\documentclass{amsart}
\usepackage{graphicx}
\usepackage{amsfonts,amsmath,amssymb,amsthm}
\usepackage{mathtools}
\usepackage[dvipsnames]{xcolor}
\usepackage[margin=1in]{geometry}
\usepackage{faktor}
\usepackage{url}
\usepackage{hyperref}
\usepackage{cleveref}
\usepackage[normalem]{ulem}

\newcommand{\op}[1]{\operatorname{#1}}
\newcommand{\Qp}{\mathbb{Q}_p}
\newcommand{\Qpbar}{\overline{\mathbb{Q}}_p}

\newtheorem{theorem}{Theorem}

\newtheorem{corollary}[theorem]{Corollary}
\newtheorem{example}[theorem]{Example}
\newtheorem{lemma}[theorem]{Lemma}
\newtheorem{setup}[theorem]{Setup}
\newtheorem{notation}[theorem]{Notation}
\newtheorem*{notation*}{Notation}
\newtheorem*{example*}{Example}
\newtheorem{proposition}[theorem]{Proposition}

\theoremstyle{definition}
\newtheorem{definition}[theorem]{Definition}
\newtheorem{remark}[theorem]{Remark}

\newcommand{\Z}{\mathbb{Z}}
\newcommand{\Q}{\mathbb{Q}}
\newcommand{\C}{\mathbb{C}}
\newcommand{\F}{\mathbb{F}}

\newcommand{\Gal}{\operatorname{Gal}}

\newcommand{\GL}{\operatorname{GL}}
\newcommand{\SL}{\operatorname{SL}}

\newcommand{\Fheight}{\operatorname{h}_\mathcal{F}}

\newcommand{\NRep}[1]{\operatorname{Im}\rho_{E,#1}}
\newcommand{\pnRep}[2]{\operatorname{Im}\rho_{E,#1^#2}}

\newcommand{\BaseField}{F}
\newcommand{\GoodReductionExtension}{K}
\newcommand{\Dbasis}{b}

\newcommand{\isom}{\tilde{w}}
\newcommand{\Frob}{\varphi}

\numberwithin{theorem}{section}
\numberwithin{equation}{section}

\title{Explicit $p$-adic Hodge theory for elliptic curves and non-split Cartan images}

 \author{Matthew Bisatt}
 \address{Faculty of Mathematics and Computer Science, Adam Mickiewicz University, 61-614 Pozna\'{n}, Poland}
 \email{matthew.bisatt@amu.edu.pl}

 \author{Lorenzo Furio}
 \address{Sorbonne Université, CNRS, IMJ-PRG. 4, place Jussieu, 75252 Paris, France}
 \email{lorenzo.furio@imj-prg.fr}

 \author{Davide Lombardo}
 \address{Dipartimento di Matematica, Universit\`a di Pisa, Largo Bruno Pontecorvo 5, 56127 Pisa, Italy}
 \email{davide.lombardo@unipi.it}

\date{}

\begin{document}

\begin{abstract}
    Let $E/\mathbb{Q}_p$ be an elliptic curve whose mod $p$ Galois image is contained in the normaliser of a non-split Cartan. We classify the possible $p$-adic images of $E$ using tools from $p$-adic Hodge theory via a careful analysis of the local Galois structure of the $p$-power torsion of $E$. We pay special attention to the case where $E$ has potentially supersingular reduction, where we give an algorithm to determine the corresponding filtered $(\varphi,\op{Gal}(K/\Qp))$-module from a Weierstrass model (which appears to be novel), and introduce alternative division polynomials that may be of independent interest.

    We deduce global consequences for elliptic curves $E/\Q$: when the mod $p$ representation of $E$ has non-split Cartan image and $E$ doesn't have CM, 
    the $p$-adic image must be the full preimage of the normaliser of a mod $p^n$ non-split Cartan for some $n \geq 1$. As an application, we sharpen existing bounds on the adelic image in terms of the Weil height of the $j$-invariant. 
\end{abstract}

\maketitle

\setcounter{tocdepth}{1}
\tableofcontents

\section{Introduction}

Galois representations attached to elliptic curves $E$ over fields $F$ of arithmetic significance are a cornerstone of modern number theory, connecting to many central themes, both theoretical and computational. Of particular interest are the mod $p^n$ and $p$-adic representations $\rho_{E,p^n}$ and $\rho_{E,p^\infty}$, which encode the Galois action on the $p^n$-torsion subgroup $E[p^n]$ and the Tate module $T_pE$ respectively (after fixing bases, we always regard $\rho_{E, p^n}$ and $\rho_{E, p^\infty}$ as taking values in $\GL_2(\Z/p^n\Z)$ and $\GL_2(\Z_p)$, respectively). In this paper, we develop tools to analyse the images of these representations over $\Q_p$, and apply them to obtain global results for elliptic curves over $\Q$.

When $F=\Q$, Serre proved the images of $\rho_{E,p}$ and $\rho_{E,p^{\infty}}$ are intertwined: if $p>3$ then one is surjective if and only if the other is \cite[IV-23, Lemma 3]{serre-abrep}. In his landmark open image theorem \cite{MR387283}, he further showed that $\rho_{E,p^{\infty}}$ is surjective when $p$ is sufficiently large so long as $E$ does not have potential complex multiplication (CM).  In the same paper, he then posed a natural uniformity question: does there exist a constant $N \geq 3$, such that $\rho_{E,p}$ (and hence also $\rho_{E,p^{\infty}}$) is surjective for every prime $p>N$, uniformly in non-CM curves $E/\Q$? This became known as \textit{Serre's uniformity question}. Despite substantial progress, the problem remains open, though it is conjectured to have an affirmative answer with optimal constant $N=37$.

By the work of many authors \cite{serre81, mazur78, biluparent11, biluparent13, zywina15, lefournlemos21, furiolombardo23}, it is known that for a non-CM elliptic curve $E/\Q$ and a prime $p>37$, the image of the representation $\rho_{E,p}$ is either $\GL_2(\F_p)$ or the normaliser of a non-split Cartan, which we henceforth denote by $C_{ns}^+(p)$ (and $C_{ns}^+(p^n)$ for the mod $p^n$ versions). Recent work has also led to significant progress towards the finer classification of possible $p$-adic images, which is an important first step in Mazur's \textit{Program B}. When $p>7$, Rouse--Sutherland--Zureick-Brown \cite[Theorem 1.1.6]{rszb22} have classified the possible $p$-adic images unless $\op{Im}\rho_{E,p} \subseteq C_{ns}^+(p)$. In the non-split Cartan case, much less is known about the images of $\rho_{E, p^n}$ for $n \geq 2$. Our first global result bridges this gap to describe the full $p$-adic tower:

\begin{theorem}[=\Cref{thm: no sharp groups over Q}]\label{introthm: no sharp groups over Q}
Let $E/\Q$ be an elliptic curve without CM and let $p > 7$ be a prime. Suppose that $\op{Im} \rho_{E, p} \subseteq C_{ns}^+(p)$. Then there exists $n \geq 1$ such that
\[
\op{Im} \rho_{E, p^\infty} = \pi_n^{-1} \left(C_{ns}^+(p^n)\right),
\]
where $\pi_n : \GL_2(\Z_p) \to \GL_2(\Z/p^n\Z)$ is the canonical projection.
\end{theorem}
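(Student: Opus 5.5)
Let $G = \op{Im}\rho_{E,p^\infty}$ and let $n \geq 1$ be maximal with $\pi_n(G) \subseteq C_{ns}^+(p^n)$. Such an $n$ exists: if $\pi_m(G) \subseteq C_{ns}^+(p^m)$ for every $m$, then $G$ would lie in the normaliser of a $p$-adic non-split Cartan, which has dimension $2$ as a $p$-adic Lie group. For a non-CM curve this contradicts Serre's open image theorem, since $G$ must be open in $\GL_2(\Z_p)$.

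The plan is to show $G = \pi_n^{-1}(C_{ns}^+(p^n))$. Since $G \subseteq \pi_n^{-1}(C_{ns}^+(p^n))$ and $G$ is closed, it suffices to prove that $G$ contains the kernel $I + p^n M_2(\Z_p)$. For this I would use the standard group-theoretic lifting lemma for subgroups of $\pi_n^{-1}(C_{ns}^+(p^n))$, in the spirit of Serre's IV-23 lemma: if the level-$(n+1)$ kernel $\ker(\pi_{n+1}(G)\to\pi_n(G))$ is all of $I+p^nM_2(\F_p)\cong M_2(\F_p)$ (with $p>3$), then $G$ contains the whole kernel $I+p^nM_2(\Z_p)$. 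The proof takes commutators and $p$-th powers to lift successively through the filtration.

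The remaining step is therefore to analyse the $\F_p[C_{ns}(p)]$-module $V := \ker(\pi_{n+1}(G)\to\pi_n(G)) \subseteq M_2(\F_p)$, which is stable under conjugation by $\pi_1(G)$. Since $\op{Im}\rho_{E,p}$ is contained in $C_{ns}^+(p)$ and has order divisible by a large element of the Cartan (this holds for $p>7$, via the known results on non-split Cartan images for non-CM curves over $\Q$), $M_2(\F_p)$ decomposes under this action as $C_{ns}(p)\oplus W$. Here $C_{ns}(p)$ is identified with $\F_{p^2}$, and $W$ is the complementary $2$-dimensional irreducible piece of "anti-linear" matrices. Maximality of $n$ forces $V \not\subseteq \F_{p^2}$, so $V \supseteq W$. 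The claim to prove is then that $V$ also contains the Cartan part, so $V = M_2(\F_p)$.

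The determinant contributes the trace part: $\det G = \Z_p^\times$ because the cyclotomic character is surjective over $\Q$. This gives the scalars $1+p^n\Z_p$ modulo $p^{n+1}$, i.e. the $\F_p$ line inside $\F_{p^2}$. The main obstacle is the remaining "norm-one" Cartan direction, that is, ruling out the "sharp" groups with $V = W\oplus\F_p$. I expect this is exactly where the paper's local $p$-adic Hodge theory enters. Restricting to $G_{\Q_p}$, the curve has potentially supersingular reduction (the mod $p$ image is a non-split Cartan with inertia acting through the fundamental character of level $2$). The local classification of $p$-adic images via filtered $(\Frob,\Gal(K/\Qp))$-modules should then show that the inertia image already contains an element of $I+p^n M_2(\Z_p)$ whose reduction has a nonzero norm-one Cartan component, or else that these sharp groups cannot occur as local images. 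I would first try to invoke the local classification theorem for $E/\Qp$ with non-split Cartan mod $p$ image. Combined with $V\supseteq W$ and the scalars, that would give $V = M_2(\F_p)$, and the lifting lemma then concludes.
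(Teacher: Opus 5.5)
\textbf{There is a genuine gap in your first reduction.} From $G\subseteq\pi_n^{-1}(C_{ns}^+(p^n))$ and $G\supseteq\op{Id}+p^nM_2(\Z_p)$ you only get $G=\pi_n^{-1}(\pi_n(G))$. The statement also asserts $\pi_n(G)=C_{ns}^+(p^n)$, and in particular $\op{Im}\rho_{E,p}=C_{ns}^+(p)$; nothing in your sketch gives this.

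Here is a concrete counterexample to your reduction. Let $\tilde C=\mu_{p^2-1}\rtimes\langle\operatorname{diag}(1,-1)\rangle\subset C_{ns}^+$ be the Teichm\"uller lift of $C_{ns}^+(p)$, take $n\ge 2$, and set $\tilde C\cdot(1+p\Z_p)\op{Id}\cdot(\op{Id}+p^nM_2(\Z_p))$. This group has surjective determinant, $n$ is maximal for it, and its level-$(n+1)$ kernel is all of $M_2(\F_p)$. Yet it has index $p^{n-1}$ in $\pi_n^{-1}(C_{ns}^+(p^n))$. For $p\equiv 2\bmod 3$, intersecting $\pi_n^{-1}(C_{ns}^+(p^n))$ with the preimage of the index-$3$ subgroup of $C_{ns}^+(p)$ gives another such example.

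So you are missing two arithmetic inputs.
\begin{itemize}
\item[(a)] $\op{Im}\rho_{E,p}$ is not the index-$3$ subgroup of $C_{ns}^+(p)$. This is a genuinely global fact about $E/\Q$. It cannot come from the local theory you plan to invoke, because over $\Q_p$ index $3$ really occurs (\Cref{rmk: index 3}).
\item[(b)] You need full Cartan growth, $G_m/G_{m+1}=V_1\oplus V_2$ for $1\le m<n$. This requires something like $\#(G_1/G_2)\ge p^2$, which comes from the absence of a canonical subgroup plus Smith's ramification bound, or from the level-by-level index equality in \Cref{thm: no sharp group over Qp}.
\end{itemize}
Once (b) is available, $V\supseteq V_1\oplus V_2$ is automatic for $n\ge 2$ by taking $p$-th powers. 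The $p$-adic Hodge input is therefore really needed only for $n=1$.

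This is exactly how the paper argues. It obtains (a) and (b) from \cite[Theorem 1.6]{furio24}, which leaves a single alternative to $\op{Im}\rho_{E,p^\infty}=\pi_n^{-1}(C_{ns}^+(p^n))$: namely $\op{Im}\rho_{E,p^2}\cong G_{ns}^\#(p^2)$, whose level-$2$ kernel is $V_1\oplus V_3$. The local theorem is then used only at level $p^2$. There the local kernel $\ker(G(p^2)\to G(p))$ is $V_1\oplus V_2$ or $M_2(\F_p)$, so it contains $V_2$ and cannot lie inside the global kernel $V_1\oplus V_3$.

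\textbf{Two intermediate steps also need repair, though both are fixable.}

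First, surjectivity of $\det$ does not by itself put a matrix of nonzero trace into $V$. Determinants $1+p^nc$ at level $n+1$ can be realised by elements lying over nontrivial elements of $\pi_n(G)$. The correct route is: $\det(G_1)=1+p\Z_p$ because $p\nmid\#G(p)$; semisimplicity then gives $V_1\subseteq G_1/G_2$; and $p$-th powers propagate this to every level.

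Second, ``maximality of $n$ forces $V\not\subseteq\F_{p^2}$'' is not automatic. You must show that an extension whose level-$(n+1)$ kernel lies in $V_1\oplus V_2$ is conjugate into $C_{ns}^+(p^{n+1})$. This follows from $H^1(\pi_n(G),V_3)=0$, using $p\nmid\#\pi_1(G)$ and the fact that $V_3$ (your $W$) is not a composition factor of $\ker(\pi_n(G)\to\pi_1(G))$.
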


We note that if Serre's uniformity question has a positive answer, then the containment $\op{Im}\rho_{E,p} \subseteq C_{ns}^+(p)$ only occurs for finitely many primes $p$, but this question remains wide open. 
The refined classification of \Cref{introthm: no sharp groups over Q} precludes the existence of images that had not previously been ruled out, thereby allowing us to strengthen previous adelic image bounds \cite{MR3437765, furio24}. 
We state here a simplified version of our result in terms of the $j$-invariant; a fully explicit version is given in \Cref{thm: adelic bound}. We note that the dependence on the height of $E$ is essentially optimal for the available methods.

\begin{theorem}\label{introthm: adelic bound}
    Let $E/\Q$ be an elliptic curve without complex multiplication and let $\operatorname{h}(j(E))$ be the absolute logarithmic Weil height of its $j$-invariant. For every $\varepsilon > 0$ there exists an effective constant $C_\varepsilon$ such that the index of the adelic Galois representation of $E$ is bounded by $C_\varepsilon \cdot \max\{1, \operatorname{h}(j(E))\}^{2+\varepsilon}$.
\end{theorem}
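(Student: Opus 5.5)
The bound comes from splitting the adelic index into the prime-by-prime indices and controlling each type of prime with existing results, using \Cref{introthm: no sharp groups over Q} only to handle the non-split Cartan primes. Write $G=\op{Im}\rho_E\subseteq \GL_2(\hat{\Z})$. Following the strategy of \cite{MR3437765, furio24}, I would bound $[\GL_2(\hat\Z):G]$ by
\begin{itemize}
\item the product of the $\ell$-adic indices $[\GL_2(\Z_\ell):\op{Im}\rho_{E,\ell^\infty}]$ over primes $\ell$ where $\rho_{E,\ell^\infty}$ is not surjective, times
\item an entanglement factor coming from the commutator subgroup.
\end{itemize}
The entanglement factor is bounded by an absolute constant via the standard Serre-type argument: for $\ell\geq 5$ with surjective image, $\SL_2(\Z_\ell)$ has no nontrivial abelian quotients and no common simple quotients with the other factors, so entanglement is confined to a bounded set of small primes and contributes a constant.

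The set of non-surjective primes $\ell$ splits into three classes.
\begin{itemize}
\item \textbf{Small primes} ($\ell\leq 37$, or $\ell$ below some fixed bound). The $\ell$-adic index is bounded by a power of the isogeny-degree bound. Via Gaudron--R\'emond style isogeny estimates, this index is $\ll \max\{1,\operatorname{h}(j(E))\}^{c}$ with a controlled exponent. Alternatively one uses the RSZB classification, which gives an absolute bound for $\ell>7$ outside the non-split case, and the explicit results at $2$ and $3$.
\item \textbf{Borel or exceptional primes} with $\ell>37$. By the results cited in the introduction there are none for non-CM $E/\Q$.
\item \textbf{Non-split Cartan primes} ($\ell>37$, or more generally $\ell>7$, with $\op{Im}\rho_{E,\ell}\subseteq C_{ns}^+(\ell)$). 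This class needs care.
\end{itemize}

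For a prime $\ell$ of the third class, \Cref{introthm: no sharp groups over Q} gives $\op{Im}\rho_{E,\ell^\infty}=\pi_n^{-1}(C_{ns}^+(\ell^n))$. Its index is
\[
[\GL_2(\Z/\ell^n\Z):C_{ns}^+(\ell^n)]=\tfrac12\,\ell^{2n-1}(\ell-1).
\]
This is of size about $\ell^{2n}/2$. It remains to bound $\ell^n$ in terms of the height. The curve yields a point on $X_{ns}^+(\ell^n)$, and an isogeny-theorem argument gives an explicit bound. Concretely, the image contains $1+\ell^n M_2(\Z_\ell)$ but no smaller level, so there is an isogeny-type relation of degree about $\ell^n$. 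More precisely, the Masser--W\"ustholz/Gaudron--R\'emond bound $b(E)\ll \max\{1,\operatorname{h}(j)\}^{1+\varepsilon'}$, applied to the endomorphism-free curve, forces $\ell^n \ll \max\{1,\operatorname{h}\}^{1+\varepsilon'}$, which is the argument of \cite{furio24}. Hence each such prime contributes $\ll \operatorname{h}^{2+2\varepsilon'}$.

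The main obstacle is that the product over all non-split Cartan primes could in principle accumulate. The point of \Cref{introthm: no sharp groups over Q} is that previously one had to allow intermediate ``sharp'' subgroups, which cost extra powers. Now the global isogeny bound must be applied jointly to $N=\prod \ell^{n_\ell}$ rather than prime by prime. I would form the product of these levels and use a single isogeny/Galois-orbit estimate for the torsion module of order $N$, giving
\[
N\ll \max\{1,\operatorname{h}\}^{1+\varepsilon'},
\]
so that the total index from these primes is $\ll N^2\ll \operatorname{h}^{2+\varepsilon}$. Absorbing the bounded contributions from small primes and entanglement into $C_\varepsilon$ then yields the statement. Making the joint bound on $N$ with exponent exactly $1+\varepsilon'$ is the step I expect to need the most care, including tracking constants to keep $C_\varepsilon$ effective.
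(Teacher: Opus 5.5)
Your skeleton is the paper's. You bound the adelic index by the product of the $\ell$-adic indices times a correction factor; in case (A) of \Cref{lemma: furio known} this is Equation (7.7) in the proof of Lemma 7.18 of \cite{furio24}. You use \Cref{introthm: no sharp groups over Q} to get index $\tfrac{\ell-1}{2\ell}\ell^{2n_\ell}$ at each non-split prime $\ell>7$ (\Cref{lemma: p-adic indices}). You then bound $\Lambda=\prod_{\ell>2}\ell^{n_\ell}$ by the height. The gap is that last step: the estimate $\Lambda\ll\max\{1,\operatorname{h}(j(E))\}^{1+o(1)}$, which is what yields the exponent $2+\varepsilon$, is not proved, and the mechanism you propose cannot give it.

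A non-split Cartan image produces no isogeny of $E$ of degree about $\ell^n$. Since $E[\ell]$ is irreducible, even over the quadratic field cut out by $C_{ns}^+/C_{ns}$, the only Galois-stable subgroups of $E[\ell^n]$ are the $\ell^mE[\ell^n]$, so an isogeny theorem for $E$ has nothing to act on. You would have to work with $E\times E$ and the graph of the $\Z_{\ell^2}$-action, and for a joint bound do so over the compositum of all the quadratic fields involved. Moreover, the known explicit isogeny estimates for elliptic curves (e.g.\ Gaudron--R\'emond) are quadratic in the height, not of size $h^{1+\varepsilon'}$, and those for abelian surfaces are much weaker. So your ``single isogeny/Galois-orbit estimate'' giving $N\ll h^{1+\varepsilon'}$ is exactly the missing idea.

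The paper does not reprove this bound. It obtains $[\GL_2(\widehat{\Z}):\op{Im}\rho_E]\le 1.5\cdot 10^7\,\Delta_7\,3^\beta\Lambda^2$. It then reruns the height estimates of part (A) of the proof of Theorem 7.1 in \cite{furio24}, done separately for $j\notin\Z$ and $j\in\Z$, with $\Lambda^2$ in place of $\Lambda^3$. It quotes \Cref{lemma: furio known} for the remaining cases and uses $\Fheight(E)\le\frac{1}{12}\operatorname{h}(j(E))$.

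Two further claims are wrong as stated, though they become harmless once a bound on $\Lambda$ is available.

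First, the correction factor is not an absolute constant. $C_{ns}^+(\ell)$ has abelianisation of order $2(\ell-1)$. The quadratic fields cut out by $C_{ns}^+(\ell)/C_{ns}(\ell)$ for different non-split primes $\ell$ can coincide or be entangled, so Serre's argument does not confine entanglement to a fixed finite set of primes. In (7.7) this appears as $2^{|\mathcal{C}_{ns}|}3^\beta$. These factors are precisely the source of the $\varepsilon$, i.e.\ the $\delta$-term in \Cref{thm: adelic bound}. They are $\ll h^{o(1)}$ only because the number of such primes is at most $\omega(\Lambda)\ll\log\Lambda/\log\log\Lambda$.

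Second, the small primes cannot be dismissed with an absolute bound. Nor can they take an $h^{c}$ bound with fixed $c>0$, which would spoil the exponent. The RSZB classification does not cover non-split Cartan images at $\ell\in\{3,5,7,11\}$, so it does not control the level $n_\ell$ there. The paper puts $3^{n_3},5^{n_5},7^{n_7}$ into $\Lambda$ with $\ell$-adic indices $O(\ell^{2n_\ell})$ (\Cref{lemma: p-adic indices}). This uses \cite{furio24}, and for $\ell=5$ also RSZB to exclude $G_{ns}^\#(25)$ away from one $j$-invariant. The prime $\ell=11$ is covered by \Cref{introthm: no sharp groups over Q}.
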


\Cref{introthm: no sharp groups over Q} is a consequence of a more explicit local one, where the hypotheses enable us to suppose that $E$ has potentially supersingular reduction. Recall that the semistability defect $e$ of an elliptic curve $E/\Qp$ is the minimal degree of an extension $L/\Qp$ such that the base change $E_L$ is semistable; when $p>3$, one has $e \in \{1,2,3,4,6\}$.

\begin{theorem}[=\Cref{lemma: p-adic image in the twisted crystalline case} + \Cref{cor: explicit p-adic image}]\label{introthm: more precise description padic Galreps}
    Let $p>7$ be a prime and let $E/\Q_p$ be an elliptic curve with semistability defect $e$, and suppose that $\operatorname{Im}\rho_{E,p} \subseteq C_{ns}^+(p)$. Then $E$ has potentially good supersingular reduction and the following hold.
    \begin{enumerate}
        \item If $e\leq 2$ then $\operatorname{Im}\rho_{E,p^n} = C_{ns}^+(p^n)$ for all $n \geq 1$.
        \item Suppose $e \geq 3$. Let $j$ be the $j$-invariant of $E$ and let $n_0$ be the positive integer
    \[
    n_0 := \begin{cases}
    \begin{array}{lll}
    \!\left\lfloor \frac{1}{3}v(j) \right\rfloor & \quad & \text{if $e \in \{3,6\}$} \\
    \!\left\lfloor \frac{1}{2}v(j-1728) \right\rfloor & \quad & \text{if $e = 4$.}
    \end{array}
    \end{cases}
    \]
    Let $\pi_{n_0} : \GL_2(\Z_p) \to \GL_2(\Z/p^{n_0}\Z)$ be the canonical projection. If $p>\sqrt{n_0+1}$, then 
\[
\operatorname{Im}\rho_{E,p^{n_0}} \subseteq C_{ns}^+(p^{n_0}) \text{ with index dividing $3$,} \qquad\qquad \text{and} \,\, \operatorname{Im}\rho_{E,p^\infty} = \pi_{n_0}^{-1}(\operatorname{Im}\rho_{E,p^{n_0}}).
\]
Moreover, if $e=4$ or $p \not\equiv 2,5 \bmod 9$, then $\operatorname{Im}\rho_{E,p^{n_0}} = C_{ns}^+(p^{n_0})$. 
    \end{enumerate}
\end{theorem}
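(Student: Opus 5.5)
We first establish that $E$ has potentially good supersingular reduction, and then split according to $e$, using $p$-adic Hodge theory (filtered $(\Frob,\Gal(K/\Qp))$-modules) to pin down the image.

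\emph{Step 1: reduction type.} If $E$ had potentially multiplicative reduction, Tate uniformisation over a quadratic extension gives $\rho_{E,p}|_{I}$ of the shape $\chi\begin{pmatrix}\omega & * \\ 0 & 1\end{pmatrix}$. Its image contains an element of order $p-1$ with distinct eigenvalues in $\F_p^\times$ (namely $\omega$ and $1$, up to the twist $\chi$). For $p>7$ such an element cannot lie in $C_{ns}^+(p)$: elements of $C_{ns}(p)$ with $\F_p$-rational eigenvalues are scalar, and elements outside $C_{ns}(p)$ square to scalars.

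If the reduction were potentially good ordinary, $\rho_{E,p}|_{I}$ would again be reducible with characters $\omega\cdot(\text{tame of order } e)$ and $(\text{tame of order } e)$. For $p>7$ this produces a non-scalar split semisimple element of large order, which gives the same contradiction. So the reduction is potentially supersingular. The tame inertia then acts through fundamental characters of level $2$, $\theta_2^{a}\theta_2^{pa'}$, landing in $C_{ns}(p)$, and $e\mid 4$ or $e\mid 6$ governs the exponents.

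\emph{Step 2: $e\le 2$.} After a quadratic twist we may take $E$ to have good supersingular reduction over $\Qp$ itself. By Fontaine–Laffaille (or Serre's description), $T_pE|_{G_{\Qp}}$ is the induction from $\Q_{p^2}$ of a Lubin–Tate-type character when $a_p=0$, which holds since $p>3$ and $|a_p|<2\sqrt p$ with $a_p\equiv 0\pmod p$.

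Concretely, the formal group is a height-$2$ Lubin–Tate group over $\Q_{p^2}$, descended to $\Qp$. Hence $\rho_{E,p^\infty}(G_{\Q_{p^2}})$ is the full unit group $\Z_{p^2}^\times = C_{ns}(p^\infty)$, by surjectivity of the Lubin–Tate character from local class field theory. Frobenius of $\Q_{p^2}/\Qp$ supplies the normaliser. Twisting by a quadratic character preserves this, which proves (1).

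\emph{Step 3: $e\ge3$.} Over $K$ (tame, of degree $e$) $E$ acquires good supersingular reduction. The filtered $(\Frob,\Gal(K/\Qp))$-module $D$ is $2$-dimensional, with $\Frob^2=-p$ up to a unit and $\Gal(K/\Qp)$ acting by characters of order $e$. I would compute the Hodge filtration line $\op{Fil}^1\subset D_K$ in the Frobenius-eigenbasis from the Weierstrass model, using the paper's algorithm and alternative division polynomials. The position of this line relative to the eigenlines should be measured by a valuation: $v(j)/3$ for $e\in\{3,6\}$, and $v(j-1728)/2$ for $e=4$. The $p$-adic distance of the filtration from the "CM-like" filtration (the one stable under $\Z_{p^2}$) is then exactly $p^{n_0}$.

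Translating back via an integral theory (Breuil–Kisin, or a Fontaine–Laffaille-type comparison over tame $K$, needing $e(p-1)$ small relative to $p$; this is where $p>\sqrt{n_0+1}$ should enter), $T_p E/p^{n_0}$ agrees with a twisted Lubin–Tate induced module. This gives $\op{Im}\rho_{E,p^{n_0}}\subseteq C_{ns}^+(p^{n_0})$.

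For the $p$-adic image, the image of $G_K$ is an open subgroup of $\GL_2(\Z_p)$. Its Lie algebra is determined by $D$: since the filtration is not $\Z_{p^2}$-stable beyond level $n_0$, the Lie algebra contains $p^{n_0}\mathfrak{gl}_2$. Combined with a Frattini/commutator argument showing the image contains $1+p^{n_0}M_2(\Z_p)$, this yields $\op{Im}\rho_{E,p^\infty}=\pi_{n_0}^{-1}(\op{Im}\rho_{E,p^{n_0}})$.

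The index dividing $3$ arises because the image of $G_{\Qp}$ modulo $p^{n_0}$ is generated by the Lubin–Tate image together with the order-$e$ tame twist. The cokernel of the determinant-compatible map is controlled by cube roots of unity, whose obstruction vanishes when $e=4$ or when $p\not\equiv2,5\pmod 9$, i.e.\ when the relevant element of $\F_{p^2}^\times$ is a cube.

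I expect the main obstacle to be the integral comparison in Step 3. Passing from the rational filtered module to an exact statement modulo $p^{n_0}$ requires an integral $p$-adic Hodge theory valid for ramification index $e$ and for precision up to $n_0$, together with the explicit identification of the filtration from the Weierstrass equation. I would first try Breuil–Kisin modules over $\mathfrak{S}=W[[u]]$ with descent data, and compute the Kisin module explicitly from the formal group logarithm.
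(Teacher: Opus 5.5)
Your Steps 1 and 2 are essentially sound, and Step 2 is a more uniform route than the paper's, which uses Habegger's degree formula when $\bar j\neq 0,1728$ and a comparison with a CM curve otherwise. Two small points there. First, the claim that the formal group is Lubin--Tate over $\Q_{p^2}$ is true, but it needs justification, since a generic supersingular curve has no CM; it follows, for example, from the uniqueness of $V_pE$ for good supersingular reduction over $\Qp$ together with \Cref{lemma: any lattice will do}. Second, you should check that a quadratic twist cannot lower the image to an index-$2$ subgroup of $C_{ns}^+$. This holds because $-1$ is a square in $C_{ns}(p^n)$ and therefore lies in every index-$2$ subgroup.

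The genuine gap is Step 3, which carries all of part (2): each of its ingredients is announced rather than proved.

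\emph{The link between the filtration and the $j$-invariant.} You only assert (``should be'') that the filtration is governed by $v(j)/3$, resp.\ $v(j-1728)/2$. Yet computing $\op{Fil}^1$ from a Weierstrass model is exactly the hard point, and appealing to ``the paper's algorithm'' is circular. The paper needs all of the following:
\begin{itemize}
\item Volkov's parameter $\beta$, together with the Fontaine--Kawachi modules $MH_{\mathcal{O}_L}$, to express $\beta$ through coefficients of the formal logarithm;
\item a Kummer-type valuation computation giving $v(\beta)=\frac13v(j)-\frac1e$ (resp.\ $\frac12v(j-1728)-\frac1e$);
\item the sign $\varepsilon$, read off from $v(\Delta)$;
\item a ramified quadratic twist to reach $v(\alpha^{-1})=n_0-1$;
\item the absence of a canonical subgroup, to ensure $n_0\geq 1$, which you never address.
\end{itemize}

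\emph{The passage to $T_pE/p^{n_0}$.} Your passage from the rational module to $T_pE/p^{n_0}$ rests on an integral theory you do not carry out, and Fontaine--Laffaille does not apply over the ramified field $K$. The missing idea is that no integral theory is needed. Since $E[p]$ is irreducible, every Galois-stable lattice in $V_\alpha=\operatorname{Hom}(D_\alpha,BW(R))$ is homothetic to $T_pE$. Hence $E[p^k]\cong A_c/A_{p^{2k}c}$ is in Galois-equivariant bijection with the roots of the explicit polynomial $g_k$. Krasner's lemma then gives $K(E[p^k])=K(E_{\mathrm{CM}}[p^k])$ for $k\le v(\alpha^{-1})+1$, so the $p$-part of $\#\operatorname{Im}\rho_{E,p^k}$ is at most $p^{2(k-1)}$. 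The classification of N-Cartan lifts (\Cref{thm:ellipticcartantower}) then converts this bound into containment in $C_{ns}^+(p^k)$. You never invoke such a classification, so you would have to extract the entire image from the integral module.

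\emph{The ``Lie algebra'' step fails as stated.} Lie-algebra arguments are rational and only give openness, so they cannot detect the exact level. For instance, $\pi_2^{-1}(G_{ns}^\#(p^2))$ (\Cref{def: ciuffetto}) is open and satisfies all the abstract constraints, yet it does not contain $\operatorname{Id}+pM_2(\Z_p)$. Ruling it out when $n_0=1$, and ruling out $\operatorname{Im}\rho_{E,p^{n_0+1}}\subseteq C_{ns}^+(p^{n_0+1})$ in general, needs genuine local input that no Frattini or commutator argument provides. The paper gets this from an explicit ramification computation at $k=n_0+1$. From two suitable roots of $g_k$ it builds an element of $K(E[p^k])$ whose valuation has denominator divisible by $p^{2k}$, which forces $[K(E[p^k]):K(E[p^{k-1}])]=p^4$ (\Cref{thm: valuation 0}). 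This is also where $p>\sqrt{n_0+1}$ enters: it is needed for the bijection between $E[p^{n_0+1}]$ and the roots of $g_{n_0+1}$, not for an integral comparison.

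\emph{The index statement is not derived.} ``The relevant element is a cube'' is not an argument. The paper reads the index off Volkov's description of inertia on $E[p]\otimes\overline{\F}_p$ through level-$2$ fundamental characters with exponents $1\mp\frac{p^2-1}{e}$ and $p\pm\frac{p^2-1}{e}$, and checks when both are divisible by $3$.
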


\begin{remark}
    As explained in \Cref{rmk: index 3}, even when $e \in \{3,6\}$ and $p \equiv 2,5 \bmod 9$ one can easily decide whether the index $[C_{ns}^+(p^{n_0}) : \operatorname{Im}\rho_{E,p^{n_0}}]$ is $1$ or $3$. In particular, for all such $e$ and $p$ there exist elliptic curves as in the statement for which the index is $3$.
\end{remark}

The local classification problem has seen considerably less attention than the global version. If $E/\Qp$ has potentially good supersingular reduction and doesn't have \textit{potential formal CM}, a result of Serre \cite[Théorème 5]{MR242839} shows that $\op{Im} \rho_{E,p^{\infty}}$ contains an open subgroup of $\GL_2(\Z_p)$, but we are unaware of any bounds on the index of this subgroup. Our work doesn't just bound this index, but precisely describes $\op{Im} \rho_{E,p^{\infty}}$ in this setup.

\subsection{Explicit $p$-adic Hodge theory approach}
We now give more details on our method for proving \Cref{introthm: more precise description padic Galreps}, where the novelty in our work lies in the systematic and computationally accessible utilisation of $p$-adic Hodge theory to study local Galois representations of $E/\Qp$ in the supersingular setting.

If $E$ has good supersingular reduction over $\Qp$ or a quadratic extension, then it is known that there are only two possible isomorphism classes for $T_pE$ and the $p$-adic image is known (see \cite[top of p.~34]{Volkov} and \Cref{lemma: p-adic image in the twisted crystalline case}). We can then suppose that $E$ has semistability defect $e \geq 3$ and becomes semistable over $K=\Qp(\sqrt[e]{-p},\zeta_e)$. For $p>7$, we show that in this context $T_pE$ is irreducible as a Galois module and hence its Galois structure is uniquely determined by its rational counterpart $V_pE:=T_pE \otimes_{\Z_p} \Q_p$.

Known results in the literature, due in particular to work of Volkov, imply that $V_pE$ (which is potentially crystalline) can be encoded by a single deformation parameter $\alpha \in \mathbb{P}^1(\Qp)$, corresponding to a filtered $(\varphi,\Gal(K/\Qp))$-module $D_{\alpha}$ (\Cref{def: D alpha}). We are able to extract from $D_\alpha$ concrete arithmetic information about the torsion representations $E[p^k]$ and thereby construct the following \textit{alternative ``division'' polynomials} for $E$.

\begin{theorem}[=\Cref{thm: p2 torsion points correspond to roots of g} + \Cref{rmk: twisted Galois action 2}]\label{introthm: alternative division polynomials}
    Let $p>7$ and let $E/\Qp$ be an elliptic curve with potentially supersingular reduction, semistability defect $e\geq 3$ and deformation parameter $\alpha$. For $k \geq 1$, define 
    \[
    g_k:=x^{p^{2k}} + \sum_{n=1}^k (-1)^np^n(x^{p^{2k-2n}} + \alpha^{-1}(\sqrt[e]{-p})^2x^{p^{2k+1-2n}}).
    \]
    If $v(\alpha^{-1}) \geq 0$ and $p> \max\{\sqrt{k}, k-v(\alpha^{-1})-1\}$, then there is a $\op{Gal}(\overline{\Q}_p/\Qp)$-equivariant bijection between $E[p^k]$ and the roots $\mathcal{R}_k$ of $g_k$ (for a suitable twisted Galois action on $\mathcal{R}_k$). Moreover, the fields $K(E[p^k])$ and $K(\mathcal{R}_k)$ are equal.
\end{theorem}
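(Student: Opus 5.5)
Over $K=\Qp(\sqrt[e]{-p},\zeta_e)$ the curve $E_K$ has good supersingular reduction, and the plan is to replace $E[p^k]$ by the $p^k$-torsion of a Lubin--Tate-type formal group whose logarithm is read off from $D_\alpha$. Write $\pi:=\sqrt[e]{-p}$, so that $K/\Qp$ is tamely ramified and $\pi$ is a uniformiser of $K$ up to the unramified part. The filtered $\varphi$-module of $V_pE|_{G_K}$ is $D_\alpha\otimes K$ with $\varphi^2=-p$ (up to the $\Gal(K/\Qp)$-twist) and Hodge filtration spanned by a line $\ell_\alpha=\langle \Dbasis_1+\alpha^{-1}\pi^{2}\Dbasis_2\rangle$. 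This is where the factor $\alpha^{-1}(\sqrt[e]{-p})^2$ in $g_k$ comes from.

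The key construction is a $p$-typical formal group $\mathcal{G}_\alpha$ over $\mathcal{O}_K$ with Honda-type logarithm
\[
\ell(x)=\sum_{n\geq 0}\frac{(-1)^n}{p^n}\Bigl(x^{p^{2n}}+\alpha^{-1}\pi^2 x^{p^{2n+1}}\Bigr),
\]
whose Dieudonné module / Breuil--Kisin module is the one attached to $D_\alpha$. We must check that it is integral: Honda's functional-equation lemma needs $v(\alpha^{-1}\pi^2)\geq 0$, which is where $v(\alpha^{-1})\geq 0$ enters. By Fontaine's classification of $p$-divisible groups over $\mathcal{O}_K$, in the range where Breuil/Kisin theory applies with ramification $e<p-1$, we get $\widehat{E}_K\cong\mathcal{G}_\alpha$ as formal groups over $\mathcal{O}_K$, compatibly with the semilinear $\Gal(K/\Qp)$-descent data. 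Since $E_K$ is supersingular, $E[p^k]=\widehat{E}[p^k]$, so $E[p^k]\cong\mathcal{G}_\alpha[p^k]$ $G_K$-equivariantly. The $\Gal(K/\Qp)$-action is transported through the descent isomorphism, and this is the ``twisted Galois action'' on roots.

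Next we identify $\mathcal{G}_\alpha[p^k]$ with the roots of $g_k$. The polynomial $g_k$ is precisely the truncation of $p^k\ell(x)$ to the terms with $n\leq k$. For $x$ with $v(x)>0$, we show that $x\in\mathcal{G}_\alpha[p^k]$ if and only if $g_k(x)=0$, up to a change of variable $x\mapsto u(x)$ tangent to the identity, via a Newton-polygon comparison. Concretely, we first compute the valuations of the $p^k$-torsion from the Newton polygon of $[p^k]$. The tail terms of $p^k\ell$ with $n>k$ have valuation $k-n+p^{2n}v(x)$, and for torsion points of level $\leq k$ these exceed the relevant valuations exactly when $p>\sqrt{k}$ and $p>k-v(\alpha^{-1})-1$; the second condition handles the $\alpha$-terms. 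Given this, Krasner/Hensel shows that the roots of $g_k$ lie in distinct residue discs matching those of the torsion points, giving a bijection. This bijection is equivariant because it is defined by a power series with coefficients in $K$. Equality of the fields $K(E[p^k])=K(\mathcal{R}_k)$ then follows since each root of $g_k$ is recovered from the corresponding torsion point, and conversely, by a convergent series over $K$.

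The main obstacle is the valuation bookkeeping in this truncation step. We must show both that the tail is negligible and that $g_k$ has exactly $p^{2k}$ roots in the open unit disc, matching the torsion points one-for-one. The Hodge-theoretic identification is mostly an invocation of existing classification results once the right logarithm is guessed.
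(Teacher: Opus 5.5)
\paragraph{Verdict.} Your route is viable and the logarithm you guess is in fact the right one, but the step that carries all the $p$-adic Hodge theory, $\widehat E_K\cong\mathcal G_\alpha$, is asserted rather than proved. That is a genuine gap.

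\paragraph{How your route differs, and what already works.} The paper never builds a formal group. It uses Volkov's model $V_\alpha=\op{Hom}(D_\alpha,BW(R))$ and identifies $E[p^k]$ with $A_{c_k}/A_{p^{2k}c_k}$, the solutions $a\in\mathfrak M_R$ of \eqref{eq: condition star alpha}. This uses irreducibility of $E[p]$ to match lattices with $T_pE$, and the estimate $v_R(a\ominus b)=v_R(a-b)$ for Witt-bivector subtraction. It then sends $a$ to the root of $g_k$ within distance $>\frac1{p^2-1}$ of $a^{(0)}$. Your $\ell$ is exactly $\pi_e$ times the $n\le 0$ part of that series. Once $E[p^k]\cong\mathcal G_\alpha[p^k]$ is granted, your second half works and is essentially the paper's:
\begin{itemize}
\item For $x\in\mathcal G_\alpha[p^k]$, $\ell(x)=0$ gives $v(g_k(x))>k+\frac1{p^2-1}$ under the stated bounds on $p$. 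This is the same estimate as in \Cref{lemma: unique root at level k}.
\item \Cref{lemma: roots distance} and \Cref{prop: shifted Newton polygon} then give a unique nearest root.
\item Injectivity follows from $v(x-_{\mathcal G}y)=v(x-y)$, since nonzero $p^k$-torsion has valuation at most $\frac1{p^2-1}$.
\end{itemize}
Three small corrections to your sketch:
\begin{itemize}
\item $g_k$ is not exactly the truncation: it is $(-1)^kp^k\ell$ truncated \emph{minus} the term $\alpha^{-1}\pi_e^2x^{p^{2k+1}}$.
\item No change of variable turns torsion points into roots of $g_k$; the bijection is nearest-root.
\item $G_K$-equivariance and $K(E[p^k])=K(\mathcal R_k)$ come from $G_K$ acting by isometries preserving both sets, hence with equal pointwise stabilisers. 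They do not come from a map given by a power series.
\end{itemize}

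\paragraph{The gap.} The missing step is the one you call ``an invocation of classification results.'' Fontaine's theory (available since $e<p-1$) says only that lifts of $\tilde E(p)$ correspond to lines in $\mathcal M$, via the class of the logarithm in $MH_{\mathcal O_L}$. It does not tell you which line belongs to $E_L$ in terms of Volkov's $\alpha$. That $\alpha$ is defined through the contravariant rational module $D^*_{\mathrm{cris},K/\Qp}(V_pE)$ together with its $G_{K/\Qp}$-descent data. Reading $\ell$ off the shape of $D_\alpha$ is a guess, and this is exactly where normalisations go wrong: the branch $\varepsilon=\pm1$, the twist $\alpha\mapsto\pm p^2\alpha^{-1}$, and signs as in \Cref{rmk: Kawachi error}.

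\paragraph{How to close it.} You need an actual dictionary between Honda systems and $D_\alpha$. One way:
\begin{itemize}
\item $v(\alpha^{-1})\ge 0$ forces $\varepsilon=+1$ in \Cref{prop: beta parameter}(iv), since $\varepsilon=-1$ gives $v(\alpha)\ge 1$. Hence $\beta=-p\pi_e\alpha^{-1}$.
\item \Cref{thm: MH iso} then shows that the line of $E_L$ is generated by $[\log_\Gamma(t)+\alpha^{-1}\pi_e^2\log_\Gamma(t^p)]$.
\item Honda's theory gives a strict isomorphism over $\Z_p$ between $\Gamma$ and the group with logarithm $\ell_0=\sum_n(-1)^nx^{p^{2n}}/p^n$, since both have type $T^2+p$. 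This isomorphism carries the class above to $[\ell]$.
\item Fontaine's full faithfulness then gives $\widehat E_L\cong\mathcal G_\alpha$ over $\mathcal O_L$.
\end{itemize}
Integrality of $\mathcal G_\alpha$ needs care. The functional-equation lemma does not apply directly over the ramified ring $\mathcal O_L$, which has no Frobenius lift. Prove it over $\Z_p[c]$ with $\sigma(c)=c^p$ and then specialise at $c=\alpha^{-1}\pi_e^2$; this also shows $\mathcal G_\alpha\equiv\mathcal G_0\bmod \pi_e$. You must also write down the descent data, e.g.\ ${}^\sigma\mathcal G_\alpha\cong\mathcal G_\alpha$ via $x\mapsto\zeta x$ with $\zeta\in\mu_e$. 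Otherwise the twisted action is just transport of structure.

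\paragraph{Trade-off.} Once completed, your argument proves more than the paper's: an explicit model of the formal group of $E_L$, with no $BW(R)$ or lattice argument. However, it rests on the integral comparison of Kawachi and Volkov and on $e<p-1$. So, unlike the paper's route, it does not cover \Cref{thm: p2 torsion points correspond to roots of g} for all $p>3$, e.g.\ $p=5$, $e=6$.
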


The recursive structure of the polynomials $g_k$ and their linear dependence on $\alpha^{-1}$ emphasise their versatility in comparison to the classical (but considerably more complicated) division polynomials; see \Cref{prop: shifted Newton polygon}, \Cref{rmk: bijection is sensible}, and \S\ref{sect: construction of Phik}. We expect that these polynomials $g_k$ will be useful in other problems involving local $p$-adic representations of elliptic curves, and note that they may also be used when the semistability defect is $1$ or $2$ (\Cref{rmk: alternative division polynomials for small e}).

We briefly remark on the hypothesis that $v(\alpha^{-1}) \geq 0$. Replacing $E$ by a ramified quadratic twist changes the parameter $\alpha$ to $\pm p^2\alpha^{-1}$ (\Cref{rmk: alpha role}(i)). Thus, whenever $v(\alpha^{-1}) \ne -1$, one can still construct a Galois-equivariant bijection between $\mathcal{R}_k$ and $E[p^k] \otimes \chi$ for a suitable quadratic character $\chi$. The case $v(\alpha^{-1})=-1$ corresponds precisely to $E$ having a canonical subgroup of order $p$ (\Cref{rmk: alpha role}(ii)), and will be excluded from our applications by the Cartan image assumption (\Cref{cor: supersingular and no canonical subgroup}); moreover, in this case the Tate module is no longer irreducible, so different methods are necessary.

The proof of \Cref{introthm: more precise description padic Galreps} is then based on two main pillars. On the one hand, we establish a dichotomy in the behaviour of the roots of $g_k$ depending on the valuation of the deformation parameter $\alpha$: when $k$ is sufficiently small compared to $v(\alpha^{-1})$, we show that the image of the Galois action on $\mathcal{R}_k$ is contained in $C_{ns}^+(p^k)$; conceptually, this happens because $E$ is `sufficiently close' to having complex multiplication. Conversely, for $k$ large we see maximal growth, namely $\op{Im} \rho_{E,p^k}$ is the full inverse image of $\op{Im} \rho_{E,p^{k-1}}$ inside $\op{GL}_2(\Z/p^k\Z)$. Proving this dichotomy gives a version of \Cref{introthm: more precise description padic Galreps} that may be expressed in terms of the deformation parameter $\alpha$ (see \Cref{thm: more precise description padic Galreps}).

The second pillar in the proof of \Cref{introthm: more precise description padic Galreps} is a connection between $\alpha$ and the $j$-invariant of $E$, which we believe to be an interesting result in its own right.
Specifically, Volkov's results give an abstract link between the set of $E/\Q_p$ with potentially good supersingular reduction and semistability defect $e$ and the $(\varphi,\Gal(K/\Qp))$-modules $D_{\alpha}$, but the $\alpha$ corresponding to such a curve is not immediately computable. We solve this problem by connecting Volkov's description directly with Weierstrass models and formal logarithms.

\begin{theorem}[=\Cref{prop: beta algorithm,prop: valuation beta,prop: beta parameter,prop: epsilon via discriminant}]\label{introthm: formula for beta}
Let $p>3$ be a prime. Suppose $E/\Qp$ is an elliptic curve having potentially good supersingular reduction, with $j$-invariant $j$, minimal discriminant $\Delta$, and semistability defect $e \in \{3,4,6\}$ satisfying $e<p-1$. Set $\pi_e=\sqrt[e]{-p}$ and let $\log_{\hat{E}}(t)=\sum_{n \ge 0} d_nt^n$ be the formal logarithm attached to a short Weierstrass model of $E/\Qp(\pi_e)$ with good reduction. Define
\[
\beta = \lim_{k \to \infty} -\frac{p}{\pi_e} \frac{d_{p^{2k+1}}}{d_{p^{2k}}}, \qquad \text{which has valuation } v(\beta)=\begin{cases}
\begin{array}{lll}
\frac{1}{3}v(j) - v(\pi_e) & \, \text{ if $e \in \{3,6\}$}, \\
\frac{1}{2}v(j-1728) - v(\pi_e) & \,  \text{ if $e =4$.}
\end{array}
\end{cases}
\]

The deformation parameter $\alpha \in \mathbb{P}^1(\Qp)$ corresponding to $E/\Qp$ is
\[
\alpha = \begin{cases}
\begin{array}{cc}
    -p \cdot (\beta/\pi_e)^{-1} & \text{ if } v(\Delta)<6 \\
    -p \cdot \beta/\pi_e^{e-3} & \text{ if } v(\Delta)>6.
    \end{array}
\end{cases}
\]
\end{theorem}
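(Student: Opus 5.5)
We prove the statement in four steps, following its four constituent parts: existence of the limit defining $\beta$, identification of $\beta$ with the Frobenius matrix, the valuation formula, and the comparison with $\alpha$.

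\textbf{Step 1: $\beta$ is the ratio of Frobenius eigenvector coordinates.} We work with the good-reduction model $E'$ over $K_0=\Qp(\pi_e)$, a totally ramified extension of degree $e<p-1$. By Honda theory, the formal group $\hat E'$ has logarithm coefficients satisfying a functional-equation recursion. Because the reduction is supersingular of height $2$, this recursion has the shape
\[
d_{p^{m+2}} = a_1 d_{p^{m+1}} + a_2 d_{p^m},
\]
with $v(a_2)=v(p)$ and $v(a_1)>0$. Here $(a_1,a_2)$ come from the characteristic polynomial of the Frobenius acting on the Dieudonné module $D_{\mathrm{cris}}(E'/K_0)$, which we identify with $H^1_{dR}$ via the Berthelot--Ogus isomorphism. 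Iterating the recursion twice gives the recursion for $\varphi^2$, so we pass to the subsequence of even indices $p^{2k}$. Since the two roots of the $\varphi^2$-recursion have distinct valuations whenever $\beta$ is finite, the ratio $d_{p^{2k+1}}/d_{p^{2k}}$ converges. Its limit is the slope of the dominant eigenline, that is, the ratio of the coordinates of a $\varphi^2$-eigenvector in the basis $\{\omega,\eta\}$ of $H^1_{dR}$. Here $\omega=dx/y$ generates the Hodge filtration, and $d_{p^k}$ measures how far $\varphi^k(\omega)$ lies off the filtration.

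\textbf{Step 2: compare with Volkov's normal form $D_\alpha$.} We write the filtered $(\varphi,\Gal(K/\Qp))$-module $D_\alpha$ of Definition~\ref{def: D alpha} in its standard basis, in which $\varphi$ and the Galois action of the $\zeta_e$/$\pi_e$-twist are fixed. There the filtration line is spanned by a vector whose coordinates involve $\alpha$. We then realize $D_{\mathrm{cris}}(E'/K)$ concretely. The Galois action is given on $\omega$ and $\eta$ by the characters $\pi_e\mapsto\zeta\pi_e$, which rescale the short Weierstrass model. This forces the isotypic decomposition: $\omega$ and $\eta$ are eigenvectors for $\Gal(K/\Qp(\pi_e))$ with known characters. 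That rigidifies the basis up to scalars, so Volkov's basis and ours differ by a diagonal matrix. Comparing the position of the filtration line with the $\varphi$-eigen structure then expresses $\alpha$ as $\beta$ times an explicit power of $\pi_e$ and $p$.

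The two cases $v(\Delta)<6$ and $v(\Delta)>6$ correspond to which character occurs on $\omega$. They are swapped by the ramified quadratic twist, matching $\alpha\mapsto \pm p^2\alpha^{-1}$ from Remark~\ref{rmk: alpha role}. This explains why the formula is inverted between the two cases, and it lets us prove one case and deduce the other by twisting.

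\textbf{Step 3: valuation formula.} We take a short model $y^2=x^3+Ax+B$ with good reduction over $K_0$, where the reduction is supersingular with $\bar j=0$ or $1728$ according to $e$. The $p$-th logarithm coefficients are governed by the Hasse-invariant type expression, namely the coefficient of $x^{p-1}$ in $(x^3+Ax+B)^{(p-1)/2}$. Only one of $A$ or $B$ contributes to this coefficient, with valuation $v(A)\propto v(j)/3$ when $e\in\{3,6\}$, and $v(B)\propto v(j-1728)/2$ when $e=4$. The leading terms of the recursion then give $v(\beta)$ after normalizing by $p/\pi_e$. We expect the Newton-polygon estimate on the recursion, needed to show that the limit is dominated by this term, to be routine.

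\textbf{Main obstacle.} The hardest step is Step 2: pinning down exactly the normalization of Volkov's basis, including the signs and the precise powers of $\pi_e$ relating the de Rham basis $\{\omega,\eta\}$ of the model to $D_\alpha$. To do this, we first treat the case $v(\Delta)<6$ by computing the Galois action on $\omega$ and $\eta$ directly from the substitution $x\mapsto \pi_e^{2}x$, $y\mapsto\pi_e^3y$. We then check the result against the twist relation in Remark~\ref{rmk: alpha role}.
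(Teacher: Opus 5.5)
There is a genuine gap at the heart of Steps 1--2: nothing in your argument links the coefficients $d_n$ to the position of the Hodge line relative to Volkov's Frobenius basis.

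\textbf{Step 1.} The mechanism does not work. Over the ramified ring $\mathcal{O}_L$ there is no exact recursion $d_{p^{m+2}}=a_1d_{p^{m+1}}+a_2d_{p^m}$; functional-equation arguments give at best congruences modulo integral series. Your valuations are also off: with $v(a_2)=1$ and $v(a_1)>0$, the $v(d_{p^m})$ would stay bounded below, whereas in fact $v(d_{p^{2k}})=-k$. More seriously, the Frobenius on the Dieudonn\'e module $M$ of $\tilde E$ satisfies $\varphi^2=-p$, a scalar. So there is no eigenvalue gap and no dominant eigenline. Moreover, any operator built from $\tilde E$ alone is the same for every lift, so its eigen-slope could never be $\beta$, which is exactly the invariant that distinguishes lifts. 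The ratio $d_{p^{2k+1}}/d_{p^{2k}}$ stabilises for the opposite reason ($\varphi^2$ is scalar). Its value is dictated by the initial data, i.e.\ by where the class of $\log_{\hat E}$ sits.

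\textbf{Step 2.} This step does not locate that class. The basis $\{\omega,\eta\}$ is not Frobenius-stable, and it does not differ from Volkov's $\{b_1,b_2\}$ by a diagonal matrix. If it did, $\mathrm{Fil}^1$ would be $b_1K$ or $b_2K$, i.e.\ $\alpha\in\{0,\infty\}$, the CM case. Since $\tau_e$ acts semilinearly and $\pi_e\in K$, a $\tau_e$-stable line can mix $b_1\otimes\pi_e^{-1}$ and $b_2\otimes\pi_e$. That is exactly the shape of Volkov's filtration, so isotypic considerations cannot pin down $\alpha$. Citing Berthelot--Ogus does not help: the whole difficulty is making the crystalline--de Rham comparison explicit over the ramified field.

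\textbf{The missing ingredient.} You need Fontaine's module $MH_{\mathcal{O}_L}$ together with Kawachi's isomorphism $\tilde w$ (\Cref{thm: MH iso}), with a CM lift $E_{\mathrm{CM}}$ as reference point.
\begin{itemize}
\item $\tilde w(\mathbf{e}_1)=[\log_\Gamma(t)]$ realises Volkov's adapted generator. It is an eigenvector for $\operatorname{Aut}(\tilde E_{\F_{p^2}})$ because $\log_\Gamma(\zeta_e t)=\zeta_e\log_\Gamma(t)$.
\item $\tilde w(\mathbf{e}_2)=-[\tfrac{\pi_e}{p}\log_\Gamma(t^p)]$.
\item The line $\mathcal{L}(\beta)$ of a lift maps to $\mathcal{O}_L\cdot[\log_{\hat E}(t)]$.
\end{itemize}
Hence $\log_{\hat E}\equiv a\log_\Gamma(t)+b\tfrac{\pi_e}{p}\log_\Gamma(t^p)\bmod\pi_e\mathcal{O}_L[[t]]$ with $\beta=-b/a$. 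Since $d_{\mathrm{CM},p^{2k+1}}=0$ and $v(d_{\mathrm{CM},p^{2k}})=-k$ (\Cref{lemma: odd-killed}), comparing coefficients gives $\beta\equiv-\tfrac{p}{\pi_e}\tfrac{d_{p^{2k+1}}}{d_{p^{2k}}}\bmod\pi_e^{ke+1}$. So convergence comes from an integral error term set against growing denominators, not from an eigenvalue gap. The factor $p/\pi_e$ comes from the normalisation $\mathbf{e}_2=\varphi(e_1)\otimes\pi_e^{1-e}$. Volkov's results then turn $\beta$ into $\alpha$.

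\textbf{Step 3.} This has a related gap. The Hasse invariant $H=pd_p$ only gives $\beta\equiv-H/\pi_e\bmod\pi_e$, which determines $v(\beta)$ only when $v(H)<2/e$. In general you need $v(d_{p^{2k+1}})=v(A)-(k+1)$ (resp.\ $v(B)-(k+1)$) for all $k$. The paper proves this directly from Yasuda's formula, isolating a unique summand of minimal valuation and evaluating it with Kummer's theorem (\Cref{prop: valuation coefficients formal log}). There is no recursion to propagate the $k=0$ information.

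\textbf{What does match.} The sign determination is in line with the paper: reading the $\tau_e$-character off the rescaling $(x,y)\mapsto(u^2x,u^3y)$ is what \Cref{prop: epsilon via discriminant} does. But the character has to be evaluated on the special fibre, hence on $e_1$ through $\log_\Gamma$. It should not be used to rigidify $\{\omega,\eta\}$ against $\{b_1,b_2\}$.
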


Although \Cref{introthm: formula for beta} gives only a limit formula for $\beta$, we discuss in \Cref{subsect: beta algorithm} how to make it computable, in the sense of being able to determine $\beta$ to any given $p$-adic precision. We also note that the ratios $d_{p^{2k+1}}/d_{p^{2k}}$ may be interpreted as higher analogues of the classical Hasse invariant of $E$, see \Cref{rmk: relation to Hasse invariant}.

Having a formula for $\beta$ then allows us to get access to the filtered $(\varphi, \op{Gal}(K/\Q_p))$-module of any given elliptic curve $E$ as above, enabling the use of tools from $p$-adic Hodge theory which, while available in principle, were previously not accessible in practice. We also emphasise that \Cref{introthm: formula for beta} has greater applicability than \Cref{introthm: more precise description padic Galreps}, since it holds for any potentially supersingular curve with semistability defect $e \geq 3$.

\subsection{Relation to existing literature}

\subsubsection*{Previous classification of $p$-adic representations}
\Cref{introthm: no sharp groups over Q} builds on previous work of the second author \cite[Theorem 1.6]{furio24}, who gave a similar classification, showing that either: $\operatorname{Im}\rho_{E,p^\infty}$ is the inverse image of $C_{ns}^+(p^n)$ for some $n \geq 1$; or $\operatorname{Im}\rho_{E,p^\infty} = \pi_2^{-1}(G_{ns}^\#(p^2))$, where $G_{ns}^\#(p^2)$ is an explicit group of level $p^2$ which we describe in \Cref{def: ciuffetto}.

One of the stepping stones towards \Cref{introthm: more precise description padic Galreps} -- namely \Cref{thm: no sharp group over Qp} -- removes this group-theoretic obstruction, namely it shows that $\rho_{E,p^2}(\Gal(\overline{\Q}_p/\Q_p))$ cannot be contained in $G_{ns}^{\#}(p^2)$ when $p>7$. This can be interpreted as a $p$-adic obstruction to the existence of rational points on the corresponding modular curve $X^{\#}_{ns}(p^2)$, confirming the calculation of Rouse--Sutherland--Zureick-Brown when $p=11$ \cite[Section 8.7, label \texttt{121.605.41.1}]{rszb22}.
When $p=7$, the situation is substantially different: $X_{ns}^\#(7^2)$ has a $\Q$-rational point, hence in particular (by smoothness) infinitely many $\Q_7$-rational points. The elliptic curves representing the points on $X_{ns}^\#(7^2)$ have potentially ordinary reduction, in contrast to the curves studied in this paper, for which the reduction is potentially supersingular. The case $p=7$ is examined in detail in \cite{furio20257adicgaloisrepresentationselliptic}.

We also remark that \Cref{introthm: more precise description padic Galreps} lies beyond the reach of purely group-theoretic arguments, since the image at level $p$ does not determine the higher-level images. In particular, group theory does not provide information about the level at which the $p$-adic tower stabilises. The additional information given by the crystalline theory is therefore essential in our analysis.

\subsubsection*{Galois representations of elliptic curves in families}

It is well-known that the mod $p^n$ Galois representation of an elliptic curve $E/\Qp: y^2=x^3+Ax+B$ is locally constant (see for example \cite[Theorem~5.1(1)]{Kisin} for a much more general statement); i.e., there exists a (typically non-explicit) $p$-adic neighbourhood of the coefficients $A, B$ where the Galois representation remains unchanged. The novel virtue of our approach is that we are able to determine the precise neighbourhood when $E$ is as in \Cref{introthm: more precise description padic Galreps} via a delicate examination of division polynomials, coupled with Krasner's lemma. In particular, the value $n_0$ in \Cref{introthm: more precise description padic Galreps} is maximal such that the mod $p^{n_0}$ representation of $E$ coincides with the mod $p^{n_0}$ representation of a certain CM curve $E_{\textrm{CM}}/\Qp$ with $j$-invariant $0$ (resp. $1728$) if $e=3,6$ (resp. $e=4$), and is therefore contained in $C_{ns}^+(p^{n_0})$. 

Our work is also related to the study of ramification in division fields for the family of elliptic curves with potentially supersingular reduction without a canonical subgroup of order $p$. Specifically \cite[Theorem 1.1]{smith23} (see also \cite{MR3501021}) proves that the ramification degree of $\Q_p(E[p^k])$ is divisible by $(p^2-1)p^{2k-2}$ whereas our proof of \Cref{thm: valuation 0} crucially improves this to $(p^2-1)p^{2k}$ (under appropriate assumptions). This additional factor is pivotal in proving the maximal growth condition $[\Q_p(E[p^{k}]) : \Q_p(E[p^{k-1}])] = p^4$ when $k$ is sufficiently large.

\subsubsection*{Other versions of $p$-adic Hodge theory}
We point out that -- despite the fact that we are mainly interested in integral and torsion representations -- we use a \textit{rational}, rather than \textit{integral}, version of $p$-adic Hodge theory. While several flavours of integral $p$-adic Hodge theory are now available (based for example on Breuil-Kisin modules \cite{MR1971512, MR2263197} -- see also \cite{MR2191528} for the case of torsion representations -- or Zink's displays \cite{MR1922825}), these theories currently do not seem to be amenable to fully explicit calculations. For example, to the best of the authors' knowledge, the Breuil-Kisin machinery relies in part on approximation statements for Robba rings which do not appear to admit a systematic treatment in families. In our setting, we can use the rational version of the theory, as developed by Fontaine \cite{Fontaine-p-divisible, Fontaine82, MR1293972} and Volkov \cite{Volkov}, because of the easy but important fact that the rational $p$-adic Tate module determines the integral one, see \Cref{lemma: any lattice will do}.

A further comment is related to the restriction $p>\sqrt{n_0+1}$ that appears in \Cref{introthm: more precise description padic Galreps}. This doesn't come from the application of $p$-adic Hodge theory: the connection we establish between the $j$-invariant of $E$ and the $p$-adic Hodge-theoretic invariants $\alpha$ and $\beta$ holds in much greater generality (in particular, for all $p>7$, see \Cref{introthm: formula for beta}); the difficulty lies in the $p$-adic analysis that underpins the Galois-equivariant bijection between $E[p^k]$ and the roots of $g_k(x)$. It would be interesting to remove this restriction from \Cref{introthm: more precise description padic Galreps}, which we believe also holds without it.

\subsubsection*{Other approaches to computing filtered $\varphi$-modules}
Finally, we remark on another aspect which we believe to be important, namely, the use of formal logarithms to describe the deformation parameter $\beta$. In the case of elliptic curves with potentially ordinary reduction, the $p$-adic Hodge-theoretic invariants of $V_pE$ can essentially be read off the trace of Frobenius, so there is no continuous family of deformations \cite[p.~111]{Volkov}.
In the case of potentially multiplicative reduction, one does have continuous deformations, parameterised by Tate's $q$-invariant, and the $p$-adic Hodge-theoretic invariants may be expressed in terms of suitable logarithms of $q$ \cite[p.~107]{Volkov}. As already pointed out, a similar description was missing in the case of potentially supersingular reduction, although the connection of $\beta$ with formal logarithms was predicted by Volkov \cite[Remarque on p.~133]{Volkov}, who, however, did not suggest an explicit formula.

We explain why the use of formal logarithms bypasses a substantial issue. Recall that the filtration on the filtered $(\varphi, \op{Gal}(K/\Q_p))$-module corresponding to $E$ comes from the comparison isomorphism between the crystalline and de Rham cohomologies of $E_K$ (the former base-changed to $K$): the line giving the non-trivial piece of the filtration corresponds to the line of regular differentials in $H^1_{\operatorname{dR}}(E_K/K)$. Thus, computing the filtration based on the definitions would involve making explicit the comparison between crystalline and de Rham cohomology over the ramified extension $K$, a task which seems extremely difficult in practice. The recent preprint \cite{booher2026computingcrystallinecohomologypdivisible} takes an important first step in this direction, but only for curves having good reduction over an unramified extension of $\Q_p$, which would not be sufficient for the applications in this paper.

\subsection{Layout}
We begin in \S\ref{sect: Prelims} with background on Cartan subgroups. Working over $\Q_p$, we also show that if the mod $p$ image is contained in $C_{ns}^+(p)$, then the curve has potentially supersingular reduction, and we establish a few basic properties that hold in this situation. We continue with a description and classification of possible lifts of $C_{ns}^+(p)$ to $\op{GL}_2(\Z_p)$, as well as analysing the special cases when $E/\Qp$ has CM or semistability defect at most $2$.

In \S\ref{sect: Volkov results}, we review the results of Volkov's PhD thesis which provides a description of $V_pE$ via filtered $(\varphi,\op{Gal}(K/\Qp))$-modules, most significantly providing a bijection between elements of $V_pE$ and roots of an infinite series which depends on a parameter $\alpha \in \mathbb{P}^1(\Qp)$. The polynomials $g_k$ of \Cref{introthm: alternative division polynomials} are defined in \S\ref{sect: description of p2 torsion} as truncations of this infinite series, with the roots of $g_k$ serving as avatars for the $p^k$-torsion of $E$; the remainder of the section is dedicated to the polynomials $g_k$ earning their description as \textit{alternative ``division" polynomials} by establishing a natural Galois-equivariant bijection between the roots of $g_k$ and $E[p^k]$.

In \S\ref{sect: integral Tate module} we turn our focus to proving \Cref{introthm: no sharp groups over Q}, which follows from \Cref{introthm: more precise description padic Galreps} after some preliminary reductions. In \Cref{sect: large valuation,sect: valuation zero} we prove a version of \Cref{introthm: more precise description padic Galreps} in terms of the valuation of $\alpha$: we describe the image modulo $p^k$ in terms of $\alpha$, with the split according to a suitable inequality between $k$ and $v(\alpha^{-1})$.

\Cref{sect: parameter alpha} is then devoted to determining the deformation parameter $\alpha$ associated to $E$ culminating in an algorithm for computing $\alpha$. This is achieved via an analysis of the related invariant $\beta$ for which we have a simple closed formula for $v(\beta)$ in terms of the $j$-invariant as in \Cref{introthm: formula for beta}; this completes the proof of \Cref{introthm: more precise description padic Galreps}.

We close in \S\ref{sect: adelic index} by giving the full statement and proof of our global results, namely \Cref{introthm: no sharp groups over Q} and the refined version of \Cref{introthm: adelic bound}. 

\vspace{5pt}
\noindent \textbf{Acknowledgements.}
DL is supported by MUR grant PRIN-2022HPSNCR (funded by the European Union project Next Generation EU) and is a member of the INdAM group GNSAGA. LF is supported by the grant ANR-HoLoDiRibey of the Agence Nationale de la Recherche (PI Gregorio Baldi). MB is supported by both the MUR grant PRIN-2022HPSNCR and University Adam Mickiewicz. DL thanks Fabrizio Andreatta for an illuminating conversation about integral $p$-adic Hodge theory. LF and DL are grateful to Maja Volkov for hosting them in Mons and for interesting discussions about her work. All the authors are also grateful to her for many insightful comments on a preliminary version of the paper.

\section{Preliminaries}\label{sect: Prelims}

\subsection{Cartan subgroups and their normalisers}

In this section, we introduce the non-split Cartan subgroups of $\GL_2(\Z_p)$ and review some of their basic properties. We will write $\op{Id}$ for the identity matrix.

\begin{definition}
	Given a prime $p$ and a subgroup $G \subseteq \GL_2(\Z_p)$, for every $n \ge 1$ we define:
	\begin{itemize}
		\item $G(p^n):= G \bmod {p^n} \subseteq \GL_2\left({\Z}/{p^n\Z}\right)$;
		\item $G_n := \left\lbrace A \in G \mid A \equiv \op{Id} \bmod{\, p^n} \right\rbrace$,
	\end{itemize}
    so that $G(p^n)=G/G_n$.
\end{definition}

\begin{definition}\label{def:cartan}
	Let $p$ be an odd prime and let $\varepsilon$ be the reduction modulo $p$ of the least positive integer which represents a quadratic non-residue in $\F_p^\times$. We define the non-split Cartan subgroup of $\GL_2(\Z_p)$ as
	\begin{align*}
		C_{ns}:= \left\lbrace \begin{pmatrix} a & \varepsilon b \\ b & a \end{pmatrix} \,\middle|\, a,b \in \Z_p, \ (a,b) \not\equiv (0,0) \mod p \right\rbrace.
	\end{align*}
	The normaliser of $C_{ns}$ in $\GL_2(\Z_p)$ is $C_{ns}^+:= C_{ns} \cup \begin{pmatrix} 1 & 0 \\ 0 & -1 \end{pmatrix} C_{ns}$.
\end{definition}

We list a few basic properties of the group $C_{ns}^+(p)$. The proofs are all straightforward.
\begin{remark}\label{rmk: basic properties Cns}
    The following hold:
    \begin{enumerate}
        \item The group $C_{ns}(p)$ is cyclic of order $p^2-1$. It can be identified with $\mathbb{F}_{p^2}^\times$: by writing $\mathbb{F}_{p^2} = \mathbb{F}_p \oplus \mathbb{F}_p \sqrt{\varepsilon}$ as $\mathbb{F}_p$-vector spaces, the matrix $\left( \begin{smallmatrix}
            a & \varepsilon b \\
            b & a
        \end{smallmatrix} \right)$ encodes the $\F_p$-linear map $\mathbb{F}_{p^2} \to\mathbb{F}_{p^2}$ given by multiplication by $a+b\sqrt{\varepsilon}$.
        \item The square of any matrix in $C_{ns}^+(p) \setminus C_{ns}(p)$ is a multiple of the identity. As a consequence, every element in $C_{ns}^+(p) \setminus C_{ns}(p)$ has order dividing $2(p-1)$.
        \item The eigenvalues of a matrix $g$ in $C_{ns}^+(p)$ are either of the form $\lambda, \lambda^p$ for some $\lambda \in \mathbb{F}_{p^2}^\times$ (if $g \in C_{ns}(p)$), or of the form $\lambda, -\lambda$ for some $\lambda \in \mathbb{F}_{p^2}^\times$ (if $g \in C_{ns}^+(p) \setminus C_{ns}(p)$).
    \end{enumerate}
\end{remark}

Non-split Cartan groups have been studied extensively, especially in relation to Galois representations of elliptic curves. We recall some key group-theoretic properties they enjoy (for proofs see \cite[Lemma 2.4]{zywina2011boundsserresopenimage} and \cite[\S 3.1]{furio24}).

\begin{lemma}\label{lemma: conjugacy action of Cnsp}
    Writing $G=C_{ns}^+$ we have:
\begin{enumerate}
    \item each quotient $G_n /G_{n+1}$ can be identified with an additive subgroup of $M_{2}(\F_p)$ via the map
    \[
    G_n/G_{n+1} \to  M_{2}(\F_p), \quad\qquad \operatorname{Id}+p^n A  \mapsto  A;
    \]
    \item the group $G(p)$ acts by conjugation on $M_{2}(\F_p)$ and on each quotient $G_n/G_{n+1}$;
    \item the action of $G(p)$ (or any index $3$ subgroup thereof) on $M_{2}(\F_p)$ is semisimple. For $p>7$, the $\F_p$-vector space $M_{2}(\F_p)$ decomposes as the sum of three irreducible and non-isomorphic representations of $G(p)$, given by
    \[
    V_1=\mathbb{F}_p \cdot \operatorname{Id}, \qquad V_2=\mathbb{F}_p \cdot \begin{pmatrix}
        0 & \varepsilon \\ 1 & 0
    \end{pmatrix}, \qquad V_3=\F_p \cdot \left\langle \begin{pmatrix}
        0 & \varepsilon \\ -1 & 0
    \end{pmatrix}, \begin{pmatrix}
        1 & 0 \\ 0 & -1
    \end{pmatrix} \right\rangle.
    \]
\end{enumerate}
\end{lemma}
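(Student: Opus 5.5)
The plan has three steps: first the filtration by congruence subgroups, then the conjugation action, then the decomposition of $M_2(\F_p)$.

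\textbf{Part (1).} I will use the map $\operatorname{Id}+p^nA \mapsto A \bmod p$ from $G_n$ to $M_2(\F_p)$. For $g=\operatorname{Id}+p^nA$ and $h=\operatorname{Id}+p^nB$ we have
\[
gh=\operatorname{Id}+p^n(A+B)+p^{2n}AB .
\]
Since $2n\ge n+1$, the map is a homomorphism from $G_n$ to the additive group. Its kernel is $G_{n+1}$, so it induces an injection $G_n/G_{n+1}\hookrightarrow M_2(\F_p)$.

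\textbf{Part (2).} The group $G$ acts by conjugation on $G_n$, because $G_n$ is the kernel of reduction modulo $p^n$ and hence normal. Conjugation by $x\in G$ sends $\operatorname{Id}+p^nA$ to $\operatorname{Id}+p^n xAx^{-1}$. Modulo $p$, this depends only on $x \bmod p$. Moreover $G_1$ acts trivially on $M_2(\F_p)$. Therefore $G(p)=G/G_1$ acts on each quotient, compatibly with the conjugation action on $M_2(\F_p)$.

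\textbf{Part (3), semisimplicity.} The group $C_{ns}^+(p)$ has order $2(p^2-1)$, which is prime to $p$. Any subgroup of index $3$ also has order prime to $p$. Maschke's theorem then gives semisimplicity.

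\textbf{Part (3), invariance.} Write $J=\left(\begin{smallmatrix}0&\varepsilon\\1&0\end{smallmatrix}\right)$, so that $C_{ns}(p)=\{a+bJ\}\cong\F_{p^2}^\times$, and write $\sigma=\operatorname{diag}(1,-1)$. Then:
\begin{itemize}
\item $V_1$ is central, hence invariant.
\item $V_2=\F_p J$ commutes with $C_{ns}(p)$. Also $\sigma J\sigma^{-1}=-J$, so $V_2$ is invariant and carries the sign character of $G/C_{ns}$.
\item $V_3$ is the set of matrices $M$ with $JM=-MJ$, that is, the $\F_{p^2}$-semilinear maps. This anticommutation condition is preserved by conjugation by elements of $C_{ns}(p)$, and also by $\sigma$, since $\sigma J\sigma^{-1}=-J$.
\end{itemize}
Thus $M_2(\F_p)=V_1\oplus V_2\oplus V_3$ as $G(p)$-modules.

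\textbf{Part (3), irreducibility and non-isomorphism.} Identify $V_3$ with $\F_{p^2}$ via $M=c\circ\mathrm{Frob}$. Conjugation by $\lambda\in\F_{p^2}^\times$ sends $c$ to $\lambda c\lambda^{-p}=\lambda^{1-p}c$. So $C_{ns}(p)$ acts on $V_3$ through the character $\lambda\mapsto\lambda^{1-p}$. Its image is the group $\mu_{p+1}$ of $(p+1)$-th roots of unity, acting by $\F_{p^2}$-multiplication. An $\F_p$-stable line would force $\mu_{p+1}\subseteq\F_p^\times$, which fails because $p+1>2$. Hence $V_3$ is irreducible of dimension $2$.

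For an index-$3$ subgroup $H$, the image of $H\cap C_{ns}(p)$ in $\mu_{p+1}$ has order at least $(p+1)/3$. For $p>7$ this exceeds $2$, so it still lies outside $\F_p^\times=\F_p\cap\mu_{p+1}$ and the same argument applies. Since $V_1$ and $V_2$ are one-dimensional and $V_3$ is two-dimensional, $V_3$ is not isomorphic to either of them.

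The one delicate point is that $V_1\not\cong V_2$. Here $V_1$ is the trivial representation and $V_2$ is the sign character. I need an element of $C_{ns}^+(p)\setminus C_{ns}(p)$ lying in the group. For the full $G(p)$ this is $\sigma$. For an index-$3$ subgroup $H$, such an element exists because index $3$ is odd, so $H$ is not contained in the index-$2$ subgroup $C_{ns}(p)$.

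The main obstacle is this index-$3$ bookkeeping: checking that $H$ still surjects onto $G/C_{ns}$, and that its image in $\mu_{p+1}$ is not contained in $\F_p^\times$. It is exactly this second check that uses $p>7$.
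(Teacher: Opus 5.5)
Your proof is correct and complete. The key steps all check out: the filtration homomorphism, the descent of the conjugation action to $G(p)=G/G_1$, Maschke's theorem, and the splitting of $M_2(\F_p)$ into the commutant $V_1\oplus V_2=\F_p[J]$ and the anticommutant $V_3$ of $J$, on which $C_{ns}(p)\cong\mathbb{F}_{p^2}^\times$ acts through $\lambda\mapsto\lambda^{1-p}$ with image $\mu_{p+1}$.

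The paper gives no argument of its own here; it defers to Zywina and Furio for this standard computation, so your write-up is a self-contained version of it. It also makes explicit that $p>7$ is needed only for index-$3$ subgroups, and there $p>5$ already suffices.
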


Combining the last two properties in the lemma, it is easy to classify the subgroups of $\GL_2(\Z/p^2\Z)$ whose projection to $\GL_2(\Z/p\Z)$ is $C_{ns}^+(p)$. We'll be especially interested in one of these groups, which we introduce in the following definition.

\begin{definition}\label{def: ciuffetto}
Let $p$ be an odd prime and let $V = V_1 \oplus V_3 \subseteq M_{2}(\F_p)$. 
We define the group $G_{ns}^\#(p^2)$ as the unique subgroup of $\GL_2(\Z/p^2\Z)$ of order $2p^3(p^2-1)$ containing $\operatorname{Id}+pV$ and whose projection modulo $p$ equals $C_{ns}^+(p)$. In particular, we have $G_{ns}^\#(p^2) \cong C_{ns}^+(p) \ltimes V$, where the semidirect product is defined by the conjugation action of $C_{ns}^+(p)$ on $M_{2}(\F_p)$.
\end{definition}

\subsection{Supersingularity and canonical subgroups}

Let $E/\BaseField$ be an elliptic curve over a $p$-adic field. In this section, we show that if the image of the mod $p$ representation is contained in $C_{ns}^+(p)$, then $E$ has potentially good supersingular reduction and does not have a canonical subgroup at $p$, unless $p$ is small relative to the absolute ramification index of $\BaseField$. We first consider general $p$-adic fields and then specialise to $\BaseField=\Q_p$. 
Some of the statements that we present can be found in \cite[\S 4]{furio24}. However, these were proven for elliptic curves over number fields; here, we generalise them to elliptic curves over $p$-adic fields. We begin with introducing some notation that we shall use frequently throughout this section, including the semistability defect of $E/\BaseField$, which we remind the reader is defined as the minimal degree of an extension $L/F$ such that $E/L$ is semistable.

\vspace*{25pt}

\begin{notation*}
\phantom{~}

\begin{tabular}{cl}
   $p$ & rational odd prime \\
   $\BaseField$ & finite extension of $\Qp$ \\
   $e_\BaseField$ & ramification degree of $F/\Qp$ \\
   $E/\BaseField$ & elliptic curve \\
   $j(E)$ & $j$-invariant of $E$ \\
   $e$ & semistability defect of $E/\BaseField$ \\
   $E[p^n]$  & the $p^n$-torsion subgroup of $E(\overline{F})$ \\
   $\rho_{E,p^n}$ & the representation of $\Gal(\overline{F}/F)$ on $E[p^n]$ \\
   $\rho_{E,p^\infty}$ & the representation of $\Gal(\overline{F}/F)$ on $T_pE$ \\
   $I_L$ & the absolute inertia group of a local field $L$ \\
   $\Q_{p^2}$ & the unique quadratic unramified extension of $\Q_p$
\end{tabular}
\end{notation*}

\begin{remark}
    Since the exact image of $\rho_{E,p^n}$ depends on the choice of a basis which we do not pick, we shall write $\op{Im} \rho_{E,p^n} \subseteq H \subseteq \op{GL}_2(\mathbb{Z}/p^n\mathbb{Z})$ to mean that $\op{Im} \rho_{E,p^n}$ is contained in a conjugate subgroup of $H$, and similarly for $\rho_{E,p^\infty}$.
\end{remark}

The following simple lemma is well-known, but we provide some details.

\begin{lemma}\label{lemma: quadratic twist acquires good reduction over a small extension}
    Let $p > 3$ be a prime and let $E/F$ be an elliptic curve with potentially good reduction. Let $\Delta$ be the discriminant of any defining equation for $E$, and let $v_F$ be a valuation of $F$, normalised so that a uniformiser of $F$ has valuation $1$. 
    \begin{enumerate}
        \item $E$ obtains good reduction over a totally tamely ramified extension of degree $e=\frac{12}{\gcd(12,v_F(\Delta))} \in \{1,2,3,4,6\}$.
        \item There is a quadratic twist of $E$ that acquires good reduction over an extension of $F$ of degree at most 4.
    \end{enumerate}
\end{lemma}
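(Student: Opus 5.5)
For part (1), I will work with a short Weierstrass model $y^2=x^3+Ax+B$ over $F$, which exists since $p>3$. Potentially good reduction means $v_F(j)\ge 0$.

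Let $L=F(\varpi^{1/e})$, where $\varpi$ is a uniformiser of $F$ and $e=12/\gcd(12,v_F(\Delta))$. This is totally tamely ramified of degree $e$ because $e\mid 12$ and $p>3$. In $L$ we have $v_L(\Delta)=e\,v_F(\Delta)$, which is divisible by $12$. So we may rescale $x\mapsto u^2x$, $y\mapsto u^3y$ with $u$ a power of $\varpi^{1/e}$, chosen so that the new discriminant is a unit.

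We must check that the rescaled coefficients $A'=u^{-4}A$ and $B'=u^{-6}B$ stay integral. This follows from $v(j)\ge 0$. Since $j=1728\cdot 4A^3\cdot(-16)/\Delta$ up to constants that are units for $p>3$, we get $3v(A)\ge v(\Delta)$. Together with $\Delta=-16(4A^3+27B^2)$, this forces $2v(B)\ge v(\Delta)/3$; otherwise $v(27B^2)<v(\Delta)\le v(4A^3)$ would give $v(\Delta)=v(B^2)$, a contradiction. Hence $v_L(A')\ge 0$ and $v_L(B')\ge 0$, while $v_L(\Delta')=0$, so the new model has good reduction over $L$.

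It remains to identify $e$ as $12/\gcd(12,v_F(\Delta))$. This quantity does not depend on the chosen model, since changing models alters $v_F(\Delta)$ by a multiple of $12$. Minimality of $e$ is not needed for the statement as written, which only asks for \emph{an} extension of that degree. If minimality is wanted, it follows because over any extension $L'$ of degree $d$ with good reduction we have $12\mid d\,v_F(\Delta)$, forcing $e\mid d$.

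For part (2), I twist by $\varpi$: the quadratic twist $E^\varpi\colon y^2=x^3+\varpi^2Ax+\varpi^3B$ has discriminant $\varpi^6\Delta$. Hence $v_F(\Delta^\varpi)=v_F(\Delta)+6$. Running through the residues of $v_F(\Delta)$ modulo $12$:
\begin{itemize}
\item the residues $\{0,6\}$ give $e=1$ or $2$;
\item the residues $\{3,9\}$ give $e=4$ for both $E$ and its twist;
\item the residues $\{2,8\}$ and $\{4,10\}$ give $e=6$ for one of the pair and $3$ for the other.
\end{itemize}
In every case either $E$ or $E^\varpi$ has $e\le 4$, and part (1) applied to that curve gives the claim.

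The only step with any subtlety is the integrality check in part (1), which is exactly where the hypothesis $v(j)\ge 0$ enters.
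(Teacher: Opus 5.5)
Your part (2) is the same as the paper's argument: twist by a uniformiser, shift $v_F(\Delta)$ by $6$, and compare $e$ through the residue of $v_F(\Delta)$ modulo $12$. For part (1) you give a direct rescaling proof, whereas the paper just cites Serre. That rescaling argument is sound apart from one slip. The inequality you need, and the one your ``otherwise'' argument actually proves, is $2v(B)\ge v(\Delta)$, not $2v(B)\ge v(\Delta)/3$. The latter would not give $v_L(B')\ge 0$.

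There is one missing step. The statement asserts $e\in\{1,2,3,4,6\}$, but $12/\gcd(12,v_F(\Delta))$ equals $12$ whenever $v_F(\Delta)$ is coprime to $12$, and you never rule this out. Your case list in part (2) silently skips the residues $1,5,7,11 \bmod 12$. Those are exactly the cases where the argument would fail, since both $E$ and its twist by $\varpi$ would then have $e=12$.

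The fix uses only what you have already shown:
\begin{itemize}
\item From $v(j)\ge 0$ you obtained $v(\Delta)\le 3v(A)$ and $v(\Delta)\le 2v(B)$.
\item Since $v(\Delta)=v(4A^3+27B^2)\ge\min\{3v(A),2v(B)\}$, it follows that $v(\Delta)=\min\{3v(A),2v(B)\}$.
\item So $v_F(\Delta)$ is divisible by $2$ or by $3$, giving $\gcd(12,v_F(\Delta))\ge 2$ and $e\in\{1,2,3,4,6\}$.
\item This conclusion does not depend on the model, since changing models shifts $v_F(\Delta)$ by a multiple of $12$.
\end{itemize}
With this line added, and the typo corrected, both parts are complete.
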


\begin{proof}
    (i) is \cite[5.6 a1)]{MR387283}. For (ii), we only need to consider the case $e=6$: then $v_F(\Delta) \equiv \pm 2 \bmod{12}$ whereas the discriminant $\Delta'$ of a ramified quadratic twist $E'$ satisfies $v_F(\Delta') \equiv v_F(\Delta)+6 \equiv \pm 4 \bmod{12}$, hence $E'$ has semistability defect $e'=3$.
\end{proof}

\begin{proposition}\label{prop: potentially supersingular}
   Let $E/\BaseField$ be an elliptic curve such that $\NRep{p} \subseteq C_{ns}^+(p)$. If $p-1 \nmid 2e_\BaseField$, then $E$ has potentially good reduction. Moreover, if $p-1 \nmid 6e_\BaseField$ and $p-1 \nmid 8e_\BaseField$, then $E$ has potentially supersingular reduction.
\end{proposition}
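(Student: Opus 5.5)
We argue by contradiction in each of the two claims, each time using the shape of the inertia action on $E[p]$ in the excluded reduction type and checking it against \Cref{rmk: basic properties Cns}.

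\emph{Potentially good reduction.} Suppose $E$ has potentially multiplicative reduction. Then over a quadratic extension $F'/F$ (unramified or ramified, so $e_{F'}\le 2e_F$) the curve is a Tate curve $\mathbb{G}_m/q^{\Z}$. The Tate parametrisation gives an exact sequence $0\to\mu_p\to E[p]\to\Z/p\Z\to 0$ of $\Gal(\overline F/F')$-modules. Restricted to $I_{F'}$, the action on $\mu_p$ is through the mod $p$ cyclotomic character. Its image on tame inertia is cyclic of order $(p-1)/\gcd(p-1,e_{F'})$, which is $>2$ (indeed $\neq 1,2$) because $p-1\nmid 2e_F$. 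So some $\sigma\in I_{F'}$ acts on $E[p]$ with eigenvalues $\lambda,1$, where $\lambda\in\F_p^\times$ and $\lambda\neq\pm1$.

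Now compare with \Cref{rmk: basic properties Cns}(iii). If $\rho(\sigma)\in C_{ns}(p)$, its eigenvalues are $\mu,\mu^p$, so one eigenvalue being $1$ forces the other to be $1$, a contradiction. If $\rho(\sigma)\in C^+_{ns}(p)\setminus C_{ns}(p)$, its eigenvalues are $\mu,-\mu$, so $\lambda=-1$, again a contradiction. The index-$2$ issue is handled directly by this case split, since both cases are covered by (iii).

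\emph{Potentially supersingular reduction.} Suppose $E$ has potentially good ordinary reduction. By \Cref{lemma: quadratic twist acquires good reduction over a small extension}(i) there is an extension $L/F$ of degree $e\mid 12$, with $e\in\{1,2,3,4,6\}$, over which $E$ has good ordinary reduction, and $e_L\le e\cdot e_F$. Over $L$, the connected–étale sequence gives $0\to\hat E[p]\to E[p]\to \tilde E[p]\to 0$. Inertia $I_L$ acts trivially on the étale quotient and by the cyclotomic character $\chi_p$ on $\hat E[p]$; the latter holds because the formal group of height $1$ has $p$-torsion given by $\mu_p$ over $\mathcal O_{L^{ur}}$ up to an unramified twist, which disappears on inertia. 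The image of $\chi_p(I_L)$ has order $(p-1)/\gcd(p-1,e_L)$. We need this order to exceed $2$, which is guaranteed if $p-1\nmid 2e_L$. Since $e_L$ divides $e\,e_F$, with $e\in\{3,4,6\}$ in the relevant cases (for $e\le 2$ the first part already suffices), the condition $p-1\nmid 2e\,e_F$ becomes $p-1\nmid 6e_F$, $p-1\nmid 8e_F$, or $p-1\nmid 12e_F$.

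The case $e=6$ needs extra care, because it would require $p-1\nmid 12e_F$. Here I would use \Cref{lemma: quadratic twist acquires good reduction over a small extension}(ii): the $C^+_{ns}(p)$ condition is unchanged under a quadratic twist up to the scalar $-1$, which lies in $C_{ns}(p)$, so $\operatorname{Im}\rho$ of the twist is still contained in $C^+_{ns}(p)$. We may therefore replace $e=6$ by $e=3$. In fact it is enough to find $\sigma\in I_L$ with $\chi_p(\sigma)\neq\pm1$, which only requires the image to have order $>2$, i.e.\ $p-1\nmid 2e_L$. With $e\le 4$ this is exactly the stated pair of conditions. The same eigenvalue argument with $\{\lambda,1\}$ as above then gives the contradiction.

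The main obstacle is getting the divisibility bookkeeping exactly right: ensuring that $e_L$ divides $e\,e_F$ with the tame extension of degree $e$ (so that $e_L=e\,e_F$), and carefully justifying the twist reduction for $e=6$.
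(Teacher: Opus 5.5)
Your second part (ruling out potentially ordinary reduction) is essentially the paper's argument, and it is fine. You twist to get $e\le 4$, use the cyclotomic character on the inertia of the good-reduction field, and compare the eigenvalues $\{1,\lambda\}$ with \Cref{rmk: basic properties Cns}(iii) to force $p-1\mid 2e\,e_F$. One aside is inaccurate: for $e\le 2$, ``the first part already suffices'' is not right, since the first part gives potentially good reduction, not non-ordinarity. Still, for $e\in\{1,2\}$ the same inertia argument only needs $p-1\nmid 2e_F$ resp.\ $p-1\nmid 4e_F$, and these follow from the hypotheses.

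The genuine gap is in the first part. Suppose the quadratic extension $F'/F$ over which $E$ becomes a Tate curve is ramified. Then $e_{F'}=2e_F$, and $\chi_p(I_{F'})$ has order $(p-1)/\gcd(p-1,2e_F)$, which is $\le 2$ exactly when $p-1\mid 4e_F$. The hypothesis $p-1\nmid 2e_F$ does not exclude this: for $F=\Q_5$, $p=5$ it holds, yet $\chi_5(I_{F'})=\{\pm1\}$. In that case every $\sigma\in I_{F'}$ acts with eigenvalues $\{1\}$ or $\{1,-1\}$. The latter is perfectly compatible with $C_{ns}^+(p)\setminus C_{ns}(p)$ (think of $\operatorname{diag}(1,-1)$), so no contradiction arises. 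As written, your first part only proves the claim under the stronger hypothesis $p-1\nmid 4e_F$.

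The fix is to stay over $F$ and compare eigenvalue \emph{ratios}. Since $v(j(E))<0$, $E$ is the twist of a Tate curve $E_q/F$ by a (possibly ramified) quadratic character $\chi$. So for every $\sigma\in I_F$ the eigenvalues of $\rho_{E,p}(\sigma)$ are $\chi(\sigma)\chi_p(\sigma)$ and $\chi(\sigma)$, whose ratio is $\chi_p(\sigma)$. By \Cref{rmk: basic properties Cns}(iii), an element of $C_{ns}^+(p)$ with eigenvalues in $\F_p$ has ratio $1$ if it lies in $C_{ns}(p)$ (since $\mu\in\F_p$ forces $\mu^p=\mu$) and ratio $-1$ otherwise. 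Hence $\chi_p(I_F)\subseteq\{\pm1\}$, i.e.\ $(p-1)/\gcd(p-1,e_F)\le 2$, i.e.\ $p-1\mid 2e_F$, a contradiction. The twist drops out of the ratio, which is why you get the sharp condition $p-1\nmid 2e_F$. In the $\Q_5$ example, the contradiction comes from an element of $I_F\setminus I_{F'}$ with $\chi_5(\sigma)$ of order $4$, which your restriction to $I_{F'}$ never sees. (The paper itself obtains this part by quoting \cite[Proposition 4.1]{furio24}.)
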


\begin{proof}
    The first statement is proved as in \cite[Proposition 4.1]{furio24}, noting that the argument given in \textit{loc.~cit.}~is in fact carried out in $p$-adic fields.
    Suppose now by contradiction that $E$ has potentially ordinary reduction and let $\GoodReductionExtension/\BaseField$ be a totally ramified extension of degree $e \in \{1,2,3,4,6\}$ over which $E$ acquires good reduction. Using \Cref{lemma: quadratic twist acquires good reduction over a small extension} we can assume $e \leq 4$ (note that taking a quadratic twist does not change the potential reduction type).
    Since the restriction of the mod $p$ cyclotomic character to $\operatorname{Gal}(\overline{\mathbb{Q}}_p/\GoodReductionExtension)$ has order $\frac{p-1}{\gcd(e_\BaseField e,p-1)}$, by \cite[Lemma 4.3]{furio24} we know that there is an element of $\NRep{p}$ with eigenvalues $1$ and $k$, where $k \in \F_p^\times$ has order $\frac{p-1}{\gcd(e_\BaseField e,p-1)}$. \Cref{rmk: basic properties Cns}(iii) gives $k=\pm 1$, and so $\frac{p-1}{\gcd(e_\BaseField e,p-1)} \mid 2$. This implies that $p-1 \mid 2e_\BaseField e$, which contradicts the hypothesis.
\end{proof}

For applications, we need to know that $E$ does not admit a canonical subgroup of order $p$ in order to apply certain results in the literature. This is however the extent of our use of canonical subgroups, so we choose to omit the definition and instead refer the interested reader to e.g.~\cite[Definition 3.5]{furiolombardo23}. The following result is a $p$-adic version of \cite[Theorem 4.5]{furio24}, which is in turn a generalisation of \cite[Theorem 3.11]{furiolombardo23} and proves the non-existence of canonical subgroups in the situation of interest.

\begin{theorem}\label{thm: canonicalsbg}
Let $E/\BaseField$ be an elliptic curve with potentially good reduction such that $\operatorname{Im}\rho_{E,p} \subseteq C_{ns}^+(p)$.
	\begin{enumerate}
		\item If $p>e_\BaseField e+1$ and $p \ne 2e_\BaseField e+1$, then $E$ does not have a canonical subgroup of order $p$.
		\item If $E$ has potentially good supersingular reduction and $p > \max\{e_\BaseField e-2,2\}$, then $E$ does not have a canonical subgroup of order $p$.
	\end{enumerate}
\end{theorem}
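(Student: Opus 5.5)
We argue by contradiction in both parts: assume $E$ has a canonical subgroup $C \subseteq E[p]$ of order $p$, and use it to force an element of $\operatorname{Im}\rho_{E,p}$ that cannot lie in $C_{ns}^+(p)$. Since $C$ is canonical, it is stable under $\Gal(\overline{F}/F)$. So $\operatorname{Im}\rho_{E,p}$ preserves a line of $E[p]$, and with respect to a basis adapted to $C$ every element is upper triangular. The Galois action on $C$ is a character $\psi$ and the action on $E[p]/C$ is $\chi_p\psi^{-1}$, where $\chi_p$ is the mod $p$ cyclotomic character.

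Now combine this with the structure of $C_{ns}^+(p)$. By \Cref{rmk: basic properties Cns}(iii), an element of $C_{ns}(p)$ with an $\F_p$-rational eigenvalue has eigenvalues $\lambda,\lambda^p$ with $\lambda\in\F_p^\times$, hence is scalar. An element of $C_{ns}^+(p)\setminus C_{ns}(p)$ has eigenvalues $\lambda,-\lambda$. Consequently, on $\Gal(\overline F/F')$, where $F'$ cuts out $\operatorname{Im}\rho_{E,p}\cap C_{ns}(p)$ and is at most quadratic, the two characters $\psi$ and $\chi_p\psi^{-1}$ must coincide. This forces $\chi_p=\psi^2$ on $\Gal(\overline F/F')$. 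On all of $\Gal(\overline F/F)$ we therefore get $\chi_p^2=\psi^4$, up to the sign ambiguity coming from elements outside $C_{ns}(p)$.

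The key input is the explicit description of how inertia acts on a canonical subgroup. For $E$ with good reduction over $K/F$ of ramification $e_Fe$, the canonical subgroup corresponds to the points of largest valuation in the formal group. If the Hasse invariant has valuation $w<\frac{p}{p+1}$ (in normalised units), the points of $C$ have valuation $\frac{1-w}{p-1}$, and the tame inertia of $K$ acts on $C$ through a fundamental character of level $1$ raised to a power. The exponent depends on $w$ times the ramification index $e_Fe$. In the ordinary case this is the standard statement that the connected part is $\chi_p$ times an unramified character on $I_K$; in general we get $\psi|_{I_K}=\omega^{a}$ for an explicit $a$ in a bounded range.

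Plugging this into $\chi_p=\psi^2$ on the inertia of $KF'$ yields a congruence $e_Fe\cdot(\text{something small})\equiv 0 \pmod{p-1}$. Equivalently, the ratio $\psi^2\chi_p^{-1}$ restricted to inertia has order $\frac{p-1}{\gcd(p-1,\,c\,e_Fe)}$ with $c\in\{1,2\}$. Triviality then requires $p-1\mid e_Fe$ or $p-1\mid 2e_Fe$, which is excluded under the hypotheses of (i): $p>e_Fe+1$ and $p\neq 2e_Fe+1$.

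For (ii), in the supersingular case the exponent $a$ is no longer forced to be $1$. It depends on $w$ through the valuation $\frac{1-w}{p-1}$, and we have $0<w<\frac{p}{p+1}$. Here we use the Newton polygon of $[p]$ directly. The image of inertia on $C$ has order divisible by $\frac{p-1}{\gcd(p-1,\,e_Fe(1-w)\cdot\text{denominator})}$, and the condition $\psi^2=\chi_p$ on inertia, with the possible quadratic twist, pins down $w$. We then show that this forces $e_Fe\ge p+2$ or so, contradicting $p>e_Fe-2$. This part is the main obstacle: we need a precise relation between $w$, the ramification of $K(C)/K$, and the exponent $a$. I would follow the computation in \cite[Theorem 3.11]{furiolombardo23}, transcribing their global argument locally since it is carried out in the completion anyway.
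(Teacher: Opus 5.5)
Your argument works for potentially ordinary reduction. There $I_K$ acts on $C$ through $\chi_p$, so $\psi^2\chi_p^{-1}|_{I_K}=\chi_p|_{I_K}$ has order $\frac{p-1}{\gcd(p-1,e_Fe)}$. Requiring it to be trivial on a subgroup of index at most $2$ gives $p-1\mid 2e_Fe$, which (i) excludes.

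That order formula, however, assumes the exponent is $a=e_Fe$. So it does not cover potentially supersingular curves, which (i) also includes, and your strategy for the supersingular case cannot succeed. Suppose $E_K$ has supersingular reduction with Hasse valuation $w<\frac{p}{p+1}$, so that $e_Fe\,w$ is a positive integer. Let $\theta$ be the level-$1$ fundamental character of $I_K$. Then $I_K$ acts on $C$ by $\theta^{e_Fe(1-w)}$ and on $E[p]/C$ by $\theta^{e_Fe\,w}$. The only constraint your method extracts is $p-1\mid 2e_Fe(1-2w)$, which is vacuous when $w=\frac12$. For example, $e_F=2$, $e=1$, $w=\frac12$ makes both characters equal to $\theta$. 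The character data is then compatible with a scalar inertia image inside $C_{ns}(p)$, yet (ii) asserts that no canonical subgroup exists here for every $p>2$. So no analysis of the characters on $C$ and $E[p]/C$ can ``pin down $w$'', and the conclusion $e_Fe\ge p+2$ does not follow. Pointing to \cite{furiolombardo23} does not repair this, because the mechanism you describe is not the one that works.

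The missing idea is wild ramification coming from the \emph{non-canonical} $p$-torsion points.
\begin{enumerate}
\item By the Newton polygon of $[p]$ (vertices $(1,1)$, $(p,w)$, $(p^2,0)$), the $p^2-p$ points of $E[p]\setminus C$ have formal parameter of valuation $\frac{w}{p(p-1)}$. In the normalisation of $K$ this is $\frac{e_Fe\,w}{p(p-1)}$.
\item Since $\#C_{ns}^+(p)=2(p^2-1)$ is prime to $p$, the extension $K(E[p])/K$ is tamely ramified. Hence these valuations must have denominators prime to $p$, which forces $p\mid e_Fe\,w$.
\item But $0<e_Fe\,w<\frac{p}{p+1}e_Fe\le p$ as soon as $e_Fe\le p+1$, i.e.\ $p>e_Fe-2$. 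This is a contradiction.
\end{enumerate}
This is where genuine ramification input is needed. The paper obtains it by transporting the proof of \cite[Theorem 4.5]{furio24}, which rests on \cite[Theorem 4.6]{smith23} and \cite[Theorem 3.10]{furiolombardo23}, to $p$-adic fields. Once (ii) is established, the supersingular case of (i) follows, because $p>e_Fe+1$ implies $p>\max\{e_Fe-2,2\}$. Your character computation is then only needed for potentially ordinary reduction.
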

	
\begin{proof}
	The proof is the same as that of \cite[Theorem 4.5]{furio24}, noting that \cite[Theorem 4.6]{smith23} and \cite[Theorem 3.10]{furiolombardo23} are stated for number fields, but their proofs hold for $p$-adic fields.
\end{proof}

We now specialise to elliptic curves over $\Q_p$. The previous general results immediately imply:
\begin{corollary}\label{cor: supersingular and no canonical subgroup}
    Let $p > 7$ be a prime number and $E/\mathbb{Q}_p$ be an elliptic curve. If $\operatorname{Im}\rho_{E,p} \subseteq C_{ns}^+(p)$, then $E$ has potentially good supersingular reduction and does not have a canonical subgroup of order $p$.
\end{corollary}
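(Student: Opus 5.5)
This follows by specialising \Cref{prop: potentially supersingular} and \Cref{thm: canonicalsbg} to $\BaseField=\Q_p$, so that $e_\BaseField=1$, and checking that $p>7$ meets every numerical hypothesis.

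First, potentially good reduction. \Cref{prop: potentially supersingular} asks for $p-1\nmid 2e_\BaseField = 2$. For $p>7$ we have $p-1\geq 10>2$, so this holds and $E$ has potentially good reduction.

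Next, supersingularity. The same proposition also asks for $p-1\nmid 6$ and $p-1\nmid 8$. Both hold because $p-1\geq 10$ exceeds $6$ and $8$. Hence $E$ has potentially good supersingular reduction. Here $p>7$ is exactly what is needed: $p=7$ fails since $6\mid 6$.

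Finally, canonical subgroups. Since $E$ now has potentially good supersingular reduction, I would apply \Cref{thm: canonicalsbg}(ii) with $e_\BaseField=1$. The semistability defect satisfies $e\leq 6$ by \Cref{lemma: quadratic twist acquires good reduction over a small extension}(i), which applies because $p>3$ and the reduction is potentially good. The required condition is then $p>\max\{e-2,2\}$, and the right-hand side is at most $4<p$. So $E$ has no canonical subgroup of order $p$.

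As a cross-check, part (i) of that theorem would also apply, since $p>e+1$ and $p\neq 2e+1$ whenever $p>13$. For $p=11,13$, however, one could have $2e+1=p$, so part (ii) is the right tool.

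There is no real obstacle. The only points needing care are verifying that $e\leq 6$ and choosing part (ii) rather than part (i) of \Cref{thm: canonicalsbg}.
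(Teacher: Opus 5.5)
Your argument is correct and is exactly the paper's proof, which simply combines \Cref{prop: potentially supersingular} with \Cref{thm: canonicalsbg}(ii) in the case $e_F=1$. One small slip in your side remark: $p=11$ is not actually an exception for part (i), since $2e+1=11$ would force $e=5$, which is not a possible semistability defect; only $p=13$ (with $e=6$) is problematic there.
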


\begin{proof}
    Combine \Cref{prop: potentially supersingular} and \Cref{thm: canonicalsbg}(ii) in the case $e_\BaseField=1$.
\end{proof}

\begin{proposition}\label{prop: e divides p plus one}
    Let $p>3$ be a prime and $E/\Q_p$ be an elliptic curve with semistability defect $e \in \{3,4,6\}$. 
    Then $E$ has potentially good reduction and $j(E) \equiv 0 \bmod{p}$ (resp.~$j(E) \equiv 1728 \bmod{p}$) if $e \in \{3,6\}$ (resp.~$e=4$).
    Moreover, the reduction is potentially supersingular if and only if $e \mid p+1$. In particular, for every prime $p>7$ and every elliptic curve $E/\Q_p$ such that $\operatorname{Im}\rho_{E,p} \subseteq C_{ns}^+(p)$, the semistability defect $e$ of $E$ divides $p+1$.
\end{proposition}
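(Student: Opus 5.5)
Since $e \in \{3,4,6\}$ and $p>3$, \Cref{lemma: quadratic twist acquires good reduction over a small extension} applies once we know potential good reduction. I will get that first. Potentially multiplicative reduction over $\Q_p$ becomes semistable over an extension of degree at most $2$ (a quadratic twist of a Tate curve), so it has $e \le 2$. Hence $e \ge 3$ forces potentially good reduction, i.e.\ $v(j) \ge 0$.

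With $v(j)\ge 0$, write a minimal model $y^2=x^3+Ax+B$ with $v(\Delta)<12$. By \Cref{lemma: quadratic twist acquires good reduction over a small extension}(i), $e=12/\gcd(12,v(\Delta))$, so $e\in\{3,6\}$ means $v(\Delta)\in\{2,4,8,10\}$ and $e=4$ means $v(\Delta)\in\{3,9\}$.

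\textbf{Value of $j$ mod $p$.} If $v(A)=0$, then $v(\Delta)>0$ forces $v(B)=0$, since $4A^3=-27B^2 \bmod p$ up to the discriminant relation. In that case the model has multiplicative reduction, i.e.\ $v(j)<0$, which is impossible. So $v(A)\ge1$, and then $v(\Delta)\ge 2$ forces $v(B)\ge1$ as well.

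Now $j=1728\cdot 4A^3/(4A^3+27B^2)$ and $v(\Delta)=v(4A^3+27B^2)$. If $v(\Delta)\in\{2,4,8,10\}$ (not divisible by $3$), then $v(\Delta)=\min(3v(A),2v(B))$ must equal $2v(B)<3v(A)$, so $v(j)=3v(A)-v(\Delta)>0$ and $j\equiv0$. If $v(\Delta)\in\{3,9\}$ (odd), then $v(\Delta)=3v(A)<2v(B)$, so $4A^3/(4A^3+27B^2)\equiv1$ and $j\equiv1728$. The equal-valuation cancellation cases are excluded because $v(\Delta)$ is not a multiple of $6$.

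\textbf{Supersingularity.} Over $K=\Q_p(\pi_e)$, where $\pi_e=\sqrt[e]{-p}$ is totally ramified, the reduction $\tilde E$ is defined over $\F_p$ with $j(\tilde E)=0$ or $1728$. It is supersingular iff $p\equiv 2\bmod 3$ (for $j=0$) or $p\equiv3\bmod4$ (for $j=1728$), by the classical Deuring criterion. Since $p>3$, the condition $p\equiv2\bmod3$ is equivalent to $3\mid p+1$ and to $6\mid p+1$. Likewise $p\equiv 3\bmod 4$ is equivalent to $4\mid p+1$. So potentially supersingular reduction holds iff $e\mid p+1$.

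The one delicate point is that the reduced curve really has $j$-invariant $j\bmod p$. This holds because the change of variables $x=\pi_e^{2k}x'$, $y=\pi_e^{3k}y'$ over $K$ preserves $j$, and $j\in\Z_p$.

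\textbf{Final claim.} Let $p>7$ and $\operatorname{Im}\rho_{E,p}\subseteq C_{ns}^+(p)$. By \Cref{cor: supersingular and no canonical subgroup}, $E$ has potentially good supersingular reduction. If $e\in\{1,2\}$, then $e\mid p+1$ trivially since $p$ is odd. If $e\ge3$, the equivalence just proved gives $e\mid p+1$.

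I expect the only mildly fiddly step to be the valuation bookkeeping for $v(A)$ and $v(B)$; the rest is standard.
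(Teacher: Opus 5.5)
Your proof is correct, but it takes a more hands-on route than the paper. The paper's proof is almost entirely by citation. Potential good reduction comes from Silverman's Theorem C.14.1(d), which is exactly your Tate-curve argument. The equivalence ``potentially supersingular $\iff e \mid p+1$'' is quoted from Volkov's thesis (Remarque~2, p.~112). The final claim is deduced from \Cref{cor: supersingular and no canonical subgroup}, as you do. The congruence $j(E) \equiv 0$ or $1728 \bmod p$ is left implicit in the paper's written proof.

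You instead derive that congruence from valuation bookkeeping on a minimal short Weierstrass model, using $e = 12/\gcd(12,v(\Delta))$. You then obtain the supersingularity criterion by rescaling to a good model over $\Q_p(\pi_e)$, whose residue field is $\F_p$, and applying Deuring's criterion for $j=0,1728$. Your route buys a self-contained argument that also explains why $e \mid p+1$ is the right condition, at the cost of a short case analysis. The citation route is shorter but gives no insight into where the condition comes from.

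One justification needs repair. When $3v(A)=2v(B)$, cancellation in $4A^3+27B^2$ could a priori make $v(\Delta)$ any integer larger than $3v(A)$. So ``$v(\Delta)$ is not a multiple of $6$'' does not by itself exclude this case. What excludes it is $v(j)=3v(A)-v(\Delta)\ge 0$, i.e.\ potential good reduction, which you already established. This forces $v(\Delta)=3v(A)=2v(B)\equiv 0 \bmod 6$, which is incompatible with $v(\Delta)\in\{2,3,4,8,9,10\}$.

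The same inequality $3v(A)\ge v(\Delta)$, together with $v(B^2)\ge \min(v(\Delta),3v(A))$, is also what makes your rescaled model over $\Q_p(\pi_e)$ integral with unit discriminant. It is worth saying so explicitly.
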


\begin{proof}
    It is a classical result that $e \geq 3$ implies that $E$ has potentially good reduction (see e.g. \cite[Theorem C.14.1(d)]{MR2514094}, which shows that if the reduction is potentially multiplicative, then semistability is achieved over an extension of degree at most $2$). The reduction is potentially supersingular if and only if $e \mid p+1$ by \cite[Remarque 2 on p.~112]{Volkov}.
    To conclude, it suffices to notice that by \Cref{cor: supersingular and no canonical subgroup} the curve $E$ has potentially good supersingular reduction whenever $p>7$ and $\operatorname{Im}\rho_{E,p} \subseteq C_{ns}^+(p)$.
\end{proof}

\begin{lemma}\label{lemma: G(p) is not in the Cartan}
Let $p>7$ be a prime number and $E/\Q_p$ be an elliptic curve such that $\NRep{p} \subseteq C_{ns}^+(p)$. Then 
    $\NRep{p} \not\subseteq C_{ns}(p)$ and $\rho_{E, p}(I_{\Q_p})$ is a cyclic group of order divisible by $\frac{p^2-1}{\gcd(e, 3)}$. If $p \equiv 1 \bmod 3$, then $\rho_{E, p}(I_{\Q_p})$ has order $p^2-1$.
\end{lemma}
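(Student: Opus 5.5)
By \Cref{cor: supersingular and no canonical subgroup}, $E$ has potentially good supersingular reduction. By \Cref{prop: e divides p plus one} (together with \Cref{lemma: quadratic twist acquires good reduction over a small extension} when $e\le 2$), it acquires good supersingular reduction over a totally tamely ramified extension $K/\Qp$ of degree $e\in\{1,2,3,4,6\}$, and $e \mid p+1$ whenever $e\ge 3$. The plan is to understand the inertia action through Serre's description of the tame inertia action on $E[p]$, and then use the structure of $C_{ns}^+(p)$ to pin down the image.

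\textbf{Tame inertia over $K$.} Over $K$ the curve has good supersingular reduction and $e_K=e$. By Serre's theorem (Raynaud), the wild inertia acts trivially on $E[p]$. The tame inertia of $K$ acts through the fundamental character of level $2$ raised to the power $e$, i.e.\ $\psi_2^{e}$ with $\psi_2: I_t\to\F_{p^2}^\times$ surjective. This uses the fact that the formal group has height $2$, so its $p$-torsion points have valuation $e/(p^2-1)$ in the normalisation of $K$. Hence $\rho_{E,p}(I_K)$ is cyclic of order $(p^2-1)/\gcd(e,p^2-1)$.

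\textbf{Passing from $I_K$ to $I_{\Qp}$.} Since $I_K$ has index $e$ in $I_{\Qp}$ and $I_{\Qp}/I_{\Qp}^{\rm wild}$ is procyclic, the image $\rho_{E,p}(I_{\Qp})$ is cyclic. The wild inertia maps to a $p$-group inside $C_{ns}^+(p)$, whose order is prime to $p$, so it acts trivially. Its order is divisible by $(p^2-1)/\gcd(e,p^2-1)$. Since $\gcd(e,p^2-1)$ divides $e\cdot\gcd(e,p^2-1)$-type factors involving only $2$ and $3$, I would sharpen this as follows. A cyclic subgroup of $C_{ns}^+(p)$ of order greater than $2(p-1)$ must lie in $C_{ns}(p)$, by \Cref{rmk: basic properties Cns}(ii). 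A cyclic subgroup of $C_{ns}(p)\cong\F_{p^2}^\times$ containing the subgroup of order $(p^2-1)/\gcd(e,p^2-1)$ has order $(p^2-1)/d$ with $d\mid \gcd(e,p^2-1)$.

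To exclude the factor $2$ and $4$, I would use the determinant. The determinant restricted to $I_{\Qp}$ is the mod $p$ cyclotomic character, which is surjective onto $\F_p^\times$. The norm map $\F_{p^2}^\times\to\F_p^\times$ sends the subgroup of index $d$ onto $\F_p^\times$ only if $\gcd(d,p-1)=1$, since the index-$d$ subgroup is $(\F_{p^2}^{\times})^d$ with norm image $(\F_p^\times)^d$. Because $p-1$ is even, $d$ must be odd, so $d\mid 3$. This gives order divisible by $(p^2-1)/\gcd(e,3)$. If $p\equiv1\bmod 3$, then $3\mid p-1$ forces $d=1$, so the order is exactly $p^2-1$. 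I expect the main subtle point to be justifying that the image lies in $C_{ns}(p)$ rather than being a cyclic subgroup meeting $C_{ns}^+\setminus C_{ns}$. This is handled by the order bound: $(p^2-1)/3>2(p-1)$ for $p>7$.

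\textbf{Non-containment in $C_{ns}(p)$.} Suppose the whole image lies in $C_{ns}(p)$. Then Frobenius commutes with inertia, so $I_{\Qp}$ acts through $\F_{p^2}^\times$ with an $\F_{p^2}$-linear structure preserved by the full Galois group. But a Frobenius lift $\sigma$ conjugates a tame inertia element $\tau$ to $\tau^p$. With $\rho(\tau)$ of order greater than $p+1$, hence not in $\F_p^\times$, and $\rho$ abelian, this gives $\rho(\tau)^{p}=\rho(\tau)$, i.e.\ $\rho(\tau)^{p-1}=1$. This contradicts the order $(p^2-1)/3>p-1$. Therefore the image is not contained in $C_{ns}(p)$.
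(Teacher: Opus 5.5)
Your route differs from the paper's. The paper reads off from Volkov's explicit description of $E[p]$ both the non-abelianness and the exponents $1\mp\frac{p^2-1}{e}$, $p\pm\frac{p^2-1}{e}$ of the level-$2$ fundamental character on all of $I_{\Qp}$, and then does a gcd computation. You compute only $\rho_{E,p}(I_K)$ and try to recover the lost factor $\gcd(e,p^2-1)$ from the $C_{ns}^+(p)$ hypothesis and the determinant. Your norm argument is sound once the image is known to lie in $C_{ns}(p)$. Your Frobenius--inertia argument for non-abelianness is correct, and it already works with the a priori bound $(p^2-1)/6>p-1$. Both are more elementary than the paper's appeal to Volkov.

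One justification needs correcting. Height $2$ alone does not make all nonzero $p$-torsion points of $\hat E$ have valuation $\frac{1}{p^2-1}$ (normalising $v(p)=1$). That holds exactly when the Hasse invariant of a good model over $K$ has valuation $\ge\frac{p}{p+1}$, i.e.\ when there is no canonical subgroup of order $p$ (\Cref{rmk: alpha role}(ii)). Otherwise $E[p]$ is reducible over $K$ and $I_K$ acts through level-$1$ characters. You need the second half of \Cref{cor: supersingular and no canonical subgroup}, which you cite only for supersingularity.

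The genuine gap is the step putting $\rho_{E,p}(I_{\Qp})$ inside $C_{ns}(p)$. At that point you only know its order is divisible by $(p^2-1)/\gcd(e,p^2-1)$, which equals $(p^2-1)/6$ when $e=6$. The inequality $(p^2-1)/3>2(p-1)$ you invoke is the \emph{output} of the determinant step. That step itself presupposes containment in $C_{ns}(p)$, so that $\det$ is the norm on $\F_{p^2}^\times$; the argument is circular here.

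For $p>11$ one has $(p^2-1)/6>2(p-1)$, so only $p=11$, $e=6$ is affected; this case is allowed since $6\mid p+1$. There $(p^2-1)/6=20=2(p-1)$. Moreover $C_{ns}^+(11)$ does contain cyclic subgroups of order $20$ generated by elements outside $C_{ns}(11)$, namely $x\mapsto c\bar x$ with $N(c)$ of order $10$. So \Cref{rmk: basic properties Cns}(ii) does not decide this case.

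It is repairable in either of two ways:
\begin{itemize}
\item For $g\in C_{ns}^+(p)\setminus C_{ns}(p)$ one has $g^2=-\det(g)\operatorname{Id}$. If $\rho_{E,p}(I_{\Qp})=\langle g\rangle$, surjectivity of $\det$ on inertia makes $\det g$ a generator of $\F_{11}^\times$. Since $-1$ is a non-square mod $11$, $-\det g$ is a square, so $g$ has order dividing $10$, contradicting divisibility by $20$.
\item Apply your argument to the ramified quadratic twist $E'$ with $e'=3$ (\Cref{lemma: quadratic twist acquires good reduction over a small extension}); its mod $p$ image still lies in $C_{ns}^+(p)$. This gives $\rho_{E',p}(I_{\Qp})\subseteq C_{ns}(p)$, hence $\rho_{E,p}(I_{\Qp})\subseteq\{\pm\operatorname{Id}\}\cdot\rho_{E',p}(I_{\Qp})\subseteq C_{ns}(p)$. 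Your norm argument then applies to $E$ unchanged.
\end{itemize}
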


\begin{proof}
By \Cref{cor: supersingular and no canonical subgroup}, $E$ has potentially good supersingular reduction. The Galois representation $E[p]$ is described explicitly in \cite[\S B.2.2]{Volkov} for $e \in \{1,2\}$ and in \cite[\S B.3.2.3]{Volkov} for $e \in \{3,4,6\}$. In the latter situation, there are several subcases depending on a parameter $v(\alpha)$. In the notation of \cite[p.~132]{Volkov}, the case $v(\alpha)=1$ cannot happen, because the image would not be contained in $C_{ns}^+(p)$. We then read from \cite{Volkov} that the representation is non-abelian -- hence the image is not contained in $C_{ns}(p)$ -- and that the restriction of $E[p] \otimes \overline{\mathbb{F}}_p$ to $I_{\Q_p}$ factors via powers of the fundamental character of level $2$ with exponents $\{1 - \frac{p^2-1}{e}, p+\frac{p^2-1}{e}\}$ (if $e \in \{1,2\}$, or $e \in \{3,4,6\}$ and $v(\alpha) \leq 0$) or $\{1 + \frac{p^2-1}{e}, p-\frac{p^2-1}{e}\}$ (otherwise). Using $e \in \{1,2,3,4,6\}$, one checks immediately that $\gcd\left( 1 \pm \frac{p^2-1}{e}, p^2-1 \right)=\gcd( 1 \pm \frac{p^2-1}{e},\mp e) \mid \gcd(e,3)$ and $\gcd\left( p \pm \frac{p^2-1}{e}, p^2-1 \right) = \gcd(p \pm \frac{p^2-1}{e}, \mp ep) \mid \gcd(e,3)$, so both characters have order divisible by $\frac{p^2-1}{\gcd(e, 3)}$. When $p \equiv 1 \bmod 3$, at least one of the exponents is also prime to $3$ (their sum is $p+1 \equiv 2 \bmod 3$), so one of the two characters has order $p^2-1$.
\end{proof}

\subsection{N-Cartan lifts}
We review some results on subgroups of $\op{GL}_2(\mathbb{Z}_p)$ that were proved in \cite{furio24}. In particular, we focus on specific subgroups that we call \textit{N-Cartan lifts}, which represent the possible images of the $p$-adic Galois representations attached to elliptic curves over $\Q_p$ in the non-split Cartan case.
\begin{definition}
	Let $p$ be an odd prime and let $G \subseteq \GL_2(\Z_p)$ be a subgroup. Following \cite[Definition 3.6]{furio24}, we say that $G$ is a \textit{non-split N-Cartan lift} if it satisfies the following properties:
	\begin{itemize}
		\item $G$ is closed;
		\item $\det(G) = \Z_p^\times$;
		\item $G(p)$ $\subseteq C_{ns}^+(p)$, but $G(p) \not\subseteq C_{ns}(p)$;
		\item there exists $g \in G(p) \cap C_{ns}(p)$ which is not a scalar matrix.
	\end{itemize}
\end{definition}

We have the following classification result on non-split N-Cartan lifts (\cite[Theorem 3.14]{furio24}). {Note that, given a subgroup $G$ of $\GL_2(\Z_p)$ and a positive integer $n$, the condition $G \supseteq \operatorname{Id} + p^n M_{2}(\Z_p)$ is equivalent to the fact that $G$ is the inverse image in $\GL_2(\Z_p)$ of its projection $G(p^n) \subseteq \GL_2(\Z/p^n\Z)$.}

\begin{theorem}\label{thm:cartantower}
	Let $G \subseteq \GL_2(\Z_p)$ be a non-split N-Cartan lift satisfying all of the following conditions:
    \begin{itemize}
        \item $|G_1/G_2| \ge p^2$;
        \item there exists an element in  $G(p) \cap C_{ns}(p)$ whose image in $\operatorname{PGL}_2(\F_p)$ has order greater than $2$;
        \item $G \supset (1+p\Z_p) \op{Id}$.
    \end{itemize}
    Then one of the following holds:
	\begin{itemize}
        \item $G \subseteq C_{ns}^+$ and $[C_{ns}^+ : G] = [C_{ns}^+(p) : G(p)]$;
		\item There exists $n \ge 1$ such that $G \supseteq \op{Id} + p^n M_{2}(\Z_p)$ and $G(p^n) \subseteq C_{ns}^+(p^n)$, with $[C_{ns}^+(p^n) : G(p^n)] = [C_{ns}^+(p) : G(p)]$;
		\item $G \supseteq \op{Id} + p^2 M_{2}(\Z_p),$ $G(p^2) \subseteq G_{ns}^\#(p^2)$, and $G(p^2) \cong G(p) \ltimes V$, with $V$ as in \Cref{def: ciuffetto}.
	\end{itemize}
\end{theorem}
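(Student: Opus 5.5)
This is \cite[Theorem 3.14]{furio24}. To reprove it, I would analyse the filtration $G_n/G_{n+1}$ using the representation theory of \Cref{lemma: conjugacy action of Cnsp}.

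\textbf{Step 1: the first layer.} By \Cref{lemma: conjugacy action of Cnsp}, each $G_n/G_{n+1}$ is a $G(p)$-stable subspace $W_n \subseteq M_2(\F_p)$. The second hypothesis gives an element of $G(p)\cap C_{ns}(p)$ of projective order $>2$. I would check that the decomposition $V_1\oplus V_2\oplus V_3$ into irreducible, pairwise non-isomorphic summands still holds for $G(p)$ itself, not just for $C_{ns}^+(p)$ or its index-$3$ subgroups. The key point is that such an element acts on $V_3$ through a character $\lambda^{p-1}$ of order $>2$, which keeps $V_3$ irreducible and distinguishes it from $V_1,V_2$. Then each $W_n$ is a direct sum of some of the $V_i$. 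The hypothesis $(1+p\Z_p)\op{Id}\subseteq G$ gives $V_1\subseteq W_n$ for all $n$, and $|G_1/G_2|\ge p^2$ forces $W_1 \in \{V_1\oplus V_2,\ V_1\oplus V_3,\ M_2(\F_p)\}$.

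\textbf{Step 2: propagation up the tower.} If $1+p^nA\in G$, then $(1+p^nA)^p\equiv 1+p^{n+1}A \pmod{p^{n+2}}$, so $W_n\subseteq W_{n+1}$. Commutators also help. For $x=1+p^nA$ and $y=1+p^mB$, the commutator is congruent to $1+p^{n+m}[A,B]$. We have $[V_2,V_3]\subseteq V_3$, since these correspond to the trace-zero $\pm$ parts under the Cartan grading, and $[V_3,V_3]$ contains $V_2$. Hence if $V_3\subseteq W_n$, then $V_2\subseteq W_{2n}$. I would also use the standard filling argument: once $W_n=M_2(\F_p)$, $p$-th powers give $W_m=M_2(\F_p)$ for all $m\ge n$, so $G\supseteq \op{Id}+p^nM_2(\Z_p)$ by completeness ($G$ is closed).

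\textbf{Step 3: case analysis.}
\begin{itemize}
\item If $W_n=V_1\oplus V_2$ for all $n$, then $G$ has the same graded pieces as $C_{ns}^+$, whose Lie algebra is $V_1\oplus V_2$. The harder part is to show that $G$ is actually contained in a conjugate of $C_{ns}^+$. I would do this by successive approximation, conjugating by elements of $1+p^n M_2(\Z_p)$ to kill the $V_3$-components, using that $V_3$ has no $G(p)$-fixed vectors (a cohomology-vanishing argument, since $|G(p)|$ is prime to $p$). This yields the first alternative, with the index equality coming from counting.
\item If $W_n$ jumps from $V_1\oplus V_2$ to $M_2(\F_p)$ at the first index $n$ where $V_3$ appears, the same conjugation argument up to level $n$ puts $G(p^n)$ inside $C_{ns}^+(p^n)$. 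Step 2 then gives $G\supseteq \op{Id}+p^nM_2(\Z_p)$, which is the second alternative.
\item If $W_1=V_1\oplus V_3$, the commutator computation gives $V_2\subseteq W_2$, so $W_2=M_2(\F_p)$. This yields the third alternative after identifying $G(p^2)$ with $G(p)\ltimes V$ inside $G_{ns}^\#(p^2)$, using the splitting of $G(p^2)\to G(p)$ for coprime orders.
\end{itemize}

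The main obstacle I expect is the cohomological conjugation step, together with excluding mixed patterns in which $V_3$ appears at some level without $V_2$ appearing earlier. A pattern like $W_n=V_1\oplus V_3$ for some $n>1$ with $W_{n-1}=V_1\oplus V_2$ must be ruled out by combining commutators of level-$(n-1)$ and level-$n$ elements. That bracket yields $V_3$ at level $2n-1$, which alone gives nothing; the real constraint comes from $[V_3,V_3]\supseteq V_2$, and I would need care with the indices there.
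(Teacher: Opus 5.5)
The paper does not prove this statement; it is quoted from \cite[Theorem 3.14]{furio24}, so your reconstruction has to stand on its own. Its skeleton is right:
\begin{itemize}
\item the layers $W_n$ are $G(p)$-submodules;
\item $V_3$ is irreducible and distinct from $V_1, V_2$ thanks to the element of projective order $>2$;
\item $V_1\subseteq W_n$, and $W_n\subseteq W_{n+1}$ via $p$-th powers;
\item $[V_3,V_3]=V_2$;
\item closedness fills in $\op{Id}+p^nM_2(\Z_p)$ once $W_n=M_2(\F_p)$.
\end{itemize}
The ``mixed pattern'' you flag as the main obstacle cannot occur. By your own monotonicity $W_{n-1}\subseteq W_n$, once $V_2$ appears it persists. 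So the only possible sequences are: $V_1\oplus V_2$ at every level; $V_1\oplus V_2$ up to some level and then $M_2(\F_p)$ (including $W_1=M_2(\F_p)$); or $W_1=V_1\oplus V_3$, which forces $W_2=M_2(\F_p)$. No bracket bookkeeping across levels is needed.

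The step that is genuinely incomplete is the conjugation into $C_{ns}^+$. Suppose $G(p^m)\subseteq C_{ns}^+(p^m)$ and $W_m\cap V_3=0$. Write $g\equiv c(\op{Id}+p^mA)\pmod{p^{m+1}}$ with $c\in C_{ns}^+$. The $V_3$-component $\phi(g)$ of $A\bmod p$ is well defined and is a $1$-cocycle on $G(p^{m+1})$. That group has order divisible by $p$, so the coprimality of $|G(p)|$ does not apply yet. Also, the absence of $G(p)$-fixed vectors in $V_3$ is not the relevant input: $H^1(G(p),V_3)=0$ by coprimality alone.

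What you must show first is $\phi|_{G_1}=0$.
\begin{itemize}
\item On $G_m$ it vanishes because $W_m\cap V_3=0$.
\item On $G_1$ it is a homomorphism satisfying $\phi(txt^{-1})=\op{Ad}(\bar t)\phi(x)$.
\item Take $t\in G$ lifting a non-scalar $\bar g\in G(p)\cap C_{ns}(p)$ and $x\in G_1$. Both reduce modulo $p^m$ into the abelian group $C_{ns}(p^m)$, so $txt^{-1}x^{-1}\in G_m$ and hence $\phi(x)=\op{Ad}(\bar g)\phi(x)$.
\item $\op{Ad}(\bar g)$ acts on $V_3\cong\F_{p^2}$ by multiplication by $\lambda^{1-p}\neq 1$, so $\phi(x)=0$.
\end{itemize}
Only now does $\phi$ inflate from $G(p)$, so it is a coboundary. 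Conjugating by a suitable $\op{Id}+p^m\tilde X$ with $X\in V_3$ kills it, without changing $G(p^m)$ or any $W_n$. The product of these conjugators converges, and closedness of $C_{ns}^+$ gives the first alternative; finitely many steps give the second.

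For the third alternative, use Schur--Zassenhaus to conjugate a complement of $\op{Id}+pV$ in $G(p^2)$ into the lift of $C_{ns}^+(p)$ defining $G_{ns}^\#(p^2)$. With these two points filled in, your argument proves the theorem.
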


It is shown in \cite{furio24} that for a non-CM elliptic curve $E/\Q$ such that $\NRep{p} \subseteq C_{ns}^+(p)$ for an odd prime $p$, the image $\operatorname{Im}\rho_{E,p^\infty}$ of the $p$-adic Galois representation is a non-split N-Cartan lift. We now extend this result to elliptic curves defined over $\Q_p$ and show that the groups $\operatorname{Im}\rho_{E,p^\infty}$ satisfy the conditions of \Cref{thm:cartantower}.

\begin{proposition}\label{prop: N-Cartan lift}
    Let $p>7$ be a prime and let $E/\Q_p$ be an elliptic curve such that $\NRep{p} \subseteq C_{ns}^+(p)$. The group $G:=\pnRep{p}{\infty}$ is a non-split N-Cartan lift.
\end{proposition}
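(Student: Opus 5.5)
We verify the four defining properties of a non-split N-Cartan lift in turn for $G=\pnRep{p}{\infty}$. Closedness is automatic: $G$ is the image of the compact group $\Gal(\overline{\Q}_p/\Q_p)$ under a continuous map into the Hausdorff group $\GL_2(\Z_p)$.

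For the determinant, the Weil pairing gives $\det\rho_{E,p^\infty}=\chi_{p^\infty}$, the $p$-adic cyclotomic character. Over $\Q_p$ this character is surjective onto $\Z_p^\times$, since $\Gal(\Q_p(\zeta_{p^\infty})/\Q_p)\cong\Z_p^\times$. Hence $\det(G)=\Z_p^\times$.

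The containment $G(p)=\NRep{p}\subseteq C_{ns}^+(p)$ is the hypothesis. The non-containment $G(p)\not\subseteq C_{ns}(p)$ is exactly the first assertion of \Cref{lemma: G(p) is not in the Cartan}, which applies because $p>7$.

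For the last property, \Cref{lemma: G(p) is not in the Cartan} says that $\rho_{E,p}(I_{\Q_p})$ is cyclic of order divisible by $\frac{p^2-1}{\gcd(e,3)}\ge \frac{p^2-1}{3}$.
\begin{itemize}
\item Every element of $C_{ns}^+(p)\setminus C_{ns}(p)$ has order dividing $2(p-1)$ by \Cref{rmk: basic properties Cns}(ii). A generator $g$ of this cyclic group has order at least $(p^2-1)/3>2(p-1)$, since $p+1>6$. So $g\in C_{ns}(p)$.
\item The scalar matrices in $C_{ns}(p)$ form a group of order $p-1<(p^2-1)/3$. So $g$ is not scalar.
\end{itemize}
Thus $g\in G(p)\cap C_{ns}(p)$ is non-scalar, as required.

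Each step reduces to results already established. The only real input, and hence the main point to get right, is the inertia computation of \Cref{lemma: G(p) is not in the Cartan}, which depends on Volkov's explicit description of $E[p]$. In the plan above the remaining verifications are only order comparisons, which are valid for $p>7$.
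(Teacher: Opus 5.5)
Your proof is correct and follows the paper's argument: closedness by compactness, the determinant via the cyclotomic character, non-containment in $C_{ns}(p)$ from \Cref{lemma: G(p) is not in the Cartan}, and the inertia image from the same lemma to produce an element of $G(p)$ whose order is too large to lie outside $C_{ns}(p)$ or to be scalar. The only cosmetic difference is that you compare orders directly for a generator of $\rho_{E,p}(I_{\Q_p})$, whereas the paper argues by contradiction using that $\frac{p^2-1}{3}\nmid 2(p-1)$ for $p>7$.
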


\begin{proof}
    We have $\det(G) = \Z_p^\times$ since $\det \rho_{E,p^\infty}$ is the $p$-adic cyclotomic character.
    By Lemma \ref{lemma: G(p) is not in the Cartan} we know that $G(p) \not\subseteq C_{ns}(p)$. 
    The group $G$ is a continuous image of the profinite (hence compact) group $\operatorname{Gal}(\overline{\mathbb{Q}}_p/\mathbb{Q}_p)$, so it is compact. As the topology of $\operatorname{GL}_2(\mathbb{Z}_p)$ is Hausdorff, every compact subspace is closed, so $G$ is closed.
	Finally, we need to show that $G(p) \cap C_{ns}(p)$ contains an element which is not a scalar matrix. By \Cref{rmk: basic properties Cns}(ii), every element in $C_{ns}^+(p) \setminus C_{ns}(p)$ has order dividing $2(p-1)$, and the same holds for scalar matrices. Suppose by contradiction that $G(p) \cap C_{ns}(p)$ consists of multiples of the identity. In particular, every element of $G(p)$ has order dividing $2(p-1)$.
    By \Cref{lemma: G(p) is not in the Cartan}, the group $G(p)$ contains an element of order $\frac{p^2-1}{3}$. However, this is impossible because $\frac{p^2-1}{3} \nmid 2(p-1)$ for $p>7$.
\end{proof}

Our next theorem classifies the possible Galois images subject to the group-theoretic constraints described above. We will show later that case (iii), although compatible with these abstract constraints, cannot occur for elliptic curves over $\Q_p$.

\begin{theorem}\label{thm:ellipticcartantower}
Let $p>7$ be a prime and let $E/\Q_p$ be an elliptic curve such that $\NRep{p} \subseteq C_{ns}^+(p)$. Set $G:=\operatorname{Im}\rho_{E,p^\infty}$ and let $e$ be the semistability defect of $E/\Q_p$. One of the following holds:
	\begin{enumerate}
		\item $G \subseteq C_{ns}^+$ and $[C_{ns}^+ : G] \in \{1,3\}$, with $G=C_{ns}^+$ if $p \equiv 1 \bmod 3$ or $e \in \{1,2,4\}$;
		\item there exists $n \ge 1$ such that $G \supseteq \op{Id} + p^n M_{2}(\Z_p)$ and $G(p^n) \subseteq C_{ns}^+(p^n)$, with 
        \[
        [C_{ns}^+(p^n) : G(p^n)] =[C_{ns}^+(p) : G(p)] \in \{1,3\}
        \]
        and $G(p^n) = C_{ns}^+(p^n)$ if $p \equiv 1 \bmod 3$ or $e \in \{1,2,4\}$;
		\item $G \supseteq \op{Id} + p^2 M_{2}(\Z_p)$ and $G(p^2) \subseteq G_{ns}^\#(p^2)$, with $[G_{ns}^\#(p^2) : G(p^2)] \in \{1,3\}$ and $G(p^2) = G_{ns}^\#(p^2)$ if $p \equiv 1 \bmod 3$ or $e \in \{1,2,4\}$.
	\end{enumerate}
\end{theorem}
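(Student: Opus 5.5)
We will deduce the statement from \Cref{thm:cartantower} applied to $G:=\operatorname{Im}\rho_{E,p^\infty}$. By \Cref{prop: N-Cartan lift}, $G$ is a non-split N-Cartan lift, so it remains to verify the three extra hypotheses of \Cref{thm:cartantower}. Then we compute the index $[C_{ns}^+(p):G(p)]$. The structure of $G(p^2)$ in case (iii) then follows from $G(p^2)\cong G(p)\ltimes V$, and the index there matches the index at level $p$.

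\textbf{Hypotheses.} The second hypothesis is easy. By \Cref{lemma: G(p) is not in the Cartan}, $\rho_{E,p}(I_{\Q_p})$ is cyclic of order divisible by $(p^2-1)/3$. Any element of $G(p)\setminus C_{ns}(p)$ has order dividing $2(p-1)$, so this cyclic group lies in $C_{ns}(p)$ for $p>7$. Its image in $\operatorname{PGL}_2(\F_p)$ has order at least $(p^2-1)/(3(p-1))=(p+1)/3>2$.

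For $(1+p\Z_p)\operatorname{Id}\subseteq G$, I would use the potentially crystalline structure, citing Volkov via \Cref{cor: supersingular and no canonical subgroup} and \Cref{prop: e divides p plus one}. Over $K=\Qp(\sqrt[e]{-p},\zeta_e)$, $E$ has good supersingular reduction. The restriction of $\rho_{E,p^\infty}$ to a suitable finite-index subgroup of inertia, via Lubin–Tate/height-2 formal group theory (or Serre's theorem that the image is open unless $E$ has formal CM), contains elements acting as scalars $\chi(\sigma)$ for the cyclotomic character. Concretely, I would argue as follows. The image contains an open subgroup of $\GL_2(\Z_p)$ if $E$ has no potential formal CM, by Serre's Théorème 5. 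In that case $G\cap Z$ is open in the centre. We then need the full $1+p\Z_p$. For this, note that $G_1$ is a pro-$p$ group whose determinant is onto $1+p\Z_p$. The $V_1$-component of $G_1/G_2$ is nonzero because $\det$ surjects. Using the conjugation action and semisimplicity (\Cref{lemma: conjugacy action of Cnsp}), one can project to the scalar part: $g\cdot{}^{h}g\cdots$ averaged over the cyclic inertia image. More precisely, take $g\in G_1$ with $\det g$ a generator, and form the product of its conjugates under a generator of the cyclic inertia image of order prime to $p$. This kills the $V_2,V_3$ components modulo $p^2$, producing a scalar mod $p^2$. Iterating this construction gives $(1+p\Z_p)\operatorname{Id}$ by closedness.

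The formal CM case needs separate treatment: there $G$ is contained in a Cartan normaliser over $\Z_p$ directly, and we land in case (i). For $|G_1/G_2|\ge p^2$, $G_1/G_2$ is a $G(p)$-stable subspace of $M_2(\F_p)$ containing $V_1$. If it were exactly $V_1$, then $G$ would be (by the tower argument) $C_{ns}(p)$-by-scalars, forcing abelian inertia image of order prime to $p$ times scalars. This contradicts the wild ramification of $\Q_p(E[p^2])$, whose ramification degree is divisible by $p^{2}$ by \cite{smith23}. I expect this step, together with the scalar step, to be the main obstacle, and would lean on Volkov's explicit description of $E[p]$ and $p$-adic analytic openness.

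\textbf{Index.} $G(p)\supseteq\rho_{E,p}(I_{\Q_p})$, which has index in $C_{ns}(p)$ dividing $\gcd(e,3)$. We also have $G(p)\not\subseteq C_{ns}(p)$, so $[C_{ns}^+(p):G(p)]$ divides $3$. It equals $1$ when $p\equiv1\bmod3$ by the last sentence of \Cref{lemma: G(p) is not in the Cartan}, and when $e\in\{1,2,4\}$ since then $\gcd(e,3)=1$. Transferring via the index equalities in \Cref{thm:cartantower} gives (i)–(iii).
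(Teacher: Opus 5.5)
\textbf{Overall assessment.} Your overall plan is the paper's: verify the hypotheses of \Cref{thm:cartantower} for $G$, then read off $[C_{ns}^+(p):G(p)]$ from the cyclic inertia image together with an element outside $C_{ns}(p)$. Your treatment of the second hypothesis and of the index is correct.

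\textbf{The gap: the scalar hypothesis.} The problem is $(1+p\Z_p)\op{Id}\subseteq G$. You claim that multiplying the conjugates of $g=\op{Id}+pA\in G_1$ by powers of a lift of a generator $c$ of $\rho_{E,p}(I_{\Q_p})$ kills the $V_2$- and $V_3$-components modulo $p^2$. But you have just shown $c\in C_{ns}(p)$. Moreover $C_{ns}(p)\cup\{0\}=\{a\op{Id}+b\left(\begin{smallmatrix}0&\varepsilon\\1&0\end{smallmatrix}\right)\}=V_1\oplus V_2$ is a commutative ring, so conjugation by $c$ fixes $V_2$ pointwise. Writing $A=A_1+A_2+A_3$ with $A_i\in V_i$ and $m$ for the order of $c$, your product is $\op{Id}+pm(A_1+A_2)$ modulo $p^2$. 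That is a Cartan element, not a scalar. To kill $V_2$ you must also use some $w\in G(p)\setminus C_{ns}(p)$, which acts on $V_2$ by $-1$; this is exactly why the paper's input includes such an element. Separately, ``iterating by closedness'' is not an argument. An element of $G$ that is scalar modulo $p^2$ says nothing about $G\cap\Z_p^\times\op{Id}$, and at each higher level new $V_2,V_3$-components appear that you would have to kill while controlling convergence.

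\textbf{A correct route.} Since $G_1$ is pro-$p$ and $p\nmid\#G(p)$, Schur--Zassenhaus gives a finite subgroup $\tilde H\subseteq G$ mapping isomorphically onto $G(p)$. For this coprime action, the $\tilde H$-fixed points of $G_1/G_2$ lift to $C_{G_1}(\tilde H)$.
\begin{itemize}
\item Using both a non-scalar element of $C_{ns}(p)\cap G(p)$ (no fixed vectors on $V_3$) and $w$ (no fixed vectors on $V_2$), these fixed points are $(G_1/G_2)\cap V_1$. This is nonzero, as you observed, because $\det(G_1)=1+p\Z_p$.
\item $C_{\GL_2(\Z_p)}(\tilde H)=\Z_p^\times\op{Id}$. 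Indeed, let $\tilde c$ generate the cyclic normal subgroup of $\tilde H$ lying over $G(p)\cap C_{ns}(p)$; it is non-scalar mod $p$. Its centraliser in $M_2(\Z_p)$ is $\Z_p[\tilde c]\cong\Z_{p^2}$, and a lift of $w$ normalises it, acting as the nontrivial automorphism.
\end{itemize}
Hence $G$ contains some $\lambda\op{Id}$ with $v(\lambda-1)=1$, and therefore all of $(1+p\Z_p)\op{Id}$. The paper simply cites \cite[Corollary 3.7]{lombardotronto22} with the inputs $C_{ns}(p)^3\subseteq G(p)$ and an element of $C_{ns}^+(p)\setminus C_{ns}(p)$ in $G(p)$.

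\textbf{Other points.} With this argument, Serre's open image theorem and the formal-CM case split become unnecessary. That matters, because your formal-CM branch (``$G$ lies in a Cartan normaliser, so we land in case (i)'') is asserted without showing the Cartan is non-split or computing the index.

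For $|G_1/G_2|\ge p^2$, the detour through ``$C_{ns}(p)$-by-scalars'' is not needed. Smith's theorem applies because $E$ has no canonical subgroup (\Cref{cor: supersingular and no canonical subgroup}), and gives $p^2\mid\#\rho_{E,p^2}(I_{\Q_p})$. Since $p\nmid\#G(p)$, this forces $p^2\mid\#\ker(G(p^2)\to G(p))=|G_1/G_2|$, exactly as in the paper.
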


\begin{remark}\label{rmk: if G grows quadratically it is a nonsplit Cartan}
With the notation of \Cref{thm:ellipticcartantower}, note that
\[
[G(p^{n+1}):G(p^n)]=[\Qp(E[p^{n+1}]):\Qp(E[p^n])] = \begin{cases}
    p^2 \,\, \text{ for all } n \geq 1 & \quad \text{in case (i)}, \\
    p^4 \,\, \text{ for } n\gg 0 & \quad \text{in cases (ii) and (iii).}
\end{cases}
\]

\noindent As a consequence, if under the hypotheses of the theorem we assume furthermore that $[G(p^{n+1}) : G(p^n)] = p^2$ for all sufficiently large $n$, or even just $\#G(p^{n}) \leq Cp^{2n}$ for some $C \in \mathbb{R}$, then $G$ is a subgroup of $C_{ns}^+$ of index $1$ or $3$ (and $G=C_{ns}^+$ if $p \equiv 1 \bmod 3$ or $e \in \{1,2,4\}$). Similarly, if the $p$-adic valuation of $\#G(p^2)$ is $3$, we must be in case (iii) and $G(p^2)$ is a subgroup of $G_{ns}^\#(p^2)$ of index dividing $3$.
\end{remark}

\begin{proof}[Proof of \Cref{thm:ellipticcartantower}]
    We show that $G$ satisfies the hypotheses of \Cref{thm:cartantower}.
    First, $G$ is a non-split N-Cartan lift by \Cref{prop: N-Cartan lift}.
    Next, we show that the image of the inertia subgroup $I_{\Qp}$ via $\rho_{E,p^2}$ has order divisible by $\frac{p^2(p^2-1)}{3}$, and by $p^2(p^2-1)$ when $p \equiv 1 \bmod 3$.
    By \Cref{thm: canonicalsbg} we know that $E$ does not have a canonical subgroup of order $p$. In particular, by \cite[Theorem 4.6]{smith23} -- which is stated over number fields, but proved over $p$-adic fields -- we know that the group $\rho_{E,p^2}(I_K)$ has order divisible by $\frac{p^2(p^2-1)}{e}$. On the other hand, by \Cref{lemma: G(p) is not in the Cartan} we know that $\rho_{E,p}(I_{\Qp})$, and hence a fortiori 
    $\rho_{E,p^2}(I_{\Qp})$, contains an element of order $\frac{p^2-1}{3}$ (or order $p^2-1$, in case $p \equiv 1 \bmod 3$ or $e \in \{1,2,4\}$), and so $\rho_{E,p^2}(I_{\Qp})$ has order divisible by $\frac{p^2(p^2-1)}{3}$ (or $p^2(p^2-1)$ respectively).
    
    Since $p \nmid \#\NRep{p}$, the image of $I_{\Qp}$ via $\rho_{E,p}$ is cyclic, hence it contains an element $g$ of order $\frac{p^2-1}{3}$. Elements in $C_{ns}^+(p) \setminus C_{ns}(p)$ have order $2$ in $\operatorname{PGL}_2(\F_p)$ (they become scalar matrices when squared, see \Cref{rmk: basic properties Cns}(ii)), and so every element of order greater than $2$ in $\operatorname{PGL}_2(\F_p)$ must be contained in $C_{ns}(p)$. However, $g$ has order at least $\frac{p+1}{(3,p+1)}$ in $\operatorname{PGL}_2(\F_p)$, which is greater than $2$ for $p>7$. Moreover, since $p^2 \mid \#\rho_{E,p^2}(I_{\Qp})$ and $p \nmid \#\NRep{p}$, we know that $p^2$ divides the cardinality of $\ker(G(p^2) \to G(p))$. This implies $|G_1/G_2| \ge p^2$.
    
    To check the last hypothesis of \Cref{thm:cartantower}, it suffices to notice that \cite[Corollary 3.7]{lombardotronto22} shows that $\Z_p^\times \cdot \op{Id} \subseteq G$, because $G(p)$ contains $C_{ns}(p)^3$ (or $C_{ns}(p)$, when $p \equiv 1 \bmod 3$ or $e \in \{1,2,4\}$) and an element of $C_{ns}^+(p) \setminus C_{ns}(p)$ (since $G$ is an N-Cartan lift).
\end{proof}

\subsection{Description of the Galois image in special cases}
We assume again that $E/\Q_p$ is an elliptic curve such that $\operatorname{Im} \rho_{E,p}$ is contained in $C_{ns}^+(p)$. We describe the image of the full $p$-adic representation attached to $E$ in two simple special cases: when $E$ has potential complex multiplication and when it acquires good reduction over an at most quadratic extension of $\Q_p$. First, however, we show that when $E[p]$ is an irreducible Galois module, the integral $p$-adic Tate module of $E$ is determined by its rational counterpart.

In the next statement, by a \textit{Galois-stable lattice} in $V_pE$ we mean a (necessarily free) $\Z_p$-submodule $T$ of $V_pE$ such that $\rho_{E, p^\infty}(\sigma)(T) \subseteq T$ for all $\sigma \in \op{Gal}(\Qpbar/\Qp)$ and $T \otimes_{\Z_p} \Q_p= V_pE$.

\begin{lemma}\label{lemma: any lattice will do}
Let $E/\Q_p$ be an elliptic curve and suppose that $E[p]$ is an irreducible $\op{Gal}(\Qpbar/\Qp)$-module. Then:
\begin{enumerate}
    \item every Galois-stable lattice $T$ in $V_pE$ is homothetic to $T_pE$, and hence in particular isomorphic to $T_pE$ as a $\mathbb{Z}_p[\op{Gal}(\Qpbar/\Qp)]$-module;
    \item given any $T$ as above, $T/p^kT$ is isomorphic to $E[p^k]$ as a $\op{Gal}(\Qpbar/\Qp)$-module for all $k>0$;
    \item if $E'/\mathbb{Q}_p$ is another elliptic curve with $V_pE \cong V_pE'$ as Galois modules, then $T_pE \cong T_pE'$ as Galois modules.
\end{enumerate}
\end{lemma}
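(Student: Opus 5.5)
The plan is the standard lattice argument for residually irreducible representations, run in order (i), (ii), (iii).

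For (i), let $T \subseteq V_pE$ be a Galois-stable lattice. Since $T$ and $T_pE$ are both lattices in the same $\Q_p$-vector space, some power of $p$ rescales $T$ into $T_pE$. Let $m \in \Z$ be the unique integer with $p^mT \subseteq T_pE$ but $p^{m}T \not\subseteq pT_pE$. Existence follows from compactness: $T$ is finitely generated, so the set of admissible scalings is bounded. Set $T' = p^mT$, which is again Galois-stable. Its image in $T_pE/pT_pE \cong E[p]$ is then a nonzero Galois-stable $\F_p$-subspace. By the irreducibility hypothesis this image is all of $E[p]$, so $T' + pT_pE = T_pE$. Nakayama's lemma over $\Z_p$, applied to the finitely generated module $T_pE$, gives $T' = T_pE$. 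Hence $T = p^{-m}T_pE$ is homothetic to $T_pE$. Multiplication by $p^{-m}$ is a Galois-equivariant $\Z_p$-isomorphism $T_pE \to T$.

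For (ii), the isomorphism $T \cong T_pE$ from (i) induces $T/p^kT \cong T_pE/p^kT_pE$. The latter is canonically Galois-isomorphic to $E[p^k]$, via the projection of the inverse system $(E[p^n])_n$ to its $k$-th term, whose kernel is $p^kT_pE$.

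For (iii), fix a Galois isomorphism $f\colon V_pE' \to V_pE$. Then $f(T_pE')$ is a Galois-stable lattice in $V_pE$, so by (i) it is isomorphic to $T_pE$. Since $f$ restricts to an isomorphism $T_pE' \cong f(T_pE')$, we get $T_pE' \cong T_pE$. Note that this step uses only the irreducibility of $E[p]$, not of $E'[p]$, which is consistent with the statement.

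None of the steps is a real obstacle. The only point needing care is the choice of $m$ in (i), namely ensuring the rescaled lattice lies in $T_pE$ but not in $pT_pE$, so that its reduction is a nonzero invariant subspace and Nakayama applies.
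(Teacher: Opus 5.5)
Your proposal is correct and follows the paper's proof essentially verbatim. In (i) you rescale to the minimal $m$ with $p^mT\subseteq T_pE$, which is equivalent to your condition $p^mT\not\subseteq pT_pE$, then use irreducibility of $E[p]$ and Nakayama. Parts (ii) and (iii) follow as you describe, by reducing modulo $p^k$ and by applying (i) to the image of $T_pE'$ in $V_pE$.
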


\begin{proof}
Let $T_1=T_pE \subseteq V_pE$ be the Tate module and let $T_2$ be any other Galois-stable lattice. All $\Z_p$-lattices in $V_pE \cong \Q_p^2$ are commensurable, so there exists $m \in \mathbb{Z}$ such that $p^m T_2 \subseteq T_1$. Choose $m \in \mathbb{Z}$ minimal with this property, so that $p^m T_2 \subseteq T_1$ but $p^{m-1}T_2 \not \subseteq T_1$. Consider the submodule $p^{m}T_2$ of $T_1$ and its image $p^mT_2/pT_1$ inside $T_1/pT_1 \cong E[p]$. Since $T_2$ is Galois-stable, so is $p^mT_2/pT_1$, but $E[p]$ is irreducible, so $p^mT_2/pT_1$ is isomorphic to either $\{0\}$ or $E[p] = T_1/pT_1$. In the former case, we have $p^mT_2 \subseteq pT_1$, contradicting the choice of $m$. In the latter, we obtain $p^mT_2/pT_1 = T_1/pT_1$, hence $p^{m}T_2 + pT_1 = T_1$. Nakayama's lemma then implies $p^mT_2 =T_1$, as desired.

Statements (i) and (ii) follow immediately. As for (iii), fix a Galois-equivariant isomorphism $\varphi : V_pE' \to V_pE$ and define $T := \varphi(T_pE')$. Then $T$ is a Galois-stable lattice in $V_pE$, hence it is isomorphic to $T_pE$ by (i).
\end{proof}

\begin{lemma}\label{lemma: description in potential CM case}
    Let $p>3$ be a prime and $E/\Q_p$ be an elliptic curve with potential complex multiplication. Let $\mathcal{O}:=\op{End}_{\Qpbar}(E)$ be the ring of endomorphisms of $E_{\overline{\Q}_p}$.
    \begin{enumerate}
        \item We have $\# \op{Gal}(\Q_p(E[p^k])/\Q_p) \mid 2p^{2(k-1)} \cdot \#(\mathcal{O} \otimes \Z/p\Z)^\times$. In particular, if $j(E) \in \{0, 1728\}$, then the $p$-adic valuation of $\# \op{Gal}(\Q_p(E[p^k])/\Q_p)$ is at most $2(k-1)$.
        \item If $p>7$ and
        $\operatorname{Im} \rho_{E, p} \subseteq C_{ns}^+(p)$, then $\operatorname{Im} \rho_{E, p^\infty}$ is a subgroup of $C_{ns}^+$ of index dividing $3$.
    \end{enumerate}
\end{lemma}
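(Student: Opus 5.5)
For part (i) I will use the CM structure of the Tate module. Let $L/\Q_p$ be a finite extension over which all endomorphisms of $E$ are defined; we may take $[L:\Q_p] \le 2$, since $L$ can be chosen as $\Q_p$ adjoined the CM field (the composite $\Q_p\cdot\mathcal{O}\otimes\Q$).

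The key structural input is the following. Over $L$, the Galois action on $T_pE$ commutes with $\mathcal{O}$. Moreover, $T_pE$ is free of rank one over $\mathcal{O}_p:=\mathcal{O}\otimes\Z_p$: it is a faithful module of $\Z_p$-rank $2$, and $\mathcal{O}$ is a quadratic order, locally Gorenstein. To be safe for $p$ dividing the conductor, I would instead argue only mod $p^k$, where $E[p^k]$ is a faithful $\mathcal{O}/p^k$-module of order $p^{2k}$. Either way, $\rho_{E,p^k}(\Gal(\Qpbar/L))$ embeds in $(\mathcal{O}/p^k\mathcal{O})^\times$, because the centraliser of a free rank-one module is the ring itself.

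Next, the kernel of $(\mathcal{O}/p^k)^\times\to(\mathcal{O}/p)^\times$ has order $p^{2(k-1)}$. Hence $\#\rho_{E,p^k}(\Gal(\Qpbar/L))$ divides $p^{2(k-1)}\#(\mathcal{O}\otimes\Z/p\Z)^\times$. Combined with $[L:\Q_p]\le 2$, this gives the stated divisibility.

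For the "in particular" clause, suppose $j\in\{0,1728\}$. Then $\mathcal{O}=\Z[\zeta_3]$ or $\Z[i]$, and $p>3$ is unramified in $\mathcal{O}$, so $(\mathcal{O}/p)^\times$ has order $(p-1)^2$ or $p^2-1$. This is prime to $p$, so the $p$-adic valuation is at most $2(k-1)$.

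For part (ii), assume $\operatorname{Im}\rho_{E,p}\subseteq C_{ns}^+(p)$ with $p>7$. By \Cref{thm:ellipticcartantower}, exactly one of (i), (ii), (iii) holds for $G=\operatorname{Im}\rho_{E,p^\infty}$. I will rule out (ii) and (iii) by growth. In both cases $G\supseteq\op{Id}+p^nM_2(\Z_p)$ for some $n$, so $\#G(p^k)$ has $p$-adic valuation $4(k-n)+O(1)$, which grows like $4k$. On the other hand, part (i) bounds the valuation of $\#G(p^k)$ by $2(k-1)+v_p\#(\mathcal{O}/p)^\times\le 2k$ for general $\mathcal{O}$. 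This is the criterion of \Cref{rmk: if G grows quadratically it is a nonsplit Cartan}: $\#G(p^k)\le Cp^{2k}$. That remark then gives that $G$ is a subgroup of $C_{ns}^+$ of index $1$ or $3$.

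The only delicate point is the freeness/embedding into $(\mathcal{O}/p^k)^\times$ when $p$ divides the conductor of $\mathcal{O}$. A crude alternative suffices for (ii): the Galois image over $L$ is abelian and commutes with $\mathcal{O}$, hence lies in the centraliser of a non-scalar element of $\operatorname{End}(T_pE)$. Such a centraliser has $\Z_p$-rank $2$, so its units mod $p^k$ have order $O(p^{2k})$, and this bound is all the growth argument needs.
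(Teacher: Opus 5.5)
Your proof is correct and follows the paper's route. You pass to an (at most) quadratic extension containing the CM field, place the Galois image in $(\mathcal{O}\otimes\Z/p^k\Z)^\times$, use $\#(\mathcal{O}\otimes\Z/p^k\Z)^\times=p^{2(k-1)}\,\#(\mathcal{O}\otimes\Z/p\Z)^\times$, and deduce (ii) from \Cref{rmk: if G grows quadratically it is a nonsplit Cartan}.

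The one weak point is your justification that $T_pE$ is free over $\mathcal{O}\otimes\Z_p$. Faithfulness is automatic for a rank-$2$ lattice, and the maximal order tensored with $\Z_p$ is a faithful but non-free such lattice when $p$ divides the conductor. Freeness is not actually needed: the paper cites Serre--Tate, whose point is that the image commutes with $\mathcal{O}$ and so lies in $(\mathcal{O}\otimes\Q_p)\cap\operatorname{End}(T_pE)=\mathcal{O}\otimes\Z_p$. Your mod $p^k$ fallback also works, because $\mathcal{O}/p^k\mathcal{O}$ is Gorenstein and so a faithful module of the same order is free; you should state that reason explicitly rather than only asserting faithfulness.
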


\begin{proof}
    Let $F$ be a quadratic extension of $\Qp$ containing the CM field $\mathcal{O} \otimes \Q$. Note that $F$ is a field of definition for the endomorphisms of $E_{\Qpbar}$. Since the action of $\op{Gal}(\overline{F}/F)$ must be compatible with the action of $\mathcal{O}$, we have that $\op{Gal}(F(E[p^k])/F) \subseteq (\mathcal{O} \otimes \mathbb{Z}/p^k\mathbb{Z})^{\times}$ \cite[p.~502, Corollary 2]{MR236190}, hence $\# \op{Gal}(F(E[p^k])/F) \mid \# (\mathcal{O} \otimes \mathbb{Z}/p^k\mathbb{Z})^{\times} = \# (\mathcal{O} \otimes \mathbb{Z}/p\mathbb{Z})^{\times} \cdot p^{2(k-1)}$ and $\# \op{Gal}(\Q_p(E[p^k])/\Q_p) \mid 2p^{2(k-1)} \cdot \#(\mathcal{O} \otimes \mathbb{Z}/p\mathbb{Z})^{\times}$. To complete the proof of (i), note that in the cases $j \in \{0, 1728\}$ and $p>3$ we have $p \nmid \#(\mathcal{O} \otimes \mathbb{Z}/p\mathbb{Z})^\times$. Now (ii) follows from \Cref{rmk: if G grows quadratically it is a nonsplit Cartan}, which applies since $p>7$.
\end{proof}

\begin{lemma}\label{lemma: p-adic image in the twisted crystalline case}
Assume $p>7$ and let $E/\mathbb{Q}_p$ be an elliptic curve such that $\operatorname{Im} \rho_{E,p} \subseteq C_{ns}^+(p)$. If $E$ has good reduction over $\Q_p$ or acquires it over a quadratic extension, then $\operatorname{Im} \rho_{E,p^\infty}$ is equal to $C_{ns}^+$.
\end{lemma}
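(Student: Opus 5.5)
Our plan is to reduce to a curve with complex multiplication having the same rational Tate module, and then invoke \Cref{lemma: description in potential CM case} together with \Cref{lemma: any lattice will do}.

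\textbf{Step 1: reduce to good supersingular reduction over $\Qp$.} By \Cref{cor: supersingular and no canonical subgroup}, $E$ has potentially good supersingular reduction, and by hypothesis $e \le 2$. If $e=2$, then $E$ is the ramified quadratic twist $E'\otimes\chi$ of a curve $E'/\Qp$ with good supersingular reduction. Twisting by $\chi$ multiplies the image by $\pm\op{Id}$, which lies in $C_{ns}$. So it suffices to treat $E'$, provided we check that $\operatorname{Im}\rho_{E',p}\subseteq C_{ns}^+(p)$. This holds because the image of $E'$ lies in the group generated by $\operatorname{Im}\rho_{E,p}$ and $-\op{Id}$.

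\textbf{Step 2: identify $V_pE$ with the Tate module of a CM curve.} For good supersingular reduction over $\Qp$ with $p>3$, the trace of Frobenius $a_p$ satisfies $a_p=0$, since $|a_p|\le 2\sqrt p$ and $p\mid a_p$. The filtered $\varphi$-module $D_{\mathrm{cris}}(V_pE)$ is then two-dimensional with $\varphi^2=-p$ (up to the appropriate normalisation), and its Hodge filtration is a line that is not $\varphi$-stable (weak admissibility). Any two such lines are exchanged by an automorphism commuting with $\varphi$, namely an element of $\Q_p(\varphi)^\times\cong\Q_{p^2}^\times$ acting transitively on non-stable lines. Hence there are at most two isomorphism classes of $V_pE$; this is the statement of \cite{Volkov} quoted in the introduction, and we take it as given. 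Concretely, the two classes are the unramified twists by the character sending Frobenius to $\pm1$.

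Each class is realised by a CM curve. The curve $y^2=x^3+1$ (for $p\equiv 2\bmod 3$) or $y^2=x^3+x$ (for $p\equiv 3 \bmod 4$) has good supersingular reduction, and its unramified quadratic twist realises the other class. Note that $p\equiv 2\bmod 3$ or $p\equiv 3\bmod 4$ is needed for supersingularity. If neither congruence holds, we instead choose any CM curve with $j$-invariant reducing to a supersingular value, whose CM field has $p$ inert. The choice of $j$-invariant does not matter, since only $V_p$ is used.

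\textbf{Step 3: transfer the image.} We have $V_pE\cong V_pE_{\mathrm{CM}}$, and $E[p]$ is irreducible because its image in $C_{ns}^+(p)$ is not contained in $C_{ns}(p)$ (\Cref{lemma: G(p) is not in the Cartan}), so it is non-abelian and acts absolutely irreducibly. By \Cref{lemma: any lattice will do}(iii), $T_pE\cong T_pE_{\mathrm{CM}}$. In particular $\operatorname{Im}\rho_{E_{\mathrm{CM}},p}\subseteq C_{ns}^+(p)$, so \Cref{lemma: description in potential CM case}(ii) shows that the image is contained in $C_{ns}^+$ with index dividing $3$. Finally, \Cref{thm:ellipticcartantower}(i) with $e\in\{1,2\}$ upgrades this to equality $G=C_{ns}^+$.

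\textbf{Main obstacle.} We expect the hardest step to be Step 2: justifying rigorously the classification of the possible $V_pE$ (weak admissibility together with the transitivity of the centraliser on non-stable lines), and making sure that each class actually arises from a CM curve for every $p$. As a fallback, one could instead compute directly from the Lubin--Tate description. For $a_p=0$, the formal group of height $2$ over $\Z_p$ is isomorphic, by Honda theory, to a Lubin--Tate group of $\Q_{p^2}$ twisted by a quadratic character. This gives $\rho$ directly as $\mathcal O_{\Q_{p^2}}^\times\rtimes\Gal(\Q_{p^2}/\Qp)$, which is exactly $C_{ns}^+$.
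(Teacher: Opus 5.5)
\textbf{Verdict: there is a genuine gap in Step 2, for primes $p\equiv 1\pmod{12}$.} Steps 1 and 3 are sound. For $e=2$, the containment $\operatorname{Im}\rho_{E,p^\infty}\subseteq\{\pm\op{Id}\}\cdot C_{ns}^+=C_{ns}^+$ followed by \Cref{thm:ellipticcartantower}(i) is a fine substitute for the paper's comparison of degrees of successive division fields.

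Step 2 fails exactly where you suspected. When $p\equiv 1\pmod{12}$ (which does occur, e.g.\ $p=13$ with supersingular $\bar j=5$), you need a CM elliptic curve \emph{defined over $\Q_p$} with good supersingular reduction over $\Q_p$. ``Any CM curve whose CM field has $p$ inert'' does not give one: such a curve is defined over $\Q(j)$, and $j$ need not lie in $\Q_p$.

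Concretely, if $p$ is inert in $K$, then Frobenius at $p$ in the generalised dihedral group $\op{Gal}(H/\Q)=\op{Cl}\rtimes\langle c\rangle$ of the ring class field is a reflection $gc$. Some conjugate of $j$ lies in $\Q_p$ if and only if $g\in\op{Cl}^2$, and this genus condition can fail. For example, $13$ is inert in $\Q(\sqrt{-5})$, yet $j(\sqrt{-5})=632000+282880\sqrt5\notin\Q_{13}$ because $\left(\frac{5}{13}\right)=-1$. You also cannot restrict to the class-number-one fields: by Chebotarev, some primes split in all of them. As written, your argument does not cover these primes.

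Two further inaccuracies are harmless to the logic. The centraliser of $\varphi$ is $\Q_p(\varphi)\cong\Q_p(\sqrt{-p})$, not $\Q_{p^2}$. And there is exactly one isomorphism class, not two: for $a_p=0$ the unramified quadratic twist does not change $V_pE$. This single class is the uniqueness the paper quotes from Volkov.

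\textbf{How to fill the gap.} Either of the following works.
\begin{itemize}
\item Pick a prime $q\equiv 3\pmod 4$ with $\left(\frac{p}{q}\right)=-1$, which exists by Dirichlet. Then $p$ is inert in $K=\Q(\sqrt{-q})$. Since $h(-q)$ is odd, every reflection is conjugate to $c$, so some conjugate of $j$ lies in $\Q_p$. Since $\bar j$ is supersingular and $p\equiv 1\pmod{12}$, we have $\bar j\neq 0,1728$, so a quadratic twist has good reduction over $\Q_p$.
\item Promote your fallback to the main argument; it is uniform and needs no CM curve. By Honda, the formal group of any $E/\Z_p$ with $a_p=0$ has type $T^2+p$. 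Hence it is strictly isomorphic over $\Z_p$ to the formal group with logarithm $\sum_{n\ge 0}(-1)^np^{-n}x^{p^{2n}}$, which is a Lubin--Tate group for $(\Q_{p^2},-p)$; no quadratic twist enters. Since $E[p^\infty]$ lies in the formal group, the image is $\mathcal{O}_{\Q_{p^2}}^\times\rtimes\op{Gal}(\Q_{p^2}/\Q_p)=C_{ns}^+$.
\end{itemize}

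\textbf{How the paper avoids the issue.} For $\bar j\neq 0,1728$ it uses Habegger's computation $[\Q_{p^2}(E[p^k]):\Q_{p^2}]=(p^2-1)p^{2(k-1)}$ together with \Cref{rmk: if G grows quadratically it is a nonsplit Cartan}. It reserves the comparison with a CM curve for $\bar j\in\{0,1728\}$. There supersingularity forces $p\equiv 2\pmod 3$ (resp.\ $p\equiv 3\pmod 4$), so $y^2=x^3+1$ (resp.\ $y^2=x^3+x$) is available.
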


\begin{proof}
    First, we suppose $E$ has good reduction. Let $\overline{E}/\mathbb{F}_p$ be the reduction of $E$ and let $\overline{j}$ be its $j$-invariant. If $\overline{j} \neq 0, 1728$, \cite[Lemma 3.3(i)]{HabeggerSmallHeight} shows that for all $k \geq 1$ we have $[\mathbb{Q}_{p^2}(E[p^k]) : \mathbb{Q}_{p^2}] = (p^2-1)p^{2(k-1)}$. Since $\operatorname{Im} \rho_{E, p^\infty}$ is a non-split N-Cartan lift by \Cref{prop: N-Cartan lift}, it follows from the classification in \Cref{thm:ellipticcartantower} that $G=C_{ns}^+$ (see also \Cref{rmk: if G grows quadratically it is a nonsplit Cartan}). If instead $\overline{j} =0$ (resp.~1728), let $E'/\mathbb{Q}_p$ be an elliptic curve with good reduction and $j$-invariant $0$, e.g.~$y^2=x^3+1$ (resp.~with $j$-invariant $1728$, such as $y^2=x^3+x$). By \Cref{cor: supersingular and no canonical subgroup} we know that $E$ has supersingular reduction, so $\overline{j}$ is a supersingular $j$-invariant. We read from \cite[Proposition 2.7]{VolkovArticle} that there is a unique isomorphism class of rational Tate modules attached to supersingular elliptic curves with good reduction over $\mathbb{Q}_p$: in the notation of \cite[\S 2.2.1]{VolkovArticle}, one has $D^*_{pst}(V_pE) \cong \mathbf{D}_c(\mathbf{1}; \mathbf{0})$. It follows $V_pE \cong V_pE'$ as Galois modules, and moreover $T_pE \cong T_pE'$ by \cite[\S B.2.2]{Volkov}. The result then follows from the fact that in the potential CM case the Galois action on the $p$-adic Tate module has the stated property (\Cref{lemma: description in potential CM case}; note that the index cannot be 3 by \Cref{thm:ellipticcartantower}(i) since $e \in \{1,2\}$).

    Finally, if $E$ has bad reduction, then a quadratic twist $E'$ of $E$ has good reduction over $\Q_p$. The claim holds for $E'$ by what we already proved. Note that the degrees $[\mathbb{Q}_p(E[p^{k+1}]) : \mathbb{Q}_p(E[p^{k}])]$ and $[\mathbb{Q}_p(E'[p^{k+1}]) : \mathbb{Q}_p(E'[p^{k}])]$ are equal for all $k \geq 1$, because they can differ at most by a factor of $2$, but they are all powers of $p$. As for $E'$ we have $[\mathbb{Q}_p(E'[p^{k+1}]) : \mathbb{Q}_p(E'[p^{k}])]=p^2$ for all $k$, the same holds for $E$ and the claim follows from \Cref{rmk: if G grows quadratically it is a nonsplit Cartan}.
\end{proof}

\section{Review of Volkov's results}\label{sect: Volkov results}

We review the results in Appendix B of Volkov's PhD thesis \cite{Volkov}, which gives a description of the rational Tate modules of elliptic curves with semistability defect $e \in \{3, 4, 6\}$ and potentially supersingular reduction. We will largely omit proofs; all the details can be found in \cite{Volkov}. The results we need from \cite{Volkov} all require $p > 3$, so we make this assumption throughout this section.

\begin{notation*}
    For the remainder of the paper, we use $v$ to denote the $p$-adic valuation on $\C_p$, normalised such that $v(p)=1$.
\end{notation*}

\subsection{The ring $R$}
We denote by $R$ the perfection of $\mathcal{O}_{\mathbb{C}_p}/(p)$:
\[
R = \varprojlim \frac{\mathcal{O}_{\mathbb{C}_p}}{(p)},
\]
where $\mathcal{O}_{\mathbb{C}_p}$ is the valuation ring of $\mathbb{C}_p$ and the transition maps are given by $x \mapsto x^p$. 
By definition, an element $x \in R$ is a sequence $(x_n)_{n \in \mathbb{N}}$ with each $x_n \in \mathcal{O}_{\mathbb{C}_p}/(p)$ and $x_{n+1}^p=x_n$. Fix, for every $n \in \mathbb{N}$, an arbitrary lift $\hat{x}_n \in \mathcal{O}_{\mathbb{C}_p}$. For every $n \geq 0$, the sequence $\hat{x}_{n+m}^{p^m}$ converges in $\mathcal{O}_{\mathbb{C}_p}$ to an element $x^{(n)}$ independent of the choice of lifts $\hat{x}_{n}$:
\[
x^{(n)} := \lim_{m \to \infty} \hat{x}_{n+m}^{p^m}.
\]
The map $x \mapsto (x^{(0)}, x^{(1)}, \ldots)$ is a bijection from $R$ to sequences $(x^{(0)}, x^{(1)}, \ldots)$ in $\mathcal{O}_{\mathbb{C}_p}$ satisfying  $(x^{(i+1)})^p = x^{(i)}$ for all $i \geq 0$; see \cite[p.~104]{Volkov} for more details. We use this interpretation to define a valuation $v_R$ on $R$ given by
\[
v_R(x)=v_R((x^{(0)}, x^{(1)}, \ldots))=v(x^{(0)}).
\]
The elements of strictly positive valuation form the maximal ideal $\mathfrak{M}_R$ of $R$. 

\subsection{The ring of Witt bivectors $BW(R)$}
We let $BW(R)$ denote the ring of Witt bivectors over $R$ (see \cite[p.~121]{Volkov} or \cite[\S 6.3]{Fontaine82}). Elements of $BW(R)$ are infinite sequences 
        \[
        \underline{a}=(a_n)_{n \in \mathbb{Z}}, \,\, a_n \in R \text{ such that }\exists \, n_0 \in \mathbb{Z}, \; \exists \, \varepsilon>0 \text{ with } v_R(a_n) \geq \varepsilon \text{ for }n \leq n_0.
        \]
        The Witt bivectors are endowed with two operators, $\varphi$ and $\text{V}$ (Frobenius and Verschiebung), given by
        \[
        \varphi( (a_n)_{n \in \mathbb{Z}}) = (a_n^p)_{n \in \mathbb{Z}}
\quad \text{and}\quad
        \text{V}((a_n)_{n \in \mathbb{Z}}) = (a_{n-1})_{n \in \mathbb{Z}}.
        \]
        These operators satisfy the relation $\varphi \text{V} = \text{V} \varphi = [p]$, multiplication by $p$ in $BW(R)$. The ring $BW(R)$ admits a natural embedding in $B_{\operatorname{dR}}$, from which it inherits a filtration \cite[top of p.~122]{Volkov}.

We give some more details about addition in $BW(R)$. Let $\underline{a} =(a_n)_{n \in \mathbb{Z}}$ and $\underline{b}=(b_n)_{n \in \mathbb{Z}}$ be elements of $BW(R)$. We will be especially interested in the $0$-th component of the sum $\underline{d} := \underline{a} + \underline{b}$. By definition, this is given by
\[
d_0 = \lim_{m \to \infty} S_m(a_{-m},\ldots,a_0;b_{-m},\ldots,b_0),
\]
where $S_m \in \mathbb{Z}[X_0,\ldots;Y_0,\ldots]$ are the addition Witt polynomials. Recall that $S_0 = X_0 + Y_0$.
As in the proof of \cite[Lemme 2 on p.~123]{Volkov}, we define the polynomials with integer coefficients
\begin{equation*}
\begin{aligned}
Q_r^{(m+1)} & = Q_r^{(m+1)}(X_0,\ldots,X_r;Y_0,\ldots,Y_r) \\
& := \frac{1}{p^{m+1-r}}(S_r(X_0^p,\ldots,X_r^p;Y_0^p,\ldots,Y_r^p)^{p^{m-r}} - (S_r(X_0,\ldots,X_r;Y_0,\ldots,Y_r)^p)^{p^{m-r}})
\end{aligned}
\end{equation*}
and have the recurrence relation $S_{m+1} = X_{m+1} + Y_{m+1} + \sum_{0 \leq r \leq m} Q_r^{(m+1)}.$ For our purposes, we only need the following simple property of the addition polynomials, which will be applied later. 

\begin{lemma}\label{lemma: addition polynomials in one variable}
For every $m \geq 0$ we have
    \[
S_m((-1)^mX_m^{p^m} , \ldots, X_m^{p^2}, -X_m^p, X_m; (-1)^{m+1}Y_m^{p^m} , \ldots, -Y_m^{p^2}, Y_m^p, -Y_m) = (X_m-Y_m)(1 + g_m(X_m, Y_m))
\]
for some polynomial $g_m$ in the ideal $(X_m, Y_m)^p$ of $\mathbb{Z}[X_m, Y_m]$.
\end{lemma}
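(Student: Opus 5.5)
The idea is to avoid computing $S_m$ explicitly. Three structural facts should suffice: $S_m - X_m - Y_m$ has no constant or linear terms; the substituted polynomial vanishes identically on the diagonal $X_m = Y_m$; and everything splits into homogeneous pieces in $X_m, Y_m$. Write $x_i := (-1)^{m-i}X_m^{p^{m-i}}$ and $y_i := (-1)^{m-i+1}Y_m^{p^{m-i}}$ for the substituted arguments, so that $x_m = X_m$ and $y_m = -Y_m$.

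\emph{Step 1 (low-degree terms of $S_m$).} I would show that $S_m = X_m + Y_m + T_m$ with $T_m \in \mathbb{Z}[X_0,\ldots,X_{m-1};Y_0,\ldots,Y_{m-1}]$ and every monomial of $T_m$ of total degree $\geq 2$. Both facts follow by induction from the recurrence $S_{m+1} = X_{m+1}+Y_{m+1}+\sum_{0\le r\le m} Q_r^{(m+1)}$ recalled above. Each $Q_r^{(m+1)}$ involves only the variables of index $\leq r \leq m$. It has no constant term, since everything vanishes at $0$. It has no linear term either: both $S_r(X^p;Y^p)^{p^{m-r}}$ and $(S_r^p)^{p^{m-r}}$ lie in the ideal generated by the variables raised to the power $p$, hence in degree $\geq p \geq 2$. (Alternatively one can read this off the ghost-component identities modulo terms of degree $\geq 2$.)

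\emph{Step 2 (size of the correction).} After substitution, $X_m + Y_m$ becomes $x_m + y_m = X_m - Y_m$. Every $x_i, y_i$ with $i<m$ is a monomial of degree $p^{m-i} \geq p$ in $X_m,Y_m$. Take a monomial of $T_m$: it has total degree $\geq 2$ and involves only variables with $i<m$. It therefore becomes a polynomial lying in $(X_m,Y_m)^{2p}$, and in particular in $(X_m,Y_m)^{p+1}$. Hence
\[
S_m(x_0,\ldots,x_m;y_0,\ldots,y_m) = X_m - Y_m + h, \qquad h \in (X_m,Y_m)^{p+1}.
\]

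\emph{Step 3 (divisibility by $X_m - Y_m$).} Setting $Y_m = X_m$ gives $y_i = -x_i$ for all $i$. Since $p$ is odd, the additive inverse of the Witt vector $(x_0,\ldots,x_m)$ is $(-x_0,\ldots,-x_m)$; this can be checked on ghost components, using $(-x)^{p^k} = -x^{p^k}$. Hence $S_m(x;-x) = 0$ identically, so $h$ vanishes on the diagonal. Now decompose $h$ into homogeneous components in $(X_m,Y_m)$. Each component has degree $\geq p+1$ and vanishes on $X_m = Y_m$, so it is divisible by $X_m - Y_m$ in $\mathbb{Z}[X_m,Y_m]$; the quotient is homogeneous of degree $\geq p$. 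Note that $X_m - Y_m$ is monic in $X_m$, so the division stays integral. Summing over components gives $h = (X_m - Y_m)g_m$ with $g_m \in (X_m,Y_m)^p$, which is the claim.

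The only step that needs any care is Step 1. The key point there is that $T_m$ has no linear terms, because a single linear term would spoil the bound in Step 2. I expect the inductive argument on the recurrence to settle it quickly. The parity fact about negation in Step 3 is standard for odd $p$.
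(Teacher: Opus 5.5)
Your proof is correct, and it takes a different route from the paper, whose proof consists only of the phrase ``straightforward induction using the recurrence relation''.

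The natural way to carry out that induction is to feed the lemma for $r<m$ back into $S_m=X_m+Y_m+\sum_{r<m}Q_r^{(m)}$. Because $p$ is odd, after the substitution the arguments of $S_r$ inside $Q_r^{(m)}$ again have the alternating shape of the lemma. For $S_r(X_0,\ldots,X_r;Y_0,\ldots,Y_r)$ the variables are $\pm X_m^{p^{m-r}},\pm Y_m^{p^{m-r}}$. For $S_r(X_0^p,\ldots,X_r^p;Y_0^p,\ldots,Y_r^p)$ they are $\pm X_m^{p^{m-r+1}},\pm Y_m^{p^{m-r+1}}$. The inductive hypothesis then makes both pieces of $Q_r^{(m)}$ divisible by $X_m-Y_m$. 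One still has to check that the quotient stays integral after dividing by $p^{m+1-r}$ (true since $Q_r^{(m)}$ has integer coefficients and $X_m-Y_m$ is monic) and that it has large enough degree.

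You use the recurrence only for a crude structural fact: $S_m-X_m-Y_m$ involves only indices $<m$ and has no terms of degree $<2$. You then get divisibility by $X_m-Y_m$ in one stroke, without induction, from the identity $S_m(Z;-Z)=0$. This says Witt negation is componentwise for odd $p$, and you verify it on ghost components. Division of each homogeneous component by the monic $X_m-Y_m$ finishes the argument.

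What each approach buys:
\begin{itemize}
\item Your route isolates exactly where the oddness of $p$ enters. It also avoids the sign and integrality bookkeeping inside the induction.
\item The inductive route stays entirely within the explicit recurrence. It also gives a sharper degree bound almost for free: the correction term has degree at least $p^2$, not just $2p$. Neither bound beyond $(X_m,Y_m)^p$ is needed for the lemma.
\end{itemize}
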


\begin{proof}
    Straightforward induction using the recurrence relation.
\end{proof}

\subsection{The filtered $(\varphi, G_{K/\mathbb{Q}_p})$-modules $D_\alpha$}\label{sect: filtred phi modules}

We consider elliptic curves $E/\mathbb{Q}_p$ with semistability defect $e \geq 3$, which necessarily have potentially good reduction.  Fix $e \in \{3,4,6\}$ and assume $p \equiv -1 \bmod{e}$. We write $K=\Qp(\pi_e,\zeta_e)$ for the splitting field of $x^e+p$, where $\pi_e=\sqrt[e]{-p}$ is a fixed root of $x^e+p$ and $\zeta_e$ is a fixed primitive $e$-th root of unity. It is a standard fact that any $E/\Q_p$ with semistability defect $e$ acquires semistable (hence good) reduction over $\Qp(\pi_e)$, and therefore over its Galois closure $K$.

We denote the Galois group of $K$ over $\Qp$ by $G_{K/\Qp}$; it is generated by a Frobenius $\omega$ and inertia element $\tau_e$ with actions on $K$ defined by
\[
\omega(\zeta_e)=\zeta_e^{-1}, \quad \omega(\pi_e)=\pi_e \qquad\quad \text{and} \qquad\quad
\tau_e(\zeta_e)=\zeta_e, \quad \tau_e(\pi_e)=\zeta_e \pi_e.
\]

By results of Volkov that we now recall, every elliptic curve $E/\mathbb{Q}_p$ with semistability defect $e$ corresponds to a filtered $(\varphi, G_{K/\mathbb{Q}_p})$-module, and the filtered $(\varphi, G_{K/\mathbb{Q}_p})$-modules that can arise in this way are parametrised by $\alpha \in \mathbb{P}^1(\mathbb{Q}_p)$, as in the following definition. Throughout the paper, we use the convention that $\alpha^{-1}=0$ when $\alpha=\infty$.

\begin{definition}[{\cite[p.~125]{Volkov}}]\label{def: D alpha}
Let $e \in \{3, 4, 6\}$ and suppose $e \mid p+1$. For $\alpha \in \mathbb{P}^1(\mathbb{Q}_p)$ we let $D_\alpha$ be the filtered $(\varphi, G_{K/\mathbb{Q}_p})$-module given by a 2-dimensional $\mathbb{Q}_{p^2}$-vector space with basis $\Dbasis_1, \Dbasis_2$ endowed with the following additional structures:
    
  \begin{enumerate}
        \item a Frobenius $\varphi$, defined by $\varphi(\Dbasis_1) = \Dbasis_2$, $\varphi(\Dbasis_2) = -p\Dbasis_1$;
        \item a filtration on $(D_\alpha)_K := D_\alpha \otimes_{\mathbb{Q}_{p^2}} K$ defined by
        \[
        \operatorname{Fil}^0(D_\alpha)_K = (D_\alpha)_K, \qquad \,\, \operatorname{Fil}^1 (D_\alpha)_K = (\Dbasis_1 \otimes \pi_e^{-1} + \alpha^{-1} \cdot \Dbasis_2 \otimes \pi_e)K, \qquad \,\,\ \operatorname{Fil}^2(D_\alpha)_K = \{0\},
        \]
        where $\operatorname{Fil}^1(D_0)_K = (\Dbasis_2 \otimes \pi_e)K$ by convention;
        \item a semi-linear action of $G_{K/\Q_p}$,
        given on the basis by
        \[
        \omega(\Dbasis_1) = \Dbasis_1, \quad \omega(\Dbasis_2)=\Dbasis_2 \quad\qquad \text{and} \quad\qquad \tau_e(\Dbasis_1) = \zeta_e \cdot \Dbasis_1, \quad \tau_e(\Dbasis_2) = \zeta_e^{-1} \cdot \Dbasis_2.
        \]
    \end{enumerate}
\end{definition}

Since $E$ acquires good reduction over $K$, we can apply the contravariant functor $D^*_{\op{cris},K/\Q_p}$ (see \cite[p.~22]{Volkov} and \cite[5.3.7]{MR1293972}) to the $p$-adic Galois representation $V_pE$. Volkov shows \cite[pp.~111-112]{Volkov} that the resulting filtered $(\varphi, G_{K/\Q_p})$-module is of the form $D_\alpha$ for some suitable $\alpha \in \mathbb{P}^1(\Qp)$. Moreover, if $D^*_{\operatorname{cris}, K/\mathbb{Q}_p}(V_pE) \cong D_\alpha$, by \cite[p.~125]{Volkov} we can recover the $\mathbb{Q}_p[\operatorname{Gal}(\overline{\mathbb{Q}}_p/\mathbb{Q}_p)]$-module $V_pE$ as
    \begin{equation}\label{eq: definition of V alpha}
    V_pE \cong V_\alpha := \operatorname{Hom}_{(\varphi, G_K)\operatorname{-mf}}(D_\alpha, BW(R)),
    \end{equation}
where morphisms are in the category of $(\varphi, G_{K})$-filtered modules (that is, $\mathbb{Q}_{p^2}$-linear maps that respect Frobenius and the filtration, and are equivariant with respect to the action of $G_K := \op{Gal}(\overline{K}/K)$). Note that $V_\alpha$ is in an obvious way a $G_K$-module via the action
\[
(g \cdot u)(d) = g \cdot (u(d)) \quad \forall g \in G_K, u \in V_\alpha, d \in D_\alpha.
\]
This $G_K$-action can be extended to an action of the full Galois group $\op{Gal}(\overline{\Q}_p/\Q_p)$ as in \cite[p.~122]{Volkov}, namely, by letting $g \in \operatorname{Gal}(\overline{\Q}_p/\Q_p)$ act on $u \in V_\alpha$ by 
\begin{equation}\label{eq: twisted action}
(g \cdot u)(d) = g \cdot ( u ( [g^{-1}] \cdot d ) ),
\end{equation}
where $[g^{-1}]$ denotes the class of $g^{-1}$ in $G_{K/\Q_p}$. 

We summarise the above discussion in the following theorem.

\begin{theorem}[{\cite[Theorem 2.1 on p.~25 and p.~125]{Volkov}}]\label{thm: Volkov parametrisation}
Let $e \in \{3,4,6\}$ and let $p> 3$ be a prime with $e \mid p+1$.
\begin{enumerate}
    \item Let $E/\Q_p$ be an elliptic curve with semistability defect $e$. Then there exists $\alpha \in \mathbb{P}^1(\Q_p)$ such that $V_pE$ is isomorphic to $V_\alpha$ as a $\Qp[\op{Gal}(\Qpbar/\Qp)]$-module, where $V_\alpha$ is defined in \Cref{eq: definition of V alpha}.
    \item Conversely, for every $\alpha \in \mathbb{P}^1(\mathbb{Q}_p)$ there exists an elliptic curve $E/\Q_p$ with semistability defect $e$, potentially supersingular reduction, and $V_pE \cong V_\alpha$ as $\Qp[\op{Gal}(\Qpbar/\Qp)]$-modules.
\end{enumerate}
\end{theorem}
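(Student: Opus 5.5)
This theorem is a repackaging of Volkov's results, so the plan is to assemble it from his thesis rather than prove anything new, checking only that his hypotheses are in force. For (i), I start from $E/\Q_p$ with semistability defect $e\in\{3,4,6\}$. By \Cref{prop: e divides p plus one}, $E$ has potentially good reduction. The reduction is potentially supersingular precisely when $e\mid p+1$, which is assumed here. Since $E$ acquires good reduction over $\Q_p(\pi_e)$, and hence over its Galois closure $K$, the representation $V_pE$ is crystalline over $K$. Therefore $D:=D^*_{\operatorname{cris},K/\Q_p}(V_pE)$ is a weakly admissible filtered $(\varphi,G_{K/\Q_p})$-module of rank $2$ over $\Q_{p^2}$, with Hodge--Tate weights $\{0,1\}$.

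Next I identify $D$ with some $D_\alpha$. The Frobenius is pinned down by the reduction: $D$ is the crystalline cohomology of a supersingular elliptic curve over $\F_{p^2}$, twisted by the descent datum. Here $\varphi^2=-p$ up to the appropriate normalisation, and there are no $\varphi$-stable lines over $\Q_{p^2}$ compatible with slope $1/2$. The $G_{K/\Q_p}$-action on the inertia part $\langle\tau_e\rangle$ decomposes $D$ into $\tau_e$-eigenlines with characters $\zeta_e^{\pm1}$, because $\det$ is cyclotomic and the inertia type is $\chi\oplus\chi^{-1}$ of order $e$. Since $\varphi$ commutes with $\tau_e$ and $e\mid p+1$, $\varphi$ swaps these two eigenlines. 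I can then choose $b_1$ in the $\zeta_e$-eigenline fixed by $\omega$ and set $b_2=\varphi(b_1)$. Up to rescaling this gives $\varphi(b_2)=-p b_1$, using $\det\varphi$ and the supersingularity, which forces the sign.

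It remains to handle the filtration. $\operatorname{Fil}^1 D_K$ is a $K$-line stable under $G_{K/\Q_p}$ acting semilinearly. A line $(x\,b_1+y\,b_2)K$ that is stable under $\tau_e$ forces $x\in\pi_e^{-1}\Q_p$-multiples and $y\in\pi_e\Q_p$-multiples, up to a common scalar. Stability under $\omega$ makes the ratio lie in $\Q_p$. This yields $\operatorname{Fil}^1=(b_1\otimes\pi_e^{-1}+\alpha^{-1}b_2\otimes\pi_e)K$ with $\alpha\in\mathbb P^1(\Q_p)$. The full recovery $V_pE\cong V_\alpha=\operatorname{Hom}(D_\alpha,BW(R))$, with the twisted action \eqref{eq: twisted action}, then follows from Fontaine's equivalence between crystalline representations and weakly admissible modules, together with Fontaine's description of $V_p$ of $p$-divisible groups via Witt bivectors, as in \cite[p.~125]{Volkov}.

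For (ii), I take any $\alpha$ and check that $D_\alpha$ is weakly admissible. The only candidate destabilising sub-objects would be $\varphi$- and $G_{K/\Q_p}$-stable lines, and there are none, since $\varphi^2=-p$ has no eigenvector over $\Q_{p^2}$ in an eigenline. So it suffices to check $t_H=t_N=1$. By Colmez--Fontaine, $V_\alpha$ is then a potentially crystalline representation with Hodge--Tate weights $\{0,1\}$ and $\det$ equal to the cyclotomic character. The main obstacle is realising $V_\alpha$ by an elliptic curve over $\Q_p$ and not merely by a $p$-divisible group. For this I would use that $D_\alpha$ gives a $p$-divisible group $G$ of height $2$ and dimension $1$ over $\mathcal O_K$ with descent datum (Fontaine/Breuil). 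By Serre--Tate, deforming a supersingular curve over $\overline{\F}_p$ with $j=0$ or $1728$ to $\mathcal O_K$ with this $p$-divisible group gives an elliptic curve $E_K$. The descent datum, which is compatible with the automorphism group of order $e$ of the special fibre, then descends $E_K$ to $\Q_p$ with semistability defect exactly $e$. If I wanted to avoid this step, I would instead cite \cite[Theorem 2.1]{Volkov} directly, since that is exactly the content being quoted.
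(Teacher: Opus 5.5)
Your proposal is correct and matches the paper, which handles this statement purely by citing Volkov along exactly your lines. You apply $D^*_{\operatorname{cris},K/\Q_p}$, identify the result with some $D_\alpha$ (your $\tau_e$-eigenline, $\omega$-descent, determinant and $\operatorname{Fil}^1$-stability analysis reconstructs Volkov's pp.~111--112), recover $V_pE\cong\operatorname{Hom}(D_\alpha,BW(R))$ with the twisted action, and rest (ii) on Volkov's Theorem~2.1. Your Breuil--Kisin/Serre--Tate sketch for (ii) is not needed, and its key descent step (compatibility of the $G_{K/\Q_p}$-descent datum with the order-$e$ automorphisms of the special fibre) is only asserted, so falling back on the citation is the right call.

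Two small slips:
\begin{enumerate}
\item $D_\alpha$ has no destabilising subobjects because a $G_{K/\Q_p}$-stable line must be a $\tau_e$-eigenline and $\varphi$ swaps the two eigenlines. Your phrase about $\varphi^2=-p$ having no eigenvector is meaningless, since $\varphi^2$ is a scalar.
\item The constant $c$ with $\varphi^2 b_1=cb_1$ is an eigenvalue of the linear map $\varphi^2$, so it cannot be adjusted by rescaling. It is the cyclotomic determinant that forces $c=-p$.
\end{enumerate}
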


\begin{remark}
    Note that, under the assumptions of the theorem, $E$ has potentially good supersingular reduction by \Cref{prop: e divides p plus one}.
\end{remark}

\begin{remark}\label{rmk: alpha role}\phantom{~}
\begin{enumerate}
    \item Taking a ramified quadratic twist of $E$ replaces $\alpha$ by $\pm p^2\alpha^{-1}$, with a different choice of $e$ when $e \in \{3,6\}$  (see \cite[top of p.~25]{Volkov}).
    \item We have $v(\alpha)=1$ precisely when the valuation of the Hasse invariant of $E$ is strictly less than $\frac{p}{p+1}$ (\cite[p.~133]{Volkov}). This condition is in turn equivalent to the fact that $E$ has a canonical subgroup of order $p$ by \cite[Theorem 3.1]{MR447119}.
    \item The cases $\alpha=0,\infty$ correspond to $j$-invariants in $\{0,1728\}$ \cite[p.~26]{Volkov}.
\end{enumerate}
\end{remark}

We now make $V_\alpha$ more concrete. 

Note first that a $\Q_{p^2}$-linear map $u : D_\alpha \to BW(R)$ is uniquely determined by the image of $\Dbasis_1$ and $\Dbasis_2$. If it is furthermore $\varphi$-equivariant, then $u(\Dbasis_2) = u(\varphi(\Dbasis_1))=\varphi(u(\Dbasis_1))$, so $u$ is determined by $u(\Dbasis_1)$ alone.

\begin{definition}\label{def: map u subscript a}
Given $\underline{a} \in BW(R)$, we denote by $u_{\underline{a}}$ the unique $\Q_{p^2}$-linear and $\varphi$-equivariant map $u: D_\alpha \to BW(R)$ such that $u(\Dbasis_1)=\underline{a}$.
\end{definition}

\begin{lemma}\label{lem: V_alpha}
Let $u \in V_\alpha$.
\begin{enumerate}
    \item $u$ is of the form $u_{\underline{a}}$ for some $\underline{a} \in BW(R)$;
    \item If $u=u_{\underline{a}}$ then $\varphi^2\underline{a}=-p\underline{a}$;
    \item Let $\underline{a} \in BW(R)$. Then $\varphi^2\underline{a}=-p\underline{a}$ if and only if $\underline{a}=((-1)^na^{p^{-n}})_{n \in \mathbb{Z}}$ for some $a=(a^{(n)})_{n \geq 0} \in \mathfrak{M}_R$;
    \item Let $a=(a^{(n)})_{n \geq 0} \in \mathfrak{M}_R$ and set $\underline{a}=((-1)^na^{p^{-n}})_{n \in \mathbb{Z}} \in BW(R)$. The map $u_{\underline{a}} : D_\alpha \to BW(R)$ is a map of filtered modules if and only if $a=(a^{(n)})_{n \geq 0}$ solves the equation
    \begin{equation}\label{eq: condition star alpha}
        \sum_{n \in \mathbb{Z}} (-1)^n p^n\left( \pi_e^{-1} {a^{(0)}}^{p^{-2n}} + \alpha^{-1}\pi_e {a^{(0)}}^{p^{-2n+1}} \right) = 0 \in \mathbb{C}_p,
    \end{equation}
    where for $n \geq 0$ we have set ${a^{(0)}}^{p^{-n}}:=a^{(n)} \in \mathbb{C}_p$;
    \item $V_\alpha = \{ u_{\underline{a}} :  \underline{a}=((-1)^na^{p^{-n}})_{n \in \mathbb{Z}} \text{ for some } a{=}(a^{(n)})_{n \geq 0} \in \mathfrak{M}_R \text{ that solves } \Cref{eq: condition star alpha} \}$.
\end{enumerate}
\end{lemma}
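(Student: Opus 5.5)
Parts (i) and (ii) are formal consequences of $\varphi$-equivariance. By the remark preceding \Cref{def: map u subscript a}, any $u \in V_\alpha$ is $\Q_{p^2}$-linear and commutes with $\varphi$. So $u(\Dbasis_2) = \varphi(u(\Dbasis_1))$, and therefore $u = u_{\underline{a}}$ with $\underline{a} = u(\Dbasis_1)$. For (ii), apply $u$ to the identity $\varphi^2(\Dbasis_1) = \varphi(\Dbasis_2) = -p\Dbasis_1$. Using linearity and $\varphi$-equivariance this gives $\varphi^2\underline{a} = -p\underline{a}$. The one point to check is that $\varphi$ on $BW(R)$ is compatible with the semilinearity: $u$ is $\Q_{p^2}$-linear and $\varphi$ acts on $\Q_{p^2}$ by the absolute Frobenius. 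Conversely, we note that for any $\underline{a}$ with $\varphi^2\underline{a}=-p\underline{a}$, the assignment $\Dbasis_1\mapsto \underline{a}$, $\Dbasis_2\mapsto\varphi\underline{a}$ extends to a $\varphi$-equivariant $\Q_{p^2}$-linear map, so $u_{\underline{a}}$ is well defined.

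For (iii), we write $[p] = \varphi\mathrm{V}$ to turn the equation $\varphi^2\underline{a} = -p\underline{a}$ into $\varphi^2\underline{a} = -\varphi\mathrm{V}\underline{a}$. Since $R$ is perfect, $\varphi$ is bijective on $BW(R)$, so this is equivalent to $\varphi\underline{a} = -\mathrm{V}\underline{a}$. We must be careful here: $-1$ in Witt bivectors is not componentwise negation in general, but for $p$ odd $-\underline{x}=(-x_n)$. Componentwise the equation reads $a_n^p = -a_{n-1}$ for all $n$. Setting $a := a_0$, induction in both directions gives $a_n = (-1)^n a^{p^{-n}}$. The bivector condition ($v_R(a_n)\ge\varepsilon$ for $n\ll 0$) forces $v_R(a)>0$, since $v_R(a^{p^{m}}) = p^m v_R(a)$. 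So $a \in \mathfrak{M}_R$, and conversely every such $a$ gives a bivector.

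Part (iv) is the main step. We need to show that $u_{\underline{a}}$ respects the filtration exactly when \eqref{eq: condition star alpha} holds, using the embedding $BW(R)\hookrightarrow B_{\mathrm{dR}}$ with its induced filtration. Extending scalars to $K$, we must check that $u$ sends the line $\operatorname{Fil}^1(D_\alpha)_K$ into $\operatorname{Fil}^1 B_{\mathrm{dR}}$. Following \cite[Lemme 2 on p.~123]{Volkov}, an element of $BW(R)\otimes K$ lies in $\operatorname{Fil}^1$ iff its image under $\theta$ vanishes. Moreover $\theta$ of a bivector $(x_n)$ is computed as $\sum_n p^{n} x_n^{(n)}$, as a convergent sum in $\C_p$, with suitable sign and index conventions. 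So we compute
\[
\theta\bigl(\pi_e^{-1}\underline{a} + \alpha^{-1}\pi_e\varphi\underline{a}\bigr).
\]
Substituting $a_n=(-1)^na^{p^{-n}}$ and $\varphi\underline{a} = ((-1)^n a^{p^{-n+1}})$ yields exactly the series in \eqref{eq: condition star alpha}. Convergence is guaranteed because $v(a^{(0)})>0$ makes the negative-$n$ terms small, while the factor $p^n$ handles positive $n$. The main obstacle is justifying this formula for $\theta$ on bivectors, since addition is not componentwise. I would handle it with the limit formula for $d_0$ via the polynomials $S_m$: by \Cref{lemma: addition polynomials in one variable} the cross terms are of higher order, and one passes to the limit. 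For $\alpha=0$ the same computation applies with the convention line $\Dbasis_2\otimes\pi_e$. Fil$^2$ imposes no condition since $\operatorname{Fil}^2 (D_\alpha)_K=0$.

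Finally, (v) combines (i)–(iv). It remains to note that $G_K$-equivariance is automatic: $G_K$ acts trivially on $D_\alpha$ (the $G_{K/\Q_p}$-action factors through $\op{Gal}(K/\Q_p)$). So Hom in the category of $(\varphi,G_K)$-filtered modules imposes only the $\varphi$ and filtration conditions.
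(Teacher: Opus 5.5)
Your (i)--(ii) are the paper's argument, and you are right that $u_{\underline a}$ exists only when $\varphi^2\underline a=-p\underline a$. For (iii)--(v) the paper gives no argument and cites \cite[p.~125]{Volkov}, so your reconstruction is a more self-contained route.

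\textbf{What is correct.} Part (iii), via $[p]=\varphi\mathrm{V}$, bijectivity of $\varphi$ and $-\underline x=(-x_n)$ for odd $p$, is correct and clean. In (iv), computing $\theta(\pi_e^{-1}\underline a+\alpha^{-1}\pi_e\varphi\underline a)$ is the right idea, and it reproduces \eqref{eq: condition star alpha} exactly in the paper's normalisation. Two points need repair.

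\textbf{The formula for $\theta$ on bivectors.} The identity $\theta(\underline x)=\sum_n p^n x_n^{(n)}$ is the crux of (iv), and \Cref{lemma: addition polynomials in one variable} does not prove it. That lemma only computes the $0$-th component of $\mathbf a-\mathbf b$ for elements of the special shape $\mathbf a$; it says nothing about $\theta$. Argue by truncation instead:
\begin{itemize}
\item Let $\underline x^{\ge -m}$ keep only the components of index $\ge -m$. Then $\mathrm{V}^m\underline x^{\ge -m}=w_m:=(x_{-m},x_{-m+1},\dots)\in W(R)$.
\item Since $\varphi\mathrm{V}=[p]$, this gives $\underline x^{\ge -m}=p^{-m}\varphi^m(w_m)$.
\item The usual ring map $\theta$ on $W(R)$ sends $(y_i)_{i\ge0}=\sum_i p^i[y_i^{p^{-i}}]$ to $\sum_i p^i y_i^{(i)}$. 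Hence $\theta(\underline x^{\ge -m})=\sum_{n\ge -m}p^n x_n^{(n)}$.
\item Let $m\to\infty$, using continuity of $BW(R)\to B^+_{\mathrm{dR}}$ and of $\theta$.
\end{itemize}
You also need the image of $BW(R)\otimes K$ to lie in $B^+_{\mathrm{dR}}$. This is used both for the $\operatorname{Fil}^0$ condition, which you did not mention, and for your criterion $\operatorname{Fil}^1=\ker\theta$.

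\textbf{The $G_K$-equivariance remark in (v) is backwards.} If $G_K$-equivariance were genuinely imposed, then because $G_K$ acts trivially on $D_\alpha$, every $u$ would have to take values in $BW(R)^{G_K}$. That would collapse $V_\alpha$, and would make the action $(g\cdot u)(d)=g\cdot u(d)$ trivial. Equivariance out of a trivial module is not automatic; it is a strong constraint on the target. The correct point is that the Hom recovering $V$ from $D$, Fontaine's $\operatorname{Hom}_{\varphi,\operatorname{Fil}}$, imposes no Galois condition at all. The $G_K$-action on $V_\alpha$ comes entirely from the target, so the paper's parenthetical ``equivariant with respect to $G_K$'' should not be read literally. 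With that reading, (v) is just (i)--(iv) combined, as you say.
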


\begin{proof}
By definition, elements of $V_\alpha$ are $\Q_{p^2}$-linear and $\varphi$-equivariant, so we have already proved (i). For (ii),  note $-pu(\Dbasis_1)=u(-p\Dbasis_1)=u(\varphi(\Dbasis_2))=\varphi(u(\Dbasis_2))=\varphi^2(u(\Dbasis_1))$.
(iii)-(v): See \cite[p.~125]{Volkov}, noting the rescaling in (iv).
\end{proof}

The upshot of this discussion is that, under the assumptions of \Cref{thm: Volkov parametrisation}, the elements of the rational Tate module $V_pE$ are in Galois-equivariant bijection with the solutions of \Cref{eq: condition star alpha}. In the next few sections, we exploit this description to compute explicitly the image of the mod $p^2$ Galois representations attached to potentially supersingular elliptic curves over $\Q_p$, and more generally to describe the elements of $E[p^k]$ in terms of roots of explicit polynomials.

\section{Description of $E[p^k]$ in terms of the roots of a polynomial}\label{sect: description of p2 torsion}

We now apply Volkov's results to give a simpler description of the $p^k$-torsion of $E$ when $E$ has semistability defect $e \in \{3,4,6\}$ and potentially supersingular reduction. In particular, this implies that $e \mid p+1$ (\Cref{prop: e divides p plus one}). We will continue to assume $p>3$ throughout in order to apply \Cref{thm: Volkov parametrisation}; in particular we have that $V_pE \cong V_{\alpha}$ for some $\alpha \in \mathbb{P}^1(\Qp)$, where $V_\alpha$ is defined in \Cref{eq: definition of V alpha}. We assume $v(\alpha^{-1}) \geq 0$: by convention, this includes $\alpha=\infty$. Recall that $\pi_e$ denotes a fixed element of $\overline{\Q}_p$ such that $\pi_e^e=-p$.

Fix $k>0$. We manipulate the series of \Cref{eq: condition star alpha} whose roots are of interest in order to derive the key polynomial we use. We start by writing
\begin{eqnarray*}
\sum_{n \in \Z} (-1)^np^n \left( \pi_e^{-1}x^{p^{-2n}} + \alpha^{-1}\pi_ex^{p^{-2n+1}} \right) &=& \frac{1}{\pi_e} \sum_{n \in \Z} (-1)^np^n \left( x^{p^{-2n}} + \alpha^{-1}\pi_e^2x^{p^{-2n+1}} \right).
\end{eqnarray*}

Consider the terms of the sum for $-k \leq n \leq 0$:
\[
\begin{aligned}
\sum_{n=-k}^{0} (-1)^np^n \bigm( x^{p^{-2n}} +  & \alpha^{-1}\pi_e^2x^{p^{-2n+1}} \bigm) 
\; = \frac{(-1)^k}{p^k} \sum_{n=0}^{k} (-1)^{n}p^{n} \left( x^{p^{2k-2n}} + \alpha^{-1}\pi_e^2x^{p^{2k-2n+1}} \right) \\
&= \frac{(-1)^k}{p^k} \left( \alpha^{-1}\pi_e^2x^{p^{2k+1}} + x^{p^{2k}} + \sum_{n=1}^{k} (-1)^{n}p^{n} \left( x^{p^{2k-2n}} + \alpha^{-1}\pi_e^2x^{p^{2k-2n+1}} \right) \right).
\end{aligned}
\]

\begin{definition}
Recall the field $K=\Q_p(\pi_e, \zeta_e)$, where $\pi_e^e =-p$. For $k \geq 1$, define:
\begin{equation}\label{eq: polynomial g}
    g_k(x) := x^{p^{2k}}+ \sum_{n=1}^{k} (-1)^np^n (x^{p^{2k-2n}} + \alpha^{-1}\pi_e^2x^{p^{2k+1-2n}})
\end{equation}
and
\[
\mathcal{R}_k := \{ \text{roots of } g_k \text{ in } \overline{K}\}.
\]
\end{definition}

It is clear from the derivation that our original series in \Cref{eq: condition star alpha} is equal to
\begin{equation}\label{eqn: gk to series}
\frac{1}{\pi_e} \left( \frac{(-1)^k}{p^k} (g_k(x)+\alpha^{-1}\pi_e^2x^{p^{2k+1}}) + \sum_{\substack{n \in \Z \\ n \neq 0,\ldots,-k}} (-1)^np^n \left( x^{p^{-2n}} + \alpha^{-1}\pi_e^2x^{p^{-2n+1}} \right) \right).
\end{equation}

For the benefit of the reader, we state here the main result of the section in the most self-contained fashion.

\begin{theorem}\label{thm: p2 torsion points correspond to roots of g}
Let $E/\Qp$ be an elliptic curve and let $k$ be a positive integer. Suppose that:
\begin{itemize}
    \item $E$ has potentially supersingular reduction;
    \item the semistability defect of $E$ is at least $3$;
    \item the deformation parameter $\alpha$ (see \Cref{thm: Volkov parametrisation}) satisfies $v(\alpha^{-1}) \geq 0$;
    \item $p > \max\{\sqrt{k}, k-v(\alpha^{-1}) -1, 3\}$.
\end{itemize}
Then:
\begin{enumerate}
    \item there is a $\op{Gal}(\overline{K}/K)$-equivariant bijection $\Phi_k$ between $E[p^k]$ and $\mathcal{R}_k$;
    \item the splitting field of $g_k(x)$ over $K$ is $K(E[p^k])$.
\end{enumerate}
\end{theorem}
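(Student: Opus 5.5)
We construct $\Phi_k$ by passing through $V_\alpha$ and the series \Cref{eq: condition star alpha}. By \Cref{lem: V_alpha}(v), each $u\in V_\alpha$ is $u_{\underline a}$ for a unique $a=(a^{(n)})_{n\ge 0}\in\mathfrak M_R$ solving \Cref{eq: condition star alpha}. Using \Cref{thm: Volkov parametrisation}, we fix a Galois-stable lattice $T\subseteq V_\alpha$ corresponding to $T_pE$, so that $T/p^kT\cong E[p^k]$. When $E[p]$ is irreducible, \Cref{lemma: any lattice will do} shows that the choice of $T$ does not matter. In general we would instead normalise $T$ as the set of $u_{\underline a}$ with $v(a^{(0)})$ bounded below suitably.

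The candidate map is $u_{\underline a}\mapsto a^{(0)}$, or rather its $p^{k}$-fold root coordinate $a^{(k')}$ for an appropriate index, reduced modulo $p^kT$. Multiplication by $p$ on $BW(R)$ is $\varphi\mathrm V$. Since $\underline a=((-1)^na^{p^{-n}})$, the element $p^k\underline a$ corresponds to shifting indices by $2k$ together with a Frobenius power. Concretely, $p^{k}u$ corresponds to $a\mapsto a^{p^{2k}}$ up to sign, by $\varphi^2=-p$. The quotient $T/p^kT$ is therefore naturally parametrised by the elements $x=a^{(2k)}$, say, modulo the relation coming from $p^kT$. Under this parametrisation, \Cref{eqn: gk to series} shows that
\[
g_k(x)=-\alpha^{-1}\pi_e^2x^{p^{2k+1}}-(-1)^kp^k\sum_{n\notin\{-k,\dots,0\}}(\cdots).
\]
So $x$ is an approximate root of $g_k$, with error of valuation controlled by $v(x)$.

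The key step is an explicit Newton-polygon and Krasner argument. First we compute the Newton polygon of $g_k$; by \Cref{rmk: ...} this is presumably a shifted polygon. From it we read off that $g_k$ has $p^{2k}$ distinct roots with explicitly known valuations, forming a group-like structure. Next, for each torsion class we take a representative $a$ of minimal norm and show that the tail error is strictly smaller than the distance from the approximate root $x$ to the other roots of $g_k$. Newton's lemma (Hensel) then produces a unique root $\Phi_k(u)\in\mathcal R_k$ close to $x$. The hypotheses $p>\sqrt k$ and $p>k-v(\alpha^{-1})-1$ should enter exactly in this comparison. The dropped term $\alpha^{-1}\pi_e^2x^{p^{2k+1}}$ and the terms with $n>0$ involve high $p$-powers of small $x$. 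The terms with $n<-k$ are divided by large powers of $p$ but involve $x^{p^{2j}}$ with $j>k$. We must check that all of these beat the root separation. Well-definedness modulo $p^kT$ uses \Cref{lemma: addition polynomials in one variable}, which controls the $0$-th component of sums of bivectors of the special shape: adding an element of $p^kT$ changes $x$ only by a higher-order amount $(X-Y)(1+g)$ with $g\in(X,Y)^p$.

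Injectivity follows from counting, since both sets have size $p^{2k}$, once we show that distinct classes give distinct roots. This again follows from the addition lemma, since the difference of two representatives gives a nonzero class whose coordinate has a controlled valuation. $\mathrm{Gal}(\overline K/K)$-equivariance is automatic: the construction uses only $\alpha$, $\pi_e\in K$ and Galois-equivariant operations, and the twisted action \Cref{eq: twisted action} is the plain action over $K$. Uniqueness in Hensel's lemma ensures the map commutes with $\sigma\in G_K$. Finally, (ii) follows because an equivariant bijection of finite $G_K$-sets has the same stabilisers, so the fixed fields coincide: $K(\mathcal R_k)=K(E[p^k])$. The main obstacle is the valuation bookkeeping in the Krasner/Hensel step, namely bounding the infinite tail against the minimal root distance of $g_k$ uniformly over all classes.
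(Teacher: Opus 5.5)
\textbf{Gap 1: the root-existence step.} Your architecture matches the paper's. You normalise the lattice by a valuation bound, using the paper's $T_c\leftrightarrow A_c$ and $p^kT_c=T_{p^{2k}c}$ from $\varphi\mathrm{V}=p$; your coordinate $a^{(2k)}$ on $p^kA_{c}$ is, up to sign, the paper's $a^{(0)}$ on $A_{c}$. You then send a class to the nearby root of $g_k$, and use the addition polynomials, counting, and uniqueness of the nearby root, as the paper does.

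However, the central step fails as you state it. Newton's lemma needs $v(g_k(x))>2v(g_k'(x))$. For every $x$ of positive valuation, $v(g_k'(x))=k$, since the $n=k$ term differentiates to $(-1)^kp^k(1+p\alpha^{-1}\pi_e^2x^{p-1})$ and all other terms are smaller. The tail only gives $v(g_k(x))>k+\frac{1}{p^2-1}$. Concretely, take $\alpha=\infty$ and $v(a^{(0)})=c_k:=\frac{1}{p^{2k-2}(p^2-1)}$. Then the $n=1$ tail term $p^{k+1}a^{(2)}$ is the unique term of minimal valuation, so $v(g_k(a^{(0)}))=k+1+c_k/p^2<2k$ as soon as $k\ge 2$. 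Your criterion ``tail error smaller than the distance to the other roots'' also needs a conversion from a value of $g_k$ to a root distance.

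That conversion comes from the full root geometry, which you only guess at. \Cref{lemma: shifted Newton polygon} says the coefficient of $t^{p^{2m}}$ in $g_k(t+\gamma)$ has valuation exactly $k-m$, and the coefficients in between are at least as large. Hence each root has exactly $p^{2i}(p^2-1)$ others at distance $\frac{1}{p^{2i}(p^2-1)}$. Then $v(g_k(x))=p^{2r}v(x-\gamma)+k-r$ for the nearest root $\gamma$, and $v(g_k(x))>k+\frac{1}{p^2-1}$ forces $v(x-\gamma)>\frac{1}{p^2-1}$ (\Cref{lemma: roots distance}). Equivalently, the same coefficient bounds hold for $g_k(t+x)$ at any $x$ with $v(x)>0$, so its Newton polygon has a unique first segment of length one with slope $v(g_k(x))-k$. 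You must also fix the target $k+\frac{1}{p^2-1}$ and the normalisation $v(x)\ge c_k$. The hypotheses enter there: $p^2>k$ handles the terms with $n\le -k-1$, whose valuation is at least $p^2+\frac{1}{p^2-1}$, and $p>k-v(\alpha^{-1})-1$ handles the term $\alpha^{-1}\pi_e^2x^{p^{2k+1}}$.

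\textbf{Gap 2: injectivity.} You need that distinct classes give coordinates at distance at most $\frac{1}{p^2-1}$, and ``controlled valuation'' does not give this. Knowing $a\ominus b\notin p^kA_{c_k}$ only yields $v_R(a\ominus b)<\frac{p^2}{p^2-1}$. The missing input is the valuation gap on $A_c$ (\Cref{lemma: valuations of elements in Ac}, from Volkov): every nonzero element has $v_R=\frac{p^{2n}}{p^2-1}$, so $v_R(a\ominus b)>\frac{1}{p^2-1}$ already forces $v_R(a\ominus b)\ge\frac{p^2}{p^2-1}$. Combine this with two facts:
\begin{itemize}
\item $v_R(a\ominus b)=v_R(a-b)$, from the addition polynomials;
\item $v_R(a-b)>\frac{1}{p^2-1}\iff v(a^{(0)}-b^{(0)})>\frac{1}{p^2-1}$, which holds because $(a^{(n)}-b^{(n)})^{p^n}$ differs from $a^{(0)}-b^{(0)}$ only by terms of valuation at least $1+c_k$.
\end{itemize}
Together these give \Cref{lemma: congruence modulo Apkc}, which yields both well-definedness and injectivity; counting then gives surjectivity.

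\textbf{Gap 3: the lattice.} Your ``in general'' fallback is not available. A valuation-normalised lattice satisfies $T_c/p^kT_c\cong E[p^k]$ only because $E[p]$ is irreducible. Volkov proves this when $v(\alpha^{-1})\ge 0$ (\Cref{prop: Tc is TpE}), and you should invoke that fact rather than hedge.
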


\begin{remark}\label{rmk: canonical subgp}
    We briefly remark on the assumptions of the theorem. If $\op{Im} \rho_{E,p} \subseteq C_{ns}^+(p)$ and $p>7$, then the potentially supersingular hypothesis is automatic from \Cref{cor: supersingular and no canonical subgroup}.
    Moreover, one can always assume that $v(\alpha^{-1})\geq -1$ up to quadratic twisting $E$ (see \Cref{rmk: alpha role}(i)); the missing case of $v(\alpha^{-1})=-1$ is equivalent to the existence of a canonical subgroup, a scenario which will also be excluded in our application by \Cref{cor: supersingular and no canonical subgroup}.
\end{remark}

\begin{remark}\label{rmk: twisted Galois action of Phi}
    Explicitly, the first conclusion of the theorem means that there is a set-theoretic bijection $\Phi_k : E[p^k] \to \mathcal{R}_k$ such that for all $P \in E[p^k]$ and all $\sigma \in \op{Gal}(\overline{K}/K)$ we have
    \[
    \sigma(\Phi_k(P)) = \Phi_k(\sigma(P)),
    \]
    where the Galois action on the left is the natural action on the roots of the polynomial $g_k(x)$ and that on the right is the natural action on $E[p^k]$. We also note that the same map $\Phi_k$ extends to an isomorphism of $\Gal(\overline{\Q}_p/\Q_p)$-sets, once $\mathcal{R}_k$ is endowed with a suitable (twisted) Galois action. We describe it in \Cref{rmk: twisted Galois action 2}.
\end{remark}

This section is devoted to proving \Cref{thm: p2 torsion points correspond to roots of g}, which we eventually do in \S\ref{subsect: proof of iso}, after introducing the necessary objects and language required along with various properties of both $\mathcal{R}_k$ and $\Phi_k$. 
Before embarking on this expedition, we note the following corollary.

\begin{corollary}\label{cor: splitting field in limit case}
    Fix $e \in \{3,4,6\}$ and let $p>3$ be a prime such that $e \mid p+1$. Then there exists a CM elliptic curve $E/\Qp$ with $j(E) \in \{0,1728\}$, potentially supersingular reduction, semistability defect $e$, and $\alpha$-parameter (cf.~\Cref{thm: Volkov parametrisation}) $\alpha=\infty$. Moreover, if $p> \sqrt{k}$, then $K(E[p^k])$ is equal to the splitting field of
    \[
    g_k^{\infty}(x)= \sum_{n=0}^k (-1)^np^n x^{p^{2k-2n}} \in K[x].
    \]
\end{corollary}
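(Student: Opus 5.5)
The proof has two parts: first produce the CM curve, then apply \Cref{thm: p2 torsion points correspond to roots of g} with $\alpha=\infty$.

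\textbf{Existence of the curve.} By \Cref{thm: Volkov parametrisation}(ii) applied to $\alpha=\infty$, there is an elliptic curve $E/\Qp$ with semistability defect $e$, potentially supersingular reduction, and $V_pE\cong V_\infty$. \Cref{rmk: alpha role}(iii) shows that the parameters $\alpha\in\{0,\infty\}$ correspond exactly to $j$-invariants in $\{0,1728\}$. Hence $j(E)\in\{0,1728\}$, and such a curve has CM.

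If one prefers an explicit curve, I would take $y^2=x^3+p^a$ with $a$ chosen so that $12/\gcd(12,v(\Delta))=e$ when $e\in\{3,6\}$, or $y^2=x^3+p^bx$ when $e=4$. \Cref{lemma: quadratic twist acquires good reduction over a small extension}(i) gives the defect, and \Cref{prop: e divides p plus one} gives supersingularity from $e\mid p+1$. Its parameter lies in $\{0,\infty\}$ by \Cref{rmk: alpha role}(iii). If it equals $0$, \Cref{rmk: alpha role}(i) shows that a ramified quadratic twist has parameter $\pm p^2\cdot 0^{-1}=\infty$ in the projective sense. This twist has $j$-invariant in $\{0,1728\}$, and one must check that the defect is still $e$, possibly after changing the exponent $a$. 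The main subtlety here is keeping track of which $e$ the twist has when $e\in\{3,6\}$. The remark notes that twisting swaps $3$ and $6$, so I would choose the starting curve with the complementary defect.

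\textbf{Splitting field.} With $\alpha=\infty$ we use the convention $\alpha^{-1}=0$, so $v(\alpha^{-1})=+\infty\ge 0$. The condition $p>k-v(\alpha^{-1})-1$ then holds automatically, and the hypotheses of \Cref{thm: p2 torsion points correspond to roots of g} reduce to $p>\max\{\sqrt{k},3\}$. We have $p>3$ by assumption and $p>\sqrt k$ by hypothesis.

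Substituting $\alpha^{-1}=0$ into \Cref{eq: polynomial g} removes every term carrying $\alpha^{-1}\pi_e^2$. This leaves
\[
g_k=x^{p^{2k}}+\sum_{n=1}^k(-1)^np^nx^{p^{2k-2n}}=\sum_{n=0}^k(-1)^np^nx^{p^{2k-2n}}=g_k^\infty.
\]
Part (ii) of the theorem then says that the splitting field of $g_k^\infty$ over $K$ is $K(E[p^k])$, which is the claim. I expect no real obstacle beyond the bookkeeping of the defect under twisting in the existence step.
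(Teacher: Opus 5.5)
Your proposal is correct and follows essentially the paper's proof: the existence of $E$ with $\alpha=\infty$ comes from \Cref{thm: Volkov parametrisation}(ii), $j(E)\in\{0,1728\}$ comes from \Cref{rmk: alpha role}(iii), and the splitting-field claim comes from \Cref{thm: p2 torsion points correspond to roots of g} with $\alpha^{-1}=0$, so that $v(\alpha^{-1})=\infty$ and $g_k=g_k^\infty$. The optional explicit-curve digression is not needed and leaves its twist bookkeeping unresolved, but this does not affect the argument.
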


\begin{proof}
    The existence of $E/\Q_p$ with semistability defect $e$ and $\alpha=\infty$ is guaranteed by \Cref{thm: Volkov parametrisation}(ii). Its $j$-invariant is $0$ or $1728$ by \Cref{rmk: alpha role}(iii) so $E$ has CM. The result now follows immediately from \Cref{thm: p2 torsion points correspond to roots of g}, recalling that $\alpha^{-1}=0$ has valuation $\infty$.
\end{proof}

\subsection{Lattices in $V_\alpha$ and a description of $E[p^k]$} \label{sect: lattices in Valpha}
Throughout this section, we let $e, p$ and $E/\Q_p$ be as in the statement of \Cref{thm: Volkov parametrisation} and let $\alpha \in \mathbb{P}^1(\Q_p)$ be such that $V_pE \cong V_\alpha$ as $\Q_p[\Gal(\overline{\Q}_p/\Q_p)]$-modules.
We now introduce certain Galois-stable lattices in $V_\alpha$ and show how they relate to a description of $E[p^k]$ as a $\op{Gal}(\overline{\Q}_p/K)$-module.
Recall from \Cref{lem: V_alpha}(i) the map $u_{\underline{a}}$ associated with $\underline{a} \in BW(R)$. 
\begin{definition}\label{def: Tc}
For $c>0$ we define the Galois stable sublattice of $V_\alpha$ given by
\[
T_c = \{ u_{\underline{a}} \in V_\alpha : \underline{a} = ((-1)^na^{p^{-n}})_{n \in \mathbb{Z}} \text{ with } v_R(a) \geq c \}.
\]
\end{definition}

We now define a family of Galois lattices $A_c$ in $\mathfrak{M}_R$ that admit Galois-equivariant bijections with the lattices $T_c$ (see \Cref{lemma: Tc or Ac ça m'est égal}).

\begin{definition}\label{def: A alpha and A c}
    We set
    \[
    A_\alpha := \{ a \in \mathfrak{M}_R : a=(a^{(n)})_{n \in \mathbb{N}} \,\text{ satisfies } \text{\Cref{eq: condition star alpha}} \} \qquad \text{and }  \qquad A_c := \{a \in A_\alpha : v_R(a) \geq c\}.
    \]
    For $a \in \mathfrak{M}_R$, we define $\mathbf{a}:=((-1)^na^{p^{-n}})_{n \in \mathbb{Z}} \in BW(R)$.
The natural bijection 
\[
\begin{array}{cccc}
\Psi: & V_\alpha & \to & A_\alpha \\
& u_{\mathbf{a}} & \mapsto & a
\end{array}
\]
provided by \Cref{lem: V_alpha}(v) enables us to equip $A_\alpha$ with the structure of a $\mathbb{Q}_p$-vector space as follows:
\begin{itemize}
    \item sum $a \oplus b := \Psi(\Psi^{-1}(a) + \Psi^{-1}(b))$ for all $a, b \in A_\alpha$;
    \item scalar product  $\lambda \cdot a := \Psi(\lambda \cdot \Psi^{-1}(a))$ for all $a \in A_\alpha$ and $\lambda \in \mathbb{Q}_p$.
\end{itemize}
\end{definition}

Operations in the vector space $A_\alpha$ can be made more explicit (see also \cite[p.~126]{Volkov}): we describe in particular sums and differences.
\begin{enumerate}
    \item Given $a, b \in A_{\alpha}$, we consider the corresponding maps $u_{\mathbf{a}}, u_{\mathbf{b}}$ and their natural sum $u_{\mathbf{a}} + u_{\mathbf{b}} = u_{\mathbf{a} +_{BW(R)} \mathbf{b}}$. The sum $\mathbf{a} +_{BW(R)} \mathbf{b}$ is given by $\mathbf{s}$, where $s = (\mathbf{a} +_{BW(R)} \mathbf{b})_0 \in \mathfrak{M}_R$. The definition of the addition law on $BW(R)$ finally yields
    \[
    a \oplus b = (\mathbf{a} + \mathbf{b})_0 = \lim_{m \to \infty} S_m( (-1)^m a^{p^m}, \ldots, a; (-1)^m b^{p^m}, \ldots, b).
    \]
\item Multiplication by $-1$ in the $\mathbb{Q}_p$-vector space structure introduced above is given simply by $-1 \cdot a = -1 \cdot (a^{(n)})_{n \in \mathbb{N}} = (-a^{(n)})_{n \in \mathbb{N}}$. In particular, denoting by $\ominus$ the difference in $A_{\alpha}$, we have
\begin{equation}\label{eq: subtraction Ac}
\begin{aligned}
a \ominus b & = (\mathbf{a} - \mathbf{b})_0 = (\mathbf{a} \oplus (-\mathbf{b}))_0 \\
& = \lim_{m \to \infty} S_m( (-1)^m a^{p^m}, (-1)^{m-1} a^{p^{m-1}}, \ldots, a; (-1)^{m+1} b^{p^m}, (-1)^{m} b^{p^{m-1}}, \ldots, -b).
\end{aligned}
\end{equation}
\end{enumerate}
We note furthermore that the map $\Psi$ described above is $\operatorname{Gal}(\overline{\mathbb{Q}}_p/K)$-equivariant  for the natural action of $\operatorname{Gal}(\overline{\mathbb{Q}}_p/K)$ on $A_\alpha \subseteq \mathfrak{M}_R$ \cite[p.~126]{Volkov}. In particular, we have:

\begin{lemma}\label{lemma: Tc or Ac ça m'est égal}
    Let $c' > c >0$. The map $\Psi$ induces $\operatorname{Gal}(\overline{\mathbb{Q}}_p/K)$-equivariant isomorphisms $T_{c'} \to A_{c'}$ and $T_c \to A_c$, and therefore also a Galois-equivariant isomorphism $T_c/T_{c'} \cong A_c/A_{c'}$.
\end{lemma}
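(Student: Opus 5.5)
The point is that $\Psi$ is a $\operatorname{Gal}(\overline{\mathbb{Q}}_p/K)$-equivariant bijection $V_\alpha \to A_\alpha$ which, by its definition, sends $T_c$ exactly onto $A_c$. The quotient statement then follows formally from the group structures. So the proof reduces to three checks: that $\Psi(T_c)=A_c$, that $\Psi$ is equivariant on these subsets, and that the quotient makes sense and is preserved.

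First, by \Cref{lem: V_alpha}(v), every $u\in V_\alpha$ is $u_{\mathbf{a}}$ for a unique $a\in A_\alpha$. Uniqueness holds because $u_{\mathbf{a}}(\Dbasis_1)=\mathbf{a}$, and the $0$-th component of $\mathbf{a}$ is $a$, so $a$ is recovered from $u$. By \Cref{def: Tc}, $u_{\mathbf{a}}\in T_c$ exactly when $v_R(a)\ge c$, which is the defining condition of $A_c$ in \Cref{def: A alpha and A c}. Hence $\Psi$ restricts to bijections $T_c\to A_c$ and $T_{c'}\to A_{c'}$.

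Second, equivariance. For $g\in\operatorname{Gal}(\overline{\mathbb{Q}}_p/K)$ the class $[g^{-1}]\in G_{K/\Qp}$ is trivial, so the action \eqref{eq: twisted action} is just $(g\cdot u)(d)=g(u(d))$. Thus $g\cdot u_{\mathbf{a}}=u_{g\mathbf{a}}$, since the Galois action on $BW(R)$ is componentwise and commutes with $\varphi$. Moreover $g\mathbf{a}=\mathbf{g(a)}$, because $g$ commutes with $p$-power maps on $R$. Therefore $\Psi(g\cdot u)=g(a)$, matching the equivariance noted from \cite[p.~126]{Volkov}. The action also preserves $v_R$, since $g$ preserves $v$ on $\mathbb{C}_p$. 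So the restricted maps $T_c\to A_c$ and $T_{c'}\to A_{c'}$ are equivariant bijections.

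Third, the quotient. By construction of the vector space structure on $A_\alpha$ via transport along $\Psi$, the bijection $\Psi$ is additive, so $A_c$ and $A_{c'}$ are subgroups for $\oplus$ because $T_c$ and $T_{c'}$ are lattices. The containment $T_{c'}\subseteq T_c$, and so $A_{c'}\subseteq A_c$, holds since $c'>c$. The group isomorphism $\Psi|_{T_c}$ carries the subgroup $T_{c'}$ onto $A_{c'}$, so it induces an isomorphism $T_c/T_{c'}\to A_c/A_{c'}$. This induced map is equivariant because both restrictions are.

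I expect the only real obstacle to be the claim, stated in \Cref{def: Tc}, that $T_c$ is a Galois-stable \emph{lattice}, i.e.\ closed under addition. Via the formula $a\ominus b=\lim S_m(\dots)$ this needs $v_R(a\ominus b)\ge\min(v_R(a),v_R(b))$. I would verify this from \eqref{eq: subtraction Ac}: each addition polynomial $S_m$ is homogeneous of weight $p^m$ when $X_i,Y_i$ are given weight $p^i$, and after the substitution $X_i\mapsto \pm a^{p^{m-i}}$ each entry has $v_R\ge p^m\min(v_R(a),v_R(b))$. Taking $p^m$-th roots via the bivector normalisation then yields the ultrametric bound. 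The same computation is in \cite[p.~126]{Volkov}, which can be cited for this step.
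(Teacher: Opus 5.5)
Your proof is correct and is essentially the paper's. There the lemma is read off directly from three facts: the $\operatorname{Gal}(\overline{\mathbb{Q}}_p/K)$-equivariance of $\Psi$ (cited from Volkov, p.~126), the additivity of $\Psi$ by transport of structure, and the observation that the defining conditions of $T_c$ and $A_c$ match under $\Psi$.

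One correction to your final paragraph, which concerns the lattice property that the paper simply takes as part of \Cref{def: Tc}. After the substitution, the entries $\pm a^{p^{m-i}}$ and $\pm b^{p^{m-i}}$ have valuations $p^{m-i}v_R(a)$ and $p^{m-i}v_R(b)$. So the last entries are only $\ge \min\{v_R(a),v_R(b)\}$, not $\ge p^m\min\{v_R(a),v_R(b)\}$, and there is no ``$p^m$-th root'' step. The correct argument is simpler: $S_m$ has integer coefficients and no constant term, so $v_R(S_m(\dots))\ge\min\{v_R(a),v_R(b)\}$ for all $m$, and this bound survives the limit. Alternatively, \Cref{lemma: valuations for the two differences are the same} gives $v_R(a\ominus b)=v_R(a-b)\ge\min\{v_R(a),v_R(b)\}$ directly.
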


Since we will often have to consider quotients of the form $A_c/A_{c'}$, it will be useful to use the notation $a \equiv b \bmod{A_{c'}}$ to mean $a \ominus b \in A_{c'}$. Note that the difference is taken with respect to the vector space structure introduced above.

\begin{lemma}\label{lemma: valuations of elements in Ac}
    Let $v(\alpha^{-1}) \geq 0$ and let $c>0$. For every $a \in A_c$ there exists $n \in \mathbb{Z}$ such that $v_R(a) = \frac{p^{2n}}{p^2-1}$.
\end{lemma}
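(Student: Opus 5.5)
The plan is a Newton polygon argument applied to the series in \Cref{eq: condition star alpha}, evaluated at $x = a^{(0)}$ for $a \in A_c$. Nonzero $a$ has $v_R(a)=v(a^{(0)})=:t>0$, hence $v(a^{(n)}) = t/p^{n}$ for all $n\ge 0$. (The element $a=0$ has infinite valuation and is excluded, or is trivially understood.)

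First, after the harmless factor $\pi_e^{-1}$, the series is a sum over $n\in\mathbb{Z}$ of two kinds of terms. One kind is $(-1)^np^n x^{p^{-2n}}$, with valuation $n + t p^{-2n}$. The other is $(-1)^np^n\alpha^{-1}\pi_e^2 x^{p^{-2n+1}}$, with valuation $n + v(\alpha^{-1}) + 2/e + t p^{-2n+1}$. For a sum of terms in $\mathbb{C}_p$ that converges to $0$, the minimal valuation among the terms must be attained at least twice. So I will show that, unless $t$ has the stated form, a single term has strictly smallest valuation, which is a contradiction.

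Second, compare the "even" terms among themselves. The function $f(n)=n+tp^{-2n}$ on $\mathbb{Z}$ is strictly convex in the relevant sense. One has $f(n)=f(n+1)$ iff $tp^{-2n}(1-p^{-2})=1$, i.e.\ $t = p^{2n+2}/(p^2-1)$. Otherwise $f$ has a unique minimiser $n_0$. Here I need care: $f(n)\to\infty$ as $n\to+\infty$ linearly, and as $n\to-\infty$ through $tp^{2|n|}$, so a minimum exists. Also $f(n)-f(n+1)=tp^{-2n}(1-p^{-2})-1$ is strictly decreasing in $n$, so a tie in the minimum can only come from consecutive indices.

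Third, the main obstacle is to check that the odd terms (those with coefficient $\alpha^{-1}$) can never attain or undercut this minimum, using $v(\alpha^{-1})\ge 0$. Write $h(n)=n+v(\alpha^{-1})+2/e+tp^{-2n+1}$. Put $s=tp^{-2n}$ and $m=n$. Then $h(m)\ge m+2/e+ps$, while the even terms at $m$ and at $m+1$ have valuations $m+s$ and $m+1+s/p^2$.

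I claim $\min(s,\,1+s/p^2) < 2/e+ps$ for every $s>0$. If $s\le 2/e+ps$, which always holds since $p>1$, then $m+s<h(m)$ strictly. The inequality is strict because $s<ps$ and $2/e>0$.

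So every odd term is strictly dominated by some even term, and the minimum valuation is attained only among even terms. By the second step it is attained twice only when $t=p^{2n+2}/(p^2-1)$ for some $n\in\mathbb{Z}$. Reindexing gives $v_R(a)=p^{2n}/(p^2-1)$. Convergence of the series, needed for the valuation argument to make sense, follows from the growth of the term valuations at both ends, as in \Cref{lem: V_alpha}.
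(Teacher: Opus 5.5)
Your proof is correct. The paper does not argue this lemma itself; its proof is only a pointer to Volkov's thesis (top of p.~128). So your argument is a self-contained substitute for that citation rather than a different route through something the paper proves.

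The direct ``minimum valuation must be attained twice'' analysis has the advantage of showing exactly where the hypothesis is used. The odd term of index $n$ exceeds the even term of the same index by $v(\alpha^{-1})+\frac{2}{e}+(p-1)tp^{-2n}$. Since $v(\alpha^{-1})\in\mathbb{Z}$ and $\frac{2}{e}<1$, this is positive for every value of $tp^{-2n}>0$ precisely when $v(\alpha^{-1})\geq 0$. For $v(\alpha^{-1})=-1$ (the canonical-subgroup case) the odd terms can undercut the even ones, and the argument no longer applies, consistent with that case being excluded.

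Your argument also shows that $c$ plays no role. The conclusion holds for every nonzero $a\in A_\alpha$, with $a=0$ necessarily excluded, as you note.

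Two small points of presentation:
\begin{itemize}
\item In your third step, the comparison with the even term of index $m+1$ and the quantity $\min(s,1+s/p^2)$ are superfluous. The same-index comparison $s<\frac{2}{e}+ps$ already suffices.
\item The ``attained at least twice'' principle for a doubly infinite convergent sum deserves a one-line justification. Because the term valuations tend to $+\infty$ in both directions, the remaining terms have a \emph{minimum} valuation strictly larger than the unique smallest one. Hence their sum cannot cancel that smallest term.
\end{itemize}
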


\begin{proof}
   See \cite[top of p.~128]{Volkov}.
\end{proof}

\begin{proposition}\label{prop: Tc is TpE}
Let $v(\alpha^{-1}) \geq 0$ and let $c>0$. Then $T_c$ is homothetic to the Tate module $T_pE$. In particular $E[p^k] \cong T_c/p^kT_c$ for all integers $k>0$.
\end{proposition}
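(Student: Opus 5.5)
The plan is to reduce the statement to \Cref{lemma: any lattice will do}. I will check two things: that $T_c$ is a Galois-stable $\Z_p$-lattice in $V_\alpha \cong V_pE$, and that $E[p]$ is an irreducible $\op{Gal}(\Qpbar/\Qp)$-module when $v(\alpha^{-1}) \geq 0$. Granting these, part (i) of that lemma shows that $T_c$ is homothetic to $T_pE$. Part (ii) then gives $E[p^k] \cong T_c/p^kT_c$ for all $k>0$.

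\emph{Irreducibility of $E[p]$.} I would read this off Volkov's explicit description of $E[p]$, as in the proof of \Cref{lemma: G(p) is not in the Cartan}. By \Cref{rmk: alpha role}(ii), the hypothesis $v(\alpha^{-1}) \geq 0$ excludes the case $v(\alpha)=1$. In the remaining cases, $E[p]\otimes\overline{\F}_p$ restricted to $I_{\Qp}$ is a sum of two fundamental characters of level $2$. Their exponents are $1\pm\frac{p^2-1}{e}$ and $p\mp\frac{p^2-1}{e}$.

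Suppose $E[p]$ had a Galois-stable line. Then $I_{\Qp}$ would act on that line through a power of the level-$1$ fundamental character. That would force one of the exponents to be divisible by $p+1$. Since $e \mid p+1$ and $e \le 6 < p+1$, the exponent $1\pm\frac{p^2-1}{e}$ is congruent to $1$ modulo $p+1$, so it is not divisible by $p+1$. The same argument handles $p\mp\frac{p^2-1}{e}$, which is congruent to $p$ modulo $p+1$. Hence $E[p]$ is irreducible.

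\emph{$T_c$ is a Galois-stable lattice.} Galois stability is clear: $\sigma\in\op{Gal}(\Qpbar/K)$ preserves $v_R$, and $\Psi$ is $\op{Gal}(\Qpbar/K)$-equivariant. For the full group $\op{Gal}(\Qpbar/\Qp)$, the twisted action \eqref{eq: twisted action} only multiplies $\underline a$ by roots of unity, which again preserves $v_R$.

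For closure under addition, I would use formula \eqref{eq: subtraction Ac} together with the fact that each $S_m$ is a polynomial with integer coefficients in which every monomial involves the variables. These imply $v_R(a\oplus b)\ge\min(v_R(a),v_R(b))$, and likewise for $\ominus$. Together with continuity, this makes $T_c$ a $\Z_p$-submodule.

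The key quantitative input is the behaviour of $v_R$ under multiplication by $p$. In $BW(R)$ we have $p=\varphi V$, and elements of $V_\alpha$ satisfy $\varphi^2=-p$. Together these should give that $p\cdot a$ corresponds to $\pm a^{p^2}$ up to the relevant sign conventions, so that
\[
v_R(p\cdot a)=p^2\,v_R(a).
\]
Now fix any lattice $L\subseteq V_\alpha$, for instance the image of $T_pE$.

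\begin{enumerate}
\item Every $u\in V_\alpha$ satisfies $p^Nu\in T_c$ for $N$ large, so $T_c$ spans $V_\alpha$ over $\Qp$.
\item By \Cref{lemma: valuations of elements in Ac}, the nonzero valuations lie in the discrete set $\{p^{2n}/(p^2-1)\}$. Elements of $L\setminus pL$ therefore take only finitely many valuation levels, so $L\setminus pL$ is compact and $v_R$ is bounded on it.
\item From (ii) we get $p^NL\subseteq T_c$. Conversely, any $u\notin p^{-N}L$ can be written as $p^{-m}u'$ with $u'\in L\setminus pL$ and $m>N$. Its valuation is then $p^{-2m}v_R(u')$, which is less than $c$ for $N$ large. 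Hence $T_c\subseteq p^{-N}L$.
\end{enumerate}

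The main obstacle is the second part: establishing the exact identity $v_R(p\cdot a)=p^2v_R(a)$, and proving that $v_R$ is bounded above and bounded away from $0$ on $L\setminus pL$. The upper bound should follow from continuity of $v_R$ in the topology of $V_\alpha$. The lower bound is the subtle direction: there are no nonzero elements of valuation $0$, and the valuation set is discrete away from $0$. I would first try to extract both facts from Volkov's argument on p.~128 of \cite{Volkov}, which yields \Cref{lemma: valuations of elements in Ac}.
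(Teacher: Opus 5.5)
Your reduction is the same as the paper's. The paper's proof consists only of two steps: $E[p]$ is (absolutely) irreducible when $v(\alpha^{-1})\ge 0$, citing \cite[bottom of p.~129]{Volkov}, and then \Cref{lemma: any lattice will do}(i)--(ii) applies. The paper never proves that $T_c$ is a Galois-stable lattice. That is built into \Cref{def: Tc}, which introduces $T_c$ as ``the Galois stable sublattice of $V_\alpha$'' following Volkov. So your second half is not needed for the proposition as framed.

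Two steps of your proposal fail as written.

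\emph{The irreducibility congruence is false.} We have $\frac{p^2-1}{e}=(p-1)\frac{p+1}{e}$. This is divisible by $p+1$ only if $e\mid p-1$. That never happens, since $e\in\{3,4,6\}$ divides $p+1$ and $\gcd(p-1,p+1)=2$. The correct residue is $\frac{p^2-1}{e}\equiv -\frac{2(p+1)}{e}\pmod{p+1}$; for example, with $p=11$ and $e=3$ one gets $40\equiv 4 \pmod{12}$. So for $v(\alpha)\le 0$ the exponents $1-\frac{p^2-1}{e}$ and $p+\frac{p^2-1}{e}$ are congruent modulo $p+1$ to $1+\frac{2(p+1)}{e}$ and $p-\frac{2(p+1)}{e}$. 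Both lie strictly between $0$ and $p+1$ because $e\ge 3$ and $p>3$. Hence neither exponent is divisible by $p+1$ and your conclusion survives, but this computation must replace your justification.

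\emph{Step (ii) of the lattice argument is a non sequitur.} The set $\{p^{2n}/(p^2-1): n\in\Z\}$ is unbounded and accumulates at $0$. So the discreteness from \Cref{lemma: valuations of elements in Ac} says nothing about $v_R$ taking finitely many values on $L\setminus pL$. What you actually need is a uniform positive lower bound for $v_R$ on a lattice $L=\Z_pu_1+\Z_pu_2$. Given that bound, $p^NL\subseteq T_c$ for large $N$. The upper bound on $L\setminus pL$ then follows from compactness, together with $v_R(p^k\ell)=p^{2k}v_R(\ell)$ and the ultrametric inequality for $\oplus$.

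The ultrametric inequality alone only gives $v_R(n\cdot a)\ge v_R(a)$ for $n\in\Z$. Extending this to $\lambda\in\Z_p$ needs a continuity property of $\Psi$ that you have not established. Your appeal to ``continuity'' for closure of $T_c$ under $\Z_p$ has the same problem. The simplest fix is to do what the paper does and take the lattice property of $T_c$ from Volkov, as recorded in \Cref{def: Tc}.
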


\begin{proof}
    Since $v(\alpha^{-1})\geq 0$, the representation $E[p]$ is absolutely irreducible by \cite[bottom of p.~129]{Volkov}. The proposition now follows from \Cref{lemma: any lattice will do}(i)-(ii).
\end{proof}

\begin{lemma}\label{lem: Tc vs Tp}
    Let $c>0$ and $v(\alpha^{-1})\geq 0$. We have $p^{k}T_{c}=T_{p^{2k}c}$, hence $E[p^k] \cong  T_c/T_{p^{2k}c} \cong A_c/A_{p^{2k}c}$ as $\operatorname{Gal}(\overline{\mathbb{Q}}_p/K)$-modules. Moreover, $A_{p^{2k}c}=p^kA_c$.
\end{lemma}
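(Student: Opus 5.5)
The key observation is that multiplication by $p$ in $V_\alpha$ is Frobenius composed with Verschiebung on $BW(R)$, and that on the bivectors $\mathbf{a}=((-1)^na^{p^{-n}})_n$ attached to elements of $A_\alpha$ it acts by a clean operation on $a$. I will compute $p\cdot u_{\mathbf{a}}$ explicitly. Since $\varphi^2\mathbf{a}=-p\mathbf{a}$ by \Cref{lem: V_alpha}(ii), we have $p\,\mathbf{a} = -\varphi^2\mathbf{a}$. The Frobenius acts componentwise by raising to the $p$-th power, so $-\varphi^2\mathbf{a} = -((-1)^n a^{p^{2-n}})_n$. Using that multiplication by $-1$ corresponds to $a\mapsto -a$ (as recorded above for $A_\alpha$), this equals $\mathbf{b}$ with $b=a^{p^2}$, possibly after checking the sign convention. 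Indeed, the $n$-th component of $-\varphi^2\mathbf{a}$ should match $(-1)^n b^{p^{-n}}$ with $b = a^{p^2}$ up to the global sign handled by the $-1$ rule. Hence $\Psi(p\cdot u)=a^{p^2}$ (or $-a^{p^2}$), and in particular $v_R(\Psi(pu)) = p^2 v_R(\Psi(u))$.

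With this in hand, both inclusions follow. First, $pT_c\subseteq T_{p^2c}$ because valuations multiply by $p^2$. Conversely, given $u_{\mathbf{b}}\in T_{p^2c}$, the element $p^{-1}u_{\mathbf{b}}\in V_\alpha$ corresponds to $b^{p^{-2}}$ (the $p^2$-th root taken in the perfect ring $R$, up to sign). This element has valuation $v_R(b)/p^2\geq c$, and it again solves \Cref{eq: condition star alpha}, being the image of an element of the vector space $V_\alpha$. So $p^{-1}u_{\mathbf b}\in T_c$, giving $T_{p^2c}\subseteq pT_c$. Iterating yields $p^kT_c=T_{p^{2k}c}$.

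For the remaining claims, \Cref{prop: Tc is TpE} gives $E[p^k]\cong T_c/p^kT_c = T_c/T_{p^{2k}c}$, and \Cref{lemma: Tc or Ac ça m'est égal} with $c'=p^{2k}c$ transports this to $A_c/A_{p^{2k}c}$ Galois-equivariantly over $K$. Likewise, $A_{p^{2k}c}=\Psi(T_{p^{2k}c})=\Psi(p^kT_c)=p^k\cdot A_c$, where the multiplication is in the transported vector space structure.

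The main obstacle is the sign and index bookkeeping in identifying $-\varphi^2\mathbf{a}$ with a bivector of the form $\mathbf{b}$. If this fails, I would instead use $p=\mathrm{V}\varphi$, and verify directly that shifting indices after applying $\varphi$ produces $((-1)^n(\pm a^{p^2})^{p^{-n}})_n$. Either way, only the valuation identity $v_R(\Psi(pu))=p^2v_R(\Psi(u))$ is needed. Note that \Cref{lemma: valuations of elements in Ac} is consistent with this, since the allowed valuations $p^{2n}/(p^2-1)$ are stable under multiplication by $p^{\pm 2}$.
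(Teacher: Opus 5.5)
Your proposal is correct and essentially follows the paper's proof: show $p\cdot u_{\mathbf a}=u_{\mathbf b}$ with $v_R(b)=p^2v_R(a)$, use bijectivity of multiplication by $p$ on $V_\alpha$ for the reverse inclusion, induct on $k$, and transport through $\Psi$ and \Cref{prop: Tc is TpE}.

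The only difference is cosmetic. The paper computes $[p]\mathbf a$ via $[p]=\varphi\mathrm{V}$, where an index shift gives $(-a_n^{p^2})_n$ directly; you use $\varphi^2\mathbf a=-p\mathbf a$ together with componentwise negation. Both routes give exactly $b=-a^{p^2}$, so your first guess $b=a^{p^2}$ has the wrong sign. This is harmless, since only $v_R(b)$ is used.
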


\begin{proof}
        
Let $u \in T_c$. By \Cref{def: Tc,def: A alpha and A c} and \Cref{lem: V_alpha}(v), there exists $a \in A_c$ such that $u=u_{\mathbf{a}}$, where 
\[
\mathbf{a}=(a_n)_{n \in \mathbb{Z}} \in BW(R), \quad a_n = (-1)^n a^{p^{-n}}
\]
and we use again the convention $a^{p^{-n}}=a^{(n)}$ for $n \geq 0$. Recalling that $[p] = \varphi \text{V}$ on $BW(R)$, we have
\[
[p] \mathbf{a} = [p](a_n)_{n \in \mathbb{Z}} = \varphi \text{V} (a_n)_{n \in \mathbb{Z}}= \varphi (a_{n-1})_{n \in \mathbb{Z}} = (a_{n-1}^p)_{n \in \mathbb{Z}} = ( -a_{n}^{p^2} )_{n \in \mathbb{Z}},
\]
where the last equality follows from the definition of the $a_n$.
Clearly, $[p] \mathbf{a}$ is of the form $\mathbf{b} = \left( (-1)^n b^{p^{-n}} \right)$, with $b=-a^{p^2}$, and in particular $v_R(b)=p^2v_R(a)$. Applying the bijection $\Psi^{-1}$ we find $[p]u_{\mathbf{a}}=u_{\mathbf{b}}$, and $u_{\mathbf{a}}$ is in $T_c$ if and only if $u_{\mathbf{b}}$ is in $T_{p^2c}$. On the other hand, given that multiplication by $p$ is bijective on $V_\alpha$, every $u_\mathbf{b} \in T_{p^2c}$ is of the form $u_\mathbf{b} = p u_\mathbf{a}$ for some $u_\mathbf{a} \in V_\alpha$, and the same valuation calculation shows $u_\mathbf{a} \in T_c$.
It follows that $pT_c = T_{p^2c}$. 
This clearly implies $p^kT_c=T_{p^{2k}c}$ by induction. The isomorphisms involving $A_c$ and the final statement follow from \Cref{lemma: Tc or Ac ça m'est égal} and the linearity of $\Psi$, and the isomorphism with $E[p^k]$ follows from \Cref{prop: Tc is TpE}.
\end{proof}

\subsection{Properties of $g_k(x)$}
We establish some basic properties of the roots $\mathcal{R}_k$ of $g_k(x)$ (see \Cref{eq: polynomial g}), namely their valuations and the valuations of their differences. To do this, we define a suitable shift of $g_k$ and then compute its Newton polygon (which will be predominantly done in \Cref{lemma: shifted Newton polygon}).

\begin{proposition}\label{prop: shifted Newton polygon}
Assume $v(\alpha^{-1}) \geq 0$. For all roots $\gamma \in \mathcal{R}_k,$ there is a partition $\mathcal{R}_k\setminus \{\gamma\} = \bigsqcup_{i=0}^{k-1} \mathcal{R}_{k,\gamma}^i$ which satisfies:
\begin{itemize}
    \item $\# \mathcal{R}_{k,\gamma}^i = p^{2i}(p^2-1)$;
    \item $v(\gamma'-\gamma)=\frac{1}{p^{2i}(p^2-1)}$ for all $\gamma' \in \mathcal{R}_{k,\gamma}^i$.
\end{itemize}
In particular, all roots of $\mathcal{R}_k$ are distinct and $\#\mathcal{R}_k=p^{2k}$.
\end{proposition}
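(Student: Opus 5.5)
Fix $\gamma \in \mathcal{R}_k$ and consider the shifted polynomial $h_\gamma(y) := g_k(y+\gamma)$, whose roots are exactly $\gamma'-\gamma$ for $\gamma' \in \mathcal{R}_k$ (counted with multiplicity). The plan is to compute the Newton polygon of $h_\gamma(y)/y$ and read off the valuations of the roots. The key observation is that $g_k$ is an additive-type polynomial up to $p$-adically small error. Its monomials are $x^{p^m}$, and $(y+\gamma)^{p^m} = y^{p^m}+\gamma^{p^m} + (\text{binomial cross terms})$, and these cross terms have coefficients divisible by $p$. So
\[
h_\gamma(y) = g_k(y) + g_k(\gamma) + C_\gamma(y) = g_k(y) + C_\gamma(y),
\]
with $C_\gamma(y) = \sum_j c_j y^j$ and $c_j$ of positive valuation controlled by $v\binom{p^m}{j}$ and by $v(\gamma)$. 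The constant term vanishes since $g_k(\gamma)=0$.

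First, I will compute the Newton polygon of $g_k$ itself. The relevant vertices are $(p^{2k-2n}, n)$ for $n=0,\dots,k$ coming from the terms $(-1)^np^nx^{p^{2k-2n}}$. The terms $\alpha^{-1}\pi_e^2 p^n x^{p^{2k+1-2n}}$ have valuation $n+2/e+v(\alpha^{-1}) \ge n+2/e$, and they lie on or above the segments joining those vertices. The point $(p^{2k-2n+1}, n+2/e)$ lies strictly above the segment from $(p^{2k-2n},n)$ to $(p^{2k-2n+2},n-1)$, because that segment at abscissa $p^{2k-2n+1}$ has height $n-\frac{p-1}{p^2-1}\cdot\frac{1}{1}\cdot p^{\,\cdot}$ scaled appropriately, roughly $n - \frac{1}{p+1} < n+2/e$. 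So the segments have slopes $-1/(p^{2i}(p^2-1))$ and horizontal lengths $p^{2i}(p^2-1)$ for $i=0,\dots,k-1$, together with the segment to $(1,k)$. Hence the nonzero roots of $g_k$ have valuations $\frac{1}{p^{2i}(p^2-1)}$ with the stated multiplicities, and $v(\gamma)\ge \frac{1}{p^{2k-2}(p^2-1)}$. The root $0$ is also present.

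Second, the main step is to show that $C_\gamma$ does not disturb this Newton polygon on $y^j$ for $j\ge1$. For the linear coefficient, $h_\gamma'(0)=g_k'(\gamma)$, and $g_k'(x)=(-1)^kp^k + (\text{terms divisible by } p^{k+1} \text{ or involving } \alpha^{-1}\pi_e^2 p^k\cdot p x^{p-1})$, so $v(h_\gamma'(0))=k$, matching $g_k$. For $1<j$, the coefficient of $y^j$ in $(y+\gamma)^{p^m}$ is $\binom{p^m}{j}\gamma^{p^m-j}$. Its valuation is $m - v(j) + (p^m-j)v(\gamma)$, and I must check that $p^{n}\binom{p^{2k-2n}}{j}\gamma^{\cdots}$ lies strictly above the polygon at abscissa $j$, except at the vertex abscissas where it does not change the lowest term. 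This is the main obstacle. The gain $2k-2n-v(j)$ from the binomial must beat the drop of the polygon. For $j$ between consecutive vertices, the polygon is linear with tiny slopes, so a loss of up to $v(j)$ in the valuation against the gain $(2k-2n-v(j))$ is fine. However, near $j=1$ to $p^2$ one needs the crude bound $(p^m-j)v(\gamma)$ with $v(\gamma)\ge 1/(p^{2k-2}(p^2-1))$. I expect this is exactly where the hypothesis $p>\sqrt{k}$ (or a similar bound like $p^2>k$) enters, to ensure $v(\binom{p^m}{j}) \ge 2k-2n-\log_p j$ dominates. If the pure bound fails, I would instead note that $\gamma$ itself, being a root, either is $0$ or has one of the listed valuations, and use that sharper value.

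Finally, from the fact that $h_\gamma(y)/y$ has the same Newton polygon as $g_k(y)/y$, there are exactly $p^{2i}(p^2-1)$ roots $\gamma'-\gamma$ of valuation $\frac{1}{p^{2i}(p^2-1)}$ for each $i$. These define $\mathcal{R}^i_{k,\gamma}$. None of these roots is $0$, so the root $\gamma$ is simple. As $\gamma$ is arbitrary, all roots are distinct, and counting gives $1+\sum_{i=0}^{k-1} p^{2i}(p^2-1)=p^{2k}$.
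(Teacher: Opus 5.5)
Your plan is the same as the paper's: shift by $\gamma$, show that the Newton polygon of $g_k(y+\gamma)$ is the hull of $(0,\infty)$ and the points $(p^{2i},k-i)$, $0\le i\le k$, and read off slopes and lengths. The gap is that the step you call the main obstacle is never carried out. Your proposed way of closing it is also not available. The proposition assumes only $v(\alpha^{-1})\ge 0$ and imposes no relation between $p$ and $k$, so an argument that needs $p>\sqrt{k}$ would prove a weaker statement. The ``tiny slopes'' heuristic does not settle the matter either, because it does not say which summands can contribute to a given coefficient $y^j$.

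The missing observation is that the exact formula $v\binom{p^M}{j}=M-v(j)$ suffices once you add one constraint: $x^{p^M}$ contributes to $y^j$ only if $j\le p^M$. Nothing about $\gamma$ is needed beyond $v(\gamma)\ge 0$, which is the only thing your unshifted Newton polygon is needed for.

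Concretely, take $p^{2m}<j<p^{2m+2}$ with $0\le m\le k-1$, so $v(j)\le 2m+1$.
\begin{itemize}
\item The summand $(-1)^np^nx^{p^{2k-2n}}$ (including $n=0$) contributes only if $p^{2k-2n}>p^{2m}$, that is, $k-n\ge m+1$. Its contribution then has valuation at least $2k-n-(2m+1)\ge k-m$.
\item The summand $(-1)^n\alpha^{-1}\pi_e^2p^nx^{p^{2k+1-2n}}$ contributes only if $k-n\ge m$. Its contribution has valuation at least $2k+1-n-(2m+1)+\frac{2}{e}\ge k-m+\frac{2}{e}$.
\end{itemize}
On the open interval $(p^{2m},p^{2m+2})$ the polygon lies strictly below height $k-m$, so all these points lie strictly above it.

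At a vertex $j=p^{2i}$ with $i\le k-1$, the summand with $n=k-i$ contributes exactly $p^{k-i}$, with no power of $\gamma$. Every other contribution has valuation at least $k-i+1$: the same two constraints give $n\le k-i-1$, respectively $n\le k-i$. So there is no cancellation at the vertices.

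This is precisely the content of \Cref{lemma: shifted Newton polygon}. With it in place, your reading-off of slopes, multiplicities and the count $1+\sum_{i=0}^{k-1}p^{2i}(p^2-1)=p^{2k}$ goes through unchanged.
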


\begin{proof}
Let $h_k(t)=g_k(t+\gamma)$. The elements $\gamma'-\gamma$ for $\gamma' \in \mathcal{R}_k$ are the roots of $h_k(t)$. The result will follow from a determination of the Newton polygon of $h_k$; namely, that it is given by the convex hull of the points
\[
\{(0,\infty) \} \, \cup \, \{(p^{2i},k-i) : 0 \leq i \leq k\}.
\]

Indeed, this Newton polygon consists of segments of length $1$ and $p^{2i+2}-p^{2i}$ for $0 \leq i \leq k-1$, with respective slopes $\infty$ and $\frac{1}{(p^2-1)p^{2i}}$ for $0 \leq i \leq k-1$, which implies the statement. Observe that the coefficient $h_k^{(i)}$ of $t^i$ in $h_k$ is given by
\begin{equation}\label{eq: coefficients shifted polynomial}
h_k^{(i)} = \binom{p^{2k}}{i}\gamma^{p^{2k}-i} + \sum_{n=1}^k (-1)^np^n \left( \alpha^{-1}\pi_e^2 \binom{p^{2k+1-2n}}{i}\gamma^{p^{2k+1-2n}-i} + \binom{p^{2k-2n}}{i}\gamma^{p^{2k-2n}-i} \right),
\end{equation}
where $\binom{a}{b}=0$ if $b>a$. The Newton polygon of $h_k$ now follows from \Cref{lemma: shifted Newton polygon}.
\end{proof}

\begin{remark}\label{rmk: sum of roots in a group}
    Note that for all $0 \leq i \leq k-1$ we have $\sum_{\gamma' \in \mathcal{R}_{k, \gamma}^i} v(\gamma-\gamma') = 1$.
\end{remark}

Before we complete the computation of the Newton polygon, we first state a couple of straightforward combinatorial facts on the valuation of binomial coefficients.

\begin{lemma}\label{lemma: binomial valuation}
Let $p$ be a prime and let $j$ be a positive integer.  Then $v\left( \binom{p^j}{a} \right) = j-v(a)$ for all $1 \leq a \leq p^j$. In particular:
\begin{enumerate}
    \item $v\left( \binom{p^j}{a} \right) \ge 1$ whenever $a \ne 0, p^j$;
    \item If $1 \leq a< p^m$ for some $m \leq j$, then $v\left( \binom{p^j}{a} \right) \geq j-m+1$.
\end{enumerate}
\end{lemma}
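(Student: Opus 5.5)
The plan is to prove the exact formula $v\bigl(\binom{p^j}{a}\bigr)=j-v(a)$ first, using Kummer's theorem or Legendre's formula, and then read off (i) and (ii) as immediate consequences.

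For the main formula, write
\[
\binom{p^j}{a} = \frac{p^j}{a}\binom{p^j-1}{a-1}.
\]
We claim $v\bigl(\binom{p^j-1}{a-1}\bigr)=0$ for $1\le a\le p^j$. Since $p^j-1$ has all base-$p$ digits equal to $p-1$, adding $a-1$ and $p^j-a$ in base $p$ produces no carries. By Kummer's theorem the valuation is therefore $0$. Equivalently, one can pair factors in
\[
\binom{p^j-1}{a-1}=\prod_{i=1}^{a-1}\frac{p^j-i}{i}
\]
and use $v(p^j-i)=v(i)$ for $1\le i<p^j$. Either way we get $v\bigl(\binom{p^j}{a}\bigr)=j-v(a)$.

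For (i), let $1\le a<p^j$. Then $v(a)\le j-1$, so the valuation is at least $1$. The case $a=0$ is excluded by hypothesis, and $a=p^j$ gives valuation $0$, which is consistent with the formula.

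For (ii), let $1\le a<p^m$. Then $v(a)\le m-1$, so $j-v(a)\ge j-m+1$.

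No step here is a real obstacle. The only care needed is the boundary bookkeeping: $a$ ranges over $[1,p^j]$, and the formula degenerates correctly at $a=p^j$.
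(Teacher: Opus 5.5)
Your proof is correct, but it is organised differently from the paper's. The paper applies Legendre's formula directly to $\binom{p^j}{a}$. It then checks, level by level, that for each $1\le i\le j$ the quantity $\lfloor p^j/p^i\rfloor-\lfloor a/p^i\rfloor-\lfloor (p^j-a)/p^i\rfloor$ equals $1$ exactly when $p^i\nmid a$ and $0$ otherwise, and that all terms vanish for $i>j$. So the valuation counts the levels $v(a)<i\le j$, which is in effect a carry count for $a+(p^j-a)$.

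You instead use the absorption identity $\binom{p^j}{a}=\frac{p^j}{a}\binom{p^j-1}{a-1}$. This isolates the factor $p^j/a$, which visibly has valuation $j-v(a)$, and reduces the lemma to the single claim that $\binom{p^j-1}{a-1}$ is a $p$-adic unit. Your pairing argument for that claim uses only $v(p^j-i)=v(i)$ for $0<i<p^j$, so it needs neither Legendre nor Kummer. The Kummer version is also fine: two numbers below $p^j$ that sum to $p^j-1$ (all digits $p-1$) cannot produce a carry.

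The paper's computation has the mild advantage of showing exactly which powers $p^i$ contribute. Yours makes the formula transparent in one line. In both, (i) and (ii) follow from the exact formula just as you describe.
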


\begin{proof}
We use Legendre’s formula $v(n!)=\sum_{i\ge1}\left\lfloor \frac{n}{p^i}\right\rfloor$ to compute $v\binom{p^j}{a}
= \sum_{i \geq 1} \left( \left\lfloor \frac{p^j}{p^i}\right\rfloor
- \left\lfloor \frac{a}{p^i}\right\rfloor
- \left\lfloor \frac{p^j-a}{p^i}\right\rfloor \right)$. For each $1 \leq i \leq j$, write $a=qp^i +r$ with $0 \leq r < p^i$, so $p^j-a=(p^{j-i}-q-1)p^i+(p^i-r)$. One then checks that $\left\lfloor \frac{p^j}{p^i}\right\rfloor
- \left\lfloor \frac{a}{p^i}\right\rfloor
- \left\lfloor \frac{p^j-a}{p^i}\right\rfloor \in \{0,1\}$ and equals $0$ if and only if $r=0$. It is now clear that the terms that contribute $1$ are precisely the $i$ such that $v(a) +1 \leq i \leq j$, giving $v\binom{p^j}{a}
= j-v(a)$, as claimed.
\end{proof}

\begin{lemma}\label{lemma: shifted Newton polygon}
Assume $v(\alpha^{-1}) \geq 0$ and $k\geq 1$. Fix $\gamma \in \mathcal{R}_k$ and let $h_k(t) = g_k(t+\gamma) = \sum_{i=0}^{p^{2k}} h_k^{(i)} t^i$. Then
\begin{enumerate}
    \item $h_k^{(p^{2k})}=1$ and $h_k^{(0)}=0$;
    \item $v(h_k^{(p^{2j})})=k-j$ for $0 \leq j \leq k-1$;
    \item if $p^{2m} < i < p^{2m+2}$ for some $0 \leq m \leq k-1$, then $v(h_k^{(i)}) \geq v(h_k^{(p^{2m})})=k-m$.
\end{enumerate}
\end{lemma}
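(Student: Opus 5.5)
The plan is to read off the valuation of each coefficient $h_k^{(i)}$ directly from \Cref{eq: coefficients shifted polynomial}, estimating every term with \Cref{lemma: binomial valuation}. The only extra input needed is that $\gamma$ is integral, $v(\gamma) \geq 0$, so that the powers of $\gamma$ cannot lower valuations. We will also use $v(\alpha^{-1}\pi_e^2) \geq 2/e > 0$, which follows from $v(\alpha^{-1}) \geq 0$.

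\emph{Step 0: integrality of $\gamma$.} Compute the Newton polygon of $g_k$ itself. The monomials $(-1)^n p^n x^{p^{2k-2n}}$ give the points $(p^{2k-2n}, n)$ for $0 \le n \le k$. The remaining monomials give points $(p^{2k+1-2n}, \geq n + 2/e)$. On the segment joining $(p^{2k-2n},n)$ and $(p^{2k-2n+2},n-1)$, the lower hull at abscissa $p^{2k+1-2n}$ has height less than $n$. Hence these extra points lie strictly above the hull, and the hull has slopes $\frac{1}{p^{2j}(p^2-1)} > 0$. Since $g_k$ has no constant term, $0$ is a root. All other roots then have valuation at least $\frac{1}{p^{2k-2}(p^2-1)} > 0$, so every $\gamma \in \mathcal{R}_k$ satisfies $v(\gamma) \geq 0$.

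\emph{Step 1: part (i).} Part (i) is immediate: $g_k$ is monic of degree $p^{2k}$, so $h_k^{(p^{2k})}=1$, and $h_k^{(0)} = g_k(\gamma) = 0$.

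\emph{Step 2: part (ii).} Take $i = p^{2j}$ with $j \le k-1$. The term with $n = k-j$ in the second family of \Cref{eq: coefficients shifted polynomial} is $(-1)^{k-j} p^{k-j}$ times $\binom{p^{2j}}{p^{2j}}\gamma^0 = 1$, so it has valuation exactly $k-j$. I will show every other term has strictly larger valuation; the ultrametric inequality then gives (ii).
\begin{itemize}
\item The leading term $\binom{p^{2k}}{p^{2j}}\gamma^{p^{2k}-p^{2j}}$ has valuation at least $2k-2j > k-j$.
\item The terms $p^n\binom{p^{2k-2n}}{p^{2j}}\gamma^{\cdots}$ with $n<k-j$ have valuation at least $n + (2k-2n-2j) > k-j$.
\item The terms with $n > k-j$ vanish, since then $p^{2k-2n} < p^{2j}$.
\item The $\alpha$-terms vanish unless $2k+1-2n \ge 2j$, i.e.\ $n \le k-j$. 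In that case their valuation is at least $n + 2/e + (2k+1-2n-2j) > k-j$.
\end{itemize}

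\emph{Step 3: part (iii).} Take $p^{2m} < i < p^{2m+2}$. Then $v(i) \le 2m+1$, and we bound each term from below by $k-m$ using $v\binom{p^r}{i} = r - v(i)$.
\begin{itemize}
\item The leading term has valuation at least $2k-2m-1 \ge k-m$, because $m \le k-1$.
\item A term $p^n\binom{p^{2k-2n}}{i}$ is nonzero only if $p^{2k-2n} \geq i > p^{2m}$, i.e.\ $n \le k-m-1$. Its valuation is then at least $n + 2k-2n-(2m+1) \ge k-m$.
\item An $\alpha$-term $p^n\alpha^{-1}\pi_e^2\binom{p^{2k+1-2n}}{i}$ is nonzero only if $n \le k-m$. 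Its valuation is at least $n + 2/e + 2k+1-2n-(2m+1) \ge k-m + 2/e$.
\end{itemize}
Combined with Step 2 for $j=m$, this gives $v(h_k^{(i)}) \ge k-m = v(h_k^{(p^{2m})})$.

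The only non-mechanical point is Step 0: checking that the $\alpha^{-1}\pi_e^2$ monomials lie above the hull, so that $g_k$ has integral roots. After that, everything is bookkeeping with \Cref{lemma: binomial valuation}.
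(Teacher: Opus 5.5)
Your proof is correct and is essentially the paper's argument: you estimate each coefficient $h_k^{(i)}$ term by term with the binomial valuation lemma, use the unique minimal term $p^{k-j}$ (coming from $n=k-j$) to get (ii), and use the constraints $n\le k-m-1$ (resp.\ $n\le k-m$ for the $\alpha^{-1}\pi_e^2$ terms) to get (iii). The one addition is your Step 0, which derives $v(\gamma)\geq 0$ from the Newton polygon of $g_k$ and so justifies the positivity of $v(\gamma)$ that the paper uses without comment.
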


\begin{proof}
(i): $h_k^{(p^{2k})}$ is the leading coefficient of $h_k$ and hence equal to that of $g_k$. Note $h_k^{(0)}=g_k(\gamma)=0$ by definition.

(ii): Let $0 \leq j \leq k-1$ and note that the binomomial coefficients in \eqref{eq: coefficients shifted polynomial} (taking $i=p^{2j}$) are 0 for $n>k-j$, so we get the following expression for the $p^{2j}$-th coefficient of $h_k$:
\[
h_k^{(p^{2j})} = \underbrace{\binom{p^{2k}}{p^{2j}}\gamma^{p^{2k}-p^{2j}}}_{>2k-2j} + \sum_{n=1}^{k-j} (-1)^n \left( \underbrace{\alpha^{-1}\pi_e^2p^n \binom{p^{2k+1-2n}}{p^{2j}}\gamma^{p^{2k+1-2n}-p^{2j}}}_{>2k+1-n-2j} + \underbrace{p^n\binom{p^{2k-2n}}{p^{2j}}\gamma^{p^{2k-2n}-p^{2j}}}_{\geq 2k-n-2j} \right)
\]
where in the underbraces we give bounds on the valuations of the terms, using \Cref{lemma: binomial valuation} and noting that $v(\alpha^{-1}) \geq 0$ and $v(\gamma),v(\pi_e)>0$. It is clear that there is a unique term of minimal valuation: the rightmost term of the sum when $n=k-j$, in which case the exponent of $\gamma$ is $0$ and the valuation is $k-j$ as claimed.

(iii) Let $p^{2m}<i<p^{2m+2}$. Using \Cref{lemma: binomial valuation}(ii) and considering only the powers of $p$ and binomial coefficients, we get the following lower bounds on valuations:
\[
h_k^{(i)} = \underbrace{\binom{p^{2k}}{i}\gamma^{p^{2k}-i}}_{{\geq 2k-2m-1}} + \sum_{n=1}^{k-1} (-1)^n \left( \underbrace{\alpha^{-1}\pi_e^2p^n \binom{p^{2k+1-2n}}{i}\gamma^{p^{2k+1-2n}-i}}_{{\geq 2k-2m-n}} + \underbrace{p^n\binom{p^{2k-2n}}{i}\gamma^{p^{2k-2n}-i}}_{{\geq 2k-2m-n-1}} \right).
\]

We now examine these terms more closely, showing that each of them has valuation at least $k-m$. Note $2k-2m-1=(k-m) + ((k-1) - m) \geq k-m$ since $m \leq k-1$, so $v(\binom{p^{2k}}{i}) \geq k-m$ always holds.
We now consider the valuation of $p^n\binom{p^{2k+1-2n}}{i}$ which is at least $2k-2m-n$; this is at least $k-m$ whenever $n \leq k-m$. On the other hand, for this summand to occur, we must have $p^{2m} < i \leq p^{2k+1-2n}$ which implies that $n \leq k-m$, so this summand always has valuation at least $k-m$ as claimed.
The final term of $p^n\binom{p^{2k-2n}}{i}$ with valuation at least $2k-2m-n-1$ is analogous: this is at least $k-m$ when $n \leq k-m-1$ and the appearance of the summand implies $n \leq k-m-1$ as before.
\end{proof}

\begin{remark}\label{rmk: bijection is sensible}
As we will prove formally in what follows, there is a natural bijection between the roots of $g_k(x)$ and the points in $E[p^k]$.
Under this bijection, the root $0$ corresponds to the identity element of $E[p^k]$ and the $p^{2i}(p^2-1)$ roots of valuation $\frac{1}{p^{2i}(p^2-1)}$ correspond to the $p^{2i}(p^2-1)$ points of order exactly $p^{i+1}$ for $0 \leq i \leq k-1$. The intuitive interpretation of the fact that the partition structure of $\mathcal{R}_k \setminus \{\gamma\}$ is independent of the choice of $\gamma$ is that for every choice of $P \in E[p^k]$, the number of elements $Q \in E[p^k]$ such that $Q-P$ has order $p^i$ depends only on $i$ and not on $P$.
\end{remark}

\subsection{Construction of the map $\Phi_k$}\label{sect: construction of Phik}
In this section, we construct the map $\Phi_k$ whose existence is asserted by \Cref{thm: p2 torsion points correspond to roots of g}, assuming the notation and hypotheses of that theorem. 

\begin{lemma}\label{lemma: roots distance}
    Assume $v(\alpha^{-1}) \geq 0$. Let $n \in \{0, \dots , k\}$ and $a \in \C_p$. Let $\gamma \in \mathcal{R}_k$ be a root of $g_k$ that maximises $v(a-\gamma)$ amongst the roots of $g_k$.
    If $v(g_k(a)) > k-n+\frac{1}{p^2-1}$, then $v(a-\gamma)> \frac{1}{p^{2n}(p^2-1)}$. Moreover, if $v(g_k(a)) \le k-n+1 + \frac{1}{p^2-1}$, then $v(a-\gamma) = \frac{1}{p^{2n}}(v(g_k(a)) - k+n)$. 
\end{lemma}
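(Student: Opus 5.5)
The approach is to factor $g_k$ over its roots and read off valuations from the root structure given by \Cref{prop: shifted Newton polygon}. Since $g_k$ is monic of degree $p^{2k}$ with distinct roots $\mathcal{R}_k$, we have
\[
v(g_k(a)) = \sum_{\gamma' \in \mathcal{R}_k} v(a-\gamma').
\]
Fix $\gamma$ maximising $v(a-\gamma)$ and set $\delta := v(a-\gamma)$. Using the partition $\mathcal{R}_k\setminus\{\gamma\} = \bigsqcup_{i} \mathcal{R}^i_{k,\gamma}$, the strong triangle inequality gives, for $\gamma' \in \mathcal{R}^i_{k,\gamma}$,
\[
v(a-\gamma') \ge \min\{\delta, d_i\}, \qquad d_i := \tfrac{1}{p^{2i}(p^2-1)}.
\]
Maximality of $\delta$ gives $v(a-\gamma') \le \delta$. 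Consequently, if $\delta < d_i$ then $v(a-\gamma') = \delta$ exactly, and if $\delta > d_i$ then $v(a-\gamma') = d_i$. The degenerate cases $\delta = d_i$ and $\delta=\infty$ (i.e.\ $a\in\mathcal{R}_k$) I will treat separately; both are harmless. This yields an explicit piecewise-linear, strictly increasing function $F(\delta) = v(g_k(a))$.

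Next I compute $F$. Suppose $d_{m} < \delta < d_{m-1}$ for some $1\le m\le k$, with the conventions $d_{-1}=\infty$ and $d_k=0$ for the extreme ranges. The classes $i\ge m$ have $d_i<\delta$, so they contribute $\#\mathcal{R}^i_{k,\gamma}\cdot d_i = 1$ each by \Cref{rmk: sum of roots in a group}, for a total of $k-m$. The classes $i<m$, together with $\gamma$ itself, contribute $\delta$ each, and there are $1+\sum_{i<m}p^{2i}(p^2-1) = p^{2m}$ of them. Hence
\[
F(\delta) = k-m + p^{2m}\delta \quad \text{on } [d_m, d_{m-1}].
\]
By continuity this formula also holds at the endpoints, which handles the degenerate cases. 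In particular
\[
F(d_m) = k-m+\tfrac{1}{p^2-1}, \qquad F(d_{m-1}) = k-m+\tfrac{p^2}{p^2-1} = k-(m-1)+\tfrac{1}{p^2-1},
\]
so the breakpoints match up consistently.

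Both claims now follow from monotonicity. For the first claim, $F$ is strictly increasing and $F(d_n) = k-n+\frac{1}{p^2-1}$, so $v(g_k(a)) > k-n+\frac1{p^2-1}$ forces $\delta > d_n$. For the second claim, assume moreover $v(g_k(a))\le k-n+1+\frac1{p^2-1} = F(d_{n-1})$. Then $\delta \in (d_n, d_{n-1}]$, and inverting the linear piece gives $\delta = p^{-2n}\bigl(v(g_k(a))-k+n\bigr)$. For $n=0$ the interval is $(d_0,\infty]$ with $F=k+\delta$, and the formula is the same.

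The main obstacle is the bookkeeping in the previous step: getting the exact contributions via the ultrametric equality cases, which requires $\delta\ne d_i$ or a continuity argument, and checking the edge indices $n=0$ and $n=k$.
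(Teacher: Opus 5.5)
Your proposal is correct and follows the paper's proof: the paper also places $v(a-\gamma)$ in a band $\frac{1}{p^{2r}(p^2-1)} < v(a-\gamma) \le \frac{1}{p^{2r-2}(p^2-1)}$, uses \Cref{prop: shifted Newton polygon} and \Cref{rmk: sum of roots in a group} to get $v(g_k(a)) = p^{2r}v(a-\gamma)+k-r$, and reads off both claims from this, with your monotonicity of $F$ doing the work of its short inequality chains and of its separate treatment of the cases $r<0$ and $r>k$. Only cosmetic fixes are needed: keep $d_k=\frac{1}{p^{2k}(p^2-1)}$ rather than $0$ (the $n=k$ case needs $F(d_k)=\frac{1}{p^2-1}$), let the bottom piece $F(\delta)=p^{2k}\delta$ cover every $\delta\le d_{k-1}$ including $\delta\le 0$, and drop the appeal to continuity, since your two inequalities already give $v(a-\gamma')=\min\{\delta,d_i\}$ exactly even when $\delta=d_i$.
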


\begin{proof}
    Write $g_k(x) = \prod_{i=1}^{p^{2k}} (x-\gamma_i)$ where $\gamma=\gamma_1$. If $a=\gamma$, there is nothing to prove; hence, from now on we assume that $a \neq \gamma$.
    Let $r$ be the unique integer such that $\frac{1}{p^{2r}(p^2-1)} < v(a-\gamma) \le \frac{1}{p^{2r-2}(p^2-1)}$ and fix an ordering on the roots $\gamma_i$ such that the sequence $(v(\gamma-\gamma_i))_{1 \leq i \leq p^{2k}}$ is (weakly) decreasing. We claim that:
    \[
    v(a-\gamma_i)=\begin{cases}
        v(a-\gamma) & \text{if } 1 \leq i \leq p^{2r}, \\
        v(\gamma-\gamma_i) &\text{else}.
    \end{cases}
    \]
    Indeed, if $i>p^{2r}$, then by \Cref{prop: shifted Newton polygon}, $v(\gamma-\gamma_i)<v(a-\gamma)$ so $v(a-\gamma_i)= v((a-\gamma) + (\gamma-\gamma_i)) = v(\gamma_i-\gamma)$. On the other hand, if $i \leq p^{2r},$ then $v(\gamma_i - \gamma) \ge v(a-\gamma)$ so $v(a-\gamma_i) = v((a-\gamma) + (\gamma-\gamma_i)) \ge v(a-\gamma)$, which must be an equality by maximality and hence proves the claim.
    If $r \in \{0, \dots, k\}$, using \Cref{rmk: sum of roots in a group} we obtain
    \begin{align}\label{eq: valuation of g(x)}
        v(g_k(a)) &= v\left( \prod_{i=1}^{p^{2k}} (a-\gamma_i) \right) = v\left( \prod\limits_{i=1}^{p^{2r}} (a-\gamma_i) \right) + v\left( \prod\limits_{i=p^{2r}+1}^{p^{2k}} (a-\gamma_i) \right) = p^{2r} v(a-\gamma) + k-r.
    \end{align}
    For $r<0$ or $r>k$, by a similar calculation we get
    \[
    v(g_k(a)) = v(a-\gamma)+k \quad \text{(if $r<0$)} \quad \quad \text{or} \quad \quad v(g_k(a)) = p^{2k}v(a-\gamma) \quad \text{(if $r>k$)}.
    \]
    
    Now note that, for $r<0$, by definition of $r$ we have $v(a-\gamma) > \frac{p^2}{p^2-1}$ and the first part of the lemma follows. For the second part, the inequality $v(g_k(a)) = v(a-\gamma) + k > 1 + \frac{1}{p^2-1} + k$ implies that the additional assumption never holds. The case $r>k$ cannot arise: by definition of $r$ we would have $v(a-\gamma) \leq \frac{1}{p^{2k}(p^2-1)}$, but the inequality $v(g_k(a)) = p^{2k}v(a-\gamma) \leq p^{2k} \frac{1}{p^{2k}(p^2-1)} = \frac{1}{p^2-1}$ then contradicts the assumption.
    
    It follows that $0 \leq r \leq k$. \Cref{eq: valuation of g(x)}, combined with the assumption $v(g_k(a)) > k-n+\frac{1}{p^2-1}$, then implies
    \[
    r-n+\frac{1}{p^2-1} < v(g_k(a))+r-k = p^{2r}v(a-\gamma) \leq \frac{p^{2r}}{p^{2r-2}(p^2-1)} = 1+ \frac{1}{p^2-1},
    \]    
    \noindent and hence $n \ge r$. In particular, we have $$v(a-\gamma) > \frac{1}{p^{2r}(p^2-1)} \ge \frac{1}{p^{2n}(p^2-1)},$$
    which concludes the proof of the first part of the statement.
    For the second half, it suffices to show that under the additional assumption we have $r=n$, since the conclusion then follows from \Cref{eq: valuation of g(x)}. To do so, notice that if we had $r \le n-1$ we would obtain
    \[
    \frac{1}{p^2-1} + k-n+1 <p^{2r}v(a-\gamma)+k-r = v(g_k(a)) \leq k-n+1 + \frac{1}{p^2-1},
    \]
    giving a contradiction.
\end{proof}

\begin{lemma}\label{lemma: unique root at level k}
Assume $v(\alpha^{-1}) \geq 0$. Let $c_k=\frac{1}{p^{2k-2}(p^2-1)}$ and let $a \in A_{c_k}$. Suppose $p > \max\{\sqrt{k}, k-v(\alpha^{-1}) -1\}$. 
Then there exists a unique root $\gamma \in \mathcal{R}_k$ such that $v(a^{(0)} - \gamma) > \frac{1}{p^2-1}$.
\end{lemma}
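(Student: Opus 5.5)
The plan is to apply \Cref{lemma: roots distance} with $n=0$. Taking $x=a^{(0)}$, it suffices to show that $v(g_k(a^{(0)})) > k + \frac{1}{p^2-1}$.

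Existence then follows. That lemma with $n=0$ gives a root $\gamma$ maximising $v(a^{(0)}-\gamma)$ with $v(a^{(0)}-\gamma) > \frac{1}{p^2-1}$.

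Uniqueness is immediate from \Cref{prop: shifted Newton polygon}. Any two distinct roots $\gamma,\gamma'$ satisfy $v(\gamma-\gamma') \le \frac{1}{p^2-1}$. If both were within distance greater than $\frac{1}{p^2-1}$ of $a^{(0)}$, the ultrametric inequality would contradict this.

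To bound $v(g_k(a^{(0)}))$, I will use that $a$ solves \Cref{eq: condition star alpha}. By \Cref{eqn: gk to series}, this gives
\[
g_k(a^{(0)}) = -\alpha^{-1}\pi_e^2 (a^{(0)})^{p^{2k+1}} - (-1)^k p^k \sum_{n \neq 0,\ldots,-k} (-1)^n p^n\left( (a^{(0)})^{p^{-2n}} + \alpha^{-1}\pi_e^2 (a^{(0)})^{p^{-2n+1}}\right).
\]
Set $c := v(a^{(0)}) = v_R(a)$. Since $a\in A_{c_k}$, we have $c \ge c_k = \frac{1}{p^{2k-2}(p^2-1)}$, and $v(\pi_e^2)=2/e$. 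I bound each group of terms in turn.

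\begin{enumerate}
\item The isolated term has valuation at least $v(\alpha^{-1}) + \frac{2}{e} + p^{2k+1}c \ge \frac{2}{e} + \frac{p^3}{p^2-1}$. This exceeds $k+\frac{1}{p^2-1}$ because $p^3 \ge p^2 > k$ when $p > \sqrt k$. Indeed $p^{2k+1}c \ge \frac{p^3}{p^2-1} > p$.
\item The terms with $n \le -k-1$ involve $(a^{(0)})^{p^{2m}}$ for $m=-n\ge k+1$. After the factor $p^k$ they have valuation $k+n+p^{2m}c = k - m + p^{2m}c$. With $m=k+1+j$ this is at least $-1-j + \frac{p^{4+2j}}{p^2-1}$, which comfortably exceeds $\frac{1}{p^2-1}$. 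The $\alpha^{-1}$ companions are even larger.
\item The terms with $n \ge 1$ involve the roots $a^{(2n)}$ and $a^{(2n-1)}$, of valuations $c/p^{2n}$ and $c/p^{2n-1}$. The plain terms have valuation $k + n + c p^{-2n} > k + 1$. The $\alpha^{-1}$-terms have valuation $k + n + v(\alpha^{-1}) + \frac{2}{e} + c p^{1-2n} > k+1$.
\end{enumerate}

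All these exceed $k+\frac{1}{p^2-1}$ since $1 > \frac{1}{p^2-1}$. Convergence of the infinite sums is clear from the growing valuations.

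The main obstacle is a possible sharpness issue in the terms just beyond the truncation, namely $n=-k-1$, and in the interplay between $c$ and the hypothesis $p > k - v(\alpha^{-1}) - 1$. The computation above uses only $c \ge c_k$ and appears not to need that hypothesis. If a finer version is required, with valuations of $g_k(a^{(0)})$ exceeding $k-n+\frac{1}{p^2-1}$ for larger $n$, the hypothesis would enter through the $\alpha^{-1}\pi_e^2$ term. There I would compare $v(\alpha^{-1}) + 2/e + p^{2k+1}c$ against $k$ using $p^{3}/(p^2-1) > p > k - v(\alpha^{-1}) - 1$. I would therefore recheck the worst case in each group carefully, including $e=3$, where $2/e$ is largest, and $c$ minimal. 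In that case I would invoke \Cref{lemma: valuations of elements in Ac} to know $c$ exactly equals some $\frac{p^{2n}}{p^2-1}$ if needed.
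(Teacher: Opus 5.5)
Your overall strategy matches the paper's: rewrite $g_k(a^{(0)})$ as the negated tail of the series, show that every tail term has valuation $>k+\frac{1}{p^2-1}$, then apply \Cref{lemma: roots distance} with $n=0$, with uniqueness from \Cref{prop: shifted Newton polygon}. However, step (1) is wrong as written, and it is exactly where the proof needs the hypothesis you call superfluous.

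\textbf{Step (1).} The quantity $p^{2k+1}c_k=\frac{p^3}{p^2-1}=p+\frac{p}{p^2-1}$ has size about $p$, not $p^3$. Once you discard $v(\alpha^{-1})$ you only have $\frac{2}{e}+\frac{p^3}{p^2-1}<p+1$. The hypothesis $p>\sqrt{k}$ does not give $p+1>k$. For example, $p=5$, $k=10$ satisfies $p>\sqrt{k}$, yet $\frac{2}{3}+\frac{125}{24}<10$. The valuation of the isolated term is exactly $v(\alpha^{-1})+\frac{2}{e}+p^{2k+1}v_R(a)$, and $v_R(a)=c_k$ is allowed. So you cannot drop $v(\alpha^{-1})$ in this term-by-term estimate.

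This is precisely where $p>k-v(\alpha^{-1})-1$ enters. Since $\alpha\in\mathbb{P}^1(\mathbb{Q}_p)$, $v(\alpha^{-1})$ is an integer, so the hypothesis gives $v(\alpha^{-1})+p\ge k$. Hence the term has valuation at least $k+\frac{2}{e}+\frac{p}{p^2-1}>k+\frac{1}{p^2-1}$. The comparison in your last paragraph is exactly this fix; it belongs in the main argument, not in a hypothetical finer version.

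\textbf{Step (2).} You only check that these terms exceed $\frac{1}{p^2-1}$, but you need them to exceed $k+\frac{1}{p^2-1}$. The worst case $m=k+1$ gives $-1+\frac{p^4}{p^2-1}=p^2+\frac{1}{p^2-1}$. This beats $k+\frac{1}{p^2-1}$ precisely because $p^2>k$, so this is where $p>\sqrt{k}$ is actually used. For larger $m$ the bound only improves.

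With these two corrections, and step (3) as you have it (which uses $v(\alpha^{-1})\ge 0$), your argument coincides with the paper's proof.
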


\begin{proof}
Note if $\gamma$ exists, then it must be unique since if $\gamma'$ is any other such root, then $v(\gamma-\gamma') > \frac{1}{p^2-1}$ which contradicts \Cref{prop: shifted Newton polygon}.

For ease of notation set $\tilde{a}:=a^{(0)}$ and $\varepsilon := \frac{1}{p^2-1}$. Recall that by definition $\tilde{a}$ is a root of the series in \Cref{eqn: gk to series}. Multiplying \Cref{eqn: gk to series} by $\pi_ep^k$ and rearranging gives
\[
g_k(\tilde{a}) = 
        \left( \sum_{\substack{n \in \mathbb{Z} \\ n \neq -k,...,0}} (-1)^{n+k+1} p^{n+k}\left(  \tilde{a}^{p^{-2n}} + \alpha^{-1} \pi_e^2 {\tilde{a}}^{p^{-2n+1}} \right) \right) - \alpha^{-1}\pi_e^2 \tilde{a}^{p^{2k+1}},
\]
where we use the convention $\tilde{a}^{p^{-m}} := a^{(m)}$ for $m \geq 0$.

We claim that all the terms on the right-hand side have valuation strictly greater than $k+\varepsilon$. This is clear for the summands with $n>0$, and moreover 

\begin{equation*}
    v(\alpha^{-1}\pi_e^2\tilde{a}^{p^{2k+1}}) = v(\alpha^{-1}) + p^{2k+1}v(\tilde{a}) + \frac{2}{e} \geq v(\alpha^{-1}) +  p^{2k+1}c_k + \varepsilon > v(\alpha^{-1}) + p + \varepsilon
\end{equation*}

which is at least $k+\varepsilon$ by assumption.
If $n:=-m \leq -k-1$, then

\begin{eqnarray*}
    v \left(p^{n+k}\left(\tilde{a}^{p^{-2n}}+\alpha^{-1}\pi_e^2\tilde{a}^{p^{-2n+1}} \right) \right) &=& v \left(p^{k-m}\left(\tilde{a}^{p^{2m}}+\alpha^{-1}\pi_e^2\tilde{a}^{p^{2m+1}} \right) \right) \\
    &\geq& (k-m) + \min\{p^{2m}v(\tilde{a}),p^{2m+1}v(\tilde{a})\} \\
    &\geq& (k-m) + p^{2m}c_k \\
    &=& (k-m) + p^{2m-2k}(1 + \varepsilon) \\
    &\geq& -1 + p^2(1 + \varepsilon) = p^2 + \varepsilon > k + \varepsilon.
\end{eqnarray*}
Hence we have $v(g_k(\tilde{a})) > k+\varepsilon$. The conclusion follows from \Cref{lemma: roots distance} applied with $n=0$. 
\end{proof}

\Cref{lemma: unique root at level k} justifies the following definition:
\begin{definition}\label{def: map Phi}
Assume the hypotheses of \Cref{thm: p2 torsion points correspond to roots of g} and let $c_k =\frac{1}{p^{2k-2}(p^2-1)}$. We define
\[
\Phi_k : A_{c_k} \to \mathcal{R}_k
\]
to be the map that takes $a \in A_{c_k}$ to the unique root $\gamma$ of $g_k(x)$ that satisfies $v(\gamma - a^{(0)}) > \frac{1}{p^2-1}$.
\end{definition}

\subsection{Approximate addition formulae in $A_c$}

We now prove two technical lemmas which will allow us to replace the complicated subtraction formulae of the $\mathbb{Q}_p$-vector space structure on $A_c$ with much simpler operations in $R$ and in $\mathcal{O}_{\mathbb{C}_p}$.  Note that given elements of $A_c \subset R$ we can subtract them both using the vector space structure of \S \ref{sect: lattices in Valpha} (denoted $a \ominus b$) and using the ring structure of $R$ (denoted $a-b$).

\begin{lemma}\label{lemma: valuations for the two differences are the same}
    Let $c>0$ and $a, b \in A_{c}$. Then $v_R(a \ominus b) = v_R(a-b).$
\end{lemma}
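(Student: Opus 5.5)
The plan is to compute $a \ominus b$ directly from the subtraction formula \eqref{eq: subtraction Ac} and to read off its valuation term by term, using \Cref{lemma: addition polynomials in one variable}. Recall that
\[
a \ominus b = \lim_{m \to \infty} S_m\bigl( (-1)^m a^{p^m}, \ldots, -a^{p}, a;\ (-1)^{m+1} b^{p^m}, \ldots, b^{p}, -b\bigr),
\]
where the $S_m$ are integer polynomials evaluated in the ring $R$, which has characteristic $p$, and the limit is taken in the $v_R$-adic topology of $R$.

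First I check that the arguments have exactly the shape required by \Cref{lemma: addition polynomials in one variable}, with $X_m = a$ and $Y_m = b$. The $j$-th $X$-entry is $(-1)^{m-j} a^{p^{m-j}}$, and the $j$-th $Y$-entry is $(-1)^{m-j+1} b^{p^{m-j}}$; this is precisely the pattern in that lemma. The identity there holds in $\Z[X_m, Y_m]$, so it can be specialised to $R$. This gives, for every $m \geq 0$,
\[
S_m\bigl((-1)^m a^{p^m}, \ldots, a;\ (-1)^{m+1} b^{p^m}, \ldots, -b\bigr) = (a-b)\bigl(1 + g_m(a,b)\bigr).
\]
Since $g_m$ lies in the ideal $(X_m,Y_m)^p$, we get $v_R(g_m(a,b)) \geq p \min\{v_R(a), v_R(b)\} \geq pc > 0$. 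Hence $1+g_m(a,b)$ has valuation $0$; indeed $v_R$ is a genuine valuation on $R$, multiplicative and ultrametric via $x \mapsto x^{(0)}$. It follows that every term $s_m$ of the sequence satisfies $v_R(s_m) = v_R(a-b)$.

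Finally I pass to the limit. If $a = b$, then every $s_m$ vanishes, so $a \ominus b = 0$ and both sides of the claimed equality are $\infty$. If $a \neq b$, set $\nu := v_R(a-b) < \infty$. By convergence, $v_R(s_m - (a\ominus b)) > \nu$ for $m$ large. The ultrametric inequality then gives
\[
v_R(a \ominus b) = v_R(s_m) = \nu,
\]
which is the statement.

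The only delicate point I expect is justifying that the limit defining the $0$-th component of a bivector sum converges in the topology induced by $v_R$. I would take this from the construction of addition in $BW(R)$ (cf.\ \cite[\S 6.3]{Fontaine82}, \cite[p.~123]{Volkov}); there, convergence of $S_m$ applied to entries of positive valuation, tending to $0$ in the negative indices, is exactly what makes bivector addition well defined. Apart from that, the argument is a direct valuation computation.
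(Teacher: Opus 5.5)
Your proposal is correct and is essentially the paper's own proof. You specialise the subtraction formula via \Cref{lemma: addition polynomials in one variable} to get $(a-b)(1+g_m(a,b))$, note that $1+g_m(a,b)$ has valuation $0$, and pass to the limit; your separate treatment of $a=b$ and the ultrametric argument for the limit simply spell out the paper's appeal to continuity of the valuation.
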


\begin{proof}
    By \Cref{eq: subtraction Ac} and \Cref{lemma: addition polynomials in one variable} we have
    \[
    \begin{aligned}
    a \ominus b & = \lim_{m \to \infty} S_m((-1)^ma^{p^m} , \ldots, a^{p^2}, -a^p, a; (-1)^{m+1}b^{p^m} , \ldots, -b^{p^2}, b^p, -b) \\
    & = \lim_{m \to \infty} (a-b)(1+g_m(a,b)).
    \end{aligned}
    \]
    In particular, since $v_R(a)>0$ and $v_R(b)>0$, and $g_m$ has no constant term, we have $v_R(g_m(a,b))>0$ and therefore $v_R(1+g_m(a,b)) = 0$. Passing to the limit in the previous equation and using the fact that the valuation is continuous, we obtain as desired $v_R(a \ominus b) = v_R(a-b)$.
\end{proof}

\begin{lemma}\label{lemma: congruence modulo Apkc}
    Let $c_k=\frac{1}{p^{2k-2}(p^2-1)}$ and $a, b \in A_{c_k}$. The following are equivalent:
    \begin{enumerate}
        \item $a \equiv b \bmod{A_{p^{2k}c_k}}$;
        \item $v_R(a - b) \geq p^{2k} c_k$;
        \item $v(a^{(0)}-b^{(0)}) > \frac{1}{p^2-1}$.
    \end{enumerate}
\end{lemma}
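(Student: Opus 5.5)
The plan is to prove (i)$\Leftrightarrow$(ii) directly from \Cref{lemma: valuations for the two differences are the same}, and (ii)$\Leftrightarrow$(iii) by comparing the valuation on $R$ with the valuation of the $0$-th components.

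\textbf{Step 1: (i)$\Leftrightarrow$(ii).} By definition, $a \equiv b \bmod A_{p^{2k}c_k}$ means $a\ominus b \in A_{p^{2k}c_k}$. Since $A_{c_k}$ is a group under $\oplus$, we have $a\ominus b \in A_{c_k}\subseteq A_\alpha$. So condition (i) is equivalent to $v_R(a\ominus b)\ge p^{2k}c_k$. By \Cref{lemma: valuations for the two differences are the same}, $v_R(a\ominus b)=v_R(a-b)$, which gives (i)$\Leftrightarrow$(ii).

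\textbf{Step 2: rewrite the threshold and compare with $c=a-b$.} Note that $p^{2k}c_k=\frac{p^2}{p^2-1}=1+\frac{1}{p^2-1}$. Set $c=a-b\in R$, computed in the ring $R$. By definition $v_R(c)=v(c^{(0)})$, where
\[
c^{(0)}=\lim_m (\hat a_m-\hat b_m)^{p^m}.
\]
For elements $x,y\in R$, the standard fact is $(x-y)^{(0)}\equiv x^{(0)}-y^{(0)} \bmod p$, because the $0$-th component of $R=\varprojlim \mathcal O_{\C_p}/p$ is the reduction mod $p$. Hence
\[
v\bigl(c^{(0)}-(a^{(0)}-b^{(0)})\bigr)\ge 1.
\]

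\textbf{Step 3: case analysis on $v_R(a-b)$.}

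(ii)$\Rightarrow$(iii). Suppose $v(c^{(0)})\ge 1+\frac1{p^2-1}>1$. The congruence from Step 2 only gives $v(a^{(0)}-b^{(0)})\ge 1$, not a strict inequality beyond $\frac1{p^2-1}$ in a useful form. Fortunately $1>\frac1{p^2-1}$, so $v(a^{(0)}-b^{(0)})\ge 1>\frac{1}{p^2-1}$ and (iii) follows.

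(iii)$\Rightarrow$(ii). Suppose (ii) fails, so $v_R(a-b)<p^{2k}c_k$. By \Cref{lemma: valuations for the two differences are the same}, $a\ominus b\in A_{c_k}$ has $v_R$-valuation $\frac{p^{2n}}{p^2-1}$ for some $n\in\Z$, by \Cref{lemma: valuations of elements in Ac}. Since this valuation is at least $c_k$ and strictly less than $\frac{p^2}{p^2-1}$, the only options are $\frac{p^{2n}}{p^2-1}$ with $n\le 0$. These are all at most $\frac1{p^2-1}<1$.

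In that range, the congruence mod $p$ from Step 2 forces $v(a^{(0)}-b^{(0)})=v(c^{(0)})\le \frac1{p^2-1}$. This contradicts (iii), so (iii)$\Rightarrow$(ii).

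\textbf{Main obstacle.} The delicate point is justifying the congruence $(x-y)^{(0)}\equiv x^{(0)}-y^{(0)}\pmod p$. I would prove it from the fact that $x^{(0)}$ lifts $x_0$, so that $(x-y)^{(0)}$ and $x^{(0)}-y^{(0)}$ both lift $x_0-y_0\in\mathcal O_{\C_p}/p$. Everything else is then the valuation gap supplied by \Cref{lemma: valuations of elements in Ac}.
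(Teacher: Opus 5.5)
Your proof is correct and has the same structure as the paper's. You get (i)$\Leftrightarrow$(ii) from \Cref{lemma: valuations for the two differences are the same}, and (iii)$\Rightarrow$(ii) from the valuation gap in \Cref{lemma: valuations of elements in Ac}. The only difference is in comparing $(a-b)^{(0)}$ with $a^{(0)}-b^{(0)}$: you work modulo $p$ directly from the inverse-limit definition of $R$, while the paper expands $(a^{(n)}-b^{(n)})^{p^n}$ binomially to get the sharper error $1+c_k$. Your coarser bound suffices because the threshold $\frac{1}{p^2-1}$ is below $1$.
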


\begin{proof}
(i) $\Leftrightarrow$ (ii): This is immediate from \Cref{lemma: valuations for the two differences are the same} since $a \equiv b \bmod{A_{p^{2k}c_k}}$ is by definition equivalent to $a \ominus b \in A_{ p^{2k}c_k}$, that is, $v_R(a \ominus b) \geq p^{2k}c_k$.

Before proving the remaining equivalences, we first make a remark. By definition we have
\[
v_R(a - b) = \lim_{n \to \infty} v\left( a^{(n)} - b^{(n)} \right)^{p^n},
\]
and since $p$ is odd we can write
\begin{equation*}
\begin{aligned}
\left( a^{(n)} - b^{(n)} \right)^{p^n} & = \left( a^{(n)} \right)^{p^n} - \left( b^{(n)} \right)^{p^n} + \sum_{1 \leq i \leq p^n-1} \binom{p^n}{i}  \left( a^{(n)} \right)^i \left( -b^{(n)} \right)^{p^n-i} \\
& = a^{(0)}-b^{(0)} + \left(\text{terms of valuation }\geq 1+c_k\right),
\end{aligned}
\end{equation*}
where we have used \Cref{lemma: binomial valuation}(i), and $v(a^{(n)}) = \frac{1}{p^n}v(a^{(0)}) \geq \frac{c_k}{p^n}$ and similarly $v(b^{(n)}) \geq \frac{c_k}{p^n}$.
In particular, passing to the limit in $n$ we obtain 
\begin{equation}\label{eq: equivalence pos valuation}
    v_R(a-b) > \frac{1}{p^2-1} \Longleftrightarrow v(a^{(0)}-b^{(0)}) > \frac{1}{p^2-1}.
\end{equation}

(ii) $\Rightarrow$ (iii): This follows directly from \Cref{eq: equivalence pos valuation} since $v_R(a-b) \geq p^{2k}c_k = \frac{p^2}{p^2-1} > \frac{1}{p^2-1}$.  

(iii) $\Rightarrow$ (ii): Note that if $v(a^{(0)}-b^{(0)}) > \frac{1}{p^2-1}$ then $v_R(a \ominus b) =v_R(a-b) > \frac{1}{p^2-1}$ by \Cref{lemma: valuations for the two differences are the same} and \Cref{eq: equivalence pos valuation}. Now the description of possible valuations for $a \ominus b$ given by \Cref{lemma: valuations of elements in Ac} shows that $v_R(a-b)=v_R(a \ominus b) \geq \frac{p^2}{p^2-1}=p^{2k}c_k$ as desired.
\end{proof}

\subsection{Proof of \Cref{thm: p2 torsion points correspond to roots of g}}\label{subsect: proof of iso}
We now show that the map $\Phi_k$ of \Cref{def: map Phi} factors via $A_{c_k}/p^kA_{c_k}$ and induces a Galois-equivariant bijection $\Phi_k : A_{c_k}/ p^kA_{c_k} \to \mathcal{R}_k$.

\begin{theorem}\label{thm: Phi gives pk map}
$\Phi_k$ descends to an isomorphism $A_{c_k}/p^kA_{c_k} \rightarrow \mathcal R_k$ of $\op{Gal}(\overline{K}/K)$-sets.
\end{theorem}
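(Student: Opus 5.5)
The argument rests on \Cref{lemma: congruence modulo Apkc} together with a cardinality count. I will prove three things in turn: that $\Phi_k$ is well defined on classes, that it is injective, and that it is Galois-equivariant. Surjectivity will then follow by comparing sizes.

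\emph{Well defined.} Let $a, b \in A_{c_k}$ with $a \equiv b \bmod{A_{p^{2k}c_k}}$. By \Cref{lem: Tc vs Tp} we have $A_{p^{2k}c_k} = p^k A_{c_k}$, so this is exactly congruence modulo $p^kA_{c_k}$. \Cref{lemma: congruence modulo Apkc}, (i)$\Rightarrow$(iii), gives $v(a^{(0)}-b^{(0)}) > \frac{1}{p^2-1}$. Write $\gamma=\Phi_k(a)$; by definition $v(\gamma-a^{(0)})>\frac{1}{p^2-1}$. The ultrametric inequality then gives $v(\gamma - b^{(0)}) > \frac{1}{p^2-1}$. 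By the uniqueness part of \Cref{lemma: unique root at level k}, $\Phi_k(b)=\gamma$.

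\emph{Injective.} Run the same argument in reverse. If $\Phi_k(a)=\Phi_k(b)=\gamma$, then $v(a^{(0)}-b^{(0)}) \ge \min\{v(a^{(0)}-\gamma), v(\gamma-b^{(0)})\} > \frac{1}{p^2-1}$. \Cref{lemma: congruence modulo Apkc}, (iii)$\Rightarrow$(i), then gives $a \equiv b \bmod{p^kA_{c_k}}$. So the induced map $A_{c_k}/p^kA_{c_k} \to \mathcal{R}_k$ is injective.

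\emph{Equivariant.} Let $\sigma \in \op{Gal}(\overline{K}/K)$. The Galois action on $A_{c_k} \subseteq \mathfrak{M}_R$ is componentwise, so $(\sigma a)^{(0)} = \sigma(a^{(0)})$. Moreover $\sigma a$ still lies in $A_{c_k}$, because the action preserves $A_\alpha$ ($\Psi$ is equivariant) and preserves $v_R$. Since $g_k$ has coefficients in $K$, $\sigma$ permutes $\mathcal{R}_k$, and $\sigma$ preserves $v$. Hence $v(\sigma\gamma - \sigma(a^{(0)})) = v(\gamma - a^{(0)}) > \frac{1}{p^2-1}$, and uniqueness gives $\Phi_k(\sigma a) = \sigma\Phi_k(a)$. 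The action on the quotient is induced from the action on $A_{c_k}$, which is compatible with $\oplus$, so the descended map is an isomorphism of $\op{Gal}(\overline{K}/K)$-sets once we know it is bijective.

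\emph{Surjective.} By \Cref{lem: Tc vs Tp}, $A_{c_k}/p^kA_{c_k} \cong E[p^k]$, which has $p^{2k}$ elements. By \Cref{prop: shifted Newton polygon}, $\#\mathcal{R}_k = p^{2k}$. An injection between finite sets of the same size is a bijection.

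The only place where real care is needed is equivariance. There I have to make sure that the $\op{Gal}(\overline{K}/K)$-action on $A_\alpha$ transported via $\Psi$ really is the naive action on $R$. That is the fact from \cite[p.~126]{Volkov} quoted before \Cref{lemma: Tc or Ac ça m'est égal}, and it is used so that $\sigma a \in A_{c_k}$ and $(\sigma a)^{(0)}=\sigma(a^{(0)})$. Everything else reduces to the ultrametric inequality and the earlier lemmas.
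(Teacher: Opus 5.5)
Your proof is correct and follows the paper's argument essentially step for step. Well-definedness and injectivity come from \Cref{lemma: congruence modulo Apkc} plus the ultrametric inequality, equivariance holds because $\sigma$ fixes $g_k$ and is an isometry, and bijectivity follows from the count $\#(A_{c_k}/p^kA_{c_k}) = \#\mathcal{R}_k = p^{2k}$. The only cosmetic difference is in well-definedness: you invoke the uniqueness in \Cref{lemma: unique root at level k} directly, whereas the paper compares $\Phi_k(a)$ and $\Phi_k(b)$ via \Cref{prop: shifted Newton polygon}, which amounts to the same thing.
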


\begin{proof}
Let $\tilde\Phi_k: A_{c_k}/p^kA_{c_k} \to \mathcal{R}_k$ be the induced map. We must show that it is well-defined, bijective and Galois equivariant. Recall from \Cref{lem: Tc vs Tp} that $p^kA_{c_k}=A_{p^{2k}c_k}$.

Well-defined: Let $a_1 \equiv a_2 \bmod{A_{p^{2k}c_k}}$ and let $\gamma_1=\Phi_k(a_1), \gamma_2=\Phi_k(a_2)$. Then $v(a_1^{(0)}-a_2^{(0)}) > \frac{1}{p^2-1}$ by \Cref{lemma: congruence modulo Apkc} and $v(\gamma_1-a_1^{(0)}), v(\gamma_2-a_2^{(0)}) > \frac{1}{p^2-1}$ by definition of $\Phi_k$. Hence

\[
v(\gamma_1-\gamma_2) = v((\gamma_1-a_1^{(0)}) + (a_1^{(0)}-a_2^{(0)}) -  (\gamma_2-a_2^{(0)})) > \frac{1}{p^2-1}.
\]

\noindent Therefore $\gamma_1=\gamma_2$ by \Cref{prop: shifted Newton polygon} and $\tilde\Phi_k$ is well-defined.

Bijective: Note that both sides have order $p^{2k}$ (see \Cref{lem: Tc vs Tp} and \Cref{prop: shifted Newton polygon}), so it suffices to prove injectivity. Let $\gamma = \tilde\Phi_k(a_1)=\tilde\Phi_k(a_2)$ so $v(\gamma-a_1^{(0)}), v(\gamma-a_2^{(0)}) > \frac{1}{p^2-1}$. Hence $v(a_1^{(0)} - a_2^{(0)}) > \frac{1}{p^2-1}$ and therefore $a_1 \equiv a_2 \bmod{A_{p^{2k}c_k}}$ by \Cref{lemma: congruence modulo Apkc}.

Galois equivariant: Let $\sigma \in \op{Gal}(\overline{K}/K)$. Since $\sigma$ permutes the roots of $g_k$ (which is defined over $K$) and is an isometry for valuations, we have that $v(\sigma(\Phi_k(a))-\sigma(a^{(0)}))=v(\Phi_{k}(a)-a^{(0)}) > \frac{1}{p^2-1}$ and hence $\sigma(\Phi_k(a))$ satisfies the defining condition to be $\Phi_k(\sigma(a))$, so $\Phi_k$ is Galois equivariant as required.
\end{proof}

\begin{proof}[Proof of \Cref{thm: p2 torsion points correspond to roots of g}]
    This is an immediate corollary of \Cref{thm: Phi gives pk map}, noting that $A_{c_k}/p^kA_{c_k} \cong E[p^k]$ by \Cref{lem: Tc vs Tp}; since $g_k$ is defined over $K$, the equality of fields also follows.
\end{proof}

Before closing this section, we detail the upgrade of $\Phi_k: E[p^k] \cong A_{c_k}/p^kA_{c_k} \rightarrow \mathcal{R}_k$ (alluded to in \Cref{rmk: twisted Galois action of Phi}) from a $\Gal(\Qpbar/K)$-set isomorphism to a $\Gal(\Qpbar/\Qp)$-set isomorphism, giving both an algebraic definition and a ``nearest root" definition. For $\sigma \in \Gal(\Qpbar/\Qp)$, we shall write $ \sigma \bmod{\Gal(\Qpbar/K)}$ for its image in $G_{K/\Qp} \cong \Gal(\Qpbar/\Qp)/\Gal(\Qpbar/K)$.

\begin{proposition}\label{rmk: twisted Galois action 2}
Suppose the hypotheses of \Cref{thm: p2 torsion points correspond to roots of g}. Let $\gamma \in \mathcal{R}_k$ and define
\[
\sigma \bullet \gamma = \begin{cases}
\Phi_k\!\big( \zeta_e^{\,m}\, \sigma(\Phi_k^{-1}(\gamma)) \big), & \text{if } \sigma \bmod \Gal(\overline{\Q}_p/K) = \omega \tau_e^{m},\\[4pt]
\Phi_k\!\big( \zeta_e^{-m}\, \sigma(\Phi_k^{-1}(\gamma)) \big), & \text{if } \sigma \bmod \Gal(\overline{\Q}_p/K) = \tau_e^{m}.
\end{cases}
\]

Then 
\begin{enumerate}
    \item $\Phi_k$ extends to an isomorphism $E[p^k] \rightarrow \mathcal{R}_k$ of $\Gal(\Qpbar/\Qp)$-sets, with respect to the natural action on $E[p^k]$ and the $\bullet$-action on $\mathcal{R}_k$.
    \item If $\sigma \in \Gal(\Qpbar/K)$, then $\sigma \bullet \gamma = \sigma(\gamma)$ is the natural action.
    \item $\sigma \bullet \gamma$ is the unique root $\tilde{\gamma}$ of $\mathcal{R}_k$ which maximises $v(\tilde{\gamma}-\zeta_e^m \sigma(\gamma))$ (resp.~$v(\tilde{\gamma}-\zeta_e^{-m} \sigma(\gamma))$) if $\sigma \bmod{\Gal(\Qpbar/K)} = \omega\tau_e^m$ (resp.~$\sigma \bmod{\Gal(\Qpbar/K)} = \tau_e^m)$.
    \end{enumerate}
\end{proposition}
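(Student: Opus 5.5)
The plan is to transport the twisted action (\ref{eq: twisted action}) on $V_\alpha$ to $A_\alpha$, and then through $\Phi_k$ to $\mathcal{R}_k$.

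\emph{Step 1: the twisted action on $A_\alpha$.} Let $\sigma \in \Gal(\Qpbar/\Qp)$ have class $[\sigma] \in G_{K/\Qp}$. By (\ref{eq: twisted action}), for $u_{\mathbf a} \in V_\alpha$ we have $(\sigma\cdot u_{\mathbf a})(\Dbasis_1) = \sigma\big(u_{\mathbf a}([\sigma^{-1}]\Dbasis_1)\big)$. By \Cref{def: D alpha}, $[\sigma^{-1}]$ acts on $\Dbasis_1$ by a root of unity: trivially if $[\sigma]=\omega$, and by $\zeta_e^{\mp m}$ for $\tau_e^{\pm m}$. The composites $\omega\tau_e^m$ need a short bookkeeping check using $\omega\zeta_e=\zeta_e^{-1}$ and the semilinearity of the action, and they yield exactly the exponents $\zeta_e^{m}$ resp. $\zeta_e^{-m}$ in the statement. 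Since $u_{\mathbf a}$ is $\Q_{p^2}$-linear and $\zeta_e \in \Q_{p^2}$ (because $e \mid p+1$), we get $(\sigma\cdot u_{\mathbf a})(\Dbasis_1) = \sigma(\zeta \cdot \mathbf a)$ for the appropriate root of unity $\zeta$.

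\emph{Step 2: scalars of $\Q_{p^2}$ in $BW(R)$.} I then need to check how multiplication by a Teichmüller root of unity $\zeta=[\zeta]$ acts on bivectors of the shape $\mathbf a=((-1)^n a^{p^{-n}})$. It acts componentwise by $\zeta^{p^{-n}}$. Since $\zeta^{p^2}=\zeta$, this means $\zeta \mathbf a$ corresponds under $\Psi$ to the element $\zeta a$ of $R$, whose $(0)$-component is $\zeta a^{(0)}$, up to the identification of $\zeta$ with its image in $R$. Hence, under $\Psi$, the twisted action on $A_\alpha$ is $\sigma\cdot a = \zeta\,\sigma(a)$. This action preserves $v_R$, so it preserves $A_{c_k}$ and $p^kA_{c_k}$. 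Transport via $\Phi_k$ through $A_{c_k}/p^kA_{c_k}\cong E[p^k]$ (\Cref{lem: Tc vs Tp}, \Cref{lemma: Tc or Ac ça m'est égal}, with $\Psi$ equivariant for the full twisted action by construction of (\ref{eq: twisted action})). Defining $\sigma\bullet\gamma := \Phi_k(\sigma\cdot \Phi_k^{-1}(\gamma))$ is then tautologically a $\Gal(\Qpbar/\Qp)$-set isomorphism, which gives (i) with the displayed formula.

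\emph{Steps 3 and 4: parts (ii) and (iii).} Part (ii) is the case $m=0$ with $[\sigma]$ trivial, which is \Cref{thm: Phi gives pk map}. For (iii), pick $a$ with $\Phi_k(a)=\gamma$, so that $v(a^{(0)}-\gamma)>\frac{1}{p^2-1}$. Then $(\zeta\sigma(a))^{(0)}=\zeta\sigma(a^{(0)})$. Since $\zeta$ is a unit and $\sigma$ is an isometry, we get $v\big(\zeta\sigma(a)^{(0)}-\zeta\sigma(\gamma)\big)>\frac{1}{p^2-1}$. By definition $\sigma\bullet\gamma$ is the root within distance $>\frac{1}{p^2-1}$ of $\zeta\sigma(a^{(0)})$, so by the ultrametric inequality it is also within $>\frac1{p^2-1}$ of $\zeta\sigma(\gamma)$. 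By \Cref{prop: shifted Newton polygon}, distinct roots are at distance $\le \frac1{p^2-1}$, so this root is the unique maximiser.

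\emph{Main obstacle.} I expect the delicate point to be Step 2 together with the exact sign and exponent bookkeeping in Step 1. I would verify carefully that Teichmüller multiplication commutes with the shape $((-1)^na^{p^{-n}})$ and with the limit defining $a^{(0)}$. I would also check that the convention in Volkov's extension of the action produces $\zeta_e^{m}$ for $\omega\tau_e^m$ rather than $\zeta_e^{-m}$.
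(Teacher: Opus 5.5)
Your proposal is correct and follows the paper's proof. You transport the twisted action to $A_{c_k}/p^kA_{c_k}$ so that (i) is tautological, obtain (ii) from \Cref{thm: Phi gives pk map}, and prove (iii) by the same ultrametric argument combined with the root separation of \Cref{prop: shifted Newton polygon}. The only difference is that the paper cites Volkov (p.~126) for the formula $\sigma\star a=\zeta_e^{\pm m}\sigma(a)$ on $A_\alpha$, whereas you derive it from \eqref{eq: twisted action} and \Cref{def: D alpha}. Your derivation checks out: $[\sigma^{-1}]=\omega\tau_e^m$ sends $\Dbasis_1$ to $\zeta_e^{-m}\Dbasis_1$, $\sigma$ inverts $\zeta_e$, and the Teichmüller computation in Step 2 is right.
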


\begin{proof}
First note that Volkov \cite[p.~126]{Volkov}, starting from the action of Equation \eqref{eq: twisted action}, defines a twisted action of $\Gal(\overline{\Q}_p/\Q_p)$ on $a \in A_\alpha$ by 
\[
\sigma \star a :=
\begin{cases}
   \zeta_e^{\,m}\, \sigma(a), & \text{if } \sigma \bmod \Gal(\overline{\Q}_p/K) = \omega \tau_e^m,\\[3pt]
   \zeta_e^{-m}\, \sigma(a), & \text{if } \sigma \bmod \Gal(\overline{\Q}_p/K) = \tau_e^m,
\end{cases}
\]
where $\sigma(a)$ denotes the natural action of $\Gal(\overline{\Q}_p/\Q_p)$ on $A_\alpha \subset R$. With this action, the natural bijection $A_{c_k}/p^kA_{c_k} \cong E[p^k]$ given by \Cref{lem: Tc vs Tp} becomes an isomorphism of $\Gal(\overline{\Q_p}/\Q_p)$-sets. It then suffices to check that $\Phi_k : A_{c_k} / p^kA_{c_k} \to \mathcal{R}_k$ is $\Gal(\overline{\Q}_p/\Q_p)$-equivariant if $A_{c_k}/p^kA_{c_k}$ is equipped with the $\star$-action and $\mathcal{R}_k$ with the $\bullet$-action.

(i): It is clear by definition that the bijection $\Phi_k: A_{c_k}/p^kA_{c_k} \rightarrow \mathcal{R}_k$ becomes a $\Gal(\overline{\Q}_p/\Q_p)$-set isomorphism via the action $\gamma \mapsto \Phi_k(\sigma \star \Phi_k^{-1}(\gamma))$ for all $\sigma \in \Gal(\overline{\Q}_p/\Q_p)$ and $\gamma \in \mathcal{R}_k$; this is precisely the definition of $\sigma \bullet \gamma$.

(ii): If $\sigma \in \Gal(\Qpbar/K)$, then since $\Phi_k$ is $\Gal(\overline{\Q}_p/K)$-equivariant with the natural action by \Cref{thm: Phi gives pk map} and $\sigma  \star a = \sigma(a)$ for all $a \in A_{\alpha}$ it follows that $\sigma \bullet \gamma=\sigma(\gamma)$ is the the natural action.

(iii): Suppose $\sigma \bmod{\Gal(\Qpbar/K)} = \omega\tau_e^m$; the other case is identical. It suffices to prove that 
\[
v(\Phi_k(\zeta_e^m\sigma(\Phi_k^{-1}(\gamma))) - \zeta_e^m\sigma(\gamma)) > \frac{1}{p^2-1},
\]
since the claim then follows from \Cref{prop: shifted Newton polygon}, so it remains to compute this valuation. Write 
\[
\Phi_k(\zeta_e^m \sigma(\Phi_k^{-1}(\gamma))) - \zeta_e^m \sigma(\gamma)  = \Phi_k(\zeta_e^m \sigma(\Phi_k^{-1}(\gamma))) - \left( \zeta_e^m \sigma(\Phi_k^{-1}(\gamma)) \right)^{(0)} +  \left( \zeta_e^m \sigma(\Phi_k^{-1}(\gamma)) \right)^{(0)} - \zeta_e^m \sigma(\gamma).
\]
Now $v \left( \Phi_k(\zeta_e^m \sigma(\Phi_k^{-1}(\gamma))) - \left( \zeta_e^m \sigma(\Phi_k^{-1}(\gamma)) \right)^{(0)} \right) >  \frac{1}{p^2-1}$ by definition of $\Phi_k$ (\Cref{def: map Phi}). On the other hand, let $a$ be any lift of $\Phi_k^{-1}(\gamma)$ to $A_{c_k}$. Then using the structure of $A_{c_k}$, we have

\[
\left( \zeta_e^m \sigma(\Phi_k^{-1}(\gamma)) \right)^{(0)} - \zeta_e^m \sigma(\gamma) = \zeta_e^m \sigma(a)^{(0)} - \zeta_e^m\sigma(\Phi_k(a)) = \zeta_e^m\sigma \left(a^{(0)} - \Phi_k(a) \right);
\]
since $\zeta_e$ is a unit and $\sigma$ is an isometry, we once again have by \Cref{def: map Phi} that this has valuation strictly greater than $\frac{1}{p^2-1}$. The claimed inequality on the valuation now follows.
\end{proof}

\begin{remark}\label{rmk: alternative division polynomials for small e}
One could remove the semistability defect assumption in \Cref{thm: p2 torsion points correspond to roots of g} as follows. Suppose $e=1$ so $E/\Qp$ has good supersingular reduction. The corresponding filtered $\varphi$-module $D=D_c^\ast(1,0)$ is unique and formally analogous to the $D_\infty$ case (cf.~\Cref{def: D alpha}), except that $D$ is defined over $\Qp$ rather than $\Q_{p^2}$ \cite[p.~117]{Volkov}. Repeating the results in this section verbatim yields a (non-twisted) $\Gal(\overline{\Q}_p/\Q_p)$-equivariant bijection between $E[p^k]$ and the roots of $g_k^\infty(x) := \sum_{n=0}^k (-1)^n p^n x^{p^{2k-2n}}$ whenever $p>\max\{\sqrt{k},3\}$. The $e=2$ case is simply a quadratic twist of this. 

One could then use this approach to determine the $p$-adic image of $E$ (namely, it will fall under \Cref{thm: more precise description padic Galreps}(i) for all $k<p^2$), but this scenario has already been covered by the stronger statement in \Cref{lemma: p-adic image in the twisted crystalline case}.
\end{remark}

\section{Classification of $p$-adic images of elliptic curves over $\Q_p$}\label{sect: integral Tate module}

Recall the group-theoretic classification of $p$-adic Galois images given in \Cref{thm:ellipticcartantower}. One of our principal results of the next few sections is that case (iii) cannot arise.

\begin{theorem}\label{thm: no sharp group over Qp}
Let $p>7$ be a prime and let $E/\Q_p$ be an elliptic curve such that $\NRep{p} \subseteq C_{ns}^+(p)$. Set $G:=\operatorname{Im}\rho_{E,p^\infty}$ and let $e$ be the semistability defect of $E/\Q_p$. 
One of the following holds:
	\begin{enumerate}
		\item $G \subseteq C_{ns}^+$ and $[C_{ns}^+ : G] =[C_{ns}^+(p):G(p)] \in \{1,3\}$, with $G = C_{ns}^+$ if $p \equiv 1 \bmod 3$ or $e \in\{1,2,4\}$;
		\item there exists $n \ge 1$ such that $G \supseteq \op{Id} + p^n M_{2}(\Z_p)$ and $G(p^n) \subseteq C_{ns}^+(p^n)$. In particular, $G$ is the inverse image of $\operatorname{Im} \rho_{E, p^n}$. Moreover, we have 
        \[
        [C_{ns}^+(p^n) : G(p^n)] =[C_{ns}^+(p):G(p)] \in \{1,3\},
        \]
        with $G(p^n) = C_{ns}^+(p^n)$ if $p \equiv 1 \bmod 3$ or $e \in\{1,2,4\}$.
	\end{enumerate}
\end{theorem}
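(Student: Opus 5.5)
Given \Cref{thm:ellipticcartantower}, it suffices to rule out case (iii), i.e.\ to show that $G(p^2)$ is never contained in $G_{ns}^\#(p^2)$. The extra claim in (ii) that $G$ is the inverse image of $\op{Im}\rho_{E,p^n}$ is immediate from $G \supseteq \op{Id}+p^nM_2(\Z_p)$.

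By \Cref{rmk: if G grows quadratically it is a nonsplit Cartan}, case (iii) forces $v_p(\#G(p^2)) = 3$. We always have $v_p(\#G(p^2))\ge 2$ from the proof of \Cref{thm:ellipticcartantower}. So the plan is to show that the $p$-part of $[\Qp(E[p^2]):\Qp]$ equals $p^2$ or $p^4$, but never $p^3$.

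We treat the cases by semistability defect.
\begin{itemize}
\item If $e\le 2$, \Cref{lemma: p-adic image in the twisted crystalline case} gives $G=C_{ns}^+$, so we are in case (i).
\item If $E$ has potential CM, \Cref{lemma: description in potential CM case}(ii) again gives case (i).
\item Otherwise $e\in\{3,4,6\}$, and \Cref{cor: supersingular and no canonical subgroup} says $E$ has potentially supersingular reduction with no canonical subgroup. By \Cref{rmk: alpha role}(ii) this means $v(\alpha)\neq 1$. A quadratic twist changes $\alpha$ to $\pm p^2\alpha^{-1}$, and does not change the $p$-power part of $[\Qp(E[p^2]):\Qp]$ (same argument as in the proof of \Cref{lemma: p-adic image in the twisted crystalline case}). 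So we may assume $v(\alpha^{-1})\ge 0$.
\end{itemize}
We then apply \Cref{thm: p2 torsion points correspond to roots of g} with $k=2$. Its hypothesis $p>\max\{\sqrt2,\,1-v(\alpha^{-1}),\,3\}$ holds since $p>7$. This gives $K(E[p^2])$ as the splitting field of
\[
g_2(x)=x^{p^4}-p\bigl(x^{p^2}+\alpha^{-1}\pi_e^2x^{p^3}\bigr)+p^2\bigl(x+\alpha^{-1}\pi_e^2x^{p}\bigr).
\]
Because $[K:\Qp]$ is prime to $p$, it suffices to compute the $p$-part of $[K(\mathcal R_2):K]$.

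The core is a dichotomy in $v(\alpha^{-1})$.
\begin{itemize}
\item When $v(\alpha^{-1})$ is large, $g_2$ is $p$-adically close to $g_2^\infty$. Using \Cref{lemma: roots distance} and Krasner's lemma, I would show $K(\mathcal R_2)=K(\mathcal R_2^\infty)$. By \Cref{cor: splitting field in limit case} the latter is the division field of a CM curve, whose $p$-part is at most $p^2$ by \Cref{lemma: description in potential CM case}(i).
\item When $v(\alpha^{-1})$ is small, I would show the $p$-part is $p^4$. The idea is to take a root $\gamma$ of valuation $\frac{1}{p^2(p^2-1)}$ and prove the extension $K(\mathcal R_1,\gamma)/K(\mathcal R_1)$ has degree $p^2$, with ramification large enough that the field generated by all such roots over $K(E[p])$ has degree $p^4$. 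Equivalently, I would show that the image of $\op{Gal}(\Qpbar/K(E[p]))$ in $\op{Id}+pM_2(\F_p)$ is not contained in a $3$-dimensional subspace.
\end{itemize}
For this second case, the cleanest route is representation-theoretic. The image is a $G(p)$-stable subspace of $M_2(\F_p)$, and by \Cref{lemma: conjugacy action of Cnsp} it is a sum of the $V_i$. The possible dimensions are $2$ ($V_3$ alone or $V_1\oplus V_2$), $3$, or $4$. Case (iii) is exactly $V_1\oplus V_3$. Since $V_1$ is always present (the cyclotomic part, via $\det$), it is enough to detect a nonzero $V_2$ component. I would do this by exhibiting a Galois element acting on $\mathcal R_2$ by a translation that is not of $C_{ns}$-type: a point whose ramification breaks, computed from the Newton polygon of $g_2(x+\gamma)-(\text{suitable }p\text{-torsion shift})$, differ from those in the CM model.

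The main obstacle is making this dichotomy exhaustive, with no intermediate $v(\alpha^{-1})$ producing exactly $p^3$. I expect the threshold to be around $v(\alpha^{-1})\approx 1$ or $2$. Proving that $p$-part $p^4$ holds for small valuation will need a fine Newton-polygon and ramification-break computation for $g_2$ in which the term $\alpha^{-1}\pi_e^2x^{p^3}$ visibly contributes.
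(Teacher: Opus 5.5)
Your architecture is the paper's: exclude case (iii) of \Cref{thm:ellipticcartantower}, twist to $v(\alpha^{-1})\ge 0$, then at level $k=2$ either compare $g_2$ with $g_2^\infty$ by Krasner or prove maximal growth. The paper packages this as \Cref{thm: more precise description padic Galreps} with $k=2$. However, the decisive step is missing, and the obstacle you single out is not the real one.

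Exhaustiveness is automatic. Since $\alpha\in\mathbb{P}^1(\Q_p)$, we have $v(\alpha^{-1})\in\Z_{\ge 0}\cup\{\infty\}$. The Krasner comparison already works for every $v(\alpha^{-1})\ge 1$. Indeed, \Cref{lemma: valuation of g(gamma) and h(beta)} gives $v(g_2(\gamma_\infty))\ge v(\alpha^{-1})+1+\frac{2}{e}+\frac{p}{p^2-1}>2+\frac{1}{p^2-1}$ for every root $\gamma_\infty$ of $g_2^\infty$. Then \Cref{lemma: roots distance} with $n=0$ feeds Krasner, symmetrically in the two polynomials. So the only remaining case is $v(\alpha^{-1})=0$. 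Your guess that the threshold might be $2$ would be fatal: for $v(\alpha^{-1})=1$ the mod $p^2$ image lies in $C_{ns}^+(p^2)$, so no $p^4$ statement can be proved there.

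The genuine gap is the case $v(\alpha^{-1})=0$. There you give no argument that $[K(E[p^2]):K(E[p])]=p^4$, nor that a nonzero $V_2$-component occurs. Your reduction to detecting $V_2$ is fine, but it is only a reformulation. The tools you propose cannot see $\alpha$ at the needed precision:
\begin{itemize}
\item by \Cref{prop: shifted Newton polygon}, the Newton polygon of $g_2(x+\gamma)$ is the same for every $\alpha$ with $v(\alpha^{-1})\ge 0$, the CM case included;
\item $[K(E[p],\gamma):K(E[p])]=p^2$ whether the kernel is $V_1\oplus V_2$, $V_1\oplus V_3$ or $M_2(\F_p)$.
\end{itemize}

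The missing idea is to measure how far $g_2$ breaks the $\F_{p^2}^\times$-symmetry of the CM model. One has $g_2^\infty(\zeta x)=\zeta\, g_2^\infty(x)$ for $\zeta^{p^2-1}=1$. By contrast, for $\zeta$ primitive one computes $g_2(\zeta\gamma)=\alpha^{-1}\pi_e^2(\zeta^p-\zeta)\bigl(p^2\gamma^{p}-p\gamma^{p^3}\bigr)$. So if $v(\gamma)=\frac{1}{p^2(p^2-1)}$, then $v(g_2(\zeta\gamma))=1+\frac{2}{e}+\frac{p}{p^2-1}$ exactly.

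\Cref{lemma: roots distance} then gives a root $\beta$ with $v(\beta-\zeta\gamma)=\frac{1}{p^2}\bigl(\frac{2}{e}+\frac{p}{p^2-1}\bigr)$. Next, subtract explicit unit multiples of $\gamma^{h_1}$ and $\gamma^{h_2}$, using $\gamma^{p^2(p^2-1)}\equiv p\bigl(1+\alpha^{-1}\pi_e^2\gamma^{p^2(p-1)}\bigr)$ modulo $p^{2-1/p^2}$ from \Cref{lemma: congruences powers of gamma}. This produces $\varepsilon_2$ or $\varepsilon_3\in K(E[p^2])$ whose valuation has denominator divisible by $p^4$ (\Cref{lem: epsilon2}). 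Since $K(E[p])/\Q_p$ has degree prime to $p$, this forces $[K(E[p^2]):K(E[p])]=p^4\neq p^3$, which rules out case (iii).

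Without this computation, or an equivalent one exploiting the $x^{p^3}$ term of $g_2$ at the precision where it differs from $g_2^\infty$, your argument does not exclude case (iii) when $v(\alpha^{-1})=0$.
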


\begin{remark}\label{rmk: uniformity}
Note that case (i) can arise not only in the potential CM case (\Cref{lemma: description in potential CM case}) but also more generally in the potential \textit{formal} CM case; see \cite[\S1.1]{HMRL21} for the definition of formal complex multiplication and \cite[\S3.5]{lubintate} for how to construct such curves. See also \cite[Remark 3.2]{furiolombardo23} as well as \Cref{rmk: index 3} below for examples showing that the index 3 case $[C_{ns}^+(p) : G(p)]=3$ does actually arise for $p \equiv 2 \bmod 3$.
\end{remark}

\subsection{Preliminary reductions for \Cref{thm: no sharp group over Qp}}\label{subsec: preliminary reductions}

We briefly recap the cases of \Cref{thm: no sharp group over Qp} that we have already proved and make some reduction steps, before giving an outline of the remaining proof.

\begin{enumerate}
    \item If the semistability defect $e$ of $E$ is at most $2$, \Cref{thm: no sharp group over Qp} is immediately implied by \Cref{lemma: p-adic image in the twisted crystalline case}, so we may suppose $e \in \{3,4,6\}.$
    \item As the mod $p$ image is contained in $C_{ns}^+(p)$, we have $e \mid p+1$ by \Cref{prop: e divides p plus one} and $E$ has potentially good supersingular reduction and no canonical subgroup of order $p$ by \Cref{cor: supersingular and no canonical subgroup}.
    \item By \Cref{thm: Volkov parametrisation}, we therefore have an isomorphism $V_pE \cong V_{\alpha}$ of Galois modules for some $\alpha \in \mathbb{P}^1(\Qp)$.
    \item The hypotheses and conclusion of \Cref{thm: no sharp group over Qp} are invariant under quadratic twisting (note that by \Cref{thm:ellipticcartantower} the index $[C_{ns}^+(p^n) : G(p^n)]$ is always odd, so it cannot change by a factor of $2$). We may thus suppose $v(\alpha^{-1}) \geq -1$ by \Cref{rmk: alpha role}(i).
    \item Moreover, since $p>7$, we may assume $v(\alpha^{-1}) \neq -1$ since $E$ has no canonical subgroup of order $p$ (see \Cref{rmk: alpha role}(ii) and step (ii) above), so $v(\alpha^{-1}) \geq 0$.
\end{enumerate}

\begin{setup}\label{setup}
    We let $p>3$ be a prime and $E/\Q_p$ be an elliptic curve with potentially supersingular reduction and semistability defect $e \in \{3, 4, 6\}$. We assume that $e \mid p+1$ and let $\alpha$ be as in \Cref{thm: Volkov parametrisation}, so that $V_pE \cong V_\alpha$. We further assume that $v(\alpha^{-1}) \geq 0$.
\end{setup}

\Cref{thm: no sharp group over Qp} will be a consequence of the following more technical but more precise result. Note that this statement holds for all primes $p>3$: it is only in the application to \Cref{thm: no sharp group over Qp} that we need to assume $p>7$.

\begin{theorem}\label{thm: more precise description padic Galreps}
Assume \Cref{setup} and let $k$ be a positive integer such that $p>\sqrt{k}$.
\begin{enumerate}
    \item If $k\leq v(\alpha^{-1})+1$, then the group $\operatorname{Im} \rho_{E, p^k}$ is contained in $C_{ns}^+(p^k)$.
    \item If $k=v(\alpha^{-1})+2$, then the group $\operatorname{Im} \rho_{E, p^\infty}$ contains $\operatorname{Id} + p^{k-1}M_2(\Z_p)$.
\end{enumerate}
In particular, if $p > \max\{7,\sqrt{v(\alpha^{-1})+2}\}$, the conclusion of \Cref{thm: no sharp group over Qp}(ii) holds with $n=v(\alpha^{-1})+1$, and therefore $\operatorname{Im} \rho_{E, p^\infty}$ is the inverse image in $\GL_2(\Z_p)$ of $\operatorname{Im} \rho_{E, p^n} \subset \GL_2(\Z/p^n\Z)$.
\end{theorem}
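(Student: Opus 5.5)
We prove both parts through the bijection $\Phi_k : E[p^k]\to\mathcal{R}_k$ of \Cref{thm: p2 torsion points correspond to roots of g}, together with the twisted action of \Cref{rmk: twisted Galois action 2}. The hypothesis $p>\max\{\sqrt{k},k-v(\alpha^{-1})-1\}$ of that theorem holds in both cases. In (i), $k-v(\alpha^{-1})-1\le 0$. In (ii), $k-v(\alpha^{-1})-1=1<p$.

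\emph{Part (i).} Compare $g_k$ with the CM polynomial $g_k^\infty$ of \Cref{cor: splitting field in limit case}. The two differ only in the terms $(-1)^np^n\alpha^{-1}\pi_e^2x^{p^{2k+1-2n}}$. Evaluate this difference at a root $\gamma$ of $g_k$. The root has $v(\gamma)\ge \frac{1}{p^{2k-2}(p^2-1)}$, so the difference has valuation at least
\[
v(\alpha^{-1})+\tfrac{2}{e}+n+p^{2k+1-2n}v(\gamma).
\]
For $n=k$ this exceeds $v(\alpha^{-1})+1+\frac{1}{p^2-1}\ge k+\frac{1}{p^2-1}$, which is exactly the threshold in \Cref{lemma: roots distance} with $n=0$. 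Terms with smaller $n$ have larger valuation, so they are also fine. \Cref{lemma: roots distance} then attaches to each $\gamma\in\mathcal{R}_k$ a unique root $\gamma^\infty$ of $g_k^\infty$ with $v(\gamma-\gamma^\infty)>\frac{1}{p^2-1}$. This is a Krasner-type nearest-root bijection $\mathcal{R}_k\to\mathcal{R}_k^\infty$. Since both polynomials have coefficients in $K$ and the twisted $\bullet$-actions are defined by the same nearest-root recipe, the bijection is $\Gal(\Qpbar/\Qp)$-equivariant by the argument of \Cref{thm: Phi gives pk map} and \Cref{rmk: twisted Galois action 2}(iii). It follows that $E[p^k]\cong E_{\rm CM}[p^k]$ as Galois sets. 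To upgrade this to an isomorphism of modules, rerun the construction at the level of $A_{c_k}$. This needs some care; one option is to use that both identifications come from $A_{c_k}/p^kA_{c_k}$ via $v_R$-approximation and \Cref{lemma: congruence modulo Apkc}. Alternatively, compose $\Phi_k$ with the CM version and check that the resulting permutation of $E[p^k]$ commutes with Galois and respects the addition of \Cref{lemma: valuations for the two differences are the same}. Once the modules are identified, \Cref{lemma: description in potential CM case}(ii) places the CM image inside $C_{ns}^+$ up to index $3$, and the same then holds for $E$ modulo $p^k$.

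\emph{Part (ii).} Here $k=v(\alpha^{-1})+2$, and by \Cref{rmk: if G grows quadratically it is a nonsplit Cartan} and \Cref{thm:ellipticcartantower} it suffices to show that $[\Qp(E[p^k]):\Qp(E[p^{k-1}])]=p^4$. By \Cref{thm: p2 torsion points correspond to roots of g}(ii) this reduces to showing $[K(\mathcal{R}_k):K(\mathcal{R}_{k-1})]=p^4$. The plan is to compare ramification degrees. In the CM-like regime of (i), $K(E[p^{k-1}])$ has ramification degree about $(p^2-1)p^{2k-4}$. The aim is to prove that $K(E[p^k])$ has ramification degree divisible by $(p^2-1)p^{2k}$, improving \cite{smith23}. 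For this, take $\gamma$ of minimal valuation $\frac{1}{p^{2k-2}(p^2-1)}$ and compute the Newton polygon of the relevant quotient polynomial over $K(\mathcal{R}_{k-1})$. The extra term $\alpha^{-1}\pi_e^2x^{p^{2k-1}}$ now has valuation comparable to the main terms, because $v(\alpha^{-1})=k-2$ exactly. This should give roots whose valuations have denominator $p^{2k}$, so that $p^4$ divides the relative degree. The degree is at most $p^4$ since $\#\ker(\GL_2(\Z/p^k)\to\GL_2(\Z/p^{k-1}))=p^4$. This valuation and denominator computation is the main obstacle. The final assertion of the theorem then follows by combining (i) with $k=n$ and (ii) with $k=n+1$ in \Cref{thm:ellipticcartantower}, which excludes case (iii) there.
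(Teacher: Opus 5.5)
Your overall architecture matches the paper's. For (i) you use a Krasner-type comparison of $g_k$ with $g_k^\infty$. For (ii) you reduce to $p^4$ growth and compare the $p$-part of the ramification of $K(E[p^k])$ (at least $p^{2k}$) with that of $K(E[p^{k-1}])$ (at most $p^{2k-4}$, by (i) at level $k-1$).

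Part (i) is essentially fine, with two remarks. First, at a root of minimal valuation the smallest-valuation term of $g_k-g_k^\infty$ is the $n=1$ term, not the $n=k$ term. This is harmless, since every term has valuation at least $v(\alpha^{-1})+1+\frac{2}{e}>k+\frac{1}{p^2-1}$. Second, the upgrade to a module isomorphism, which you leave open, is not needed. The nearest-root bijection already gives $K(E[p^k])=K(E_{\mathrm{CM}}[p^k])$. So the $p$-part of $\#\operatorname{Im}\rho_{E,p^k}$ is at most $p^{2(k-1)}$ by \Cref{lemma: description in potential CM case}(i), and \Cref{thm:ellipticcartantower} then forces $\operatorname{Im}\rho_{E,p^k}\subseteq C_{ns}^+(p^k)$; this is exactly the paper's argument. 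Your first option can in fact be completed, since the law $\oplus$ does not depend on $\alpha$ and $v_R(x\oplus y)=v_R(x+y)$, but it is more work than needed.

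The genuine gap is in (ii). Exhibiting an element of $K(E[p^k])$ whose valuation has denominator divisible by $p^{2k}$ is the entire content of that part, and you leave it as ``the main obstacle''. Moreover, the mechanism you suggest cannot deliver it as stated. By \Cref{prop: shifted Newton polygon}, every root of $g_k$ and every difference of two roots has valuation $\frac{1}{p^{2i}(p^2-1)}$ with $0\le i\le k-1$. So the $p$-part of the denominator is at most $p^{2k-2}$, and no Newton polygon whose roots are roots of $g_k$ or their differences, over any base, will exhibit $p^{2k}$.

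The missing idea is to measure how the CM symmetry fails at level $k$, as the paper does:
\begin{itemize}
\item Let $\zeta$ be a primitive $(p^2-1)$-th root of unity. Then $\zeta\gamma$ is a root of $g_k^\infty$ whenever $\gamma$ is. For $g_k$, however, $g_k(\zeta\gamma)=\alpha^{-1}\pi_e^2(\zeta^p-\zeta)\sum_{n=1}^k(-1)^np^n\gamma^{p^{2k-2n+1}}$ has valuation exactly $k-1+\frac{2}{e}+\frac{p}{p^2-1}$ when $v(\gamma)$ is minimal. This is precisely where $v(\alpha^{-1})=k-2$ enters.
\item \Cref{lemma: roots distance} then yields a root $\beta$ with $v(\beta-\zeta\gamma)=\frac{1}{p^2}\bigl(\frac{2}{e}+\frac{p}{p^2-1}\bigr)$.
\item Approximate additivity of $g_k$ gives a congruence $c\,\gamma^{p^{2k-1}}-\varepsilon_1^{p^2}+p\varepsilon_1\equiv 0$ for $\varepsilon_1=\beta-\zeta\gamma$.
\item One then successively subtracts unit multiples $u_i\gamma^{h_i}$, using $\gamma^{p^{2k-2}(p^2-1)}\approx p$ and $\gamma^{p^{2k-2}\cdot 2(p^2-1)/e}\approx\xi\pi_e^2$ (\Cref{lemma: congruences powers of gamma}). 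This gives $v(\varepsilon_{i+1})=h_{i+1}v(\gamma)$ with $h_{i+1}=p^{2k-4}(p^2-1)+h_i/p^2$.
\item Each step lowers $v(h_i)$ by $2$, so $v(\varepsilon_k)$ ends up with denominator $p^{2k}$. For $k=2$ a second-order term may require one extra correction.
\end{itemize}
Without such a construction, (ii) is unproved, and so is the exclusion of case (iii) of \Cref{thm:ellipticcartantower} that the final assertion relies on.
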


\begin{remark}
If $k>v(\alpha^{-1})+2$, the conclusion of \Cref{thm: more precise description padic Galreps}(ii) is clearly still valid, so long as $p>\sqrt{v(\alpha^{-1})+2}$.
\end{remark}

Before proving \Cref{thm: more precise description padic Galreps}, we show that it easily implies \Cref{thm: no sharp group over Qp}.

\begin{proof}[Proof of \Cref{thm: no sharp group over Qp} assuming \Cref{thm: more precise description padic Galreps}]
We simply need to rule out case (iii) of \Cref{thm:ellipticcartantower}. By the above discussion we can assume $e \in \{3,4,6\}$, $e \mid p+1$ and $v(\alpha^{-1}) \geq 0$. We apply \Cref{thm: more precise description padic Galreps} with $k=2$: if $v(\alpha^{-1})=0$, \Cref{thm: more precise description padic Galreps}(ii) implies that $G$ contains $\operatorname{Id}+pM_2(\Z_p)$, which is incompatible with \Cref{thm:ellipticcartantower}(iii). On the other hand, if $v(\alpha^{-1}) \geq 1$, then \Cref{thm: more precise description padic Galreps}(i) shows $G(p^2) \subseteq C_{ns}^+(p^2)$, which is again incompatible with \Cref{thm:ellipticcartantower}(iii).
\end{proof}

We prove \Cref{thm: more precise description padic Galreps} in the next two sections. More precisely, to prove (i), in \Cref{sect: large valuation} we show that the image of the mod $p^k$ Galois representation attached to $E$ is the same as for a fixed CM elliptic curve with deformation parameter $\alpha^{-1}=0$ and hence has image contained in $C_{ns}^+(p^k)$. In \Cref{sect: valuation zero}, we prove (ii) by analysing the ramification index of $p$ in the fields $K(E[p^k])$ and $K(E[p^{k-1}])$. Note that the final statement in \Cref{thm: more precise description padic Galreps} follows immediately applying (i) to $k=v(\alpha^{-1})+1$ and (ii) to $k=v(\alpha^{-1})+2$.

\section{Cartan image}\label{sect: large valuation}

In this section, we work with the notation and assumptions of \Cref{setup} and establish \Cref{thm: more precise description padic Galreps}(i), 
which will follow from \Cref{thm: technical result for large valuation} below.
The key result states that when $k$ is sufficiently small, the Galois structure of the roots of $g_k(x)$ (as in \Cref{eq: polynomial g}) is constant in an explicit neighbourhood of $\alpha=\infty$; via \Cref{thm: p2 torsion points correspond to roots of g} we can relate this back to the Galois structure of $E[p^k]$. The relevant polynomial for $\alpha=\infty$ is 
\[
g_k^{\infty}(x) = \sum_{n=0}^{k} (-1)^np^n x^{p^{2k-2n}}.
\]

\begin{theorem}\label{thm: technical result for large valuation}
Suppose $k \leq v(\alpha^{-1}) +1$. The splitting fields of $g_k$ and $g_k^{\infty}$ over $K$ coincide.
\end{theorem}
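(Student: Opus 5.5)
The plan is a Krasner's lemma argument: every root of $g_k$ will be shown to lie closer to a unique root of $g_k^\infty$ than to any other root of $g_k^\infty$, and vice versa, so the two sets of roots generate the same field over $K$. Both polynomials are monic of degree $p^{2k}$ with coefficients in $K$. By \Cref{prop: shifted Newton polygon}, applied to $g_k$ and to $g_k^\infty$ (the latter being the case $\alpha^{-1}=0$), each has $p^{2k}$ distinct roots, and the minimal distance between distinct roots has valuation at most $\frac{1}{p^2-1}$. More precisely, for any root $\gamma$ the largest valuation of $\gamma-\gamma'$ over $\gamma'\neq\gamma$ is exactly $\frac{1}{p^2-1}$, attained on the class $\mathcal{R}^0_{k,\gamma}$. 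So if for each $\gamma\in\mathcal{R}_k$ I find $\gamma^\infty\in\mathcal{R}_k^\infty$ with $v(\gamma-\gamma^\infty)>\frac{1}{p^2-1}$, Krasner's lemma gives $K(\gamma^\infty)\subseteq K(\gamma)$. Applying the same argument with the roles of the two polynomials exchanged gives the reverse inclusion. Taking composita over all roots then yields equality of the splitting fields.

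The key estimate is the valuation of $g_k^\infty(\gamma)$ for $\gamma\in\mathcal{R}_k$. Since $g_k(\gamma)=0$, we have
\[
g_k^\infty(\gamma)=g_k^\infty(\gamma)-g_k(\gamma)=-\alpha^{-1}\pi_e^2\sum_{n=1}^k(-1)^np^n\gamma^{p^{2k+1-2n}}.
\]
By \Cref{prop: shifted Newton polygon} applied with the root $0$, every root satisfies $v(\gamma)\geq\frac{1}{p^{2k-2}(p^2-1)}$. Hence the $n$-th term has valuation at least
\[
v(\alpha^{-1})+\tfrac{2}{e}+n+p^{2k+1-2n}\cdot\tfrac{1}{p^{2k-2}(p^2-1)}.
\]
The worst case is $n=k$, where the exponent is $p$. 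That term has valuation at least $v(\alpha^{-1})+k+\frac{2}{e}+\frac{p^{3-2k}}{p^2-1}$, and with $k\le v(\alpha^{-1})+1$ this is at least $2k-1+\frac{2}{e}$.

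The estimate then feeds into \Cref{lemma: roots distance}, which is stated for $g_k$ but whose proof only uses the Newton polygon structure of \Cref{prop: shifted Newton polygon}, so it applies verbatim to $g_k^\infty$. To conclude $v(\gamma-\gamma^\infty)>\frac{1}{p^2-1}$ for the nearest root $\gamma^\infty$, I need $v(g_k^\infty(\gamma))>k+\frac{1}{p^2-1}$, which is the case $n=0$ of that lemma. Every term above has valuation at least $v(\alpha^{-1})+1+\frac{2}{e}+\cdots\geq k+\frac{2}{e}$, and $\frac{2}{e}\ge\frac13>\frac{1}{p^2-1}$. Notably, the valuation of $\gamma$ hardly matters here, and this is exactly where the hypothesis $k\le v(\alpha^{-1})+1$ is used.

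The symmetric direction is the same computation with $\gamma^\infty\in\mathcal{R}_k^\infty$: we have $g_k(\gamma^\infty)=g_k(\gamma^\infty)-g_k^\infty(\gamma^\infty)$, which is the same sum evaluated at $\gamma^\infty$, with the same bound. Then \Cref{lemma: roots distance} for $g_k$ produces the nearest root in $\mathcal{R}_k$.

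I expect the main obstacle to be bookkeeping rather than an idea. One must check that \Cref{lemma: roots distance} and \Cref{prop: shifted Newton polygon} hold for $g_k^\infty$ (they only need $v(\alpha^{-1})\ge 0$, and $\alpha^{-1}=0$ qualifies). One must also check that Krasner's lemma is applied in the correct form: it needs $v(\gamma-\gamma^\infty)$ to exceed $v(\gamma^\infty-\gamma')$ for every conjugate $\gamma'\neq\gamma^\infty$ of $\gamma^\infty$ over $K(\gamma)$, and this holds because all such $\gamma'$ lie in $\mathcal{R}_k^\infty$. The hypothesis $p>\sqrt k$ is not needed in this step.
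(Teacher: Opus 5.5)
Your proposal is correct and follows essentially the same route as the paper. Both use Krasner's lemma with the root-separation bound $\frac{1}{p^2-1}$ from \Cref{prop: shifted Newton polygon}, and both bound $v(g_k^\infty(\gamma))$ and $v(g_k(\gamma_\infty))$ below by $k+\frac{1}{p^2-1}$ via the difference $\alpha^{-1}\pi_e^2\sum_{n=1}^{k}(-1)^np^nx^{p^{2k+1-2n}}$, then conclude with \Cref{lemma: roots distance} at $n=0$; the paper packages the estimate as \Cref{lemma: valuation of g(gamma) and h(beta)}. One small slip: the term of smallest valuation is $n=1$ (valuation at least $v(\alpha^{-1})+1+\frac{2}{e}+\frac{p}{p^2-1}$), not $n=k$, but the uniform bound over all $n\geq 1$ that you then use is valid and suffices.
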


We first show the following technical lemma.

\begin{lemma}\label{lemma: valuation of g(gamma) and h(beta)}
Let $\gamma_\infty$ be a root of $g_k^\infty$, $\gamma$ a root of $g_k$, and $\zeta$ a $(p^2-1)$-th root of unity. Then:
\begin{itemize}
    \item $v(g_k(\gamma_\infty)) \geq  v(\alpha^{-1}) +1 + \frac{2}{e} + \frac{p}{p^2-1}$;
    \item $v(g_k^\infty(\gamma)) = v(g_k(\zeta \gamma)) \geq  v(\alpha^{-1}) +1 + \frac{2}{e} + \frac{p}{p^2-1}$.
\end{itemize}
Moreover equality holds when $v(\gamma_\infty)$ (resp.~$v(\gamma))$ equals $\frac{1}{p^{2k-2}(p^2-1)}$.
\end{lemma}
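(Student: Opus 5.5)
The plan is to exploit the relation $g_k(x) = g_k^\infty(x) + \alpha^{-1}\pi_e^2 \sum_{n=1}^k (-1)^n p^n x^{p^{2k+1-2n}}$. For the first bullet, $g_k^\infty(\gamma_\infty)=0$, so
\[
g_k(\gamma_\infty) = \alpha^{-1}\pi_e^2 \sum_{n=1}^k (-1)^n p^n \gamma_\infty^{p^{2k+1-2n}}.
\]
By Proposition \ref{prop: shifted Newton polygon}, applied with $\alpha^{-1}=0$ (the proposition only uses $v(\alpha^{-1})\ge 0$, so it covers $g_k^\infty$), the nonzero roots have valuation $\frac{1}{p^{2i}(p^2-1)}$ with $0\le i\le k-1$; if $\gamma_\infty=0$ the bound is trivial. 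I will compare the valuations $n + p^{2k+1-2n}v(\gamma_\infty)$ of the summands. With $v(\gamma_\infty)\ge \frac{1}{p^{2k-2}(p^2-1)}$, the $n=k$ term has valuation at least $k + \frac{p^3}{p^{2k}(p^2-1)}$. That is not obviously at least $1+\frac{p}{p^2-1}$, so instead I will minimise over $n$ using $v(\gamma_\infty)=\frac{1}{p^{2i}(p^2-1)}$.

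The summand for index $n$ has valuation $n + \frac{p^{2k+1-2n-2i}}{p^2-1}$. Writing $m=k-n-i$, this is $k-i-m+\frac{p^{2m+1}}{p^2-1}$, and it is minimised at $m=0$ (the term $p^{2m+1}/(p^2-1)$ grows much faster than $m$ once $m\ge 0$). For $m<0$ the fractional term shrinks but $k-i-m$ grows by at least $1$ per step, so those terms are larger. The minimum therefore occurs at $n=k-i$, with value $i+\dots$; more precisely it equals $n+\frac{p}{p^2-1}$ with $n=k-i\ge 1$. Hence every summand has valuation at least $1+\frac{p}{p^2-1}$, with the minimum attained uniquely. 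Adding $v(\alpha^{-1})+\frac{2}{e}$ gives the bound. When $v(\gamma_\infty)=\frac{1}{p^{2k-2}(p^2-1)}$, i.e. $i=k-1$, the unique minimal term is $n=1$ with valuation exactly $1+\frac{p}{p^2-1}$, so equality holds. The uniqueness of the minimum must be checked carefully, since the neighbours $m=\pm1$ give $1+\frac{p^3}{p^2-1}$ and $-1+\frac{1}{p(p^2-1)}$ relative to $k-i$. Here I have to be careful: for $m=-1$ the term is $k-i+1+\frac{1}{p(p^2-1)}$, which still exceeds $k-i+\frac{p}{p^2-1}$ because $\frac{p}{p^2-1}<1$.

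For the second bullet, first check that $g_k^\infty(\zeta x) = \zeta g_k^\infty(x)$. This holds because every exponent $p^{2j}\equiv 1 \bmod (p^2-1)$, so $\zeta^{p^{2j}}=\zeta$. The odd exponents satisfy $\zeta^{p^{2j+1}}=\zeta^p$, so the $\alpha^{-1}$-part picks up the factor $\zeta^p$. I will choose $\zeta$ with $\zeta^{p-1}=-1$, giving $g_k(\zeta\gamma) = \zeta g_k^\infty(\gamma) - \zeta\cdot\alpha^{-1}\pi_e^2(\text{odd part})(\gamma)$. Since $g_k(\gamma)=0$, we have $g_k^\infty(\gamma) = -\alpha^{-1}\pi_e^2(\text{odd part})(\gamma)$, and therefore $g_k(\zeta\gamma) = 2\zeta g_k^\infty(\gamma)$. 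This matches the equality of valuations stated in the lemma, since $2\zeta$ is a unit; the intended $\zeta$ is presumably one with $\zeta^{p-1}=-1$. Then $v(g_k^\infty(\gamma)) = v\bigl(\alpha^{-1}\pi_e^2\sum_n (-1)^n p^n\gamma^{p^{2k+1-2n}}\bigr)$, and this is the same computation as before, using that the roots of $g_k$ have the same valuation profile by Proposition \ref{prop: shifted Newton polygon}. The equality case for $v(\gamma)=\frac{1}{p^{2k-2}(p^2-1)}$ follows identically.

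The main obstacle is the bookkeeping around the choice of $\zeta$: the identity $v(g_k^\infty(\gamma))=v(g_k(\zeta\gamma))$ requires $\zeta^{p}=-\zeta$, so I expect to specify that $\zeta$ satisfies $\zeta^{p-1}=-1$.
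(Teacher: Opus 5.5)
\textbf{Bullet one and the equality case.} Your treatment here is correct and is the paper's argument. You write $g_k(\gamma_\infty)=\alpha^{-1}\pi_e^2\sum_{n=1}^k(-1)^np^n\gamma_\infty^{p^{2k+1-2n}}$ and bound the summands using the root valuations from \Cref{prop: shifted Newton polygon}. The minimisation over $n$ is more than needed. Every summand with $n\geq 2$ trivially has valuation at least $2>1+\frac{p}{p^2-1}$, so only $n=1$ uses $v(x)\geq\frac{1}{p^{2k-2}(p^2-1)}$; your worry about the $n=k$ term is unfounded. The minimisation is nevertheless correct, and it even gives the exact value $v(\alpha^{-1})+\frac{2}{e}+(k-i)+\frac{p}{p^2-1}$ when $v(\gamma_\infty)=\frac{1}{p^{2i}(p^2-1)}$. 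There is a small slip when you compare neighbours: relative to $k-i$, $m=1$ gives $-1+\frac{p^3}{p^2-1}$ and $m=-1$ gives $+1+\frac{1}{p(p^2-1)}$. Your next sentence uses the right value, so nothing breaks.

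\textbf{Bullet two: the restriction on $\zeta$ is the gap.} You restrict to $\zeta$ with $\zeta^{p-1}=-1$ and claim that $v(g_k^\infty(\gamma))=v(g_k(\zeta\gamma))$ requires $\zeta^p=-\zeta$. That claim is false. Worse, the restriction excludes exactly the case the paper needs: in \S\ref{sect: valuation zero} the lemma is applied with $\zeta$ a primitive $(p^2-1)$-th root of unity. For such $\zeta$, $\zeta^{p-1}$ is a primitive $(p+1)$-th root of unity, hence $\neq -1$. Your own computation, done without specialising, works for every $\zeta$ with $\zeta^{p^2-1}=1$. With $S(\gamma):=\sum_{n=1}^k(-1)^np^n\gamma^{p^{2k+1-2n}}$ and $g_k^\infty(\gamma)=-\alpha^{-1}\pi_e^2S(\gamma)$, it gives
\[
g_k(\zeta\gamma)=\zeta g_k^\infty(\gamma)+\zeta^p\alpha^{-1}\pi_e^2S(\gamma)=(\zeta-\zeta^p)\,g_k^\infty(\gamma).
\]
Now $\zeta-\zeta^p=\zeta(1-\zeta^{p-1})$, and $\zeta^{p-1}$ is a root of unity of order dividing $p+1$, which is prime to $p$. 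So this factor is a unit as soon as $\zeta^{p-1}\neq 1$; this is what the paper uses when it says $\zeta^p-\zeta$ is a unit. The only $\zeta$ that must be excluded are those with $\zeta^{p-1}=1$. For them $g_k(\zeta x)=\zeta g_k(x)$ identically, so $g_k(\zeta\gamma)=0$ and the claimed equality fails whenever $\gamma\neq 0$ and $\alpha\neq\infty$. The correct hypothesis is therefore $\zeta^{p-1}\neq 1$, not $\zeta^{p-1}=-1$. With that replacement your argument is complete.
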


\begin{proof}
    Set $g=g_k$ and $h=g_k^\infty$. We have
    \begin{equation}\label{eq: proof of valuation of g(gamma) and h(beta)}
        g(x) = h(x) + \alpha^{-1}\pi_e^2\sum_{n=1}^k (-1)^n p^n x^{p^{2k-2n+1}}.
    \end{equation}
    
    We claim that for $x \in \{\gamma, \gamma_\infty, \zeta\gamma\}$, we have $v(p^n x^{p^{2k-2n+1}}) \geq 1 + \frac{p}{p^2-1}$ with equality precisely when $n=1$ and $v(x)= \frac{1}{p^{2k-2}(p^2-1)}$ and strict inequality otherwise. Indeed, this is clear if $n \geq 2$; the case $n=1$ is a direct computation using that $v(x)\geq \frac{1}{p^{2k-2}(p^2-1)}$ by \Cref{prop: shifted Newton polygon}. Combining this claim with \eqref{eq: proof of valuation of g(gamma) and h(beta)} and the fact that $g(\gamma)=h(\gamma_{\infty})=0$ then completes the proof for the valuations of $g(\gamma_{\infty})$ and $h(\gamma)$.

    Finally, consider $x=\zeta\gamma$. Notice that $\zeta^{p^2} = \zeta$, and so $h(\zeta\gamma) = \zeta h(\gamma)$. Hence
    \begin{align*}
        g(\zeta\gamma) &= \zeta h(\gamma) + \alpha^{-1}\pi_e^2 \zeta^p \sum_{n=1}^k (-1)^n p^n \gamma^{p^{2k-2n+1}} = \zeta g(\gamma) + \alpha^{-1}\pi_e^2 (\zeta^p-\zeta) \sum_{n=1}^k (-1)^n p^n \gamma^{p^{2k-2n+1}} \\
        &= \alpha^{-1}\pi_e^2 (\zeta^p-\zeta) \sum_{n=1}^k (-1)^n p^n \gamma^{p^{2k-2n+1}}.
    \end{align*}
    
    Since $\zeta^p-\zeta$ is a unit, we get $v(g(\zeta\gamma)) = v\left(\alpha^{-1}\pi_e^2 \sum_{n=1}^k (-1)^n p^n \gamma^{p^{2k-2n+1}}\right)=v(g(\gamma)-h(\gamma))=v(h(\gamma))$, where we have used \eqref{eq: proof of valuation of g(gamma) and h(beta)} and the fact that $g(\gamma)=0$ by assumption. This concludes the proof.

\end{proof}

\begin{remark}\label{rmk: g zetabeta congruence}
    Notice that in the case $x=\zeta\gamma$ we actually proved that $g(\zeta\gamma) + \alpha^{-1}\pi_e^2 (\zeta^p-\zeta) p\gamma^{p^{2k-1}}$ has valuation strictly greater than $v(\alpha^{-1}) + 2 + \frac{2}{e}$, which is a stronger statement.
\end{remark}

\begin{proof}[Proof of \Cref{thm: technical result for large valuation}]
Let $\gamma_{\infty}$ be a nonzero root of $g_k^{\infty}$. We show that there exists a root $\gamma$ of $g_k$ such that $v(\gamma_\infty-\gamma)> \frac{1}{p^2-1}$ from which Krasner's lemma gives $K(\gamma_{\infty}) \subseteq K(\gamma)$ since for any other root $\gamma_\infty'$ of $g_k^\infty$ we have $v(\gamma_\infty - \gamma_\infty') \leq \frac{1}{p^2-1}$ by \Cref{prop: shifted Newton polygon}. Since our arguments will be symmetric in $\gamma$ and $\gamma_\infty$, it suffices to prove containment in one direction to get equality of splitting fields. 
Now since $v(\alpha^{-1})\geq k-1$, \Cref{lemma: valuation of g(gamma) and h(beta)} yields
\[
v(g_k(\gamma_\infty)) \geq v(\alpha^{-1}) + 1 + \frac{2}{e} + \frac{p}{p^2-1}
 > k + \frac{1}{p^2-1}.
\]
To conclude, we apply \Cref{lemma: roots distance} with $a=\gamma_\infty$ and $n=0$ to determine the existence of the claimed root $\gamma$ of $g_k$ such that $v(\gamma_\infty-\gamma)> \frac{1}{p^2-1}$.

\end{proof}

We can now prove \Cref{thm: more precise description padic Galreps}(i):

\begin{proof}
Let $G=\operatorname{Im} \rho_{E, p^\infty}$. Note that $k-v(\alpha^{-1})-1 \leq 0$ by assumption, so $\max\{\sqrt{k}, k-v(\alpha^{-1}), 3 \} <p$. Hence we may apply \Cref{thm: p2 torsion points correspond to roots of g} to get that $K(E[p^k])$ is the splitting field over $K$ of the polynomial $g_k(x)$, which coincides with the splitting field $F$ of $g_k^{\infty}(x)$ by \Cref{thm: technical result for large valuation}. By \Cref{cor: splitting field in limit case} there exists a CM elliptic curve $E'/\Q_p$ with $j$-invariant $0$ or $1728$ such that $K(E'[p^k])=F$. By \Cref{lemma: description in potential CM case}(i), the largest power of $p$ that divides $[F : \Q_p]$ is at most $2(k-1)$ (it is easy to show that it is exactly $2(k-1)$, but we don't need this). Therefore
\[
v(\#G(p^k)) = v([F:\Q_p]) \leq 2(k-1),
\]
which is only compatible with the cases where $G(p^k) \subseteq C_{ns}^+(p^k)$ in \Cref{thm:ellipticcartantower}.
\end{proof}

\section{Maximal growth}\label{sect: valuation zero}

As in the previous section, we work with the notation and assumptions of \Cref{setup}.  We now consider the case $k=v(\alpha^{-1})+2$ and prove maximal growth in the following result, from which \Cref{thm: more precise description padic Galreps}(ii) will follow easily. Recall that the field $K=\Q_p(\pi_e, \zeta_e)$ (introduced in \Cref{sect: filtred phi modules}) is the splitting field of $x^e+p$.

\begin{theorem}\label{thm: valuation 0}
    Assume the hypotheses and notation of \Cref{setup} and set $k=v(\alpha^{-1})+2$. Assume furthermore that $p>\sqrt{k}$.
    The field extension $K(E[p^k])/K(E[p^{k-1}])$ has degree $p^4$, hence a fortiori $[\mathbb{Q}_p(E[p^k]) : \mathbb{Q}_p(E[p^{k-1}])]=p^4$.
\end{theorem}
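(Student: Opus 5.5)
Since $\op{Gal}(K(E[p^k])/K(E[p^{k-1}]))$ embeds in $\ker(\GL_2(\Z/p^k\Z)\to\GL_2(\Z/p^{k-1}\Z))\cong M_2(\F_p)$, the degree is at most $p^4$. It suffices to show that the ramification index of $K(E[p^k])$ over $K$ is divisible by $(p^2-1)p^{2k}$. Indeed, by \Cref{thm: p2 torsion points correspond to roots of g}, $K(E[p^{k-1}])$ is the splitting field of $g_{k-1}$. In the case $k-1\le v(\alpha^{-1})+1$, \Cref{thm: technical result for large valuation} shows it equals the splitting field of $g_{k-1}^\infty$. That field is $K(E'[p^{k-1}])$ for the CM curve $E'$ of \Cref{cor: splitting field in limit case}, so by \Cref{lemma: description in potential CM case}(i) its degree over $K$ has $p$-part at most $p^{2(k-2)}$. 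A ramification index divisible by $p^{2k}$ upstairs then forces $[K(E[p^k]):K(E[p^{k-1}])]\ge p^{2k-2(k-2)}=p^4$. Since $[K:\Qp]$ is prime to $p$, the statement over $\Qp$ follows.

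To produce the ramification, I would pick a root $\gamma\in\mathcal{R}_k$ of minimal valuation $\frac{1}{p^{2k-2}(p^2-1)}$ and study the extension $K(\gamma)\cdot K(E[p^{k-1}])$. The tame part $(p^2-1)$ and the factor $p^{2k-2}$ already come from $v(\gamma)$. Finding the extra $p^2$ requires a finer invariant. My plan is to use $\zeta\gamma$ for $\zeta$ a $(p^2-1)$-th root of unity. By \Cref{lemma: valuation of g(gamma) and h(beta)}, with equality since $v(\gamma)$ is minimal,
\[
v(g_k(\zeta\gamma))=v(\alpha^{-1})+1+\tfrac{2}{e}+\tfrac{p}{p^2-1}=k-1+\tfrac{2}{e}+\tfrac{p}{p^2-1}.
\]
This lies in the range $(k-1+\frac{1}{p^2-1},\,k+\frac{1}{p^2-1}]$. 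Applying \Cref{lemma: roots distance} with $n=1$ gives a root $\gamma'$ with
\[
v(\zeta\gamma-\gamma')=\tfrac{1}{p^2}\bigl(\tfrac{2}{e}+\tfrac{p}{p^2-1}\bigr).
\]

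The key point is that this exact valuation has denominator divisible by a higher power of $p$. Writing it over a common denominator gives $\frac{2(p^2-1)+ep}{ep^2(p^2-1)}$, and $p\nmid 2(p^2-1)+ep$ since $p\nmid 2$. Now consider the element $\zeta\gamma-\gamma'$, or better the ratio $\gamma'/\gamma-\zeta$. It lies in $K(E[p^k])$ because $\zeta\in\Q_{p^2}\subseteq K$ as $e\mid p+1$, and its valuation combined with $v(\gamma)$ should force the value group to contain $\frac{1}{p^{2k}(p^2-1)e'}$-type denominators. More precisely, I need $p^{2k}$ in the denominator of some valuation: $v(\gamma)$ contributes $p^{2k-2}$, and $v(\gamma'-\zeta\gamma)$ contributes $p^2$ independently. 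I would check carefully that together they generate a value group whose index over $v(K^\times)=\frac1e\Z$ is divisible by $(p^2-1)p^{2k}$, possibly by taking a suitable product or quotient such as $(\gamma'-\zeta\gamma)^{p^{2k-4}}/\gamma^{c}$ to isolate the denominator.

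The main obstacle is this last step. The valuation $\frac{2}{ep^2}+\frac{1}{p(p^2-1)}$ only has $p^2$ in its denominator, whereas $v(\gamma)$ has $p^{2k-2}$, so naively the combined denominator is just $p^{\max(2,2k-2)}$ and not $p^{2k}$. I expect to instead iterate the construction at every level. One route is to use the sharper estimate in \Cref{rmk: g zetabeta congruence} to show that $\gamma'-\zeta\gamma$, after subtracting the explicit approximation $\alpha^{-1}\pi_e^2(\zeta^p-\zeta)p\gamma^{p^{2k-1}}$-correction divided by the derivative-like factor, has valuation whose $p$-adic denominator is $p^{2k}$. Alternatively, I would descend to the $k=1$ or $k=2$ picture (where $v(\alpha^{-1})=0$) and bootstrap using the Cartan structure for lower levels. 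If the valuation trick fails, I would fall back on comparing with \cite{smith23}-style ramification bounds for $K(E[p^{k-1}])$ and arguing via group theory with \Cref{thm:ellipticcartantower}: any growth beyond $p^2$ at level $k$ forces $p^4$.
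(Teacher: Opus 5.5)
\textbf{There is a genuine gap.} Your reduction is the paper's own. It consists of the $p^4$ upper bound and the comparison of $K(E[p^{k-1}])$ with the CM splitting field, which caps the $p$-part of its ramification over $K$ at $p^{2k-4}$. Hence you need an element of $K(E[p^k])$ whose valuation has $p$-adic denominator $p^{2k}$. Your $\gamma'-\zeta\gamma$ is exactly the paper's first approximant $\varepsilon_1$. But the proposal stops where the real proof begins.

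Your assertion that $v(\gamma'-\zeta\gamma)$ supplies a higher power of $p$ in the denominator is false for $k\ge 2$: it supplies $p^2\le p^{2k-2}$. As you half-notice, no quotient such as $(\gamma'-\zeta\gamma)^{a}/\gamma^{c}$ can help. Multiplicative combinations only give valuations in the group generated by $v(\gamma)$ and $v(\gamma'-\zeta\gamma)$, whose $p$-part is $p^{\max(2,2k-2)}$. New denominators arise only from \emph{additive} cancellation of leading terms, and your plan has no mechanism for that.

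Your fallbacks fail too. The bound of \cite{smith23} gives only $(p^2-1)p^{2k-2}$, which is fully compatible with $G(p^k)\subseteq C_{ns}^+(p^k)$, so it detects no growth beyond $p^2$. For $k=2$, the claim that growth beyond $p^2$ forces $p^4$ is false: $G_{ns}^\#(p^2)$ gives growth $p^3$. Excluding that case is precisely what this theorem is used for in \Cref{thm: no sharp group over Qp}.

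\textbf{The missing idea is a successive-approximation scheme.} Write $\alpha^{-1}=up^{k-2}$ with $u\in\Z_p^\times$. Approximate additivity of $g_k$, together with \Cref{rmk: g zetabeta congruence}, gives
\[
(-1)^k\pi_e^2u(\zeta-\zeta^p)\gamma^{p^{2k-1}}-\varepsilon_1^{p^2}+p\varepsilon_1\equiv 0 \bmod p^{1+\frac{2}{ep}+\frac{1}{p^2-1}}.
\]
Meanwhile $g_k(\gamma)=0$ yields $\gamma^{p^{2k-2}(p^2-1)}\approx p$ and $\gamma^{p^{2k-2}\cdot 2(p^2-1)/e}\approx\xi\pi_e^2$ for some $e$-th root of unity $\xi$. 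So both $p$ and $\pi_e^2$ can be traded for units times powers of $\gamma$.

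For $k>2$ one then peels off the dominant term. Set $u_1=(-1)^k\xi^{-1}u(\zeta-\zeta^p)$, $h_1=\frac{2p^{2k-4}(p^2-1)}{e}+p^{2k-3}$, $h_{i+1}=p^{2k-4}(p^2-1)+h_i/p^2$, and $\varepsilon_{i+1}=\varepsilon_i-u_1\gamma^{h_i}$. The congruence $u_1\gamma^{p^2h_i}-\varepsilon_i^{p^2}+p\varepsilon_i\equiv 0$ then propagates by induction and forces $v(\varepsilon_i)=h_iv(\gamma)$. Since $v(h_k)=-2$, the valuation $v(\varepsilon_k)$ has denominator divisible by $p^{2k}$.

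For $k=2$ (where $v(\alpha^{-1})=0$), the correction $\frac{2}{e}\alpha^{-1}\pi_e^2\gamma^{p^3-p^2}$ is no longer negligible. A separate analysis with a second-order correction term shows that $\varepsilon_2$ or $\varepsilon_3$ has the required denominator $p^4$. None of this appears in your plan, and without it the theorem is not proved.
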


\begin{proof}[Proof of \Cref{thm: more precise description padic Galreps}(ii)]
     Let $G=\operatorname{Im} \rho_{E, p^\infty}$  and suppose by contradiction $G \not \supseteq I + p^{k-1}M_2(\Z_p)$. By \Cref{thm:ellipticcartantower} we then have either $G(p^{k}) \subseteq C_{ns}^+(p^{k})$ with index coprime to $p$, or $k=2$ and $G(p^2) \subseteq G_{ns}^\#(p^2)$. 
    If $G(p^2) \subseteq G_{ns}^\#(p^2)$, by definition of the group $G_{ns}^\#(p^2)$ we have $[\Q_p(E[p^2]) : \Q_p(E[p])] =\frac{\#G(p^2)}{\#G(p)} \mid \#G_{ns}^\#(p^2)=p^3 \cdot 2(p^2-1)$, while \Cref{thm: valuation 0} shows $[\mathbb{Q}_p(E[p^2]) : \mathbb{Q}_p(E[p])]=p^4$. 
    If instead $G(p^{k}) \subseteq C_{ns}^+(p^{k})$, by \Cref{thm: valuation 0} we have $p^4 = [\Q_p(E[p^{k}]) : \Q_p(E[p^{k-1}])] = \frac{|G(p^{k})|}{|G(p^{k-1})|} = \frac{|C_{ns}^+(p^{k})|}{|C_{ns}^+(p^{k-1})|} = p^2$, giving a contradiction.
\end{proof}

For the rest of the section, we set the following notation.
\begin{notation}\label{alpha unit notation}
Let $k=v(\alpha^{-1})+2 \geq 2$ and $g(x)=g_k(x)$ be the polynomial given in \Cref{eq: polynomial g}. We let $F$ be the splitting field of $g$ over $K$ and $\beta,\gamma \in F$ be roots of $g(x)$ with valuation $\frac{1}{p^{2k-2}(p^2-1)}$. Throughout our computations, we write $a \equiv b \bmod p^c$ to mean $v(a-b) \geq c$.
\end{notation}

Observe that since $k=v(\alpha^{-1})+2$, the hypotheses of \Cref{thm: valuation 0} (namely that $p>\sqrt{k}$ and $p>3$) are sufficient to apply \Cref{thm: p2 torsion points correspond to roots of g}(ii) to obtain that $F =K(E[p^k])$. Before embarking on the calculations, we briefly describe our approach to proving \Cref{thm: valuation 0} which is to construct a suitable element $\varepsilon \in F$ with $v(\varepsilon)=\frac{a}{p^{2k}}$ for some integer $a$ coprime to $p$; from this we can directly conclude the proof by a ramification argument.
It remains to describe how to build $\varepsilon$. We produce this from two roots $\beta, \gamma$ of $g$ iteratively as
\begin{equation}\label{eq: epsilons and hs}
    \varepsilon_r = \beta- u_0\gamma^{h_0} - u_1\gamma^{h_1}- \cdots - u_{r-1}\gamma^{h_{r-1}},
\end{equation}
for suitable units $u_i \in \mathcal{O}_F^\times$ and integers $h_i$ such that $v(\beta)<v(\varepsilon_1)<v(\varepsilon_2)<\ldots<v(\varepsilon_r)$ is strictly increasing. In particular, this forces $h_i=\frac{v(\varepsilon_{i})}{v(\gamma)}$. By appropriately choosing the units $u_i$, we show that $\varepsilon_2$ or $\varepsilon_3$ has the required property when $k=2$, whilst if $k>2$, then $\varepsilon_k$ suffices. With the strategy outlined, we now give a proof of \Cref{thm: valuation 0}  modulo the ramification calculation of the next sections.

\begin{proof}[Proof of \Cref{thm: valuation 0}.]
First suppose there exists $\varepsilon \in K(E[p^k])$ such that $v(\varepsilon)=\frac{a}{p^{2k}}$ with $a$ coprime to $p$. Then the extension $K(E[p^k])/\Qp$ has ramification degree divisible by $p^{2k}$.
By \Cref{thm: more precise description padic Galreps}(i) applied to $k-1$, we have $\operatorname{Im}\rho_{E,p^{k-1}} \subseteq C_{ns}^+(p^{k-1})$, so the $p$-part of the ramification index of $K(E[p^{k-1}])/\Qp$ is bounded by $p^{2k-4}$ and hence the ramification degree of $K(E[p^k])/K(E[p^{k-1}])$ is $p^4$. Since the degree is bounded by $p^4$, it follows that $[K(E[p^k]) : K(E[p^{k-1}])]=p^4$.

It remains to exhibit such an element $\varepsilon$. If $k=2$, then $\varepsilon_2$ or $\varepsilon_3$ suffices (Lemma \ref{lem: epsilon2}), whereas if $k>2$, then we use $\varepsilon=\varepsilon_k$ (\Cref{prop: recursive congruence}).
\end{proof}

\subsection{Preliminary estimates}

We note the following version of the freshman's dream:
\begin{lemma}\label{lemma: freshmans dream}
Let $n\geq 1$ be an integer and $a,b \in \overline{\Q}_p$ with $v(a) \geq 0, v(b) \geq 0$. Then
\[
(a+b)^{p^n} \equiv a^{p^n}+b^{p^n} \bmod p^{ 1 + p^n \min\{v(a), v(b)\}}.
\]
\end{lemma}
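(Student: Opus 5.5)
We argue by induction on $n$, using the binomial expansion and the $p$-adic valuation of binomial coefficients from \Cref{lemma: binomial valuation}. Set $m:=\min\{v(a),v(b)\}\geq 0$.

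For $n=1$ we write $(a+b)^p = a^p+b^p+\sum_{i=1}^{p-1}\binom{p}{i}a^ib^{p-i}$. Each middle coefficient has valuation at least $1$ by \Cref{lemma: binomial valuation}(i). Each monomial $a^ib^{p-i}$ has valuation at least $im+(p-i)m=pm$. Hence every middle term has valuation $\geq 1+pm$, which is the claim for $n=1$.

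For the inductive step, assume $(a+b)^{p^{n}}=a^{p^{n}}+b^{p^{n}}+c$ with $v(c)\geq 1+p^{n}m$. Set $A:=a^{p^n}+b^{p^n}$, so that $v(A)\geq p^nm$. We raise to the $p$-th power:
\[
(a+b)^{p^{n+1}}=(A+c)^p = A^p + \sum_{i=1}^{p}\binom{p}{i}A^{p-i}c^i .
\]
For $1\leq i\leq p-1$ the term has valuation at least $1+(p-i)p^nm+i(1+p^nm)\geq 1+p^{n+1}m$. For $i=p$ the term $c^p$ has valuation at least $p(1+p^nm)\geq 1+p^{n+1}m$. It remains to compare $A^p$ with $a^{p^{n+1}}+b^{p^{n+1}}$. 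This is the $n=1$ case applied to $a^{p^n},b^{p^n}$, which have minimum valuation at least $p^nm$; it gives a congruence modulo $p^{1+p^{n+1}m}$. Combining these bounds closes the induction.

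Alternatively, one could expand $(a+b)^{p^n}$ directly. By \Cref{lemma: binomial valuation} the coefficient $\binom{p^n}{i}$ has valuation $n-v(i)\geq 1$ for $0<i<p^n$. The monomial $a^ib^{p^n-i}$ has valuation $\geq p^nm$. This gives the bound in a single step, so the induction above is not even necessary.

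There is no real obstacle here. The only point to check is that the non-negativity assumptions $v(a),v(b)\geq0$ are what allow the monomial bound $im+(p^n-i)m$ to be replaced by $p^n\min\{v(a),v(b)\}$.
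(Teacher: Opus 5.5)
Your proof is correct, and the one-step expansion you give as an ``alternative'' is exactly the paper's proof. The paper expands $(a+b)^{p^n}$, uses $v\binom{p^n}{i}\geq 1$ for $0<i<p^n$ from \Cref{lemma: binomial valuation}(i), and bounds $v(a^ib^{p^n-i})\geq p^n\min\{v(a),v(b)\}$.

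Your induction is also valid. The cross terms have valuation at least $1+p^{n+1}m+i$, the term $c^p$ has valuation at least $p+p^{n+1}m$, and the $n=1$ case handles $A^p$. Its only advantage is that it uses $p\mid\binom{p}{i}$ just for $0<i<p$. Since \Cref{lemma: binomial valuation} is already available, the direct route is shorter.

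One small correction to your closing remark: the inequality $iv(a)+(p^n-i)v(b)\geq p^n\min\{v(a),v(b)\}$ uses only $0\leq i\leq p^n$, not $v(a),v(b)\geq 0$. In fact the non-negativity hypothesis is not used anywhere in either argument.
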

\begin{proof}
We have $(a+b)^{p^n} - (a^{p^n}+b^{p^n}) = \sum_{i=1}^{p^n-1}\binom{p^n}{i} a^ib^{p^n-i}$. Each binomial coefficient has valuation at least $1$ by \Cref{lemma: binomial valuation}(i), while $v(a^i b^{p^n-i})=i v(a) + (p^n-i) v(b) \geq p^n \min\{ v(a), v(b) \}$.
\end{proof}

\begin{lemma}\label{lemma: g(x+y) = g(x) + g(y)}
    Suppose $p >3$ and recall that $k=v(\alpha^{-1})+2$. Let $x,y \in \overline{\Q}_p$ be elements such that $v(x) \ge \frac{1}{3p^2}$ and $v(y) \ge 0$. 
    We have $g(x+y) \equiv g(x) + g(y) \mod p^{k + \min\{pv(x), 1+v(x)\}}$.
\end{lemma}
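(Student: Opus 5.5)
Since $g = g_k$ is a $\mathbb{Z}$-linear combination of monomials $x^{p^m}$ (with coefficients involving $p^n$ and $\alpha^{-1}\pi_e^2$), I would expand $g(x+y) - g(x) - g(y)$ termwise:
\[
g(x+y)-g(x)-g(y) = \left[(x+y)^{p^{2k}} - x^{p^{2k}} - y^{p^{2k}}\right] + \sum_{n=1}^k (-1)^n p^n \left( D_{2k-2n} + \alpha^{-1}\pi_e^2 D_{2k+1-2n}\right),
\]
where $D_m := (x+y)^{p^m} - x^{p^m} - y^{p^m} = \sum_{i=1}^{p^m-1}\binom{p^m}{i}x^i y^{p^m-i}$. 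Then I would bound each bracket separately, with $D_0 = 0$ disposing of the $n=k$ term with exponent $p^0$.

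The key estimate is a refined version of \Cref{lemma: freshmans dream} that uses only $v(x)$ (since $v(y)\geq 0$ may be $0$). By \Cref{lemma: binomial valuation}, $v(\binom{p^m}{i}) = m - v(i)$. For $1\le i\le p^m-1$ write $v(i)=s \le m-1$. Then $i\ge p^s$, so
\[
v\!\left(\tbinom{p^m}{i}x^iy^{p^m-i}\right) \ge m - s + p^s v(x).
\]
The function $s\mapsto m-s+p^sv(x)$ on $0\le s\le m-1$ is convex. Its minimum is therefore attained at an endpoint: either $m+v(x)$ (at $s=0$) or $1+p^{m-1}v(x)$ (at $s=m-1$). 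Hence
\[
v(D_m)\ge \min\{m+v(x),\,1+p^{m-1}v(x)\} \quad (m\ge1).
\]

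Next I would check that each term has valuation at least $k+\min\{pv(x),1+v(x)\}$.

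For the leading term ($m=2k$), we have $2k+v(x)\ge k+1+v(x)$. Also $1+p^{2k-1}v(x)\ge k+pv(x)$, which requires $(p^{2k-1}-p)v(x)\ge k-1$. When $k\ge2$, using $v(x)\ge \frac1{3p^2}$, this becomes $\frac{p^{2k-3}-p^{-1}}{3}\ge k-1$, which holds for $p>3$. When $k=1$ it is trivial.

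For the term $p^nD_{2k-2n}$ with $1\le n\le k-1$ (so $m=2k-2n\ge2$), we get valuation at least $\min\{2k-n+v(x),\,n+1+p^{2k-2n-1}v(x)\}$. The first is at least $k+1+v(x)$. For the second we need $n+1+p^{m-1}v(x)\ge k+pv(x)$, equivalently $(p^{m-1}-p)v(x)\ge k-n-1$. With $j=k-n\ge1$, i.e. $m=2j$, this says $(p^{2j-1}-p)v(x)\ge j-1$. It is trivial for $j=1$, and for $j\ge2$ it follows from $v(x)\ge\frac1{3p^2}$ as before.

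For the terms $p^n\alpha^{-1}\pi_e^2D_{2k+1-2n}$, the prefactor $\alpha^{-1}\pi_e^2$ contributes an extra valuation $v(\alpha^{-1})+\frac2e=k-2+\frac2e$, and $m=2j+1\ge 3$ with $j=k-n$. The bound becomes
\[
n+k-2+\tfrac{2}{e}+\min\{2j+1+v(x),\,1+p^{2j}v(x)\}.
\]
The first branch is at least $n+k-1+2j+v(x)\ge k+1+v(x)$. The second branch gives $k+(n-1)+\frac2e+p^{2j}v(x)$. Using $n \ge 1$ and $p^{2j}\ge p$, this is at least $k+pv(x)$.

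This small-$n$ case with the $\pi_e^2$ factor, specifically $n=1$, $j=k-1$, is where I expect the tightest bookkeeping. I would double-check it, together with the $k=1$ edge cases (where $n=k$ gives $D_1$, with $v(D_1)\ge 1+v(x)$ times $p\,\alpha^{-1}\pi_e^2$). Since all terms meet the bound, the congruence follows.
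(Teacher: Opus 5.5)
\textbf{Verdict: genuine gap, though it can be closed.} Your decomposition into the brackets $D_m$ matches the paper's. So does your termwise bound $v\bigl(\binom{p^m}{i}x^iy^{p^m-i}\bigr)\ge m-s+p^sv(x)$ with $s=v(i)$. The step that fails is the next one. A convex function on an interval attains its \emph{maximum} at an endpoint, not its minimum. So convexity of $s\mapsto m-s+p^sv(x)$ does not give $v(D_m)\ge\min\{m+v(x),1+p^{m-1}v(x)\}$, and this bound is in fact false. Take $p=7$, $m=4$, $v(y)=0$ and $v(x)=\frac{1}{3p^2}$. The term $i=p^2$ has valuation $2+\frac13=\frac73$, and it is the unique term of minimal valuation, so $v(D_4)=\frac73$. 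Your bound instead claims $v(D_4)\ge\min\{4+\frac1{147},\frac{10}{3}\}=\frac{10}{3}$. Since $D_4$ really occurs (it is the leading bracket when $k=2$), your check of the terms $p^nD_{2k-2n}$, $0\le n\le k-1$, covers only the two extreme values of $s$. All the intermediate values are left unproved.

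To close the gap you must check every $s$. With $j=k-n$, you need $j-s+p^sv(x)\ge\min\{pv(x),1+v(x)\}$ for all $0\le s\le 2j-1$.
\begin{itemize}
\item The cases $s=0$ and $s=1$ are immediate.
\item For $s\ge2$, the constraint $s\le 2j-1$ forces $s-j\le\lfloor\frac{s-1}{2}\rfloor$.
\item The hypothesis $v(x)\ge\frac1{3p^2}$ gives $(p^s-p)v(x)\ge\frac{p^{s-1}-1}{3p}$.
\item So it suffices that $\frac{p^{s-1}-1}{3p}\ge\lfloor\frac{s-1}{2}\rfloor$. This holds for $p\ge5$ by induction on $s$; the case $s=3$ is where $p>3$ is needed.
\end{itemize}
This is exactly the paper's inequality $\frac{p^{r-2}-1}{3p}\ge\lfloor r/2\rfloor-1$ with $r=s+1$, which the paper obtains from $k-n\ge\lceil r/2\rceil$.

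For the $\alpha^{-1}\pi_e^2$ terms the per-$s$ check is easy. Each summand has valuation at least $k+\frac2e+p^sv(x)$ if $s\ge1$, and at least $k+1+v(x)$ if $s=0$, so that part of your conclusion survives. Two small corrections there: $m=2j+1$ equals $1$ when $n=k$, so ``$m\ge3$'' and ``$p^{2j}\ge p$'' are not right in that case. Also, $k\ge2$ always holds in this section, so no $k=1$ case arises.
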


\begin{proof}
Set $h(x) = g_k^\infty(x) = \sum_{n=0}^{k} (-1)^np^n x^{p^{2k-2n}}$. By definition we have $$g(x+y) = h(x+y) + \alpha^{-1}\pi_e^2 \sum_{n=1}^k (-1)^n p^n (x+y)^{p^{2k-2n+1}}.$$
Letting $H_{n,i} := (-1)^n p^n \binom{p^{2k-2n}}{i}x^i y^{p^{2k-2n}-i}$, every term in $h(x+y)$ is of the form
\[
(-1)^n p^n (x+y)^{p^{2k-2n}} = (-1)^n p^n x^{p^{2k-2n}} + (-1)^n p^n y^{p^{2k-2n}} + \sum_{i=1}^{p^{2k-2n}-1} H_{n,i}.
\]
Consider a fixed $i \in \{1,\ldots,p^{2k-2n}-1\}$ and let $r \in \Z$ be the unique integer such that $p^{r-1} \le i < p^r$. Note that $1 \le r \le 2k-2n$. We show that for every $0 \le n \le k-1$ and $1 \le i \le p^{2k-2n}-1$ we have $v(H_{n,i}) \ge k+\min\{pv(x), 1 + v(x)\}$. By \Cref{lemma: binomial valuation}(ii) we have
$$v(H_{n,i}) \ge n + (2k-2n-r+1) + iv(x) = k + (k-n) - r + 1 + iv(x).$$
Since $k-n \geq \lceil \frac{r}{2} \rceil$ and $i \geq p^{r-1}$ by definition, we obtain $v(H_{n,i}) \ge k - \lfloor \frac{r}{2} \rfloor + 1 + p^{r-1}v(x)$; this equals $k+1+v(x)$ when $r = 1$. If $r \geq 2$, we claim $v(H_{n,i})$ is greater than $k+pv(x)$. To see this, note that $k- \lfloor \frac{r}{2} \rfloor + 1 + p^{r-1}v(x) = k+pv(x) + (p^{r-1}-p)v(x) - \lfloor \frac{r}{2} \rfloor + 1 \geq k+pv(x) + \frac{p^{r-2}-1}{3p} - \lfloor \frac{r}{2} \rfloor + 1$, so it suffices to prove $\frac{p^{r-2}-1}{3p}  \geq \lfloor \frac{r}{2} \rfloor - 1$, which is easily done by induction.
    
This shows $h(x+y) \equiv h(x) + h(y) \mod p^{k + \min\{pv(x), 1+v(x)\}}$. To conclude, it suffices to prove that $$\alpha^{-1}\pi_e^2 \sum_{n=1}^k (-1)^n p^n (x+y)^{p^{2k-2n+1}} \equiv \alpha^{-1}\pi_e^2 \sum_{n=1}^k (-1)^n p^n (x^{p^{2k-2n+1}} + y^{p^{2k-2n+1}}) \mod p^{k + \min\{pv(x), 1+v(x)\}},$$
    for which it suffices to show that $p^n (x+y)^{p^{2k-2n+1}} \equiv p^n (x^{p^{2k-2n+1}} + y^{p^{2k-2n+1}}) \mod p^{2+\min\{pv(x),1+v(x)\}}$ for $1 \leq n \leq k$ since $v(\alpha^{-1})=k-2.$
    Expanding the power on the left-hand side and cancelling like terms, we are left with showing
\[
p^n \sum_{i=1}^{p^{2k-2n+1}-1} {p^{2k-2n+1} \choose i} x^i y^{p^{2k-2n-1}-i} \equiv 0 \bmod{p^{2+\min\{p v(x), 1+v(x) \}}}.
\]
Note that every summand has valuation at least $1+v(x)$, since each binomial coefficient has valuation at least $1$ by \Cref{lemma: binomial valuation}(i). The statement is now obvious for $n \geq 2$. For $n=1$, we consider separately $i < p$ and $i \geq p$. If $i \geq p$, then the summand has valuation at least $1+iv(x) \geq 1+pv(x)$, whereas if $i <p$, then \Cref{lemma: binomial valuation}(ii) gives $v(\binom{p^{2k-1}}{i}x^i) \geq 2k-1+1-1 + iv(x) \geq 3 +v(x)$ since $k \geq 2$.
\end{proof}

Before proving the next lemma, let us remind the reader of the explicit expression for the condition $g(\gamma)=0$:
\begin{align}\label{eq: g gamma is zero}
    \begin{split}
    0 = g(\gamma) &= \gamma^{p^{2k}}+ \sum_{n=1}^{k} (-1)^np^n (\alpha^{-1}\pi_e^2\gamma^{p^{2k+1-2n}} + \gamma^{p^{2k-2n}}) = h(\gamma) + \alpha^{-1}\pi_e^2 \sum_{n=1}^k (-1)^n p^n \gamma^{p^{2k-2n+1}},
    \end{split}
\end{align}
where $h(x)=\sum_{n=0}^k (-1)^n p^n x^{p^{2k-2n}}$. We will also need the following remark about the $e$-th roots of $p$.

\begin{remark}\label{rmk: 2eth roots of p}
As $e \mid p+1$, the field $\Q_p(\zeta_e)$ is $\Q_{p^2}$, which also contains the $2e$-th roots of unity since $p^2 \equiv 1 \pmod{2e}$ for $e \in \{3,4,6\}$ and all $p>3$.
In particular, the $e$-th roots of $p$ have the form $\zeta\pi_e$, where $\zeta$ satisfies $\zeta^e=-1$, and are all in $K= \Q_p(\zeta_e, \pi_e)$.
\end{remark}

\begin{lemma}\label{lemma: congruences powers of gamma}
We have $\gamma^{p^{2k-2}(p^2-1)} \equiv p(1 + \alpha^{-1} \pi_e^2 \gamma^{p^{2k-2}(p-1)}) \mod p^{2-\frac{1}{p^2}}$. Moreover, there exists an $e$-th root of unity $\xi$ such that
\[
\gamma^{p^{2k-2} \cdot \frac{2(p^2-1)}{e}} \equiv \xi \pi_e^2 \left(1 + \frac{2}{e}\alpha^{-1}\pi_e^2 \gamma^{p^{2k-2}(p-1)} \right) \quad \bmod{p^{1+ \frac{2}{p+1}}}.
\]
\end{lemma}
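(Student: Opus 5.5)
We start from the explicit relation \eqref{eq: g gamma is zero}, which says $h(\gamma)=-\alpha^{-1}\pi_e^2\sum_{n=1}^k(-1)^np^n\gamma^{p^{2k-2n+1}}$ with $h=g_k^\infty$. Recall $v(\gamma)=\frac{1}{p^{2k-2}(p^2-1)}$ and $v(\alpha^{-1})=k-2$. The idea is to isolate the two dominant terms $\gamma^{p^{2k}}$ and $-p^{k-1}\cdot(\ldots)$. Concretely, we divide through by $p^{k-1}\gamma^{p^{2k-2}}$, since $v(p^{k-1}\gamma^{p^{2k-2}})=k-1+\frac{1}{p^2-1}$. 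This is the common valuation of the term $\gamma^{p^{2k}}$ (valuation $\frac{p^2}{p^2-1}+\ldots$; indeed $p^{2k}v(\gamma)=\frac{p^2}{p^2-1}$) and of the term $(-1)^{k-1}p^{k-1}\gamma^{p^2}$.

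A first guess for the dominant pair, for general $k$, is $\gamma^{p^{2k}}$ against $p^k\gamma$. The valuations are $\frac{p^2}{p^2-1}$ and $k+\frac{1}{p^{2k-2}(p^2-1)}$, which do not match for $k\ge2$. Instead we apply the statement to the relevant valuation scale. The claimed congruence $\gamma^{p^{2k-2}(p^2-1)}\equiv p(\cdots)$ has left side of valuation $1$, so it is equivalent to $\gamma^{p^{2k}}\equiv p\gamma^{p^{2k-2}}(1+\alpha^{-1}\pi_e^2\gamma^{p^{2k-2}(p-1)})$ modulo $p^{2-1/p^2+p^{2k-2}v(\gamma)}$, and the latter has valuation $2-\frac1{p^2}+\frac{1}{p^2-1}$.

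The plan is therefore to show that in $g(\gamma)=0$ all terms except $\gamma^{p^{2k}}$, $-p\gamma^{p^{2k-2}}$ and $-p\alpha^{-1}\pi_e^2\gamma^{p^{2k-1}}$ have valuation at least $2+\frac{1}{p^2-1}-\frac1{p^2}$. This is a direct check using $v(\gamma)$ and $v(\alpha^{-1})=k-2$.
\begin{itemize}
\item The terms $p^n\gamma^{p^{2k-2n}}$ with $n\ge2$ have valuation $n+\frac{p^{2-2n+2}}{\ldots}$; explicitly $n+p^{2-2n}\frac{1}{p^2-1}\cdot p^{2}$. For $n=2$ this is $2+\frac{1}{p^2-1}\cdot p^{-2}\cdot p^2\ldots$, and we check that it exceeds the threshold. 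This is the tightest case, and it is where the loss $-\frac1{p^2}$ in the modulus comes from.
\item The $\alpha^{-1}$ terms with $n\ge2$ have an extra factor of $p^{k-2}\pi_e^2$ and are easily absorbed.
\end{itemize}
Dividing by $\gamma^{p^{2k-2}}$ then gives the first congruence.

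For the second congruence, we raise the first to the power $2/e$. Write $\gamma^{p^{2k-2}(p^2-1)}=p(1+\delta)$ with $\delta:=\alpha^{-1}\pi_e^2\gamma^{p^{2k-2}(p-1)}+O(p^{1-1/p^2})$. Note that $v(\alpha^{-1}\pi_e^2\gamma^{p^{2k-2}(p-1)})=k-2+\frac2e+\frac{1}{p+1}>0$. Then $\gamma^{p^{2k-2}\frac{2(p^2-1)}{e}}$ is an $e$-th root of $p^2(1+\delta)^2$. By \Cref{rmk: 2eth roots of p}, $p^{2/e}$ has the form $\xi\pi_e^2$ for an $e$-th root of unity $\xi$ lying in $K$, so the $e$-th roots of $p^2(1+\delta)^2$ are $\xi\pi_e^2(1+\delta)^{2/e}$ up to $e$-th roots of unity, which we absorb into $\xi$. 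The binomial series $(1+\delta)^{2/e}=1+\frac2e\delta+O(\delta^2)$ converges because $p\nmid e$ and $v(\delta)>0$.

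It remains to check the error terms. The error $\pi_e^2\delta^2$ has valuation $\frac2e+2(\frac2e+\frac1{p+1})$ when $k=2$, and we must compare this with $1+\frac{2}{p+1}$. For $e\in\{3,4,6\}$ we have $\frac{6}{e}\ge1$, so the bound holds, and it is strict when $e<6$. The case $e=6$ is the delicate one: there $\frac6e=1$ exactly and the valuation equals $1+\frac2{p+1}$, so the congruence (which only requires $\ge$) still holds. The error $\pi_e^2\cdot O(p^{1-1/p^2})$ is likewise fine. The only genuine obstacle is the precise bookkeeping of the $n=2$ term in the first step together with the $e=6$, $k=2$ borderline in the second step. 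If the $n=2$ term turns out not to be absorbed, I would carry it along explicitly as $p^2\gamma^{p^{2k-4}}$ divided by $\gamma^{p^{2k-2}}$.
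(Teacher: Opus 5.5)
Your strategy is the paper's. You isolate $\gamma^{p^{2k}}$ and the two $n=1$ summands in $g(\gamma)=0$, show every other summand has valuation at least $B=2+\frac{1}{p^2(p^2-1)}$, and divide by $\gamma^{p^{2k-2}}$. You then extract an $e$-th root using $(\pi_e^2)^e=p^2$ and the binomial series for $(1+x)^{2/e}$. Two corrections are needed.

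\textbf{First step: the valuation bookkeeping is off.} The opening paragraph's ``common valuation $k-1+\frac{1}{p^2-1}$'' is right only for $k=2$. The balancing pair is $\gamma^{p^{2k}}$ and $p\gamma^{p^{2k-2}}$, both of valuation $1+\frac{1}{p^2-1}$. In your bullet, the correct value is $v(p^n\gamma^{p^{2k-2n}})=n+\frac{1}{p^{2n-2}(p^2-1)}$, not $n+\frac{p^{4-2n}}{p^2-1}$. For $n=2$ this is exactly $B$, so the $n=2$ term \emph{meets} the threshold rather than exceeding it. That equality is what gives the modulus $p^{2-1/p^2}$ after dividing by $\gamma^{p^{2k-2}}$, which has valuation $\frac{1}{p^2-1}$. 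So no fallback is needed. With your value $2+\frac{1}{p^2-1}$ there would be no loss of $\frac{1}{p^2}$ at all, which contradicts your own explanation of where the loss comes from.

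\textbf{Second step: you have put the borderline in the wrong place.} Write $A=\alpha^{-1}\pi_e^2\gamma^{p^{2k-2}(p-1)}$. The quadratic term $\pi_e^2A^2$ is fine, as you say, since $\frac{6}{e}+\frac{2}{p+1}\ge 1+\frac{2}{p+1}$. The dangerous term is the \emph{linear} one, $\frac{2}{e}\xi\pi_e^2\delta'$. Its leading part, coming from the $n=2$ summand, is $-\frac{2}{e}\xi\pi_e^2\,p\,\gamma^{-p^{2k-4}(p^2-1)}$, of valuation exactly $\frac{2}{e}+1-\frac{1}{p^2}$.

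Your ``likewise fine'' therefore needs $\frac{2}{e}-\frac{1}{p^2}\ge\frac{2}{p+1}$. Since $e\mid p+1$, this holds whenever $e\le\frac{p+1}{2}$. It fails when $e=p+1$, i.e.\ $(p,e)=(5,6)$, which the standing assumptions $p>3$, $e\mid p+1$ allow. In that case all remaining contributions have strictly larger valuation. So for the correct $\xi$ the difference has valuation exactly $\frac{97}{75}<\frac{4}{3}=1+\frac{2}{p+1}$. Any other $\xi$ is worse, because distinct $e$-th roots of unity differ by units. Hence the second congruence is false as stated for $(p,e)=(5,6)$.

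The paper's one-line ``these inequalities easily imply'' passes over the same check. So this is a defect of the statement, not of your route. It does not affect the later uses of the lemma, which need only weaker moduli, and the main results assume $p>7$. A correct version should either exclude $(p,e)=(5,6)$, e.g.\ by assuming $e<p-1$ as in \S\ref{sect: parameter alpha}, or use the modulus $p^{\min\{1+\frac{2}{p+1},\,1+\frac{2}{e}-\frac{1}{p^2}\}}$.
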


\begin{proof}
Let $B := v(p^2 \gamma^{p^{2k-4}}) = 2 + \frac{1}{p^2(p^2-1)}$. In \Cref{eq: g gamma is zero}, only $\gamma^{p^{2k}}$ and the summands with $n=1$ can possibly have valuation less than $B$, so we have
$
\gamma^{p^{2k}} - p \left( \alpha^{-1} \pi_e^2 \gamma^{p^{2k-1}} + \gamma^{p^{2k-2}} + \delta \right)=0
$
for some $\delta \in \overline{\Q}_p$ of valuation at least $B-1$. Dividing by $\gamma^{p^{2k-2}}$ we obtain $\gamma^{p^{2k-2}(p^2-1)} = p \left( 1+ \alpha^{-1} \pi_e^2 \gamma^{p^{2k-2}(p-1)}  + \delta' \right)$ with $v(\delta') \geq B-1 - v(\gamma^{p^{2k-2}}) = B-1- \frac{p^{2k-2}}{p^{2k-2}(p^2-1)}=1-\frac{1}{p^2}$, which gives the first part of the statement.

Now notice that the Taylor series for $(1+x)^{2/e}$ has coefficients in $\Z_p$ since $p \nmid e$, so it converges for any $x$ with $v(x) > 0$ and satisfies $v\left((1+x)^{2/e} - \left(1 + \frac{2}{e}x\right)\right) \geq 2 v(x)$. Raising the equality $\gamma^{p^{2k-2}(p^2-1)} = p \left( 1+ \alpha^{-1} \pi_e^2 \gamma^{p^{2k-2}(p-1)}  + \delta' \right)$ to the power $2/e$, and accounting for an unknown $e$-th root of unity $\xi$, we then get $\gamma^{p^{2k-2} \cdot\frac{2(p^2-1)}{e}} = \xi \pi_e^2 \left( 1+ \frac{2}{e} \alpha^{-1} \pi_e^2 \gamma^{p^{2k-2}(p-1)}  + \frac{2}{e}\delta' + \delta'' \right)$ with $v(\pi_e^2) = \frac{2}{e}$, $v(\delta') \geq 1-\frac{1}{p^2}$, and $v(\delta'') \geq 2\min \{ v(\delta'), v\left(\alpha^{-1} \pi_e^2 \gamma^{p^{2k-2}(p-1)}\right) \} \geq \min\{2-\frac{2}{p^2}  ,\frac{4}{e} + \frac{2}{p+1} \}$.
These inequalities easily imply the second congruence in the statement.
\end{proof}

\subsection{Construction of $\varepsilon$}
We now begin to implement the strategy described after \Cref{alpha unit notation}. 
Given a root $\gamma$ of $g_k$ of valuation $\frac{1}{p^{2k-2}(p^2-1)}$ and a $(p^2-1)$-th root of unity $\zeta$, by \Cref{lemma: valuation of g(gamma) and h(beta)} we have $v(g_k(\zeta\gamma)) = v(\alpha^{-1}) + 1 + \frac{2}{e} + \frac{p}{p^2-1} = k-1 +\frac{2}{e} + \frac{p}{p^2-1}$, and hence $k-1+\frac{1}{p^2-1} < v(g_k(\zeta\gamma)) \le k + \frac{1}{p^2-1}$. In particular, by \Cref{lemma: roots distance} there exists another root $\beta$ of $g_k$ such that $v(\beta- \zeta\gamma) = \frac{1}{p^2}\left( \frac{2}{e} + \frac{p}{p^2-1} \right)$.
For the purposes of exposition in the upcoming calculations of $\varepsilon_i$, we provide the reader with a brief table of notation that will be used throughout.

\begin{notation*}
\phantom{~}

\begin{tabular}{cl}
   $g=g_k$ & polynomial given in \Cref{eq: polynomial g} \\
   $\gamma$  & a root of $g$ of valuation $\frac{1}{p^{2k-2}(p^2-1)}$ \\
   $\xi$ & the $e$-th root of unity associated to $\gamma$ arising in \Cref{lemma: congruences powers of gamma} \\
   $\zeta$  & a fixed primitive $(p^2-1)$-th root of unity \\
   $\beta$ & a root of $g$ of valuation $\frac{1}{p^{2k-2}(p^2-1)}$ such that $v(\beta-\zeta\gamma) = \frac{1}{p^2}\left( \frac{2}{e} + \frac{p}{p^2-1} \right)$ \\
   $u$ & the element $u \in \Z_p^\times$ such that $\alpha^{-1} = p^{k-2} u$.
\end{tabular}
\end{notation*}

Throughout the course of our computations, we not only give the valuations of the partial sums $\varepsilon_i$ (and hence determine $h_{i+1}$, see \Cref{eq: epsilons and hs} and the comments following it) but also provide congruence conditions on $\varepsilon_i$ that will facilitate future unit calculations. We start building our partial sums $\varepsilon_i$ by setting $u_0=\zeta$ and $h_0=1$, so $\varepsilon_1=\beta-\zeta\gamma$.

\begin{lemma}\label{lemma: epsilon1}
Suppose $p > 3$. Let $\varepsilon_1=\beta-\zeta\gamma$ and $\alpha^{-1} = u p^{k-2}$ with $u \in \Z_p^\times$. Then $\varepsilon_1$ satisfies
\begin{equation}\label{eq: lemma epsilon1}
     (-1)^k\pi_e^2u(\zeta-\zeta^p)\gamma^{p^{2k-1}} - \varepsilon_1^{p^2} + p\varepsilon_1 \equiv 0 \quad \bmod{p^{1+\frac{2}{ep}+\frac{1}{p^2-1}}}. 
\end{equation}
\end{lemma}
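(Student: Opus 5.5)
The plan is to expand the identity $0=g(\beta)=g(\zeta\gamma+\varepsilon_1)$ using the approximate additivity of $g$ from \Cref{lemma: g(x+y) = g(x) + g(y)}. This reduces everything to estimating $g(\zeta\gamma)$ and $g(\varepsilon_1)$ separately. Throughout, write $M:=1+\frac{2}{ep}+\frac{1}{p^2-1}$ for the target exponent.

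\textbf{Step 1 (splitting $g(\beta)$).} By the choice of $\beta$ we have
\[
v(\varepsilon_1)=\frac{1}{p^2}\Bigl(\frac{2}{e}+\frac{p}{p^2-1}\Bigr)\geq \frac{2}{6p^2}=\frac{1}{3p^2},
\]
and $v(\zeta\gamma)\geq 0$. So \Cref{lemma: g(x+y) = g(x) + g(y)} applies with $x=\varepsilon_1$ and $y=\zeta\gamma$. We have $pv(\varepsilon_1)=\frac{2}{ep}+\frac{1}{p^2-1}$, which is smaller than $1+v(\varepsilon_1)$. Hence the lemma gives
\[
0=g(\beta)\equiv g(\varepsilon_1)+g(\zeta\gamma) \bmod p^{k-1+M}.
\]

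\textbf{Step 2 (the term $g(\zeta\gamma)$).} By \Cref{rmk: g zetabeta congruence}, $g(\zeta\gamma)+\alpha^{-1}\pi_e^2(\zeta^p-\zeta)p\gamma^{p^{2k-1}}$ has valuation greater than $k+\frac{2}{e}$. Since $\frac{2}{e}>\frac{2}{ep}+\frac{1}{p^2-1}$ for $p>3$, this error lies in the modulus $p^{k-1+M}$. Substituting $\alpha^{-1}p=up^{k-1}$ then gives
\[
g(\zeta\gamma)\equiv up^{k-1}\pi_e^2(\zeta-\zeta^p)\gamma^{p^{2k-1}} \bmod p^{k-1+M}.
\]

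\textbf{Step 3 (the term $g(\varepsilon_1)$).} I will expand $g(\varepsilon_1)$ term by term and show that only two terms survive modulo $p^{k-1+M}$. These are the $n=k-1$ and $n=k$ terms of the non-$\alpha$ part, namely $(-1)^{k-1}p^{k-1}\varepsilon_1^{p^2}$ and $(-1)^kp^k\varepsilon_1$. Their valuations are $k-1+\frac{2}{e}+\frac{p}{p^2-1}$ and $k+v(\varepsilon_1)$ respectively.

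The remaining terms are handled as follows.
\begin{itemize}
\item The leading term $\varepsilon_1^{p^{2k}}$ and the terms $p^n\varepsilon_1^{p^{2k-2n}}$ with $n\leq k-2$ have valuation at least $k-2+p^4v(\varepsilon_1)$, which is comfortably larger than $k-1+M$.
\item The $\alpha^{-1}$-terms have valuation at least $(k-2)+n+\frac{2}{e}+p^{2k-2n+1}v(\varepsilon_1)$. For $n=k$ this is $2k-2+\frac{2}{e}+pv(\varepsilon_1)$, and this exceeds $k-1+M$ because $k\geq 2$ and $\frac{2}{e}>0$. Smaller $n$ give even larger powers of $\varepsilon_1$.
\end{itemize}
Therefore
\[
g(\varepsilon_1)\equiv (-1)^{k-1}p^{k-1}\bigl(\varepsilon_1^{p^2}-p\varepsilon_1\bigr) \bmod p^{k-1+M}.
\]

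\textbf{Step 4 (conclusion).} Combining Steps 1–3 and dividing by $(-1)^{k-1}p^{k-1}$, the modulus drops to $p^{M}$ and we obtain
\[
\varepsilon_1^{p^2}-p\varepsilon_1+(-1)^{k-1}u\pi_e^2(\zeta-\zeta^p)\gamma^{p^{2k-1}}\equiv 0 \bmod p^{M}.
\]
Multiplying by $-1$ gives \eqref{eq: lemma epsilon1}.

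The main obstacle is the bookkeeping in Step 3, specifically checking each discarded term against the precise exponent $M$. The tightest comparisons are the $n=k$ $\alpha^{-1}$-term, where $k\geq 2$ is needed, and the remark's error bound in Step 2, where we need $\frac{2}{e}>\frac{2}{ep}+\frac{1}{p^2-1}$.
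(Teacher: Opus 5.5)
Your proposal is correct and follows essentially the same route as the paper. Like the paper, you split $0=g(\varepsilon_1+\zeta\gamma)$ via \Cref{lemma: g(x+y) = g(x) + g(y)} modulo $p^{k+\frac{2}{ep}+\frac{1}{p^2-1}}$, evaluate $g(\zeta\gamma)$ with \Cref{rmk: g zetabeta congruence}, keep only the terms $p^{k-1}\varepsilon_1^{p^2}$ and $p^k\varepsilon_1$ of $g(\varepsilon_1)$, and divide by $\pm p^{k-1}$. The one cosmetic difference is that you discard the $\alpha^{-1}$-terms using $v(\alpha^{-1})=k-2$, whereas the paper only uses $v(\alpha^{-1}\pi_e^2)\geq 0$ together with the fact that $v(p^k\varepsilon_1^p)$ already equals the modulus exponent.
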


\begin{proof}
    By definition, we have $g(\varepsilon_1 + \zeta\gamma) = 0$ and $v(\varepsilon_1) = \frac{1}{p^2} \left( \frac{2}{e} + \frac{p}{p^2-1} \right) \geq \frac{1}{3p^2}$. We may then apply \Cref{lemma: g(x+y) = g(x) + g(y)} to get
    \begin{equation}\label{eq: auxiliary eq for epsilon1 eq}
        0 = g(\varepsilon_1 + \zeta\gamma) \equiv g(\varepsilon_1) + g(\zeta\gamma) \equiv g(\varepsilon_1) - \alpha^{-1}\pi_e^2 (\zeta^p-\zeta) p\gamma^{p^{2k-1}} \mod p^{k+\frac{2}{ep}+\frac{1}{p^2-1}},
    \end{equation}
    where the last congruence follows from \Cref{rmk: g zetabeta congruence}. On the other hand, for every $r \ge 2$ we have
    $$v(p^{k-r}\varepsilon_1^{p^{2r+1}}) > v(p^{k-r}\varepsilon_1^{p^{2r}}) = k-r + p^{2r-2} \left( \frac{2}{e} + \frac{p}{p^2-1} \right) > k - 2 + \frac{2p^2}{e} + p > k + \frac{2}{ep} + \frac{1}{p^2-1},$$
    and so expanding the definition of $g(\varepsilon_1)$ we find
    $$g(\varepsilon_1) \equiv (-1)^k (p^k \varepsilon_1 - p^{k-1}\varepsilon_1^{p^2}) + (-1)^{k} \alpha^{-1}\pi_e^2(p^k \varepsilon_1^p - p^{k-1} \varepsilon_1^{p^3}) \equiv (-1)^k (p^k \varepsilon_1 - p^{k-1}\varepsilon_1^{p^2}) \mod p^{k+\frac{2}{ep}+\frac{1}{p^2-1}},$$
    where the last congruence holds because $v(p^k\varepsilon_1^p) = k + \frac{2}{ep} + \frac{1}{p^2-1}$ and $v(p^{k-1}\varepsilon_1^{p^3}) = k - 1 + \frac{2p}{e} + \frac{p^2}{p^2-1} > k+1$.
    Writing $\alpha^{-1} = u p^{k-2}$ and dividing \Cref{eq: auxiliary eq for epsilon1 eq} by $(-1)^kp^{k-1}$ concludes the proof.
\end{proof}

We now have to treat separately the cases $k=2$ and $k>2$.

\subsection{The case $k=2$}
We start by focusing on the case $k=2$. Before we proceed with the next step, we first prove some congruences which will ease our calculations. Recall that in this case we have $v(\alpha^{-1}) = 0$.

\begin{lemma}\label{lemma: u1 and u2}
Let $u_1=\xi^{-1}\alpha^{-1}(\zeta-\zeta^p)$, $u_2=-\frac{2}{e} \,\xi^{-2}\alpha^{-2}(\zeta-\zeta^p)$, $h_1=\frac{2(p^2-1)}{e}+p$ and $h_2=\frac{4(p^2-1)}{e}+2p-1$. 
Then
\begin{enumerate}
    \item $(u_1\gamma^{h_1})^{p^2} \equiv \alpha^{-1}\pi_e^2(\zeta-\zeta^p)\gamma^{p^3}\left(1+\tfrac{2}{e}\alpha^{-1}\pi_e^2\gamma^{p^3-p^2}\right) \quad \bmod{p^{1+\frac{1}{p^2-1}}}.$
    \item $(u_2\gamma^{h_2})^{p^2} \equiv -\frac{2}{e}\alpha^{-2}\pi_e^4(\zeta-\zeta^p)\gamma^{2p^3-p^2} \quad \bmod{p^{1+\frac{1}{p^2-1}}}.$
\end{enumerate}
\end{lemma}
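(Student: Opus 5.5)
The plan is to compute $(\gamma^{h_i})^{p^2}$ directly from \Cref{lemma: congruences powers of gamma} with $k=2$, and then absorb the unit prefactors. With $k=2$, write $\Gamma:=\gamma^{p^2\cdot\frac{2(p^2-1)}{e}}$. \Cref{lemma: congruences powers of gamma} gives
\[
\Gamma \equiv \xi\pi_e^2\left(1+\tfrac{2}{e}\alpha^{-1}\pi_e^2\gamma^{p^2(p-1)}\right) \bmod p^{1+\frac{2}{p+1}}.
\]
Since $h_1 p^2 = p^2\cdot\frac{2(p^2-1)}{e} + p^3$, we have $(\gamma^{h_1})^{p^2} = \Gamma\cdot\gamma^{p^3}$. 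Because $\xi$ is an $e$-th root of unity and $e\mid p^2-1$ (as $e\mid p+1$), we get $\xi^{p^2}=\xi$. Also $u_1\in\Q_{p^2}$ (note $\alpha\in\Q_p$ and $\zeta,\xi\in\Q_{p^2}$ by \Cref{rmk: 2eth roots of p}), so $u_1^{p^2}$ is close to $u_1$. Precisely, $\zeta^{p^2}=\zeta$ and $\xi^{p^2}=\xi$, but $\alpha^{-p^2}\neq\alpha^{-1}$ in general; however $\alpha^{-1}=u\in\Z_p^\times$ when $k=2$, so $u^{p^2}\equiv u\bmod p$. I need to check that this error, multiplied by $\pi_e^2\gamma^{p^3}$, lands in the modulus. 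Its valuation is at least $1+\frac{2}{e}+\frac{p}{p^2-1}$, which exceeds $1+\frac{1}{p^2-1}$, so this is fine. The same reasoning gives $(\zeta-\zeta^p)^{p^2}=\zeta-\zeta^p$.

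Multiplying out, the $\xi^{-1}$ cancels the $\xi$ in $\Gamma$. This gives (i), after checking the error terms. The error from $\Gamma$ has valuation at least $1+\frac{2}{p+1}$, and multiplying by $\gamma^{p^3}$ (valuation $\frac{p}{p^2-1}$) keeps it above $1+\frac{1}{p^2-1}$.

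For (ii), write $h_2 p^2 = 2\cdot p^2\frac{2(p^2-1)}{e} + 2p^3 - p^2$, so $(\gamma^{h_2})^{p^2} = \Gamma^2\gamma^{2p^3-p^2}$. Here only the leading term of $\Gamma$ is needed: $\Gamma^2 \equiv \xi^2\pi_e^4$ modulo an error of valuation at least $\frac{4}{e}+v(\alpha^{-1}\pi_e^2\gamma^{p^3-p^2})=\frac{6}{e}+\frac{p^2(p-1)}{p^2(p^2-1)}$. Multiplying by $\gamma^{2p^3-p^2}$ adds valuation $\frac{2p-1}{p^2-1}$. The total is $\frac6e+\frac{1}{p+1}+\frac{2p-1}{p^2-1}$. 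For $e\le 6$ this is $\ge 1+\frac{3p-2}{p^2-1}$, which exceeds $1+\frac{1}{p^2-1}$. The unit $u_2$ is handled as before, using $\xi^{-2}$ and $\alpha^{-2}$ with $u^{p^2}\equiv u\bmod p$.

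The main obstacle is the bookkeeping of error terms, and in particular the $e=6$ case, where $\frac{2}{e}$ is smallest. The step I expect to be tightest is the unit-congruence error in (ii) for $e=6$. Its valuation is at least $1+\frac{4}{e}+\frac{2p-1}{p^2-1}$, which is fine. For the $\Gamma$ error in (ii) I rely on the $\frac{6}{e}\ge 1$ bound above. If some estimate fails, I would fall back on the sharper first congruence of \Cref{lemma: congruences powers of gamma} (modulo $p^{2-1/p^2}$) and redo the $2/e$ power expansion.
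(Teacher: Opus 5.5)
Your proposal is correct and follows the paper's proof: reduce $u_i^{p^2}$ to $u_i$ modulo $p$, expand $\gamma^{h_ip^2}$ via \Cref{lemma: congruences powers of gamma} with $k=2$, and check that every discarded term has valuation above $1+\frac{1}{p^2-1}$. There are two harmless imprecisions. First, $(\zeta-\zeta^p)^{p^2}=\zeta-\zeta^p$ holds only modulo $p$, not exactly: equality would force $\zeta-\zeta^p$ to be a root of unity, which fails in general, e.g.\ for $p=11$. Second, in (ii) your bound on $\Gamma^2-\xi^2\pi_e^4$ omits the cross term coming from the $p^{1+\frac{2}{p+1}}$-error in $\Gamma$; this term has valuation at least $1+\frac{2}{e}+\frac{2}{p+1}$, which is below your stated bound when $e=3$, but it still lies in the modulus after multiplying by $\gamma^{2p^3-p^2}$.
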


\begin{proof}
(i): First note that $u_1^{p^2} \equiv u_1 \bmod{p}$. This follows from the following facts: $\xi^{p^2} = \xi$ since $p^2 \equiv 1 \bmod{e}$;  $(\alpha^{-1})^{p} \equiv \alpha^{-1} \bmod p$ because $\alpha \in \mathbb{Z}_p^\times$; and $(\zeta-\zeta^p)^{p^2} \equiv \zeta-\zeta^p \bmod{p}$ by \Cref{lemma: freshmans dream} since $\zeta^{p^2}=\zeta$.

Now, $h_1p^2=p^3+p^2 \cdot \frac{2(p^2-1)}{e}$, so we can apply \Cref{lemma: congruences powers of gamma} to obtain
\begin{eqnarray*}
    \gamma^{h_1p^2} &\equiv& \xi\pi_e^2 \left(1+\tfrac{2}{e}\alpha^{-1}\pi_e^2\gamma^{p^3-p^2}\right) \gamma^{p^3} \quad \bmod{p^{1+\frac{1}{p^2-1}}}.
\end{eqnarray*}
Combining the two congruences concludes the proof of (i).

(ii): Similarly $u_2^{p^2} \equiv u_2 \bmod{p}$ and since $h_2p^2=2p^2 \cdot \frac{2(p^2-1)}{e} + (2p^3-p^2)$, we similarly obtain from \Cref{lemma: congruences powers of gamma}:
\begin{align*}
    \gamma^{h_2p^2} \ &\equiv \ \xi^2\pi_e^4 \left(1+\frac{2}{e}\alpha^{-1}\pi_e^2\gamma^{p^3-p^2}\right)^2 \gamma^{2p^3-p^2} \ \equiv \ \xi^2\pi_e^4 \gamma^{2p^3-p^2} \mod p^{1+\frac{1}{p^2-1}}
\end{align*}
since the terms suppressed in the final congruence have valuation at least $\frac{6}{e} + \frac{2p^3-p^2}{p^2(p^2-1)} \geq 1 + \frac{1}{p^2-1}$. We conclude as above.
\end{proof}

\begin{lemma}\label{lem: epsilon2}
Let $\varepsilon_2=\varepsilon_1-u_1\gamma^{h_1}$. Then precisely one of the following holds:
\begin{enumerate}
    \item $p^4$ divides the denominator of $v(\varepsilon_2)$; 
    \item $v(\varepsilon_2)=h_2v(\gamma)$ where $h_2=\frac{4(p^2-1)}{e}  +2p-1\in \mathbb{Z}$.
\end{enumerate}
In case (ii), let $u_2=-\frac{2}{e} \,\xi^{-2}\alpha^{-2}(\zeta-\zeta^p)$ and $\varepsilon_3=\varepsilon_2-u_2\gamma^{h_2}$. Then $v(\varepsilon_3)$ has denominator divisible by $p^4$.
\end{lemma}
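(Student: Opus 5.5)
The plan is to start from the congruence \eqref{eq: lemma epsilon1} of \Cref{lemma: epsilon1} with $k=2$ and substitute $\varepsilon_1=\varepsilon_2+u_1\gamma^{h_1}$. Since $v(\varepsilon_1)=\frac{1}{p^2}(\frac{2}{e}+\frac{p}{p^2-1})$ and $v(u_1\gamma^{h_1})=h_1v(\gamma)$ is of the same size, \Cref{lemma: freshmans dream} gives
\[
\varepsilon_1^{p^2}\equiv \varepsilon_2^{p^2}+(u_1\gamma^{h_1})^{p^2}
\]
modulo a power of $p$ exceeding $1+\frac{1}{p^2-1}$. Here I would check that $p^2\min\{v(\varepsilon_2),v(u_1\gamma^{h_1})\}$ is at least roughly $\frac{2}{e}$, with a little care if $v(\varepsilon_2)$ turns out to be smaller. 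Next I would insert \Cref{lemma: u1 and u2}(i), written with $u=\alpha^{-1}$ since $k=2$. The main term $\pi_e^2\alpha^{-1}(\zeta-\zeta^p)\gamma^{p^3}$ then cancels, up to the sign conventions. This should leave, modulo $p^{1+\frac{1}{p^2-1}}$ or slightly better, a relation of the form
\[
\varepsilon_2^{p^2}\equiv p\varepsilon_2+p\,u_1\gamma^{h_1}-X,\qquad X:=\tfrac{2}{e}\alpha^{-2}\pi_e^4(\zeta-\zeta^p)\gamma^{2p^3-p^2},
\]
where $X$ is exactly $(u_2\gamma^{h_2})^{p^2}$ up to the error in \Cref{lemma: u1 and u2}(ii).

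Then I would carry out a Newton-polygon style dominant-balance comparison of the valuations. We have
\[
v(p\,u_1\gamma^{h_1})=1+\frac{2}{ep^2}+\frac{1}{p(p^2-1)},\qquad v(X)=\frac{4}{e}+\frac{2p-1}{p^2-1}=p^2h_2v(\gamma).
\]
The term $p\varepsilon_2$ can never be the balancing partner of $\varepsilon_2^{p^2}$, because that would force $v(\varepsilon_2)=\frac{1}{p^2-1}$, which is too large. Hence $v(\varepsilon_2^{p^2})$ equals the minimum of the valuations of $p\,u_1\gamma^{h_1}$ and $X$, whenever that minimum is unique and lies below the error bound.

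This gives the dichotomy. If $p\,u_1\gamma^{h_1}$ is the strictly dominant term, then $v(\varepsilon_2)=\frac{1}{p^2}\bigl(1+\frac{2}{ep^2}+\frac{1}{p(p^2-1)}\bigr)$. Its numerator over the common denominator $p^4(p^2-1)e$ is prime to $p$, so $p^4$ divides the denominator and we are in case (i). Otherwise $X$ dominates, $v(\varepsilon_2)=h_2v(\gamma)$, and we are in case (ii). The two valuations have different $p$-adic denominators, so the cases are mutually exclusive. I expect the two valuations are never equal, and I would verify this by comparing $p$-parts of the denominators.

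In case (ii), I would write $\varepsilon_2=\varepsilon_3+u_2\gamma^{h_2}$ and repeat the argument. Freshman's dream together with \Cref{lemma: u1 and u2}(ii) cancels $X$, leaving
\[
\varepsilon_3^{p^2}\equiv p\,u_1\gamma^{h_1}+(\text{smaller terms}).
\]
This forces $v(\varepsilon_3)=\frac{1}{p^2}v(p\,u_1\gamma^{h_1})$, whose denominator is divisible by $p^4$ as before.

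The main obstacle is the bookkeeping of error terms. One must check that every neglected contribution has valuation strictly above $v(p\,u_1\gamma^{h_1})$. These contributions include the freshman's-dream cross terms, the error of \Cref{lemma: u1 and u2}, the error of \Cref{lemma: epsilon1} (which is only $1+\frac{2}{ep}+\frac{1}{p^2-1}$), and $p\varepsilon_2$ or $p\varepsilon_3$. The comparison $\frac{2}{ep}$ versus $\frac{2}{ep^2}+\frac{1}{p(p^2-1)}$ is tight, and I would treat each $e\in\{3,4,6\}$ explicitly if a uniform estimate fails.
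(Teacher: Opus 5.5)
Your proposal is correct and follows the paper's proof essentially step for step. You substitute into \eqref{eq: lemma epsilon1}, then use \Cref{lemma: freshmans dream} and \Cref{lemma: u1 and u2}(i) to reach $-X-\varepsilon_2^{p^2}+p\varepsilon_2+pu_1\gamma^{h_1}\equiv 0 \bmod p^{1+\frac{1}{p^2-1}}$. You separate $v(X)$ from $v(pu_1\gamma^{h_1})$ by the $p$-parts of their denominators, rule out $v(\varepsilon_2)\ge\frac{1}{p^2-1}$, and in case (ii) cancel $X$ via \Cref{lemma: u1 and u2}(ii). Two small corrections: the lemma actually gives $(u_2\gamma^{h_2})^{p^2}\equiv -X$, and this sign is exactly what makes the cancellation work. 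Also, the error bookkeeping is not tight: working uniformly modulo $p^{1+\frac{1}{p^2-1}}$ suffices for every $e$, because $v(pu_1\gamma^{h_1})=1+h_1v(\gamma)<1+\frac{1}{p^2-1}$ (equivalently $h_1<p^2$).
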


\begin{proof}
    Replacing $\varepsilon_1$ with $\varepsilon_2+u_1\gamma^{h_1}$ in \Cref{eq: lemma epsilon1} yields:
    \begin{align*}
        0 \ &\equiv \ \alpha^{-1}\pi_e^2(\zeta-\zeta^p)\gamma^{p^3} - (\varepsilon_2+u_1\gamma^{h_1})^{p^2} + p(\varepsilon_2+u_1\gamma^{h_1}) & \mod p^{1+\frac{1}{p^2-1}} \\
        &\equiv \ \alpha^{-1}\pi_e^2(\zeta-\zeta^p)\gamma^{p^3} - \varepsilon_2^{p^2} - (u_1\gamma^{h_1})^{p^2} + p(\varepsilon_2+u_1\gamma^{h_1}) & \mod p^{1+\frac{1}{p^2-1}} \\
        &\equiv \ \alpha^{-1}\pi_e^2(\zeta-\zeta^p)\gamma^{p^3} - \varepsilon_2^{p^2} - \alpha^{-1}\pi_e^2(\zeta-\zeta^p)\gamma^{p^3}\left(1+\tfrac{2}{e}\alpha^{-1}\pi_e^2\gamma^{p^3-p^2}\right) + p(\varepsilon_2+u_1\gamma^{h_1}) & \mod p^{1+\frac{1}{p^2-1}} \\
        &\equiv \ -\tfrac{2}{e}\alpha^{-2}\pi_e^4(\zeta-\zeta^p)\gamma^{2p^3-p^2} - \varepsilon_2^{p^2} +p\varepsilon_2 +pu_1\gamma^{h_1} & \mod p^{1+\frac{1}{p^2-1}}&,
    \end{align*}
    where the second congruence follows from \Cref{lemma: freshmans dream} since $p^2v(\varepsilon_2)\geq p^2v(\gamma) = \frac{1}{p^2-1}$, while the third follows from \Cref{lemma: u1 and u2}(i).
    We consider again the valuations of these terms, which are $\frac{4}{e} {+} \frac{2p-1}{p^2-1}, \, p^2v(\varepsilon_2), \, 1{+}v(\varepsilon_2)$ and $1{+}\frac{1}{p^2}(\frac{2}{e}{+}\frac{p}{p^2-1})$ respectively. Note that $v(\tfrac{2}{e}\alpha^{-2}\pi_e^4(\zeta-\zeta^p)\gamma^{2p^3-p^2}) \neq v(pu_1\gamma^{h_1})$ since only one has denominator coprime to $p$.
    Moreover, if $v(p\varepsilon_2) \leq v(\varepsilon_2^{p^2})$ then $v(\varepsilon_2) \geq \frac{1}{p^2-1}$ which would then imply that both these valuations are greater than $1+\frac{1}{p^2-1}$ leading to $-\tfrac{2}{e}\alpha^{-2}\pi_e^4(\zeta-\zeta^p)\gamma^{2p^3-p^2} +pu_1\gamma^{h_1} \equiv 0 \,\, \bmod{p^{1+\frac{1}{p^2-1}}}$, which is a contradiction since the terms have distinct valuations and $v(\gamma^{h_1}) < \frac{1}{p^2-1}$ for $p > 2$ and $e \geq 3$.
    Hence either:
    \begin{enumerate}
    \item $v(\varepsilon_2^{p^2})=v(pu_1\gamma^{h_1})$ so $v(\varepsilon_2)=\frac{1}{p^2}(1+\frac{1}{p^2}(\frac{2}{e} + \frac{p}{p^2-1}))$ has denominator divisible by $p^4$; or
    \item $v(\varepsilon_2^{p^2})=v(\tfrac{2}{e}\pi_e^4\alpha^{-2}(\zeta-\zeta^p)\gamma^{2p^3-p^2})$ in which case $v(\varepsilon_2)=h_2v(\gamma)$.
    \end{enumerate}
    In case (ii), replacing $\varepsilon_2$ with $\varepsilon_3 + u_2\gamma^{h_2}$ in the above chain of congruences we get
    \begin{align*}
        0 \ 
        &\equiv \ -\tfrac{2}{e}\alpha^{-2}\pi_e^4(\zeta-\zeta^p)\gamma^{2p^3-p^2} - (\varepsilon_3+u_2\gamma^{h_2})^{p^2} +p(\varepsilon_2 +u_1\gamma^{h_1}) & \mod p^{1+\frac{1}{p^2-1}} \\
        &\equiv \ -\tfrac{2}{e}\alpha^{-2}\pi_e^4(\zeta-\zeta^p)\gamma^{2p^3-p^2} - \varepsilon_3^{p^2} -(u_2\gamma^{h_2})^{p^2} +p(\varepsilon_2 +u_1\gamma^{h_1}) & \mod p^{1+\frac{1}{p^2-1}} \\
        &\equiv \ -\varepsilon_3^{p^2} +p(\varepsilon_2 +u_1\gamma^{h_1}) & \mod p^{1+\frac{1}{p^2-1}}&,
    \end{align*}
    where the second congruence follows from \Cref{lemma: freshmans dream} and the last from \Cref{lemma: u1 and u2}(ii).
    Since $v(\varepsilon_2)>v(\gamma^{h_1})$, we have $v(\varepsilon_3^{p^2})=1+v(\gamma^{h_1})$ and hence
    \[
    v(\varepsilon_3)=\frac{1}{p^4} \left(p^2+\frac{2}{e}+\frac{p}{p^2-1} \right)
    \]
    has denominator divisible by $p^4$.
\end{proof}

\subsection{The case $\mathbf{k>2}$.}

Assume now that $k>2$. We set 
\begin{itemize}
    \item $h_1 := p^{2k-4}(p^2-1)\left(\frac{2}{e} + \frac{p}{p^2-1}\right) = \frac{2p^{2k-4}(p^2-1)}{e} + p^{2k-3}$;
    \item $h_{i+1} := p^{2k-4}(p^2-1) + \frac{1}{p^2}h_i$ for $i \ge 1$;
    \item $u_i := (-1)^{k} \xi^{-1} u (\zeta - \zeta^p)$ for $i \geq 1$, where $u := \alpha^{-1}p^{2-k} \in \mathbb{Z}_p^\times$ (note that $u_i = u_1$ for all $i \ge 1$);
    \item $\varepsilon_{i+1} := \varepsilon_i - u_1\gamma^{h_i}$ for $1 \le i \le k-1$.
\end{itemize}
These are the quantities we will use in  \Cref{eq: epsilons and hs}.

\begin{remark}\label{rmk: h_i inequalities}
    Note that $h_i=\frac{1}{p^{2i-2}}(h_1-p^{2k-2}) + p^{2k-2}$ for $i \geq 1$. As $h_1 < p^{2k-2}$, it is immediate that the sequence $h_i$ is strictly increasing and bounded by $p^{2k-2}$. Moreover, $h_1 \in \Z$ and $v(h_i)=2(k-i-1)$ so $h_1,\ldots,h_{k-1}$ are integers, as is $p^2h_k$. In particular, this implies $\varepsilon_i$ are well-defined elements of the splitting field of $g_k$ for all $1 \leq i \leq k$.
\end{remark}

\begin{proposition}\label{prop: recursive congruence}
    Suppose $p>3$. With the notation above, we have
    \[
    v(\varepsilon_i) = h_iv(\gamma) \quad \text{for all } 1 \leq i \leq k.
    \]
    In particular $v(\varepsilon_k)$ has denominator divisible by $p^{2k}$.
\end{proposition}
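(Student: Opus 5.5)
We argue by induction on $i$, propagating the congruence of \Cref{lemma: epsilon1} in the same spirit as the $k=2$ case. The inductive statement is stronger than the valuation claim:
\[
(\ast)_i:\qquad (-1)^k u(\zeta-\zeta^p)\pi_e^2\,\gamma^{p^{2k-2}\cdot\frac{h_i}{p^{2k-4}(p^2-1)}\cdot(\ldots)} - \varepsilon_i^{p^2} + p\,\varepsilon_{i-1}\text{-type terms}\equiv 0 .
\]
More precisely, for each $1\le i\le k$ we aim to prove
\[
\varepsilon_i^{p^2}\equiv p\,u_1\gamma^{h_{i-1}} \pmod{p^{1+h_{i-1}v(\gamma)+\delta}}
\]
for some $\delta>0$, with the convention that for $i=1$ the right-hand side is replaced by the first term $(-1)^k\pi_e^2u(\zeta-\zeta^p)\gamma^{p^{2k-1}}$ of \eqref{eq: lemma epsilon1}. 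Taking valuations then gives $p^2 v(\varepsilon_i)=1+h_{i-1}v(\gamma)$. Since $v(\gamma)=\frac{1}{p^{2k-2}(p^2-1)}$, this is exactly $v(\varepsilon_i)=h_iv(\gamma)$ by the recursion $h_{i}=p^{2k-4}(p^2-1)+h_{i-1}/p^2$.

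\textbf{Base case.} For $i=1$, note that $v(\varepsilon_1)=\frac{1}{p^2}(\frac{2}{e}+\frac{p}{p^2-1})=h_1v(\gamma)$ holds by the choice of $\beta$. Using \Cref{lemma: congruences powers of gamma}, I rewrite the forcing term $\pi_e^2\gamma^{p^{2k-1}}$ as $\xi^{-1}\gamma^{p^{2k-2}\cdot 2(p^2-1)/e+p^{2k-1}}(1+O(\alpha^{-1}\pi_e^2\gamma^{\ldots}))$. The error factor has extra valuation at least $v(\alpha^{-1})=k-2\ge1$, which is now harmless because $k>2$; this is the key simplification compared to $k=2$, where a correction $u_2$ was needed. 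The forcing term therefore becomes $(u_1\gamma^{h_1})^{p^2}$ up to a unit congruence $u_1^{p^2}\equiv u_1\pmod p$, exactly as in \Cref{lemma: u1 and u2}.

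\textbf{Inductive step.} Substituting $\varepsilon_i=\varepsilon_{i+1}+u_1\gamma^{h_i}$ into the congruence and applying \Cref{lemma: freshmans dream} gives $(\varepsilon_{i+1}+u_1\gamma^{h_i})^{p^2}\equiv\varepsilon_{i+1}^{p^2}+(u_1\gamma^{h_i})^{p^2}$. The term $(u_1\gamma^{h_i})^{p^2}$ cancels the forcing term at step $i$. What remains is
\[
-\varepsilon_{i+1}^{p^2}+p\varepsilon_{i+1}+pu_1\gamma^{h_i}\equiv 0,
\]
together with the lower-order pieces $p u_1\gamma^{h_{j}}$ for $j<i$, which were already absorbed at earlier steps. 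Here I must carry the congruence as
\[
-\varepsilon_{i}^{p^2}+p\varepsilon_i+(\text{forcing})\equiv0,
\]
with the accumulated term being $p u_1\gamma^{h_{i-1}}$ rather than a $\gamma^{p^{2k-1}}$ term. For $i\ge2$, the new forcing term $pu_1\gamma^{h_{i-1}}$ must itself be matched by $(u_1\gamma^{h_i})^{p^2}$. This follows from the first congruence of \Cref{lemma: congruences powers of gamma}, $\gamma^{p^{2k-2}(p^2-1)}\equiv p(1+\ldots)$, because $p^2h_i=p^{2k-2}(p^2-1)+h_{i-1}$. This identity is precisely why the $h_i$ were defined by their recursion, and it explains why the same unit $u_1$ works at every step.

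To conclude the valuation step, I will compare valuations in $-\varepsilon_{i+1}^{p^2}+p\varepsilon_{i+1}+pu_1\gamma^{h_i}$. The term $p\varepsilon_{i+1}$ cannot dominate: if it did, we would get $v(\varepsilon_{i+1})\ge\frac{1}{p^2-1}$, and the remaining term $pu_1\gamma^{h_i}$ would then be isolated, a contradiction since $h_iv(\gamma)<\frac1{p^2-1}$ by \Cref{rmk: h_i inequalities}. Hence $\varepsilon_{i+1}^{p^2}$ balances $pu_1\gamma^{h_i}$. The final claim then follows from \Cref{rmk: h_i inequalities}: we have $v(h_k)=-2$, so $h_k v(\gamma)$ has denominator divisible by $p^{2k}$.

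\textbf{Main obstacle.} The hard part is controlling the error moduli uniformly in $i$. Each step loses a factor $p^2$ in precision relative to $v(\varepsilon_i)$, since the error needs to exceed $p^2h_{i+1}v(\gamma)=1+h_iv(\gamma)$. The available precision is of the form $1+\frac{1}{p^2-1}$ or $2-\frac1{p^2}$, while $h_iv(\gamma)<\frac1{p^2-1}$, so I expect the bound to close. Still, the error terms coming from $\alpha^{-1}\pi_e^2\gamma^{p^{2k-2}(p-1)}$ in \Cref{lemma: congruences powers of gamma}, and from truncating $g$ in \Cref{lemma: epsilon1}, must be checked to have valuation exceeding $1+h_{k-1}v(\gamma)$. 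I would first verify this at the worst index $i=k-1$.
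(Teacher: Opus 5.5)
Your proposal is correct and is essentially the paper's proof. The paper carries the single congruence $u_1\gamma^{p^2h_i}-\varepsilon_i^{p^2}+p\varepsilon_i\equiv 0$ by induction: the base case comes from \Cref{lemma: epsilon1} and \Cref{lemma: congruences powers of gamma} using $v(\alpha^{-1})\ge 1$; the inductive step uses \Cref{lemma: freshmans dream}, $u_1^{p^2}\equiv u_1 \bmod p$, and $pu_1\gamma^{h_i}\equiv u_1\gamma^{p^2h_{i+1}}$ (from $\gamma^{p^{2k-2}(p^2-1)}\equiv p$); it then reads off $v(\varepsilon_i)=h_iv(\gamma)$ by the same valuation comparison you give.

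The precision concern in your last paragraph does not arise. The modulus $p^{1+\frac{2}{ep}+\frac{1}{p^2-1}}$ from \Cref{lemma: epsilon1} is preserved unchanged at every step, and it exceeds $v(\gamma^{p^2h_i})<1+\frac{1}{p^2-1}$ uniformly in $i$ because $h_i<p^{2k-2}$. So nothing is lost per step and the index $i=k-1$ needs no separate check.
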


\begin{proof}
    We first claim that the congruence
    \begin{equation}\label{eq: eps congruence}
    u_1 \gamma^{p^2h_i} - \varepsilon_i^{p^2} + p\varepsilon_i \equiv 0 \mod p^{1+\frac{2}{ep} + \frac{1}{p^2-1}}
    \end{equation}
    holds for every $1 \leq i \leq k$ (note the exponents $p^2h_i$ are integers by \Cref{rmk: h_i inequalities}). We defer the proof of this momentarily, and instead first use it to deduce the valuation of $\varepsilon_i$. 
    By  \Cref{rmk: h_i inequalities}, we have $h_i < p^{2k-2}$ and hence $v(\gamma^{p^2h_i})<1+\frac{1}{p^2-1}$. If $v(\varepsilon_i) \geq \frac{1}{p^2-1}$, then
    \[
    v(\varepsilon_i^{p^2}) \geq v(p\varepsilon_i) \geq 1 + \frac{1}{p^2-1} > v(u_1\gamma^{p^2h_i})
    \]
    which cannot satisfy the congruence \eqref{eq: eps congruence} (noting that $1 + \frac{1}{p^2-1}< 1 + \frac{2}{ep}+\frac{1}{p^2-1}$) and hence $v(\varepsilon_i) < \frac{1}{p^2-1}$. Therefore $v(\varepsilon_i^{p^2})< v(p\varepsilon_i)<1+\frac{1}{p^2-1}$ so to obtain the cancellation we must have $v(\varepsilon_i^{p^2})=v(u_1\gamma^{p^2h_i})$, equivalently $v(\varepsilon_i)=h_iv(\gamma)$.
    In particular, if the congruence is true for $\varepsilon_k$, then since $v(h_k)=-2$ by \Cref{rmk: h_i inequalities}, we have
    \[
    v(v(\varepsilon_k))=v(h_kv(\gamma))=v(h_k)+v(v(\gamma))=-2 -(2k-2) = -2k
    \] 
    so the denominator of $v(\varepsilon_k)$ is divisible by $p^{2k}$ as required.
    
    It remains to prove the claimed congruence \eqref{eq: eps congruence}, which we do by induction on $i \leq k$. Suppose first that $i = 1$. By \Cref{lemma: epsilon1} we have
    \[
    (-1)^k\pi_e^2u(\zeta-\zeta^p)\gamma^{p^{2k-1}} - \varepsilon_1^{p^2} + p\varepsilon_1 \equiv 0 \mod{p^{1+\frac{2}{ep}+\frac{1}{p^2-1}}},
    \]
    hence it suffices to show that 
    \[
    u_1 \gamma^{p^2h_1} := (-1)^k\xi^{-1}u(\zeta-\zeta^p) \gamma^{p^{2k-2} \cdot\frac{2(p^2-1)}{e} + p^{2k-1}} \equiv (-1)^k\pi_e^2u(\zeta-\zeta^p)\gamma^{p^{2k-1}} \mod p^{1+\frac{2}{ep}+\frac{1}{p^2-1}}.
    \]
    The base case now follows from \Cref{lemma: congruences powers of gamma}, noting that $v(\alpha^{-1}) \ge 1$ (as $k>2$), and that $1+ \frac{2}{ep} + \frac{1}{p^2-1} < 1 + \frac{2}{p+1}$.

    We now suppose the congruence holds for $\varepsilon_i$ (so in particular $v(\varepsilon_i)=h_iv(\gamma)$) and prove the induction step: namely that the congruence also holds for $\varepsilon_{i+1}$ whenever $i \leq k-1$. 
    Recall that $\varepsilon_{i+1}=\varepsilon_{i}-u_1\gamma^{h_i}$ by definition. Using that $v(\varepsilon_{i})=h_iv(\gamma)$ and $h_i \geq h_1$ (\Cref{rmk: h_i inequalities}) we have
    \[
    p^2v(\varepsilon_{i+1}) \geq p^2v(\gamma^{h_i}) \geq p^2v(\gamma^{h_1}) \geq \frac{2}{e}+\frac{1}{p^2-1}.
    \]
    The freshman's dream (\Cref{lemma: freshmans dream}) therefore gives us that $\varepsilon_{i}^{p^2} \equiv \varepsilon_{i+1}^{p^2} + u_1^{p^2}\gamma^{p^2h_i} \mod p^{1+\frac{2}{ep}+\frac{1}{p^2-1}}$. Moreover, analogously to the first line of the proof of \Cref{lemma: u1 and u2}, $u_1^{p^2}\equiv u_1 \bmod p$, and hence $u_1^{p^2} \gamma^{p^2 h_i} \equiv u_1 \gamma^{p^2h_i} \mod p^{1+\frac{2}{ep}+\frac{1}{p^2-1}}$ since $v(\gamma^{p^2h_i}) \ge\frac{2}{e}+\frac{1}{p^2-1} > \frac{2}{ep} + \frac{1}{p^2-1}$ by above.
    Rewriting the congruence for $\varepsilon_i$ now yields
    \begin{equation*}
        0 \equiv u_1\gamma^{p^2h_i} - (\varepsilon_{i+1} + u_1\gamma^{h_i})^{p^2} + p\varepsilon_{i+1} + pu_1\gamma^{h_i} \equiv p u_1 \gamma^{h_i} - \varepsilon_{i+1}^{p^2} + p\varepsilon_{i+1} \mod p^{1+\frac{2}{ep}+\frac{1}{p^2-1}}.
    \end{equation*}
    To obtain the claimed congruence for $\varepsilon_{i+1}$, it remains to show that $pu_1 \gamma^{h_i} \equiv u_1\gamma^{p^2h_{i+1}} \mod p^{1+\frac{2}{ep}+\frac{1}{p^2-1}}.$ Since $v(\alpha^{-1}) \geq 1$, we have $p \equiv \gamma^{p^{2k-2}(p^2-1)} \mod p^{2 - \frac{1}{p^2}}$ by \Cref{lemma: congruences powers of gamma}, so substituting this in and using the recurrence relation for $h_i$ proves that $pu_1 \gamma^{h_i} \equiv u_1\gamma^{p^2h_{i+1}} \mod p^{1+\frac{2}{ep}+\frac{1}{p^2-1}}$ and hence also the claimed congruence \eqref{eq: eps congruence} as required.
\end{proof}

\section{Determining $\alpha$}\label{sect: parameter alpha}

One of our main results, \Cref{thm: more precise description padic Galreps}, is formulated in terms of the deformation parameter $\alpha$ associated with an elliptic curve $E$ with potentially supersingular reduction and semistability defect $e \geq 3$. 
In this section, we consider the problem of determining $\alpha$ (or at least its valuation) in order to give a more computationally effective version of \Cref{thm: more precise description padic Galreps} where $\alpha$ is replaced by the $j$-invariant. To the best of our knowledge, this constitutes a novel and explicit link between the concrete Weierstrass model of $E$ and the notoriously opaque world of $p$-adic Hodge theory.

We shall fix the following notation throughout this section.

\begin{notation*}
\phantom{~} \\
\begin{tabular}{cl}
$p>3$ & rational prime \\
$\tilde{E}/\mathbb{F}_p$ & fixed supersingular elliptic curve \\
$e$ & $\in \{3,4,6\}$ such that $e<p-1$ and $e \mid p+1$ \\
$\pi_e$ & $\sqrt[e]{-p}$ \\
$L,\mathcal{O}_L$ & $\Qp(\pi_e)$ and its valuation ring.
\end{tabular}
\end{notation*}

Moreover, since $p>3$, we shall assume all elliptic curves are in short Weierstrass form for convenience. We will study the set of lifts of $\tilde{E}/\mathbb{F}_p$ to $L$, identifying those which can be defined over $\Qp$ with semistability defect $e$ and present an algorithm to compute the associated parameter $\alpha$ in terms of coefficients of formal logarithms (via another parameter $\beta$), building on work of Volkov and Kawachi \cite{Kawachi}. The following definition encapsulates the properties of the elliptic curves we will consider.

\begin{definition}
Let $F/\Qp$ be a totally ramified extension. We say that an elliptic curve $E/F$ is a \emph{lift} of $\tilde{E}$ if $E$ has good reduction with special fibre isomorphic to $\tilde{E}$. We moreover call an elliptic curve $E/\Qp$ a \emph{potential $e$-lift} if $E/\Qp$ has semistability defect $e$ and $E/L$ is a lift of $\tilde{E}$.
\end{definition}

By \Cref{thm: Volkov parametrisation}, to any potential $e$-lift of $\tilde{E}/\F_p$ we can attach a deformation parameter $\alpha \in \mathbb{P}^1(\Q_p)$. Our objective is to describe $\alpha$, and especially its valuation, as explicitly as possible. The result below summarises the key statements regarding $v(\alpha)$. Whilst we do not state it below, one can determine $\alpha$ itself (via $\beta$) using coefficients of the formal logarithm as follows: compute $\beta$ to any given precision with \Cref{lemma: yasuda} and \Cref{prop: beta algorithm}; $\alpha$ is then given in terms of $\beta$ in \Cref{prop: beta parameter}(iv).

\begin{theorem}
\label{thm: valuation alpha overview}
Let $E/\Qp$ be a potential $e$-lift of $\tilde{E}/\mathbb{F}_p$ with $j$-invariant $j$, Volkov parameter $\alpha$ and minimal discriminant $\Delta$. Then:
\begin{enumerate}
    \item $j \in \{0,1728\}$ if and only if $\alpha \in \{0,\infty\}$;
    \item The valuation of $\alpha$ is given by the following table:

\begin{center}
 \begin{tabular}{c|ccc}
     & $e=3$ & $e=4$ & $e=6$ \\ \hline 
     $v(\Delta) < 6$  & $\frac{1}{3}(5-v(j))$ & $\frac{1}{2}(3-v(j-1728))$ & $\frac{1}{3}(4-v(j))$ \\[0.1in]
     $v(\Delta) > 6$ & $\frac{1}{3}(2+v(j))$ & $\frac{1}{2}(1+v(j-1728))$ & $\frac{1}{3}(1+v(j))$
 \end{tabular}
\end{center}

    \item $E$ has a canonical subgroup if and only if $v(\alpha)=1$ if and only if
    $\begin{cases}
        v(j) \in \{1,2\} &\text{if } e \in \{3,6\}; \\
        v(j-1728)=1 &\text{if } e=4. 
    \end{cases}$
\end{enumerate}
\end{theorem}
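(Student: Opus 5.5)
This theorem is essentially an assembly of the results on $\beta$ announced in \Cref{introthm: formula for beta}, so the plan is to prove those first and then read off (i)--(iii). We work on the good model of $E$ over $L=\Qp(\pi_e)$, a lift of the supersingular curve $\tilde{E}$, and its formal logarithm $\log_{\hat E}(t)=\sum d_n t^n$. Following Volkov's remark and Kawachi, the line $\operatorname{Fil}^1$ in $D^*_{\operatorname{cris}}$ corresponds to the invariant differential. The first step is to show that the limit $\beta=\lim_k -\frac{p}{\pi_e}\frac{d_{p^{2k+1}}}{d_{p^{2k}}}$ exists and encodes the position of this line relative to the Frobenius basis $b_1,b_2=\varphi(b_1)$.

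Concretely, the coefficients satisfy a Honda-type recursion. Since $\tilde{E}$ is supersingular with $a_p=0$, the logarithm is of the form $f$ with $f - \frac{1}{p}\cdot(\text{twist of }f)^{(p^2)}\in \mathcal{O}_L[[t]]$, up to a correction term involving $p$. This gives a linear recurrence for $(d_{p^{2k}},d_{p^{2k+1}})$ driven by the matrix of $\varphi$ in \Cref{def: D alpha}. The ratio therefore converges, and the limit is the slope of the $\operatorname{Fil}^1$ line. Comparing with $\operatorname{Fil}^1=(b_1\otimes\pi_e^{-1}+\alpha^{-1}b_2\otimes\pi_e)K$ then yields the formula $\alpha=-p(\beta/\pi_e)^{-1}$ or $\alpha=-p\beta/\pi_e^{e-3}$. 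Which case occurs depends on whether the $\tau_e$-eigencharacter of the differential is $\zeta_e$ or $\zeta_e^{-1}$. That eigencharacter is in turn determined by how $\omega$ rescales under $x\mapsto \pi_e^{-2r}x$, i.e.\ by whether $v(\Delta)<6$ or $v(\Delta)>6$; this is \Cref{prop: epsilon via discriminant}.

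Next, for the valuation of $\beta$, we write the model over $L$ as $y^2=x^3+A'x+B'$. Take $A'=A\pi_e^{-4r}$ and $B'=B\pi_e^{-6r}$ with $r$ chosen so that the curve has good reduction. Near $j=0$ (for $e\in\{3,6\}$) we have $v(A')>0$, and the first coefficients $d_p,d_{p^2}$ are computed mod higher order from the Hasse invariant, which is a multiple of $A'^{(p+1)/3}$-type terms. Since $\tilde{E}$ is supersingular with $j=0$, the leading Hasse-type term vanishes mod $p$, and the dominant contribution to $d_{p^{2k+1}}/d_{p^{2k}}$ is governed by $v(A')$. Since $v(A')$ is determined by $v(j)=3v(A)-v(\Delta)$, this gives $v(\beta)=\frac{1}{3}v(j)-v(\pi_e)$; the case $e=4$ near $1728$ uses $B'$ and $v(j-1728)=2v(B)-v(\Delta)$ in the same way. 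Substituting $v(\beta)$ into the two formulas for $\alpha$ and tabulating with $v(\pi_e)=1/e$ and the allowed values of $v(\Delta)$ for each $e$ gives the table in (ii).

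Part (i) then follows from (ii) together with \Cref{rmk: alpha role}(iii): $j\in\{0,1728\}$ makes the relevant valuation infinite, so $\beta=0$ and $\alpha\in\{0,\infty\}$, and conversely. Part (iii) follows from \Cref{rmk: alpha role}(ii) combined with the table, by solving $v(\alpha)=1$ in each cell. For example, for $e=3$ and $v(\Delta)<6$ this gives $v(j)=2$, and for $v(\Delta)>6$ it gives $v(j)=1$. I expect the main obstacle to be the second step: controlling the $p$-adic size of $d_{p^{2k+1}}/d_{p^{2k}}$ uniformly in $k$, so that the leading-term valuation survives in the limit. This requires a careful inductive estimate on the recursion for $d_{p^n}$, using that the perturbation away from the CM curve is small.
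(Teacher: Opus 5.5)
Your closing bookkeeping is correct. Granted $v(\beta)=v(A')-\frac1e$ (resp.\ $v(B')-\frac1e$ for $e=4$), the two formulas for $\alpha$, and the rule $\varepsilon=1\iff v(\Delta)<6$, the table and parts (i) and (iii) follow as you say. In (iii) you should add that for $e\in\{3,6\}$ the integrality $v(\alpha)\in\Z$ fixes $v(j)\bmod 3$ in each cell, so only one of $v(j)\in\{1,2\}$ occurs there.

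The gap is that the two steps carrying the content are asserted rather than proved. First, you never connect the coefficients $d_n$ to Volkov's $\beta$. A Honda-type functional equation for $\log_{\hat E}$ is not available over the totally ramified ring $\mathcal{O}_L$:
\begin{itemize}
\item Honda's theory of types lives over unramified bases. The paper uses it only for $\log_\Gamma$ over $\Z_p$, which has type $T^2+p$.
\item Over $\mathcal{O}_L$ the congruence $\sigma(a)\equiv a^p$ required by the functional-equation lemma holds only modulo $\pi_e$. A Hazewinkel-type equation there would therefore have coefficients in $\pi_e^{-1}\mathcal{O}_L$, forcing $v(d_{p^n})\geq -n/e$. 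This contradicts $v(d_{p^{2k}})=-k$ once $e\geq 3$.
\end{itemize}
More importantly, even granting that the ratios converge, ``the limit is the slope of $\operatorname{Fil}^1$'' is exactly the explicit crystalline--de Rham comparison over $K$ that cannot be carried out directly.

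The missing idea is Fontaine's module $MH_{\mathcal{O}_L}(\Gamma)$ (this is where the hypothesis $e<p-1$ enters) together with Kawachi's isomorphism $\tilde w:\mathcal{M}\to MH_{\mathcal{O}_L}(\Gamma)$. This isomorphism sends $\mathbf{e}_1,\mathbf{e}_2$ to $[\log_\Gamma(t)]$ and $-[\frac{\pi_e}{p}\log_\Gamma(t^p)]$, and sends the line $\mathcal{L}(\beta)$ to $\mathcal{O}_L\cdot[\log_{\hat E}(t)]$. It gives $\log_{\hat E}(t)\equiv a\log_\Gamma(t)+b\frac{\pi_e}{p}\log_\Gamma(t^p) \bmod \pi_e\mathcal{O}_L[[t]]$ with $a$ a unit and $\beta=-b/a$. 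Comparing coefficients of $t^{p^{2k}}$ and $t^{p^{2k+1}}$, using $d_{\mathrm{CM},p^{2k+1}}=0$ and $v(d_{\mathrm{CM},p^{2k}})=-k$, then yields $\beta\equiv-\frac{p}{\pi_e}\frac{d_{p^{2k+1}}}{d_{p^{2k}}}\bmod \pi_e^{ke+1}$. The passage from $\beta$ to $\alpha$ is Volkov's parametrisation of lifts by lines in $\mathcal{M}$, not a direct computation of $\operatorname{Fil}^1$.

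Second, the valuation step is left open. The congruence above holds only modulo $\pi_e^{ke+1}$, and $v(\beta)$ grows with $v(j)$, so you need $v(d_{p^{2k+1}})=v(A')-(k+1)$ (resp.\ $v(B')-(k+1)$) exactly, for every $k$. The Hasse invariant ($k=0$) only determines $b$ modulo $p$.

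You flag this as the ``main obstacle'', and your heuristic for it is wrong: the Hasse invariant is not built from ``$A'^{(p+1)/3}$-type terms''. For $p\equiv 2\bmod 3$ its leading term is a unit times $A'B'^{(p-5)/6}$, which is linear in $A'$; that is exactly why $v(H)=v(A')$. A leading term of degree $(p+1)/3$ in $A'$ would give the wrong $v(\beta)$.

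The paper settles this from Yasuda's closed formula for $d_{p^{2k+1}}$. It shows there is a unique summand of minimal valuation ($m=1$ for $e\in\{3,6\}$, $n=1$ for $e=4$). Kummer's theorem on base-$p$ digits then shows its multinomial coefficient has valuation exactly $k$. Without these two ingredients, part (ii), and hence part (iii), is not established.
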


\begin{proof}
(i) follows upon combining \Cref{prop: beta parameter}(ii) and (iv). Parts (ii) and (iii) are \Cref{proposition: formula valuation alpha}(i) and (ii) respectively.
\end{proof}

\subsection{Lifts and Volkov's $\beta$}

We begin by relating Volkov's parameter $\alpha$ to another parameter $\beta$ which classifies lifts to $L$ of a given supersingular curve $\tilde{E}/\F_p$. Whilst we will discuss all possible lifts later (\Cref{prop: Dieudonne mod}), we temporarily restrict to lifts which are base changes of potential $e$-lifts (so $\alpha$ is defined).

\begin{proposition}\label{prop: beta parameter}
Let $\tilde{E}/\mathbb{F}_p$ be a supersingular elliptic curve. Fix $e \in \{3,4,6\}$ and suppose $e<p-1$ and $e \mid p+1$.
\begin{enumerate}
    \item The set of lifts $E/L$ of $\tilde{E}/\F_p$ which are the base change of potential $e$-lifts is parameterised by
    \[
    \beta \in  \Z_p\pi_e \, \cup \, \Z_p\pi_e^{e-3};
    \]
    denote by $E_\beta / L$ the lift corresponding to $\beta$. 
    \item A lift $E_{\beta}$ has $j$-invariant $0$ or $1728$ if and only if $\beta=0$.
   \item
   For a given $\beta$ as in~(i), consider the potential $e$-lifts whose base change to $L$ is isomorphic to $E_\beta$. 
\begin{enumerate}
    \item If $\beta \neq 0$ and $e \neq 4$, there exists a unique such elliptic curve $E_\beta^{\varepsilon}/\Q_p$, where $\varepsilon \in \{\pm 1\}$, with $\varepsilon = 1$ if and only if $\beta \in \Z_p \pi_e$.

    \item Otherwise, there are exactly two such elliptic curves $E_\beta^{\varepsilon}/\Q_p$ (with $\varepsilon \in \{\pm 1\}$), which are ramified twists of one another.
\end{enumerate}
    In both cases, letting $\Delta$ be the minimal discriminant of $E_\beta^{\varepsilon}/\Q_p$, we have $\varepsilon=1$ if and only if $v(\Delta)<6$, and $\varepsilon=-1$ if and only if $v(\Delta)>6$.
    \item For a potential $e$-lift $E_\beta^{\varepsilon}/\Qp$, set
    \[
    \alpha = \begin{cases}
        \infty & \qquad \text{if } \beta=0 \text{ and }\varepsilon=1; \\
         0&  \qquad \text{if } \beta=0 \text{ and } \varepsilon=-1;
    \end{cases}
    \qquad \text{else set }
    \alpha = \begin{cases}
        -p \cdot (\beta/\pi_e)^{-1} & \qquad \text{if } \varepsilon=1; \\
        -p \cdot \beta/\pi_e^{e-3} &  \qquad \text{if } \varepsilon=-1.
    \end{cases}
    \] 
    The filtered $(\varphi,G_{K/\Qp})$-module corresponding to $E_{\beta}^{\varepsilon}$ is $D_\alpha$  (cf.~\Cref{def: D alpha}).
\end{enumerate}
\end{proposition}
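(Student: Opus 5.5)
The plan is to transport Volkov's description of the possible filtered $(\varphi,G_{K/\Qp})$-modules into the language of lifts of $\tilde{E}$ to $L$, using the Dieudonné module of the special fibre together with the Hodge filtration coming from the lift. The main tool is the crystalline theory over $\mathcal{O}_L$: since $e<p-1$, the ramification of $L/\Qp$ is small. Grothendieck–Messing / Fontaine's theory, as used by Volkov and Kawachi \cite{Kawachi}, then classifies lifts of $\tilde{E}$ to $\mathcal{O}_L$ by lines in $D_{\operatorname{cris}}(\tilde{E})\otimes L$. These lines must satisfy an admissibility condition, namely that the resulting filtered module is weakly admissible, or equivalently that the corresponding lattice condition holds.

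First I would fix the Dieudonné module of $\tilde{E}/\F_p$. It is free of rank $2$ over $\Z_p$ with $\varphi^2=-p$, using supersingularity and $a_p=0$ for $p>3$. I take a basis $b_1,b_2=\varphi b_1$ and base change to $\Q_{p^2}$ to match \Cref{def: D alpha}. A lift $E/L$ is then determined by its Hodge line $\operatorname{Fil}^1\subset D\otimes L$, which I write as $(b_1+\beta' b_2)L$ or $(b_2)L$. I would normalise $\beta'$ so that integrality of the lift corresponds to $\beta'$ lying in a suitable fractional ideal of $\mathcal{O}_L$.

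Next I impose descent to $\Qp$ with semistability defect $e$. This requires the line to be stable under the semilinear $G_{K/\Qp}$-action of \Cref{def: D alpha}(iii), where $\tau_e$ scales $b_1$ by $\zeta_e$ and $b_2$ by $\zeta_e^{-1}$, and $\omega$ fixes both. Equivariance forces $\beta'$ to transform like a specific power of $\pi_e$ times an element of $\Qp$. The two characters available, that is the two ways to match $\tau_e$ on the line, give exactly the two shapes $\Z_p\pi_e$ and $\Z_p\pi_e^{e-3}$ in (i). Comparing with $\operatorname{Fil}^1(D_\alpha)_K=(b_1\pi_e^{-1}+\alpha^{-1}b_2\pi_e)K$ and its twisted analogue under a ramified quadratic twist (\Cref{rmk: alpha role}(i)) then yields the two formulas in (iv). The case $\beta=0$ corresponds to $\alpha\in\{0,\infty\}$.

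For (ii) I would combine this with \Cref{rmk: alpha role}(iii), which says $\alpha\in\{0,\infty\}$ if and only if $j\in\{0,1728\}$. Part (iii) is a descent count: potential $e$-lifts with a given base change $E_\beta$ are twists of one another by characters of $\Gal(L^{\mathrm{gal}}/\Qp)$ compatible with $\operatorname{Aut}(E_\beta)$. When $j\ne0,1728$ and $e\neq4$, only the quadratic twist survives, and exactly one of the two twists has defect $e$, because a ramified quadratic twist swaps $e=3\leftrightarrow6$ by \Cref{lemma: quadratic twist acquires good reduction over a small extension}. When $e=4$, or when $\beta=0$, both twists keep the same defect. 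The criterion $v(\Delta)<6$ versus $v(\Delta)>6$ follows from $v(\Delta)\bmod12$ in that lemma, since twisting adds $6$. Matching $\varepsilon$ with $\beta\in\Z_p\pi_e$ comes from tracking which character appears in the descent step.

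The main obstacle is making the normalisations precise. The parameter $\beta$ must actually be defined so that it agrees with Volkov's filtration, including the exact factor $-p$ and the powers of $\pi_e$, and this requires an explicit comparison between the Hodge line of the lift and Volkov's $\operatorname{Fil}^1$. I would first try to extract this comparison from Volkov \cite[pp.~111--125]{Volkov} and Kawachi, and verify the constants on the CM test curves $\beta=0$.
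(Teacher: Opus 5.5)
Your reconstruction of Volkov's classification is sound in outline and agrees with what the paper relies on; the paper simply cites Volkov for (i), (ii), (iv) and for the count in (iii). Concretely, $\tau_e(e_1\otimes 1)=\zeta_e^{\varepsilon}(e_1\otimes1)$ forces $\tau_e(e_2\otimes1)=\zeta_e^{-\varepsilon}(e_2\otimes 1)$, because $p\equiv -1\bmod e$. With $\mathbf{e}_2=e_2\otimes\pi_e^{1-e}$, stability of $(\mathbf{e}_1+\beta\mathbf{e}_2)\mathcal{O}_K$ then gives $\tau_e(\beta)=\zeta_e^{2\varepsilon-1}\beta$, which yields the two shapes $\Z_p\pi_e$ and $\Z_p\pi_e^{e-3}$. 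Matching $e_1$ with $b_1$ (resp.\ with $b_2$, so that $e_2\mapsto -pb_1$) reproduces both formulas in (iv).

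The genuine gap is the last assertion of (iii): $\varepsilon=1$ if and only if $v(\Delta)<6$. Your justification (``$v(\Delta)\bmod 12$ \dots since twisting adds $6$'') only shows that, for fixed $e$, the minimal discriminant has two possible valuations, one on each side of $6$. It says nothing about which of them goes with $\varepsilon=+1$.
\begin{itemize}
\item For $e=4$ the quadratic twist swaps $E_\beta^{\pm1}$, but the labelling could a priori be the other way round.
\item For $e\in\{3,6\}$ the two values ($\{4,8\}$, resp.\ $\{2,10\}$) are not related by a quadratic twist at all. The ramified quadratic twist sends $4\mapsto 10$ and $8\mapsto 2$, i.e.\ it exchanges $e=3$ and $e=6$.
\end{itemize}
Moreover, $\varepsilon$ is invisible in $D_\alpha$, whose $\tau_e$-action is fixed by convention. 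It is the eigenvalue of the automorphism of $\tilde{E}_{\F_{p^2}}$ induced by $\tau_e$ on the adapted generator $e_1$. So ``tracking which character appears'' presupposes exactly the information you have not computed.

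What is missing is the explicit computation the paper carries out in \Cref{prop: epsilon via discriminant}. For $e=4$:
\begin{itemize}
\item Take a minimal model $y^2=x^3+Ax+B$ over $\Qp$, so $v(A)\in\{1,3\}$, corresponding to $v(\Delta)\in\{3,9\}$.
\item Take the good-reduction model over $K$ given by $\psi(x,y)=(u^2x,u^3y)$ with $u=\pi_e^{-v(A)}$.
\item Reducing ${}^{\tau_e}\psi\circ\psi^{-1}$ modulo $\pi_e$ shows that $\tau_e$ induces $(w,z)\mapsto(\zeta_e^{-2v(A)}w,\zeta_e^{-3v(A)}z)$ on the special fibre, i.e.\ $t\mapsto \zeta_e^{v(A)}t$.
\item Since $e_1$ corresponds to $[\log_\Gamma(t)]$ under Fontaine's isomorphism and $\log_\Gamma(\zeta_e t)=\zeta_e\log_\Gamma(t)$, this gives $\varepsilon\equiv v(A)\bmod 4$, hence $\varepsilon=1\iff v(\Delta)=3$.
\end{itemize}
The cases $e=6$ and $e=3$ are identical with $u=\pi_e^{-v(B)}$, resp.\ $u=\pi_e^{-v(B)/2}$.

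For $\beta\neq 0$ and $e\neq 4$ one could instead read $\varepsilon$ off $v(\beta)$ modulo $\Z$. That, however, needs the much later formula for $v(\beta)$ in terms of the good-reduction model over $L$, and it gives nothing when $\beta=0$ or $e=4$. Without some such computation, your argument establishes the dichotomy but not the stated correspondence between $\varepsilon$ and $v(\Delta)$.
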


\begin{proof}
(i)-(ii) are contained in \cite[Proposition 6 on p.~91]{Volkov}. (iii) follows from \cite[p.~92, Remarque 1]{Volkov}, except for the determination of $\varepsilon$ in terms of $v(\Delta)$, which will be covered by \Cref{prop: epsilon via discriminant} below. Finally, (iv) is inside the proof in \cite[p.~95]{Volkov}: Volkov describes the relevant filtered modules as 
\[
D_\alpha = \begin{cases}
        D_{\operatorname{pc}}^*(e; 0; -p \cdot (\beta/\pi_e)^{-1}) = D_{-p \cdot (\beta/\pi_e)^{-1}} & \qquad \text{if } \varepsilon=1; \\
        D_{\operatorname{pc}}^*(e; 0; -p \cdot \beta/\pi_e^{e-3}) = D_{-p \cdot \beta/\pi_e^{e-3}} &  \qquad \text{if } \varepsilon=-1. \qedhere
    \end{cases}
\]
\end{proof}

\begin{remark}\label{uniqueness of alpha beta}
Note that different values of $\beta$ may correspond to the same $\alpha$: every $\alpha$ with $v(\alpha)=1$ may arise both from $\beta=-(\alpha/p) \cdot \pi_e^{e-3}$ and $\beta'=-(p/\alpha) \pi_e$. This is since for such a $\beta$ the elliptic curve $E_\beta^{-1}$ has a canonical subgroup of order $p$ (\Cref{rmk: alpha role}(ii)), and so carries a degree $p$ isogeny to another elliptic curve $E'/\Q_p$, which is in general not isomorphic to $E_\beta^\varepsilon$. Using \cite[Remarque~2, p.~93]{Volkov}, which proves the existence of a $\Qp$-isogeny between $E_{u \pi_{e}^{e-3}}$ and $E_{u^{-1} \pi_e}$ for every $u \in \mathbb{Z}_p^\times$ (in our case, $u=-\alpha/p$), one can check that $E' = E_{\beta'}^{+1}$. This isogeny induces an isomorphism of Galois modules $V_pE^{-1}_\beta \cong V_pE^{+1}_{\beta'}$, which is compatible with both curves having the same value of $\alpha$. 
\end{remark}

Whilst \Cref{prop: beta parameter} connects $\alpha$ and $\beta$, it is still unclear how to determine $\beta$ for a given lift. To do this, we will give a more technical description via an integral Dieudonn\'{e} module, which we will then leverage for a direct connection with formal logarithms in the following subsections.

\subsection{Formal logarithms and $E_{\mathrm{CM}}$}

We now aim to relate $\beta$ to the formal logarithm of our lift $E/L$. To enable this, we first fix a particular CM lift of $\tilde{E}/\mathbb{F}_p$ and then recap formal logarithms. Recall from \Cref{prop: e divides p plus one} that $j(E)$ is congruent to $0$ or $1728$ modulo $p$, according to whether $e \in \{3,6\}$ or $e=4$.

\begin{definition}
We fix a lift $E_{\mathrm{CM}}/\Qp$ of $\tilde{E}$ with good supersingular reduction and $j(E_{\mathrm{CM}}) \in \{0,1728\}$. As $\tilde{E}$ is in short Weierstrass form, we may assume the same for $E_{\mathrm{CM}}$ and write
\[
E_{\mathrm{CM}}:\quad
\begin{cases}
y^2=x^3+A_{\mathrm{CM}} x & \text{if } e=4,\\
y^2=x^3+B_{\mathrm{CM}}  & \text{if } e \in \{3,6\},
\end{cases}
\]
for suitable $A_{\mathrm{CM}},B_{\mathrm{CM}} \in \Z_p$.
\end{definition}

\begin{definition}\label{def: formal log}
Let $F$ be an algebraic extension of $\Q_p$ and let $E/F$ be an elliptic curve in short Weierstrass form. Let $\omega=\frac{dx}{2y}$ be the standard invariant differential of $E$ and take $t=-x/y$ as parameter at the origin. The choice of $t$ induces an identification of the completed local ring of $E$ at the origin with $\mathcal{O}_F[[t]]$, and hence of the formal group of $E$ with the formal spectrum of $\mathcal{O}_F[[t]]$. We may then write (the pullback to $\mathcal{O}_F[[t]]$ of) $\omega$ as $g(t)\,dt$ with $g(t)\in 1 +t \mathcal{O}_F[[t]]$.
The formal logarithm of $\hat{E}$ is the unique formal power series
$\log_{\widehat{E}}(t)\in F[[t]]$ that solves the equations
\[
\log_{\widehat{E}}(0)=0\qquad \text{and} \qquad \frac{d}{dt}\log_{\widehat{E}}(t)=g(t).
\]
In particular, $\log_{\widehat{E}}(t)=t+O(t^2)$.
\end{definition}

\begin{definition}
    Take $E=E_{\mathrm{CM}}/\Q_p$ in the previous definition. We shall write $\Gamma$ and $\log_\Gamma \in \Q_p[[t]]$ for the formal group of $E$ over $\Z_p$ and the formal logarithm of $E$ respectively, in line with the notation of \cite{Kawachi}.
\end{definition}

We collect here two results about formal logarithms that will be useful later.
\begin{lemma}[Yasuda, {\cite[Corollary 6(a)]{Yasuda}}]\label{lemma: yasuda}
Let $R$ be a $\Q$-algebra and $E/R$ be given by $y^2=x^3+Ax+B$ such that $4A^3+27B^2 \in R^{\times}$.
Let $t=-x/y$ and $\log_{\widehat E}(t) \in R[[t]]$ be the corresponding formal logarithm. Then
\[
\log_{\widehat E}(t)=\sum_{m,n\ge 0}
{2m+3n \choose m+2n, m, n} \,A^mB^n\,
\frac{t^{4m+6n+1}}{4m+6n+1}.
\]
\end{lemma}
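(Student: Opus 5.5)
We aim to prove Yasuda's formula: with $t=-x/y$, the invariant differential satisfies $\omega = \frac{dx}{2y} = \sum_{m,n}\binom{2m+3n}{m+2n,\,m,\,n}A^mB^n t^{4m+6n}\,dt$. The formula then follows by termwise integration, which is legitimate because $R$ is a $\Q$-algebra.

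First we change coordinates to $t=-x/y$ and $s=-1/y$, so that $x=t/s$ and $y=-1/s$. The curve equation becomes $s = t^3 + A t s^2 + B s^3$. In the formal group this determines $s=s(t)\in R[[t]]$ with $s=t^3+O(t^7)$.

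Next we express $\omega$ in terms of this equation. Implicit differentiation of $F(t,s)=s-t^3-Ats^2-Bs^3=0$ gives the standard identity $\omega = \frac{dt}{\partial F/\partial s}\cdot(\text{unit})$. Checking it directly: $\frac{dx}{2y} = -\frac{s}{2}\,d(t/s) = -\frac{1}{2}\left(dt - \frac{t}{s}\,ds\right)$. We then use $ds = \frac{3t^2+As^2}{F_s}\,dt$ with $F_s = 1-2Ats-3Bs^2$, and simplify using the curve relation. We expect the result to be $\omega = \pm\frac{dt}{1-2Ats-3Bs^2}$ up to the sign convention. The sign normalisation $\log(t)=t+O(t^2)$ fixes it.

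Then we extract coefficients by Lagrange inversion. Writing $s = t^3\,\phi(s/t^3\cdot\ldots)$ is awkward, so instead we set $s = t^3 w$. The equation becomes $w = 1 + A t^4 w^2 + B t^6 w^3$, and $\omega = \frac{dt}{1-2At^4w-3Bt^6w^2}$. Regarding $w$ as the root of $w = 1 + a w^2 + b w^3$ with $a=At^4$ and $b=Bt^6$, the denominator is exactly $1 - \partial_w(aw^2+bw^3)$. The Lagrange–Bürmann formula in the form
\[
\frac{1}{1-\psi'(w)} = \sum_N [u^N]\,(1+u)^{0}\ldots
\]
(more precisely, $\frac{H(w)}{1-\psi'(w)} = \sum_{N} [z^N]\,H(z)\psi(z)^N$ for $w=1+\psi(w)$, shifted to $z$ around $1$) with $H=1$ gives
\[
\frac{1}{1-2aw-3bw^2} = \sum_{m,n}\binom{m+n}{m}a^mb^n\,[\text{coefficient of } z^{m+n}]\,(1+z)^{2m+3n}.
\]
That coefficient is $\binom{2m+3n}{m+n}$. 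The product $\binom{m+n}{m}\binom{2m+3n}{m+n}$ equals the multinomial $\binom{2m+3n}{m+2n,\,m,\,n}$. Substituting $a=At^4$ and $b=Bt^6$ and integrating yields the formula.

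The main obstacle is getting the differential identity and its sign exactly right, together with applying the precise variant of Lagrange inversion that produces $1/(1-\psi'(w))$ rather than $w$ itself. If the formal manipulation causes trouble, we will fall back on verifying the coefficient identity universally over $\Z[A,B]$ after tensoring with $\Q$.
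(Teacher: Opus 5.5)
The paper does not prove this lemma; it only cites Yasuda's Corollary~6(a). Your proposal is a correct, self-contained derivation, so it is a genuinely different route from the citation.

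There are two places to tighten. First, there is no sign ambiguity. Continuing your computation, $\omega=-\tfrac12\bigl(1-\tfrac{t}{s}\cdot\tfrac{3t^2+As^2}{F_s}\bigr)dt$. Substituting $t^3/s=1-Ats-Bs^2$ (the curve equation divided by $s$) makes the numerator $F_s-3t^3/s-Ats$ equal to exactly $-2$, so $\omega=dt/(1-2Ats-3Bs^2)$. The normalisation $\log_{\widehat E}(t)=t+O(t^2)$ is therefore a consequence, not something you get to choose.

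Second, the two-parameter Lagrange--B\"urmann step needs a justification, and the displayed formula with $(1+u)^{0}\ldots$ should simply be deleted. One way to do it:
\begin{enumerate}
\item Introduce an auxiliary variable $\lambda$ and apply the one-variable second form $\frac{1}{1-\lambda\Phi'(z)}=\sum_{N\ge 0}\lambda^N[u^N]\Phi(u)^N$ to $z=\lambda\Phi(z)$, with $\Phi(u)=a(1+u)^2+b(1+u)^3$, over $\Z[a,b]$.
\item Set $\lambda=1$. This is allowed because, by homogeneity, the $\lambda^N$-coefficients on both sides lie in $(a,b)^N$.
\item Specialise $a\mapsto At^4$, $b\mapsto Bt^6$, using that $w=1+At^4w^2+Bt^6w^3$ has a unique solution $w\equiv 1 \bmod t$ in $R[[t]]$.
\end{enumerate}

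Your coefficient extraction is right, and so is $\binom{m+n}{m}\binom{2m+3n}{m+n}=\binom{2m+3n}{m+2n,\,m,\,n}$. As a check, direct expansion of $1/(1-2aw-3bw^2)$ to second order gives $1+2a+3b+6a^2+20ab+15b^2$, matching the multinomials.

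Compared with the bare citation, your route is transparent and gives slightly more. The second form of Lagrange inversion involves no division, so it shows the expansion of $\omega$ has coefficients in $\Z[A,B]$. Hence the $\Q$-algebra hypothesis is needed only for the final integration, and the condition $4A^3+27B^2\in R^\times$ plays no role in the computation.
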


\begin{lemma}\label{lemma: odd-killed}
Write $\log_\Gamma(t)=\sum_{n\ge 0} d_{\mathrm{CM},n} t^{n}$. Then:
\begin{enumerate}
\item $d_{\mathrm{CM}, p^{2k+1}}=0$ for all $k \ge 0$.
\item $v(d_{\mathrm{CM},p^{2k}}) = -k$ for all $k \ge 0$.
\item If $\zeta_e$ is an $e$-th root of unity, then $\log_\Gamma(\zeta_e t)=\zeta_e \log_\Gamma(t)$.
\end{enumerate}
\end{lemma}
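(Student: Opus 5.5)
The plan is to apply Yasuda's formula (\Cref{lemma: yasuda}) to $E_{\mathrm{CM}}$, where exactly one of $A,B$ vanishes. In that case $\log_\Gamma$ is a lacunary series whose surviving exponents lie in a single residue class, and each of (i)--(iii) reduces to an arithmetic statement about that class and about $p$-adic valuations of multinomial coefficients.

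Take first $e\in\{3,6\}$, so $A=0$ and $B=B_{\mathrm{CM}}$. Only the terms with $m=0$ survive:
\[
\log_\Gamma(t)=\sum_{n\ge0}\binom{3n}{n}B^n\frac{t^{6n+1}}{6n+1}.
\]
For $e=4$ we have $B=0$, and
\[
\log_\Gamma(t)=\sum_{m\ge0}\binom{2m}{m}A^m\frac{t^{4m+1}}{4m+1}.
\]
Here $\binom{3n}{n}$ is my reading of the multinomial $\binom{3n}{2n,0,n}$, and $\binom{2m}{m}$ that of $\binom{2m}{m,m,0}$.

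Part (iii) is then immediate. The only exponents that occur are $\equiv 1 \bmod 6$ (respectively $\equiv 1 \bmod 4$), hence $\equiv 1 \bmod e$ in every case, so $\zeta_e^{N}=\zeta_e$ for each such exponent $N$.

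Part (i) comes from the hypothesis $e\mid p+1$, i.e.\ $p\equiv -1\bmod e$. This gives $p^{2k+1}\equiv -1\bmod e$. For $e\in\{3,6\}$ we need $p^{2k+1}\not\equiv 1\bmod 6$. Indeed $p\equiv -1\bmod 3$ and $p$ is odd, so $p\equiv 5\bmod 6$ and $p^{2k+1}\equiv 5\bmod 6$. For $e=4$, $p\equiv 3\bmod 4$ gives $p^{2k+1}\equiv 3\bmod 4$. In either case the coefficient of $t^{p^{2k+1}}$ is zero.

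For part (ii), consider $N=p^{2k}$. Since $p^2\equiv 1$ modulo $6$ (respectively $4$), $N$ does occur, with $n=(p^{2k}-1)/6$ (respectively $m=(p^{2k}-1)/4$). The coefficient is $\binom{3n}{n}B^n/p^{2k}$ (respectively $\binom{2m}{m}A^m/p^{2k}$). The curve $E_{\mathrm{CM}}$ has good reduction and $p>3$, so $B$ (respectively $A$) is a $p$-adic unit; otherwise the discriminant $-16\cdot 27B^2$ (respectively $-64A^3$) would not be a unit. The claim therefore reduces to the following: $v\binom{3n}{n}=k$ for $n=(p^{2k}-1)/6$, and $v\binom{2m}{m}=k$ for $m=(p^{2k}-1)/4$. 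This is the main step.

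To prove it I will use Kummer's theorem: $v\binom{a+b}{a}$ equals the number of carries when adding $a$ and $b$ in base $p$.

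In the case $e\in\{3,6\}$ we add $n$ and $2n$. With $p\equiv 5\bmod 6$, write $r=(p-5)/6$ and $s=(5p-1)/6$. Then $(p^2-1)/6=r\,p+s$, so the base-$p$ digits of $n$ alternate $s,r,s,r,\dots$ starting from the units digit, across $2k$ digits. The number $2n=(p^{2k}-1)/3$ has digits that alternate $(2p-1)/3$ (units) and $(p-2)/3$. Adding digitwise, the units position gives $s+(2p-1)/3=(3p-1)/2\ge p$, which carries. The next position gives $r+(p-2)/3+1=(p-1)/2<p$, with no carry. The pattern then repeats, giving exactly $k$ carries.

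In the case $e=4$ we add $m$ and $m$. Here $(p^2-1)/4=\frac{p-3}{4}\,p+\frac{3p-1}{4}$. The units digit doubles to $(3p-1)/2\ge p$, which carries. The next digit gives $2\cdot\frac{p-3}{4}+1=(p-1)/2$, with no carry. Again this yields $k$ carries.

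In both cases $v(d_{\mathrm{CM},p^{2k}})=k-2k=-k$. I expect the digit bookkeeping to be the main obstacle and will check it carefully.
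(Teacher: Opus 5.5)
Your proposal is correct. For (i) and (iii) you argue exactly as the paper does: Yasuda's formula makes $\log_\Gamma$ lacunary, supported on exponents $\equiv 1 \pmod 6$ (respectively $\pmod 4$), and $p\equiv -1 \pmod e$ rules out the odd powers $p^{2k+1}$.

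For (ii) the paper takes a different route. It does not compute the coefficient at all. It invokes Honda theory via Kawachi: since $E_{\mathrm{CM}}$ has good supersingular reduction, its logarithm is of type $T^2+p$, and the paper reads $v(d_{\mathrm{CM},p^{2k}})=-k$ off Kawachi's Lemma 2.1.1.

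You instead isolate the single contributing term, $\binom{3n}{n}B^n/p^{2k}$ (respectively $\binom{2m}{m}A^m/p^{2k}$), and count carries. Your digit bookkeeping is right. Equivalently, $n$, $2n$ and $3n$ all have base-$p$ digit sum $k(p-1)$, so Legendre's formula gives $v\binom{3n}{n}=k$, and the same holds for $m$, $2m$ and $\binom{2m}{m}$.

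The trade-off is as follows. The paper's argument is shorter and uses only good supersingular reduction over $\Z_p$, not the special shape $A=0$ or $B=0$ of the CM equation, so the valuation statement would hold for any such curve. Yours is self-contained and elementary and avoids formal-group type theory. It is also in the same spirit as the Kummer-theorem computation the paper itself carries out for $d_{p^{2k+1}}$ in the proposition on valuations of formal-logarithm coefficients.

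One small point to state explicitly: $B_{\mathrm{CM}}$ (respectively $A_{\mathrm{CM}}$) is a unit because the fixed model is a lift of $\tilde E$, i.e.\ it reduces to $\tilde E$, so its discriminant is a unit. Good reduction of the curve alone would not force a given integral model to be the good one.
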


\begin{proof}
We treat the case $e \in \{3,6\}$; the case $e=4$ is analogous. Now $E_{\mathrm{CM}}/\Qp: y^2=x^3+B_{\mathrm{CM}}$ and $p \equiv -1 \bmod{6}$ since $E_{\mathrm{CM}}$ has supersingular reduction. By \Cref{lemma: yasuda} we have $\log_\Gamma(t) \in t\Qp[[t^6]].$ (i) and (iii) now follow from this, noting that $p \equiv -1 \bmod{6}$ and $e \mid 6$ respectively.

(ii)  As $E_{\mathrm{CM}}$ has good supersingular reduction, $\log_\Gamma$ is of type $T^2+p$ (cf.\ the first line of the proof of \cite[Proposition 2.1.1]{Kawachi}), so the valuations follow from \cite[Lemma 2.1.1]{Kawachi}.
\end{proof}

\subsection{The connection with formal logarithms}\label{subsect: connection with formal logs}

For a power series ring $R[[x_0,\ldots,x_n]]$, we will write $R[[x_0,\ldots,x_n]]_0$ for the ideal of series with zero constant coefficient.

\begin{definition}\label{def: MH Zp}
Following Fontaine \cite[p.~166]{Fontaine-p-divisible} and Kawachi \cite[p.~695]{Kawachi}, we set
\[
    MH_{\Z_p}(\Gamma)=\left\{ f \in \Q_p[[t]]_0 : \ \frac{d}{dt}f \in \Z_p[[t]] , \quad f(x)+f(y)-f(\Gamma(x,y))\in p\Z_p[[x,y]]_0\right\} / p\Z_p[[t]]_0,
\]
where $\Gamma(x,y)$ is the formal group law of the formal group $\Gamma$. We will write $[f(t)]$ for the class of $f(t) \in \Q_p[[t]]_0$ in $MH_{\Z_p}(\Gamma)$.
\end{definition}

\begin{remark}\label{rmk: log in MH}
The prototypical example of an element of $MH_{\Z_p}(\Gamma)$ is $f=[\log_{\Gamma}(t)]$, where we take $t=-x/y$ (cf. \Cref{def: formal log}); indeed $\log_{\Gamma}(x) + \log_{\Gamma}(y) - \log_{\Gamma}(\Gamma(x,y))=0$ by definition. 
\end{remark}

\begin{lemma}\label{lemma: adapted generator}
Let $M$ be the Dieudonné module of the $p$-divisible group $\tilde{E}(p)$ of $\tilde{E}$. The following hold.
\begin{enumerate}
    \item There is a continuous isomorphism of $\varphi$-modules $w : M \xrightarrow{\sim} MH_{\Z_p}(\Gamma)$, where $\varphi$ acts on $MH_{\Z_p}(\Gamma)$ by the formula
    \[
    \varphi([f(t)])=[f(t^p)].
    \]
    \item There exists a generator $e_1 \in M$ as a $\Z_p[\varphi]$-module such that $w(e_1)=[\log_\Gamma(t)]$. The elements $e_1, e_2 := \varphi(e_1)$ form a $\Z_p$-basis of $M$, and
    \[
    w(e_1) = [\log_\Gamma(t)], \quad w(e_2)=\varphi([\log_\Gamma(t)])=[\log_\Gamma(t^p)]
    \]
    is a $\Z_p$-basis of $MH_{\Z_p}(\Gamma)$. Moreover, $e_1$ is ``adapted'' to the automorphism group of $\tilde{E}$ in the sense of \cite[p.~91]{Volkov}, that is, $e_1 \otimes 1 \in M \otimes_{\Z_p} \Z_{p^2}$ is an eigenvector for the action of the automorphisms of $(\tilde{E})_{\F_{p^2}}$.
\end{enumerate}
\end{lemma}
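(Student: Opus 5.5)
For part (i) we intend to rely on Fontaine's theory of Honda systems and Dieudonné modules for formal groups over $\Z_p$, as set out in \cite{Fontaine-p-divisible} and used by Kawachi \cite{Kawachi}. The reduction of $E_{\mathrm{CM}}$ is the supersingular curve $\tilde{E}$, so the $p$-divisible group of $E_{\mathrm{CM}}$ over $\Z_p$ is connected, of height $2$, and equal to the formal group $\Gamma$. It lifts $\tilde{E}(p)$. Fontaine identifies the Dieudonné module of the special fibre of a formal group $\Gamma/\Z_p$ with $MH_{\Z_p}(\Gamma)$, the quotient of the module of "logarithms of the second kind" by the "exact" ones. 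In this identification Frobenius acts by $f(t)\mapsto f(t^p)$, because $\Z_p$ has trivial Frobenius. Continuity comes from the natural topology on power series. So (i) amounts to citing \cite[p.~166]{Fontaine-p-divisible} and \cite[p.~695]{Kawachi} and checking that their Frobenius convention agrees with ours. The one point needing care is well-definedness of $\varphi$: we must show that $[f(t^p)]$ again satisfies the defining conditions. Its derivative is $pt^{p-1}f'(t^p)$, which is integral. For the additivity condition we use $\Gamma(x,y)^p\equiv\Gamma(x^p,y^p)\bmod p$, which holds since $\Gamma$ has $\Z_p$-coefficients, and combine it with integrality of $f'$.

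For part (ii), \Cref{rmk: log in MH} shows that $[\log_\Gamma(t)]$ lies in $MH_{\Z_p}(\Gamma)$, and $[\log_\Gamma(t^p)]$ is its image under $\varphi$. The goal is to show these two classes form a $\Z_p$-basis; then we set $e_1:=w^{-1}([\log_\Gamma(t)])$, and $e_2=\varphi(e_1)$ is automatic. The module $MH_{\Z_p}(\Gamma)$ is free of rank $2$ (it is Dieudonné of height $2$), so it suffices to prove that the two classes are linearly independent modulo $p$. The tool is Kawachi's theory of logarithms "of type $T^2+p$" \cite[Lemma 2.1.1, Prop.~2.1.1]{Kawachi}. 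Since $E_{\mathrm{CM}}$ has good supersingular reduction, $\log_\Gamma$ is of this type. This gives $\varphi^2[\log_\Gamma]=-p[\log_\Gamma]$, and Kawachi shows that $[\log_\Gamma]$ and $\varphi[\log_\Gamma]$ generate.

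If a direct argument is needed, suppose $a[\log_\Gamma(t)]+b[\log_\Gamma(t^p)]=0$ with $a,b\in\Z_p$ not both divisible by $p$. Then $a\log_\Gamma(t)+b\log_\Gamma(t^p)$ would lie in $p\Z_p[[t]]_0$. We compare coefficients of $t^{p^2}$ using \Cref{lemma: odd-killed}(ii): this gives $v(d_{\mathrm{CM},p^2})=-1$. The coefficient of $t^p$ is $a\,d_{\mathrm{CM},p}+b\,d_{\mathrm{CM},1}=b$, because $d_{\mathrm{CM},p}=0$ by \Cref{lemma: odd-killed}(i). Integrality then forces $p\mid b$. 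Next, the $t^{p^2}$ coefficient is $a\,d_{\mathrm{CM},p^2}+b\,d_{\mathrm{CM},p}=a\,d_{\mathrm{CM},p^2}$, which has valuation $v(a)-1$. For this to be integral we need $p\mid a$, a contradiction. This independence modulo $p$ is the step we expect to be the main obstacle, since it requires handling the quotient by $p\Z_p[[t]]_0$ carefully. Once it is established, Nakayama's lemma shows that the two classes span.

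For the "adapted" claim we use \Cref{lemma: odd-killed}(iii). Automorphisms of $\tilde{E}_{\F_{p^2}}$ lift to $E_{\mathrm{CM}}$ over $\Z_{p^2}$ as $[\zeta](t)=\zeta t$ on the formal group, for $\zeta$ a root of unity of order dividing $e$ (or $2e$ after including $-1$). Under $w$ their action on $MH$ is given by $[f(t)]\mapsto[f(\zeta t)]$. By \Cref{lemma: odd-killed}(iii) this sends $[\log_\Gamma(t)]$ to $\zeta[\log_\Gamma(t)]$. Hence $e_1\otimes1$ is an eigenvector, and the same argument covers the scalar automorphism $-1$.
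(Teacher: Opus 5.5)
Your part (i) and your proof that $e_1 \otimes 1$ is an eigenvector (via \Cref{lemma: odd-killed}(iii)) follow the paper. The gap is in the basis claim of (ii). Write $N = MH_{\Z_p}(\Gamma)$, $x = [\log_\Gamma(t)]$, $y = [\log_\Gamma(t^p)]$.

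You correctly reduce to showing that $x, y$ are linearly independent in $N/pN$. Your direct argument, however, tests a different condition. You assume $a\log_\Gamma(t) + b\log_\Gamma(t^p) \in p\Z_p[[t]]_0$, which says $ax + by = 0$ in $N$, not $ax + by \in pN$. This only shows that $x, y$ are $\Z_p$-linearly independent, i.e.\ that they span a sublattice of finite index; compare $(1,0)$ and $(0,p)$ in $\Z_p^2$. Nakayama's lemma gives nothing from that.

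The condition you actually need is
\[
a\log_\Gamma(t) + b\log_\Gamma(t^p) \in pg + p\Z_p[[t]]_0 \quad \text{for some } g \text{ with } [g] \in N.
\]
Here $g$ is only required to have integral derivative. So $pg$ can have an arbitrary $\Z_p$-coefficient at $t^p$ and a coefficient of valuation $-1$ at $t^{p^2}$. Your comparisons of the $t^p$ and $t^{p^2}$ coefficients therefore give no constraint on $a$ or $b$. Your first route, that ``Kawachi shows'' generation, is an attribution you do not pin down, so it cannot carry the argument either.

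The missing ingredient is the $\Z_p[\varphi]$-module structure of $M$, which the paper takes from Volkov. For supersingular $\tilde{E}$ one has $\varphi^2 = -p$ on all of $M$, and $M$ is free of rank one over the discrete valuation ring $\Z_p[\varphi]$, whose uniformiser is $\varphi$. Hence every element of $M \setminus \varphi M$ is a generator, and then $\{x, \varphi x\}$ is automatically a $\Z_p$-basis.

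It remains to show $x \notin \varphi N$. By (i), every class in $\varphi N$ is represented by some $f(t^p)$. The series $\log_\Gamma(t) - f(t^p)$ has $t$-coefficient $1 \notin p\Z_p$, so it does not lie in $p\Z_p[[t]]_0$.

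Your own framework can be repaired the same way:
\begin{enumerate}
\item Every representative of an element of $pN$ has $t$-coefficient in $p\Z_p$, so $ax + by \in pN$ forces $p \mid a$.
\item Then $b\varphi x \in pN = \varphi^2 N$.
\item If $b$ were a unit, injectivity of $\varphi$ would give $x \in \varphi N$, contradicting the argument above.
\end{enumerate}
Either way, the relevant invariant is the $t$-coefficient, not the $t^p$ or $t^{p^2}$ coefficient. The structural input $\varphi^2 = -p$ on $M$ cannot be replaced by coefficient comparisons alone.
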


\begin{proof}
Since $\tilde E$ is supersingular, \cite[p.~86]{VolkovArticle} gives that not only is $M$ free of rank $2$ over $\Z_p$, but it is also free of rank $1$ over $\Z_p[\varphi]$ where $\varphi$ satisfies $\varphi^2+p=0$ and every $x \in M \setminus \varphi M$ is a $\Z_p[\varphi]$-generator. As recalled in \cite[p.~695]{Kawachi}, \cite[III, Proposition 6.5]{Fontaine-p-divisible} gives a continuous isomorphism $w:M\xrightarrow{\sim} MH_{\Z_p}(\Gamma)$ of $\Z_p[\Frob]$-modules, where the Frobenius $\Frob$ acts by transport of structure. An easy calculation using the formulae in \cite[p.~695]{Kawachi} yields that the $\varphi$-action on $MH_{\Z_p}(\Gamma)$ is given by $\varphi([f(t)])=[f(t^p)]$; this shows (i).

Fix an automorphism of order $e$ of $\tilde{E}_{\F_{p^2}} = (E_{\mathrm{CM}})_{\F_{p^2}}$. It lifts uniquely to $\op{Aut}_{\Z_{p^2}}(E_{\mathrm{CM}}) \cong \Z[\zeta_e]^{\times}$, which (up to replacing it with its inverse) induces $t \mapsto \zeta_e t$ on the completion of the local ring at $0$ of $(E_{\mathrm{CM}})_{\Z_{p^2}}$. By \Cref{lemma: odd-killed}(iii) we thus see that $[\log_\Gamma(t)] \in MH_{\Z_p}(\Gamma) \otimes \Z_{p^2}$ is an eigenvector for the action of this automorphism (under the identification induced by $w$). Moreover, $[\log_\Gamma(t)] \not\in \varphi(MH_{\Z_p}(\Gamma))$, because any $[f(t^p)] \in \varphi(MH_{\Z_p}(\Gamma))$ satisfies $f(t^p) \equiv 0 \bmod ({p, t^2})$, while $\log_{\Gamma}(t) \equiv t \bmod ({p, t^2})$. Since every element of $M \setminus \varphi M$ is a $\Z_p[\varphi]$-generator, we choose the $\Z_p[\varphi]$-generator $e_1:=w^{-1}([\log_{\Gamma}(t)])$ hence $M=\Z_pe_1 \oplus \Z_p\varphi(e_1)$. To complete the proof of (ii), finally observe that $e_1 \otimes 1$ is an eigenvector for the action of the automorphisms of $(\tilde{E})_{\F_{p^2}} = E_{\mathrm{CM}} \times_{\Z_p} \F_{p^2}$, so $e_1$ is ``adapted".
\end{proof}

\begin{remark}
    The result of this lemma is related to \cite[Chapitre III, Proposition 4.3]{Fontaine-p-divisible}. When $M$ is the Dieudonn\'{e} module of the $p$-divisible group $\tilde{E}(p)$, Fontaine shows that the quotient $M/\varphi M$ is canonically isomorphic to the cotangent space of $\tilde{E}(p)$, which is a $1$-dimensional vector space generated by any non-zero invariant differential. In our description, we identify a distinguished line in $M$ itself: via the isomorphism $w$, it corresponds to the line generated by the class of the formal logarithm, whose differential is precisely the invariant differential.
\end{remark}

\begin{proposition}\label{prop: Dieudonne mod}
Let $\tilde{E}/\mathbb{F}_p$ be a supersingular curve and let $M$ be the Dieudonn\'{e} module associated to the $p$-divisible group $\tilde{E}(p)$ of $\tilde{E}/\mathbb{F}_p$. Define
\[
\mathcal{M} := M \otimes_{\Z_p} \mathcal{O}_L + \varphi M \otimes_{\Z_p} \pi_e^{1-e}\mathcal{O}_L
\]
and set $\mathbf e_1:=e_1\otimes 1$, $\mathbf e_2:=e_2\otimes \pi_e^{1-e}$, where $e_1$ is the generator described in \Cref{lemma: adapted generator}.
Then:
\begin{enumerate}
\item $\mathbf{e}_1, \mathbf{e}_2$ is an $\mathcal{O}_L$-basis of $\mathcal{M}$;
\item every lift  $E'/\mathcal{O}_L$ of $\tilde{E}$ corresponds to an $\mathcal{O}_L$-line $\mathcal{L}(\beta) = (\mathbf{e}_1 + \beta\mathbf{e}_2)\mathcal{O}_L \subset \mathcal{M}$  with $\beta \in \mathcal{O}_L$;
\item conversely, every $\mathcal{O}_L$-line $\mathcal{L}(\beta)$ for $\beta \in \mathcal{O}_L$ corresponds to some lift $E'/L$ of $\tilde{E}$;
\item in the notation of \Cref{prop: beta parameter}(i), $\mathcal{L}(\beta)$ corresponds to $E_\beta$.
\end{enumerate}
\end{proposition}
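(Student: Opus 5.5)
The plan is to deduce all four parts from Grothendieck--Messing deformation theory in the form used by Volkov (following Fontaine and Kawachi). Lifts of $\tilde{E}$ to $\mathcal{O}_L$ correspond to admissible lines in the evaluation of the Dieudonné crystal of $\tilde{E}(p)$ on $\mathcal{O}_L$. Since $e<p-1$, the PD structure on $\pi_e\mathcal{O}_L$ is topologically nilpotent, so this evaluation is $M\otimes_{\Z_p}\mathcal{O}_L$ equipped with its natural $\varphi$-structure. Volkov \cite[pp.~89--91]{Volkov} instead rephrases the classification through the lattice $\mathcal{M}$, which allows for the ramification of $L$. I would take his description as the starting point and only check that the normalisations agree.

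For (i), I would use \Cref{lemma: adapted generator}: $e_1,e_2=\varphi(e_1)$ form a $\Z_p$-basis of $M$, and $\varphi M=\Z_p e_2+pM$ with $pe_1=-\varphi(e_2)$. It follows that
\[
\varphi M\otimes\pi_e^{1-e}\mathcal{O}_L=\pi_e^{1-e}e_2\mathcal{O}_L+\pi_e e_1\mathcal{O}_L,
\]
using $v(p\pi_e^{1-e})=v(\pi_e)$. Adding $M\otimes\mathcal{O}_L$ gives $\mathcal{M}=e_1\mathcal{O}_L\oplus \pi_e^{1-e}e_2\mathcal{O}_L$, which is (i).

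For (ii) and (iii), I would apply Volkov's \cite[Proposition 6]{Volkov} (together with the Fontaine/Kawachi description of lifts via $MH$), which says that lifts correspond bijectively to $\mathcal{O}_L$-direct-summand lines of $\mathcal{M}$ whose reduction does not lie in the image of $\varphi$. This condition mirrors the fact that the Hodge filtration of a lift maps onto $M/\varphi M$, the cotangent space of $\tilde{E}(p)$ (see the remark after \Cref{lemma: adapted generator}). A saturated line $(x\mathbf e_1+y\mathbf e_2)\mathcal{O}_L$ satisfies this condition exactly when $x\in\mathcal{O}_L^\times$, and after rescaling it becomes $\mathcal{L}(\beta)$ with $\beta=y/x\in\mathcal{O}_L$. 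Lines with $x\notin\mathcal{O}_L^\times$ would reduce into the $\mathbf e_2$-direction, which is the image of Frobenius, so they do not occur. This gives (ii), and the surjectivity of the correspondence gives (iii).

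For (iv), Volkov's parameter is defined precisely with respect to an adapted basis. By \Cref{lemma: adapted generator}, $e_1$ is adapted, so Volkov's $\beta$ coincides with ours, up to the choice of adapted generator. The main obstacle is this normalisation: an adapted generator is determined only up to $\Z_p^\times$ and up to adding multiples of $\varphi M$ compatible with eigenvector conditions. I would check that any two adapted choices $e_1,e_1'=ue_1$, with $u\in\Z_p^\times$ forced by the eigenvector condition over $\Z_{p^2}$, rescale $\beta$ by $u^{\sigma}/u=1$, since $\varphi$ is $\sigma$-linear and $u\in\Z_p$. Thus $\beta$ is independent of the choice. Finally, I would verify that Volkov's $\mathbf{e}_2$ uses the same power $\pi_e^{1-e}$, so that his parametrisation $\beta\in\Z_p\pi_e\cup\Z_p\pi_e^{e-3}$ of descended lifts matches ours.
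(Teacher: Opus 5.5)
Your argument is correct and follows the paper. Part (i) is the same direct computation, using $\varphi M=\Z_p e_2+pM$ and $p\pi_e^{1-e}=-\pi_e$. Parts (ii)--(iv) are taken from Volkov's classification of lifts exactly as the paper does, and your check of the adapted-generator normalisation for (iv) spells out what the paper leaves implicit. One correction to the reference: the classification of all lifts by the lines $\mathcal{L}(\beta)$, $\beta\in\mathcal{O}_L$, is \cite[\S3.3.1.2 and Proposition 4, p.~83]{Volkov}, whereas Proposition 6 on p.~91 covers only the lifts that descend to potential $e$-lifts, i.e.\ $\beta\in\Z_p\pi_e\cup\Z_p\pi_e^{e-3}$.
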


\begin{proof}
Part (i) is obvious from the definitions, using that $\pi_e^{-e}=-p^{-1}$; see also \cite[beginning of \S2.2]{Kawachi}. For the rest of the statement, see \cite[\S3.3.1.2 and Proposition 4, p.~83]{Volkov}.
\end{proof}

\begin{remark}\label{rmk: Kawachi vs Volkov}
We follow Volkov's choice of basis for $\mathcal{M}$ here, but note that Kawachi adopts a slightly different choice: they take the basis $\mathbf{e}_1^K=\mathbf{e_1}$ and $\mathbf{e}_2^K=e_2 \otimes \frac{\pi_e}{p} =e_2 \otimes -\pi_e^{1-e} = -\mathbf{e}_2$. Following our arguments through with Kawachi's convention, one arrives at a $\beta$-parameter which is the negative of Volkov's (and our) $\beta$-parameter.
\end{remark}

We next construct an isomorphism from our integral module $\mathcal{M}$ to an analogous version of $MH_{\Z_p}$ for curves over $L$. In what follows, we will work both with lifts $E/L$ and potential $e$-lifts $E/\Q_p$. In either case, we fix an integral model with good reduction for $E_L$
\begin{equation}\label{eq: integral model EL}
    E_L/L :y^2=x^3+Ax+B, \quad \text{where } \begin{cases}
        v(A)=0 & \text{ if } e=4; \\
        v(B)=0 & \text{ if } e \in \{3,6\}
    \end{cases}
\end{equation}
which reduces modulo $(\pi_e)$ to $\tilde{E}$.

\begin{definition}\label{def: MHOL} 
Let $E/L$ be a lift of $\tilde{E}$ with formal group law $\hat{E}$ (with respect to the fixed integral model of good reduction \eqref{eq: integral model EL}). Following \cite[p.~698]{Kawachi} and \cite[IV, \S 4.1]{Fontaine-p-divisible}, we define
\[
MH_{\mathcal{O}_L}(\hat{E}) := \left\{f \in L[[t]]_0 : \frac{d}{dt}f \in \mathcal{O}_L[[t]], \quad f(x)+f(y) -f(\hat{E}(x,y)) \in \pi_e\mathcal{O}_L[[x,y]]_0 \right\}/\pi_e\mathcal{O}_L[[t]]_0,
\]
{and write $[f(t)]$ for the class of $f(t) \in L[[t]]_0$ in $MH_{\mathcal{O}_L}(\hat{E})$.}
\end{definition}

\begin{remark}\label{rmk: MH indep of lift}
\phantom{~}
\begin{itemize}
\item When $e\leq p-1$, $MH_{\mathcal{O}_L}(\hat{E})$ only depends on $\tilde{E}$ and is independent of the choice of lift $E$; see \cite[IV, Proposition 4.1]{Fontaine-p-divisible} and the last line of \cite[IV.3]{Fontaine-p-divisible}. We will use $MH_{\mathcal{O}_L}((\hat{E}_{\mathrm{CM}})_{L})$, which for simplicity we denote $MH_{\mathcal{O}_L}(\Gamma)$.
\item Analogously to \Cref{rmk: log in MH}, $[\log_{\hat{E}}(t)]$ (with parameter $t$ induced by the choice of model \eqref{eq: integral model EL}) is an element of $MH_{\mathcal{O}_L}(\hat{E})$. 
\end{itemize}
\end{remark}

\begin{theorem}\label{thm: MH iso}
Recall the module $\mathcal{M}$ from \Cref{prop: Dieudonne mod} (with its basis $\mathbf{e}_1,\mathbf{e}_2$) that parameterises lifts of $\tilde{E}/\mathbb{F}_p$. Then:
\begin{enumerate}
    \item there is a continuous isomorphism of $\mathcal{O}_L$-modules $\tilde{w}: \mathcal{M} \xrightarrow{\sim} MH_{\mathcal{O}_L}(\Gamma)$;
    \item we have
    \[
    \tilde{w}(\mathbf{e}_1)=[\log_\Gamma(t)], \quad \tilde{w}(\mathbf{e}_2)= -[\tfrac{\pi_e}{p}\log_\Gamma(t^p)];
    \]
    \item if the line $\mathcal{L}(\beta) \subseteq \mathcal{M}$ corresponds to the lift $E/L$ (cf. \Cref{prop: Dieudonne mod}(iii)), then $\tilde{w}(\mathcal{L}(\beta))=\mathcal{O}_L \cdot [\log_{\hat E}(t)]$.
\end{enumerate}
\end{theorem}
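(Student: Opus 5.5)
The plan is to build $\tilde w$ by base-changing Fontaine's isomorphism $w : M \xrightarrow{\sim} MH_{\Z_p}(\Gamma)$ of \Cref{lemma: adapted generator} to $\mathcal{O}_L$. Then we correct it by the factor $\pi_e^{1-e}$ on the $\varphi M$ part, so that it lands in $MH_{\mathcal{O}_L}(\Gamma)$. The key steps are as follows.

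First, we extend $w$ $L$-linearly to $M \otimes_{\Z_p} L \to L[[t]]_0/(\text{appropriate ideal})$ by $e_1 \otimes \lambda \mapsto \lambda\log_\Gamma(t)$ and $e_2\otimes\lambda \mapsto \lambda\log_\Gamma(t^p)$. Second, we check that the two images $\log_\Gamma(t)$ and $\pi_e^{1-e}\log_\Gamma(t^p) = -\tfrac{\pi_e}{p}\log_\Gamma(t^p)$ define elements of $MH_{\mathcal{O}_L}(\Gamma)$. The additivity defect vanishes identically for any multiple of a logarithm, since $\log_\Gamma(t^p)$ composed with $\Gamma$ is still additive up to the defect of $\log_\Gamma$ evaluated at $p$-th powers. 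More precisely, $\log_\Gamma(x^p)+\log_\Gamma(y^p)-\log_\Gamma(\Gamma(x,y)^p)\in p\Z_p[[x,y]]_0$ because $[\log_\Gamma(t^p)] = \varphi[\log_\Gamma]$ lies in $MH_{\Z_p}(\Gamma)$. Multiplying by $\pi_e/p$ then gives a defect in $\pi_e\mathcal{O}_L[[x,y]]_0$. For the derivative condition, $\frac{d}{dt}\tfrac{\pi_e}{p}\log_\Gamma(t^p) = \pi_e t^{p-1}g(t^p)\in\mathcal{O}_L[[t]]$. This gives a well-defined $\mathcal{O}_L$-linear map $\tilde w$ with the formulas in (ii), using $\pi_e^{1-e}=-\pi_e/p$ as in \Cref{rmk: Kawachi vs Volkov}.

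Third, we prove that $\tilde w$ is an isomorphism. This is essentially Fontaine's theorem for ramification $e \le p-1$ (\cite[IV, \S4]{Fontaine-p-divisible}, also used in \cite[\S2.2]{Kawachi}). He shows that $MH_{\mathcal{O}_L}(\Gamma)$ is free of rank $2$ over $\mathcal{O}_L$ with basis given precisely by $[\log_\Gamma(t)]$ and $[\tfrac{\pi_e}{p}\log_\Gamma(t^p)]$. Alternatively, we can check freeness directly: $\mathcal{O}_L$-linear independence modulo $\pi_e$ follows by looking at the coefficient of $t$ and at the leading term $\tfrac{\pi_e}{p}\cdot d_{\mathrm{CM},p}t^p$ versus $d_{\mathrm{CM},p^2}$, using \Cref{lemma: odd-killed}. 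Continuity is automatic from that of $w$.

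Fourth, for (iii), we invoke Fontaine's comparison: $MH_{\mathcal{O}_L}$ is independent of the lift (\Cref{rmk: MH indep of lift}), and the Hodge filtration line of a lift $E/\mathcal{O}_L$ corresponds to the $\mathcal{O}_L$-line spanned by the class of its own formal logarithm $[\log_{\hat E}(t)]$. This is the content of \cite[IV, Proposition 4.1 and \S5]{Fontaine-p-divisible} as used in \cite[p.~698]{Kawachi}. It remains to match this with Volkov's $\mathcal{L}(\beta)$ from \Cref{prop: Dieudonne mod}. Here both constructions use the same Dieudonné module $M$ and the same adapted basis, so the identification of Volkov's line with the Fontaine line is a compatibility check between \cite[\S3.3.1.2]{Volkov} and Fontaine's description.

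The main obstacle is this last compatibility, together with sign conventions: making sure that Volkov's identification of lifts with lines in $\mathcal{M}$ is the same as the one given by transporting formal logarithms through $\tilde w$. Otherwise $\beta$ could come out twisted by a unit such as $-1$, which is exactly the discrepancy in \Cref{rmk: Kawachi vs Volkov}. The first thing I would try is to test the identification on the CM lift $E_{\mathrm{CM}}$, where $\beta=0$ and $[\log_\Gamma]=\tilde w(\mathbf{e}_1)$. I would then use the eigenvector (adaptedness) property of $e_1$ to pin down the normalisation.
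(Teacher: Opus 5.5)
Your approach is the paper's. Its proof of (i)--(ii) just cites Kawachi's construction of $\tilde w$ as the extension of Fontaine's $w$ \cite[Lemma 2.2.3]{Kawachi}, and the values $\tilde w(\mathbf e_1),\tilde w(\mathbf e_2)$ \cite[Lemma 2.2.2]{Kawachi} with the sign change of \Cref{rmk: Kawachi vs Volkov}; (iii) is \cite[Lemma 2.2.4]{Kawachi}. Your checks that $\log_\Gamma(t)$ and $\tfrac{\pi_e}{p}\log_\Gamma(t^p)$ satisfy the derivative and additivity conditions defining $MH_{\mathcal{O}_L}(\Gamma)$ are correct, and they make that construction explicit. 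So does the observation that the twist by $\pi_e^{1-e}=-\pi_e/p$ keeps the image well defined modulo $\pi_e\mathcal{O}_L[[t]]_0$.

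You should discard two side remarks.

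\textbf{The ``direct'' freeness check.} It does not work as written: $d_{\mathrm{CM},p}=0$ by \Cref{lemma: odd-killed}(i), so there is no such leading-term comparison. Injectivity of $\tilde w$ does follow from comparing the coefficients of $t^{p^{2k}}$ and $t^{p^{2k+1}}$ for all $k$, as in \Cref{prop: beta algorithm}. Surjectivity onto $MH_{\mathcal{O}_L}(\Gamma)$, however, is the genuine content of Fontaine's theorem for $e\le p-1$. It cannot be obtained from coefficient comparisons of the two generators, so the citation is indispensable.

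\textbf{Testing (iii) on $E_{\mathrm{CM}}$.} This cannot pin down a normalisation. A check at the single point $\beta=0$ cannot identify two parametrisations, and it is blind to $\beta\mapsto-\beta$, which is exactly the Kawachi/Volkov discrepancy you are worried about. No such test is needed. Your $\tilde w$ is the canonical map induced by $w$, and (iii) is precisely the Fontaine--Kawachi statement that under this map the line of a lift is $\mathcal{O}_L\cdot[\log_{\hat E}(t)]$. The paper takes Volkov's lines $\mathcal{L}(\beta)$ to be lines in the same $\mathcal{M}$ under the same correspondence. The only difference between the two sources is the choice of second basis vector, which is already absorbed into the sign in (ii).
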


\begin{proof}
(i)-(ii): The isomorphism $\isom$ is constructed in \cite[Lemma 2.2.3 and top of p.~699]{Kawachi} and the images of $\mathbf{e}_1, \mathbf{e}_2$ follow from \cite[Lemma 2.2.2]{Kawachi} (accounting for the sign change discussed in \Cref{rmk: Kawachi vs Volkov}). (iii) is \cite[Lemma 2.2.4]{Kawachi}.
\end{proof}

If $E/L$ is a lift of $\tilde{E}/\mathbb{F}_p$ then by \Cref{rmk: MH indep of lift} we get a class $[\log_{\hat{E}}(t)] \in MH_{\mathcal{O}_L}(\Gamma)$, which can be written as a linear combination of $[\log_\Gamma(t)]$ and $[\tfrac{\pi_e}{p}\log_{\Gamma}(t^p)]$ by \Cref{thm: MH iso}. We can lift this to get a relation of the corresponding power series which we now exploit.

\begin{corollary}\label{cor: linear combination formal logarithms}
Let $E/L$ be a lift of $\tilde{E}/\mathbb{F}_p$. Write
\begin{equation}\label{eq: formal log rel}
    \log_{\hat{E}}(t) \equiv a\log_{\Gamma}(t) + b\tfrac{\pi_e}{p}\log_{\Gamma}(t^p) \bmod{\pi_e\mathcal{O}_L[[t]]}.
\end{equation}
Then $a$ is a unit in $\mathcal{O}_L$ and the lift $E$ corresponds under $\tilde{w}$ to $\mathcal{L}(\beta) \subseteq \mathcal{M}$, where $\beta=-\frac{b}{a} \in \mathcal{O}_L$.
\end{corollary}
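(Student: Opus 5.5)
The plan is to transport the relation \eqref{eq: formal log rel} back to $\mathcal{M}$ through the isomorphism $\tilde{w}$ of \Cref{thm: MH iso}, and then read off $a$ and $\beta$ from the basis description of the lines $\mathcal{L}(\beta)$ given in \Cref{prop: Dieudonne mod}.

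First I will check that a relation of the form \eqref{eq: formal log rel} exists with $a,b \in \mathcal{O}_L$. By \Cref{rmk: MH indep of lift}, $[\log_{\hat E}(t)] \in MH_{\mathcal{O}_L}(\Gamma)$. By \Cref{thm: MH iso}(i)--(ii) and \Cref{prop: Dieudonne mod}(i), the classes $[\log_\Gamma(t)]$ and $[\tfrac{\pi_e}{p}\log_\Gamma(t^p)]$ form an $\mathcal{O}_L$-basis of $MH_{\mathcal{O}_L}(\Gamma)$. So $[\log_{\hat E}(t)] = a[\log_\Gamma(t)] + b[\tfrac{\pi_e}{p}\log_\Gamma(t^p)]$ for unique $a,b\in\mathcal{O}_L$. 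Since classes are taken modulo $\pi_e\mathcal{O}_L[[t]]_0$, this equality lifts exactly to the congruence \eqref{eq: formal log rel}.

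Next I will show that $a$ is a unit. Reduce \eqref{eq: formal log rel} modulo $(\pi_e, t^2)$. On the left, $\log_{\hat E}(t) \equiv t$ by \Cref{def: formal log}. On the right, $\log_\Gamma(t)\equiv t$, while $\tfrac{\pi_e}{p}\log_\Gamma(t^p)$ has no linear term at all, being a series in $t^p$ with no constant term. Comparing coefficients of $t$ gives $a\equiv 1 \bmod \pi_e$, so $a\in\mathcal{O}_L^\times$. The point needing care is that $\tfrac{\pi_e}{p}\log_\Gamma(t^p)$ need not be integral. However, the congruence is an identity of power series modulo $\pi_e\mathcal{O}_L[[t]]$ whose relevant coefficients live in degrees $\ge p$, so the coefficient of $t$ is unaffected.

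Finally, applying $\tilde{w}^{-1}$ and using \Cref{thm: MH iso}(ii), which gives $\tilde{w}(\mathbf{e}_2) = -[\tfrac{\pi_e}{p}\log_\Gamma(t^p)]$, we get
\[
\tilde{w}^{-1}([\log_{\hat E}(t)]) = a\mathbf{e}_1 - b\mathbf{e}_2 = a(\mathbf{e}_1 + \beta\mathbf{e}_2), \qquad \beta = -b/a \in \mathcal{O}_L.
\]
By \Cref{thm: MH iso}(iii), the line corresponding to $E$ maps onto $\mathcal{O}_L\cdot[\log_{\hat E}(t)]$. Since $a$ is a unit, this line is $\mathcal{O}_L(\mathbf{e}_1+\beta\mathbf{e}_2) = \mathcal{L}(\beta)$. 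Lines of the form $\mathcal{L}(\beta)$ determine $\beta$ uniquely, because the $\mathbf{e}_1$-coefficient of a generator normalised to $1$ fixes it. Hence $E$ corresponds to $\mathcal{L}(-b/a)$.

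The only real obstacle is the sign convention. This is exactly why I use the form of \Cref{thm: MH iso}(ii) with the minus sign, as discussed in \Cref{rmk: Kawachi vs Volkov}.
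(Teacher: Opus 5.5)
Your proposal is correct and follows the paper's proof: you compare $t$-coefficients to get $a \equiv 1 \bmod \pi_e$. You then pull the relation back through $\tilde{w}^{-1}$ using $\tilde{w}(\mathbf{e}_2) = -[\tfrac{\pi_e}{p}\log_\Gamma(t^p)]$, and apply \Cref{thm: MH iso}(iii) to identify the line as $\mathcal{L}(-b/a)$. The only difference is that you spell out two points the paper leaves implicit: why $a,b \in \mathcal{O}_L$ exist, and why $\mathcal{L}(\beta)$ determines $\beta$.
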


\begin{proof}
Since the formal logarithms have the form $\log_{\hat{E}}(t)=t+O(t^2)$, we can compare the $t$-coefficient to get $a \equiv 1 \bmod{\pi_e}$; in particular $a$ is a unit.
Now applying $\isom^{-1}$ to \eqref{eq: formal log rel} we have $\isom^{-1}([\log_{\hat{E}}(t)])=a\mathbf{e}_1 - b\mathbf{e}_2$
and hence by \Cref{thm: MH iso}(iii)
\[
\mathcal{L}(\beta) = \isom^{-1}(\mathcal{O}_L \cdot [\log_{\hat{E}}(t)]) = \mathcal{O}_L \cdot (a\mathbf{e}_1 - b\mathbf{e}_2) = \mathcal{O}_L \cdot (\mathbf{e}_1 - \tfrac{b}{a}\mathbf{e}_2),
\]
so the $\beta$-parameter is precisely $-\frac{b}{a}$ by definition of $\mathcal{L}(\beta)$.
\end{proof}

\subsection{Algorithmic determination of $\beta$}\label{subsect: beta algorithm}

Having related the $\beta$-parameter (and hence also $\alpha$) to the formal logarithm, we now use the explicit power series of $\log_{\hat{E}}$ and $\log_{\Gamma}$ to determine $\beta$ to any given precision.

\begin{proposition}\label{prop: beta algorithm}
Let $E/L$ be a lift of $\tilde{E}/\mathbb{F}_p$ and write $\log_{\hat{E}}(t)= \sum_{n \geq 0} d_nt^n$. For all $k \geq 0$, the $\beta$-parameter of $E$ satisfies 
\[
\beta \equiv - \frac{p}{\pi_e}\frac{d_{p^{2k+1}}}{d_{p^{2k}}} \bmod \pi_e^{ke+1}.
\]
In particular, its valuation is given by the formula
$
v(\beta)=\lim_{k\to\infty}\left(v(d_{p^{2k+1}})+(k+1)-\frac{1}{e}\right).
$
\end{proposition}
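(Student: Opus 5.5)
The plan is to start from \Cref{cor: linear combination formal logarithms}, which gives
\[
\log_{\hat E}(t) \equiv a\log_\Gamma(t) + b\tfrac{\pi_e}{p}\log_\Gamma(t^p) \bmod \pi_e\mathcal{O}_L[[t]],
\]
with $a$ a unit and $\beta=-b/a$. We then compare the coefficients of $t^{p^{2k}}$ and $t^{p^{2k+1}}$ on both sides. Write $\log_\Gamma(t)=\sum d_{\mathrm{CM},n}t^n$. By \Cref{lemma: odd-killed}(i), $d_{\mathrm{CM},p^{2k+1}}=0$. The series $\log_\Gamma(t^p)$ has nonzero coefficients only at exponents divisible by $p$. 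Its coefficient at $t^{p^{2k}}$ is $d_{\mathrm{CM},p^{2k-1}}=0$, and its coefficient at $t^{p^{2k+1}}$ is $d_{\mathrm{CM},p^{2k}}$. Comparing coefficients therefore yields
\[
d_{p^{2k}} = a\,d_{\mathrm{CM},p^{2k}} + \delta_k, \qquad d_{p^{2k+1}} = b\,\tfrac{\pi_e}{p}\,d_{\mathrm{CM},p^{2k}} + \delta'_k,
\]
where $\delta_k,\delta'_k \in \pi_e\mathcal{O}_L$. For $k=0$ the exponent $p^{-1}$ does not occur, so the first identity still holds.

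Next we take the ratio. \Cref{lemma: odd-killed}(ii) gives $v(d_{\mathrm{CM},p^{2k}})=-k$, so $d_{p^{2k}} = a\,d_{\mathrm{CM},p^{2k}}(1+\eta_k)$ with
\[
v(\eta_k)\geq k+\tfrac1e.
\]
Similarly,
\[
-\tfrac{p}{\pi_e}d_{p^{2k+1}} = -b\,d_{\mathrm{CM},p^{2k}} - \tfrac{p}{\pi_e}\delta'_k,
\]
and the error term has valuation at least $1-\tfrac1e+\tfrac1e=1$. Dividing the two expressions gives
\[
-\frac{p}{\pi_e}\frac{d_{p^{2k+1}}}{d_{p^{2k}}} = \frac{-b/a - (p/\pi_e)\delta'_k/(a\,d_{\mathrm{CM},p^{2k}})}{1+\eta_k}.
\]
The numerator equals $\beta$ plus an error of valuation at least $1+k$. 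Since $\beta\in\mathcal{O}_L$, the denominator contributes an error of valuation at least $k+\tfrac1e$.

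The main obstacle is the precision bookkeeping. The target modulus is $\pi_e^{ke+1}$, that is, valuation $k+\tfrac1e$. Both error terms have valuation at least $k+\tfrac1e$, so the congruence follows. I would still double-check that the error in \eqref{eq: formal log rel} is genuinely in $\pi_e\mathcal{O}_L[[t]]$ coefficientwise, as opposed to only up to derivatives. This holds because the relation comes from an equality of classes in $MH_{\mathcal{O}_L}$, which is a quotient by $\pi_e\mathcal{O}_L[[t]]_0$. I would also check that no cross terms from $\log_\Gamma(t^p)$ land on $t^{p^{2k}}$; this uses $d_{\mathrm{CM},p^{2k-1}}=0$ from \Cref{lemma: odd-killed}(i).

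For the valuation formula, take $k$ large, with $k+\tfrac1e > v(\beta)$ when $\beta\neq0$. Then
\[
v(\beta) = 1-\tfrac1e + v(d_{p^{2k+1}}) - v(d_{p^{2k}}).
\]
From the expression for $d_{p^{2k}}$ above, $v(d_{p^{2k}})=-k$, which gives the stated limit. If $\beta=0$, both sides tend to $\infty$, consistently with the convention that $v(0)=\infty$.
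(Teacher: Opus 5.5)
Your proposal is correct and follows the paper's proof: compare the coefficients of $t^{p^{2k}}$ and $t^{p^{2k+1}}$ in the relation of \Cref{cor: linear combination formal logarithms}, use \Cref{lemma: odd-killed}(i)--(ii), and divide, using that $a$ is a unit. The only difference is presentational. The paper writes separate congruences for $a$ (mod $\pi_e^{ke+1}$) and for $b$ (mod $\pi_e^{(k+1)e}$), and reads $v(\beta)=v(b)$ off the $b$-congruence. You instead bound the error in the ratio directly and obtain the valuation from the $\beta$-congruence together with $v(d_{p^{2k}})=-k$.
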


\begin{proof}
Throughout the proof, a congruence $x \equiv y \pmod{\pi_e^m \mathcal{O}_L}$ for $x, y \in L$ (not necessarily in $\mathcal{O}_L$) means as usual that the difference $x-y$ belongs to $\pi_e^m \mathcal{O}_L$.
By examining coefficients of \eqref{eq: formal log rel}, we have
\[
d_r = ad_{\mathrm{CM},r} + b\tfrac{\pi_e}{p}d_{\mathrm{CM},r/p} \,\, \bmod{\pi_e\mathcal{O}_L}, \qquad \text{where } d_{\mathrm{CM},r/p} =0 \text{ if } p\nmid r. 
\]
Taking $r$ to be a power of $p$ and using \Cref{lemma: odd-killed}(i) and (ii), we find that for all $k \geq 0$
\[
d_{p^{2k}} \equiv ap^{-k}u_{p^{2k}} \bmod{\pi_e}, \quad d_{p^{2k+1}} \equiv b\tfrac{\pi_e}{p}p^{-k}u_{p^{2k}} \bmod{\pi_e}
\]
where $u_{p^{2k}}:=d_{\mathrm{CM},p^{2k}} \cdot p^k \in \Z_p^{\times}$; equivalently
\[
a \equiv d_{p^{2k}}p^{k}u_{p^{2k}}^{-1} \bmod{\pi_e^{ke+1}}, \quad b \equiv d_{p^{2k+1}}\tfrac{p}{\pi_e}p^{k}u_{p^{2k}}^{-1} \bmod{\pi_e^{(k+1)e}}.
\]
Since $(k+1)e \geq ke+1$ and $a \in \mathcal{O}_L^{\times}$ (\Cref{cor: linear combination formal logarithms}), the congruence on $\beta$ follows. The congruence on $b$ also shows that $v(b) = \lim_{k \to \infty} \left(v(d_{p^{2k+1}}) + k+1-\frac{1}{e}\right)$; since $a$ is a unit, the valuation formula in the statement is an immediate consequence.
\end{proof}

\begin{remark}
The congruence $d_{p^{2k}} \equiv a p^{-k} u_{p^{2k}} \bmod{\pi_e}$ established in the course of the previous proof, combined with the fact that $a$ is a unit, shows in particular that $v(d_{p^{2k}})=-k$ for all $k \geq 0$. Similarly, since $v(b) \geq 0$ we obtain $v(d_{p^{2k+1}}) \geq \frac{1}{e} -(k+1)$.
We can then rewrite the congruence in the statement of \Cref{prop: beta algorithm} as $\beta \equiv -\frac{b'}{a'} \bmod{\pi_e^{ke+1}}$, where the numerator $b'=\frac{p^{k+1}}{\pi_e} \cdot d_{p^{2k+1}}$ is in $\mathcal{O}_L$ and the denominator $a'=p^k d_{p^{2k}}$ is in $\mathcal{O}_L^\times$. Since we take a congruence modulo $\pi_e^{ke+1}$, it suffices to compute $a',b'$ modulo $\pi_e^{ke+1}$. We discuss how to do this efficiently, starting with $b'$. Using \Cref{lemma: yasuda}, we may write
\[
b'=\frac{p^{k+1}}{\pi_e} d_{p^{2k+1}} = \frac{1}{p^{k} \pi_e} \sum_{\substack{m,n\ge 0 \\ 4m+6n+1 = p^{2k+1}}}
{2m+3n \choose m+2n, m, n} \,A^mB^n.
\]
The factor in front has valuation $-(k+\frac{1}{e})$, while the $(m,n)$ summand has valuation at least $m\,v(A)+n\,v(B)$. Since one of $A,B$ has valuation at least $\frac{1}{e}$ (using the model \eqref{eq: integral model EL}), only summands with $m \leq 2ke+1$ (if $v(A)>0$) or
$n \leq  2ke+1$ (if $v(B)>0$) can contribute modulo $\pi_e^{ke+1}$. 
In particular, the number of relevant summands grows linearly, rather than exponentially, in $k$. A similar comment applies to $a'$, which may be computed using the formula of \Cref{lemma: yasuda}, keeping only the $2ke$ summands of lowest valuation.
\end{remark}

\begin{remark}\label{rmk: invariance under strict isomorphisms}
Let $u\in 1+\pi_e\mathcal{O}_L$, and let $E'$ be obtained from $E$ by the scaling $(x,y)\mapsto(u^2x,u^3y)$, so that
$E' : (y')^2=(x')^3+A u^4 x'+B u^6$. It is easy to check that $[\log_{\widehat{E}'}]=\frac{1}{u}[\log_{\widehat{E}}]$ in $MH_{\mathcal{O}_L}(\widehat{E})$. By \Cref{thm: MH iso}(iii), the invariant $\beta$ depends only on the line generated by $[\log_{\widehat{E}}]$, and hence this equality shows that we obtain the same $\beta$ using either model. In other words, $\beta$ is invariant under strict isomorphisms, namely isomorphisms inducing the identity on the special fibre. This invariance is, of course, already implicit in Fontaine's theory.
\end{remark}

\begin{remark}\label{rmk: Kawachi error}
    The statement of \cite[Proposition 2.2.1]{Kawachi} may suggest a different way to compute $\beta$, by looking only at the coefficient of $t^p$ in $\log_{\widehat{E}}(t)$ and using that $b_p, \frac{\beta \pi_e}{p}$ are $\Z_p$-linearly independent. However, the proof of this proposition appears to be incorrect, since it deduces an exact equality of coefficients from an equality in $MH_{\mathcal{O}_L}(\widehat{E})$, where functions are only defined modulo $\pi_e \mathcal{O}_L[[t]]_0$. In particular, it is easy to check that \cite[Proposition 2.2.1]{Kawachi} leads to a formula for $\beta$ that is not invariant under strict isomorphisms.
\end{remark}

\subsection{Valuation of $\beta$}
Let $E/\Q_p$ be an elliptic curve with semistability defect $e \geq 3$.
Our purpose in this section is to give explicit formulae expressing the valuations of the $\alpha$ and $\beta$ parameters of $E$ in terms of its $j$-invariant. The following is the key result.

\begin{proposition}\label{prop: valuation beta}
Let $E/\Qp$ be a potential $e$-lift of $\tilde{E}/\mathbb{F}_p$ with $j(E)=j$. The valuation of the $\beta$-parameter of $E_L$ (cf.~\Cref{prop: Dieudonne mod}) is given by
\[
v(\beta) = 
\begin{cases}
\begin{array}{lll}
\frac{1}{3}v(j) - \frac{1}{e} & \quad & \text{ if $e \in \{3,6\}$} \\
\frac{1}{2}v(j-1728) - \frac{1}{e} & \quad & \text{ if $e =4$.}
\end{array}
\end{cases}
\]
\end{proposition}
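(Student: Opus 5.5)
Treat the case $e\in\{3,6\}$; the case $e=4$ is symmetric, with the roles of $A$ and $B$ swapped and $v(j-1728)$ in place of $v(j)$. The computation will go through \Cref{prop: beta algorithm}, which gives $v(\beta)=\lim_k\bigl(v(d_{p^{2k+1}})+k+1-\tfrac1e\bigr)$, where the $d_n$ are the coefficients of $\log_{\hat E}$ from \Cref{lemma: yasuda} applied to the good-reduction model \eqref{eq: integral model EL} over $L$.

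\textbf{Step 1: normal form and the easy case.} Write $E/\Q_p: y^2=x^3+A_0x+B_0$. Over $L$, rescale to $A=A_0\pi_e^{-4r}$ and $B=B_0\pi_e^{-6r}$, choosing $r$ so that $v(B)=0$. Since $j\equiv 0 \bmod p$, the unit $B$ controls $j$, and $j=1728\cdot 4A^3/(4A^3+27B^2)$. Hence
\[
v(j)=3v(A)=3v(A_0)-12r/e .
\]
So proving the proposition amounts to showing $v(\beta)=v(A)-\tfrac1e$. The case $A=0$ is consistent with \Cref{prop: beta parameter}(ii), which gives $\beta=0$.

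\textbf{Step 2: linearity in $A$.} In Yasuda's formula, $d_{p^{2k+1}}$ is a sum over $4m+6n+1=p^{2k+1}$. Since $p\equiv -1\bmod 6$, the exponent $p^{2k+1}-1$ is not divisible by $6$, so every summand has $m\ge1$. Write $d_{p^{2k+1}}=A\cdot S_k(A,B)$ and split $S_k$ into the $m=1$ term plus the terms with $m\ge2$.

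The $m=1$ term is $\binom{2+3n}{1+2n,1,n}B^n/p^{2k+1}$ with $n=(p^{2k+1}-5)/6$. I expect this to be essentially the coefficient of $t^{p^{2k+1}}$ in the first-order deformation of $\log_\Gamma$ in the direction $A$. I would aim to show it has valuation exactly $-(k+1)$, or at least that the full quantity $S_k\,p^{k+1}$ tends to a unit. This is the main obstacle.

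\textbf{Step 3: proving the unit claim.} A direct attack is to compute the valuation of the multinomial $\frac{(2+3n)!}{(1+2n)!\,n!}$ by Legendre's formula, using the base-$p$ digits of $n=(p^{2k+1}-5)/6$. An alternative that avoids delicate digit counting is structural. Use \Cref{cor: linear combination formal logarithms}: $\beta=-b/a$ is an analytic function of $A$, and it vanishes at $A=0$ by \Cref{prop: beta parameter}(ii). The scaling $t\mapsto\zeta_e t$ from \Cref{lemma: odd-killed}(iii) then forces $\beta$ to be of the form $A\pi_e^{-1}\cdot(\text{series})$; more precisely $\beta\in A\pi_e^{-1}\mathcal O_L$ up to the weight bookkeeping. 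It remains to see that the leading coefficient is a unit. For that, compare with a single explicit example: a family with $v(\beta)$ known, such as one having a canonical subgroup, where \Cref{rmk: alpha role}(ii) gives $v(\alpha)=1$ and hence $v(\beta)$ via \Cref{prop: beta parameter}(iv).

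\textbf{Step 4: the higher terms.} The terms with $m\ge2$ contribute an extra factor $A^{m-1}$ with $v(A)>0$. So once the linear coefficient is shown to be a unit at the right normalisation (valuation $-(k+1)$ after multiplying out), these terms are negligible. I would try the Legendre computation first, and fall back on the structural argument if the digits become unwieldy.
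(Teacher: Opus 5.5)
\textbf{There is a genuine gap.} Your skeleton is the paper's: Yasuda's formula, a unique dominant summand linear in $A$ (resp.\ $B$), the limit formula of \Cref{prop: beta algorithm}, and $v(j)=3v(A)$ from the unit discriminant. However, the proof consists precisely of the two estimates that you defer or wave through.

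\textbf{First gap: the exact valuation of the $m=1$ summand is left open.} You need $v(T_0)=k$, where $T_0=\binom{N}{2n_0+1,\,1,\,n_0}=N\binom{N-1}{n_0}$, with $N=\frac{p^{2k+1}-1}{2}$ a $p$-adic unit and $n_0=\frac{p^{2k+1}-5}{6}$. This is a short Kummer computation, and it is what the paper does. In base $p$, $n_0$ has digit $\frac{p-5}{6}$ in position $0$ and then alternately $\frac{5p-1}{6},\frac{p-5}{6}$. Likewise $2n_0+1$ has digit $\frac{p-2}{3}$ in position $0$ and then alternately $\frac{2p-1}{3},\frac{p-2}{3}$. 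Adding them to get $N-1$ produces exactly $k$ carries, one at each odd position. So the summand has valuation $-(2k+1)+k+v(A)=-(k+1)+v(A)$.

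Your structural fallback cannot replace this. Nothing you say shows that $\beta$ is an integral power series in $A$ with an exact factor $\pi_e^{-1}$. Nor can a single example show that a leading coefficient is a unit for every lift. Moreover, \Cref{prop: beta algorithm} only determines $\beta$ modulo $\pi_e^{ke+1}$. For large $v(A)$ you must therefore control $d_{p^{2k+1}}$ with $k>v(A)-\frac{2}{e}$, which is the very estimate you are trying to avoid. Your reference example is also circular. The characterisation of canonical subgroups by $v(j)$ is deduced in the paper from this proposition. And $v(\alpha)=1$ alone only gives $v(\beta)\in\{\frac1e,\frac{e-3}{e}\}$, depending on the sign $\varepsilon$.

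\textbf{Second gap: Step 4 is not justified.} The terms with $m\ge 2$ (in fact $m=1+3j$, $n=n_0-2j$) are not negligible merely because $v(A)>0$. Their multinomial coefficients can be less divisible by $p$ than $T_0$. Here $T_j(1+3j)!/T_0=\frac{(2n_0+1)!}{(2n_0+1-j)!}\cdot\frac{n_0!}{(n_0-2j)!}\in\Z$, so $v(T_j)\ge k-v((1+3j)!)>k-\frac{1+3j}{p-1}$. This loss must be beaten by $3j\,v(A)\ge\frac{3j}{e}$. That comparison uses both $v(A)\ge\frac1e$ and the hypothesis $e<p-1$, and your argument invokes neither.

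The case $e=4$ is not symmetric. There $T_j/T_0$ contains the factor $1/((m_0+3)\cdots(m_0+j+2))$, with $m_0=\frac{p^{2k+1}-7}{4}$, and these factors can be highly divisible by $p$. The paper needs the identity $3(m_0+j+3)+(m_0-3j-2)=p^{2k+1}$ to get $v(m_0-3j-2)=v(m_0+j+3)$. This shows each denominator loss is cancelled by a numerator factor, which gives $v(T_j)\ge v(T_0)-v((1+2j)!)$. Only after that does $2j\,v(B)\ge\frac{2j}{4}$ close the comparison.
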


In order to prove \Cref{prop: valuation beta}, we will need the following technical estimate on the coefficients of the formal logarithm of $E_L$ corresponding to the choice of model \eqref{eq: integral model EL}.

\begin{proposition}\label{prop: valuation coefficients formal log}
Let $\log_{\hat{E}_L}(t)=\sum_{r \geq 1} d_r t^r$ be the formal logarithm associated with the good reduction model \eqref{eq: integral model EL} of $E_L$. Suppose that $e \mid p+1$ and $e<p-1$. Then for every $k\ge 0$ we have
\[
v\bigl(d_{p^{2k+1}}\bigr)=-(k+1)+v(A) \text{ for } e\in\{3,6\},
\; \text{ respectively } \;
v\bigl(d_{p^{2k+1}}\bigr)=-(k+1)+v(B) \text{ for }e=4.
\]
\end{proposition}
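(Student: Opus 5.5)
Throughout, I treat the case $e\in\{3,6\}$; the case $e=4$ is symmetric, with the roles of $A$ and $B$ swapped and $n$ replaced by $m$. The plan is to read $d_{p^{2k+1}}$ off Yasuda's formula (\Cref{lemma: yasuda}) and show that a single summand has strictly minimal valuation. For $r=p^{2k+1}$ we have
\[
d_{r}=\frac{1}{r}\sum_{4m+6n+1=r}\binom{2m+3n}{m+2n,\,m,\,n}A^mB^n .
\]
In the model \eqref{eq: integral model EL} we have $v(B)=0$, and $v(A)>0$ since $A$ reduces to $0$ (as $\tilde E$ has $j=0$). Moreover $v(A)\in\frac1e\Z$ and $v(A)\ge \frac1e$. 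So I would compare the summands according to $m$.

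First, the term $m=0$ vanishes. This requires $6n=p^{2k+1}-1$, and $p\equiv -1\bmod 6$ (because $e\mid p+1$ and $p$ is supersingular for $j=0$), so $p^{2k+1}\equiv -1\bmod 6$ and there is no such $n$. This is the same parity phenomenon as in \Cref{lemma: odd-killed}(i).

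Next, the congruence $4m\equiv p^{2k+1}-1 \bmod 6$ forces $m\equiv 1\bmod 3$ and $n=(p^{2k+1}-1-4m)/6$. The candidate dominant term is $m=1$, which has $n=(p^{2k+1}-5)/6$ and multinomial $\binom{2+3n}{1+2n,1,n}$. Here $2+3n=(p^{2k+1}-1)/2$, so a Legendre/Kummer computation like \Cref{lemma: binomial valuation} should show this multinomial is a $p$-adic unit. Writing $(p^{2k+1}-1)/2$ in base $p$, all its digits equal $(p-1)/2$, and the parts $1+2n=(p^{2k+1}-2)/3$, $1$, $n$ need to add without carries. I would check this digitwise; the digits of $(p^{2k+1}-2)/3$ and of $n$ should be roughly $(p-1)/3$-ish and $(p-1)/6$-ish patterns that sum correctly. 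Granting this, the $m=1$ term gives $v=-(2k+1)+v(A)$. This is not the claimed $-(k+1)+v(A)$, so the naive "unit multinomial'' guess is wrong. I therefore expect the correct statement to be that this multinomial has valuation exactly $k$, coming from $k$ carries, and I would compute the valuation $v\binom{(p^{2k+1}-1)/2}{\cdot}$ precisely via Kummer's carry count. As a sanity check against \Cref{lemma: odd-killed}(ii) in the analogous even case, $d_{p^{2k}}$ has valuation $-k$, which means the multinomials there carry $k$ times out of $2k$.

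Then I must show that all terms with $m\ge4$ have valuation strictly larger than $v(A)-(k+1)$. Each such term is at least $-(2k+1)+m\,v(A)+v(\text{multinomial})$. Since $v(A)\ge\frac1e$, the factor $A^{m}$ gains at least $(m-1)/e$. The danger is that the multinomial for larger $m$ has fewer carries and loses up to $k$. I would bound the multinomial valuation from below by a Kummer count, showing roughly that $v(\text{multinomial}_m)\ge k-\lfloor\log_p m\rfloor$-type, and that the loss is beaten by $(m-1)v(A)\ge (m-1)/e$. This is where the hypothesis $e<p-1$ enters: even the worst case $m\approx p$ gains $(p-1)/e>1$. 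An alternative cleaner route is the one suggested by \Cref{prop: beta algorithm}: the congruence $d_{p^{2k+1}}\equiv b\frac{\pi_e}{p}p^{-k}u_{p^{2k}}\bmod \pi_e$ already gives $v(d_{p^{2k+1}})=v(b)+\frac1e-(k+1)$ whenever $v(b)+\frac1e-(k+1)<\frac1e$. So it suffices to identify $v(b)$ once, say from $k=0$. At $k=0$ the computation $d_p$ is a single short sum with small multinomials, where the $m=1$ term is cleanly dominant for $e<p-1$. This gives $v(b)=v(A)-\frac1e$, and the general $k$ then follows from the congruence. I would pursue this route first.

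The main obstacle is controlling the non-leading summands of $d_p$, that is, showing all $m\ge4$ terms and the binomial valuations are strictly dominated, uniformly under only $e<p-1$. In particular, in the borderline where $v(A)$ is large the leading term may fall below the precision $\pi_e$ of the congruence. If so, I would instead work directly with exact Yasuda sums for general $k$ using Kummer's theorem.
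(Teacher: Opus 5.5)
You follow the paper's own strategy: Yasuda's formula, only $m\equiv 1\bmod 3$ contributes, the $m=1$ term strictly dominates, and Kummer gives its valuation. However, both computations that make up the proof are missing.

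For the leading term, you first predicted a $p$-adic unit and noticed this contradicts the statement. You then simply adopted ``valuation exactly $k$'' as an expectation, but that value is the content of the proposition and must be derived. With $N=\frac{p^{2k+1}-1}{2}$ and $n_0=\frac{N-2}{3}$ one has $T_0=\binom{N}{1,\,n_0,\,2n_0+1}=N\binom{N-1}{n_0}$ with $p\nmid N$. Write $p=3a+2$. Then $2n_0+1=\frac{p^{2k+1}-2}{3}$ has base-$p$ digit $a$ in the even positions and $2a+1>\frac{p-1}{2}$ in the $k$ odd positions. Meanwhile $N-1$ has all digits $\frac{p-1}{2}$, except $\frac{p-3}{2}$ in position $0$. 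So adding $n_0$ and $2n_0+1$ produces exactly $k$ carries, and $v(T_0)=k$.

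For the remaining terms, your ``roughly $k-\lfloor\log_p m\rfloor$'' carry bound is unproved. What works is this: for $m=1+3j$ one has $T_j(1+3j)!/T_0=\frac{n_0!}{(n_0-2j)!}\cdot\frac{(2n_0+1)!}{(2n_0+1-j)!}\in\mathbb{Z}$. Hence $v(T_j)\ge k-v((1+3j)!)\ge k-\frac{3j}{p-1}$. This loss is strictly beaten by the gain $3j\,v(A)\ge\frac{3j}{e}$, precisely because $e<p-1$. The paper writes out the case $e=4$. There the analogous ratio is not visibly integral, and one uses $3(m_0+j+3)+(m_0-3j-2)=p^{2k+1}$ instead, together with explicit base-$p$ expansions for the Kummer count.

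Your preferred shortcut via \Cref{prop: beta algorithm} cannot replace this. At $k=0$ all multinomials are units since $\frac{p-1}{2}<p$, so indeed $v(d_p)=v(A)-1$. But the congruence $d_p\equiv b\frac{\pi_e}{p}\pmod{\pi_e}$ pins down $v(b)$ only if $v(A)-1<\frac1e$; for a potential $e$-lift with $e\in\{3,6\}$ this means $v(j)\le 3$. More fundamentally, take any $k$ with $v(A)-(k+1)\ge\frac1e$. There the asserted valuation lies beyond the precision of a congruence modulo $\pi_e$, so no information about $b$ can give the exact value of $v(d_{p^{2k+1}})$. Large $v(A)$ is not a borderline case but the main one: it is what makes $n_0$ large in \Cref{cor: explicit p-adic image}. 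So the exact carry count and the domination estimate above are unavoidable, and they are exactly what your proposal leaves undone.
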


\begin{proof}
Recall that by \Cref{lemma: yasuda} we have
\begin{equation}\label{eq: dp2k1-mn-sum}
d_{p^{2k+1}}
=
p^{-(2k+1)}
\sum_{\substack{m,n\ge 0\\ 2m+3n=\frac{p^{2k+1}-1}{2}}}
\binom{\frac{p^{2k+1}-1}{2}}{m,\;n,\;m+2n}\,A^mB^n.
\end{equation}
Our strategy is to show that there is a unique term of minimal valuation -- this occurs when $n=1$ (resp.~$m=1$) when $e=4$ (resp.~$e \in \{3,6\}$) -- and then compute its valuation. We only treat the case $e=4$ as the case $e \in \{3,6\}$ is analogous. 

Suppose then that $e=4$ so $v(B)>v(A)=0$ and set $N=\frac{p^{2k+1}-1}{2}$ which is odd since $p \equiv -1 \bmod{4}$. The equality $2m+3n=N$ implies $n$ is odd and the admissible options are $(m_j,n_j):=(m_0-3j,1+2j)$ for $0\leq j \leq \lfloor m_0/3 \rfloor$, where $m_0=\frac{N-3}{2}$.
For $j$ in this interval, write $T_j:=\binom{N}{m_j,\;n_j,\;m_j+2n_j}$; by definition we have
\[
T_j = \frac{m_0!(m_0+2)!}{(m_0-3j)!(m_0+j+2)!} \cdot \frac{1}{(1+2j)!}T_0.
\]
We claim that $v(T_j) \geq v(\frac{1}{(1+2j)!}T_0)$. We show this by induction. The base case $j=0$ is trivial, and for the induction step it suffices to show $v(T_{j+1} \cdot (3+2j)!) \geq v(T_j \cdot (1+2j)!)$, which is implied by $v(m_0-3j-2) \geq v(m_0+j+3)$. The identity
\[
3(m_0+j+3) + (m_0-3j-2)= p^{2k+1}
\]
shows that, since both summands on the left-hand side are strictly between $0$ and $p^{2k+1}$, they must have the same valuation, which completes the induction.

Hence, if $j>0$, then since $v(A)=0$, we have 
\[
v(T_jA^{m_j}B^{1+2j})  \geq v(T_0 A^{m_0} B) +2jv(B) - v((1+2j)!) > v(T_0A^{m_0}B^{n_0})
\]
where the last inequality follows using the bounds $v(B) \geq \frac{1}{e}=\frac{1}{4}$ and $v((1+2j)!) < \frac{1+2j}{p-1}$. This shows that \eqref{eq: dp2k1-mn-sum} has a unique summand of minimal valuation, corresponding to $j=0$, that is, $(m,n)=(m_0, 1)$.

Therefore $v(d_{p^{2k+1}})=-(2k+1) + v(T_0)+v(B)$ so it remains to prove $v(T_0)=k$. Now by expansion, we see $T_0=N\binom{N-1}{m_0}$ with $v(N)=0$ and so we compute the valuation of the binomial coefficient using Kummer's theorem: letting $S(n)$ be the sum of the $p$-adic digits of $n$, we have $v \left( \binom{N-1}{m_0} \right)=\frac{1}{p-1}(S(m_0)+S(N+1-m_0)-S(N-1))$.
One can check that the base $p$ expansions are:
\begin{eqnarray*}
    N-1 &=& \tfrac{p-3}{2} \cdot p^0+\sum_{i=1}^{2k} \tfrac{p-1}{2} \cdot p^i; \\
    N-1-m_0 &=& \tfrac{p+1}{4}\cdot p^0 + \sum_{i=1}^k \tfrac{p-3}{4} \cdot p^{2i} + \sum_{i=1}^k (p-\tfrac{p+1}{4}) \cdot p^{2i+1}; \\
    m_0 &=& (\tfrac{p+1}{4} -2)\cdot p^0 + \sum_{i=1}^k \tfrac{p-3}{4} \cdot p^{2i} + \sum_{i=1}^k (p-\tfrac{p+1}{4}) \cdot p^{2i+1}.
\end{eqnarray*}

From this, it is straightforward to compute the sum of the $p$-adic digits of $N-1, N-1-m_0$, and $m_0$, and Kummer's theorem gives $v(T_0)=k$ as desired.
\end{proof}

\begin{remark}\label{rmk: relation to Hasse invariant}
    Applying \Cref{prop: valuation coefficients formal log} with $k=0$ gives in particular
    \[
    v(pd_p)=
    \begin{cases}
        v(A), \qquad \text{if $e \in \{3,6\}$} \\
        v(B), \qquad \text{if $e=4$}.
    \end{cases}
    \]
    Combining \Cref{lemma: yasuda} and the development of $(x^3+Ax+B)^{\frac{p^n-1}{2}}$ using the multinomial theorem, one sees that the product $p^n d_{p^n}$ coincides with the coefficient of $x^{p^n-1}$ in $\left( x^3+Ax+B \right)^{\frac{p^n-1}{2}}$. For $n=1$, the reduction modulo $p$ of the coefficient of $x^{p-1}$ in $\left( x^3+Ax+B \right)^{\frac{p-1}{2}}$, that is $pd_p$, is known as the \textit{Hasse invariant} of the elliptic curve $E$. This suggests that the ratios $p d_{p^{2k+1}}/d_{p^{2k}}$ should have an interpretation as higher analogues of the Hasse invariant. Note that by \Cref{prop: beta algorithm} these ratios converge $p$-adically to $-\pi_e \beta$.
\end{remark}

\begin{corollary}\label{cor: formula for valuation beta}
    With the good reduction model of $E/L$ given in \eqref{eq: integral model EL}, we have 
    \[
    v(\beta) = -\frac{1}{e} + \begin{cases}
        v(A), \qquad \text{ if }e \in \{3,6\}\\
        v(B), \qquad \text{ if }e =4
    \end{cases}
    \]
    Letting $H \in \mathcal{O}_L$ be the coefficient of $x^{p-1}$ in $(x^3+Ax+B)^{(p-1)/2}$, we can rewrite this as
    \[
    v(\beta) = v(H) - \frac{1}{e}.
    \]
\end{corollary}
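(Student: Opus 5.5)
The first formula is a direct combination of \Cref{prop: beta algorithm} and \Cref{prop: valuation coefficients formal log}. By \Cref{prop: beta algorithm}, $v(\beta)=\lim_{k\to\infty}\bigl(v(d_{p^{2k+1}})+(k+1)-\tfrac1e\bigr)$, where the $d_r$ are the coefficients of $\log_{\hat E}$ for a good-reduction model of $E_L$. We use the particular model \eqref{eq: integral model EL}; this is legitimate because $\beta$ depends only on the lift, and is insensitive to strict isomorphisms by \Cref{rmk: invariance under strict isomorphisms}. \Cref{prop: valuation coefficients formal log} gives, for every $k\ge0$, $v(d_{p^{2k+1}})+(k+1)=v(A)$ when $e\in\{3,6\}$ and $=v(B)$ when $e=4$. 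So the sequence inside the limit is constant, and its value is the claimed formula.

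One point needs care. \Cref{prop: valuation coefficients formal log} silently assumes that $A\neq0$ (respectively $B\neq0$). If $A=0$ (resp. $B=0$), then $j\in\{0,1728\}$. In that case every summand of the relevant coefficient vanishes, so $d_{p^{2k+1}}=0$ for all $k$ and hence $b=0$, giving $\beta=0$. Both sides of the formula then read $+\infty$, consistent with \Cref{prop: beta parameter}(ii). I would treat this case separately in one line.

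For the Hasse-invariant reformulation, \Cref{rmk: relation to Hasse invariant} identifies $p d_p$ with the coefficient $H$ of $x^{p-1}$ in $(x^3+Ax+B)^{(p-1)/2}$. The case $k=0$ of \Cref{prop: valuation coefficients formal log} gives $v(pd_p)=v(A)$ (resp. $v(B)$). Substituting $v(H)=v(pd_p)$ into the first formula yields $v(\beta)=v(H)-\tfrac1e$.

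The only step I expect to need attention is the degenerate case $A=0$ or $B=0$ and the model-independence just discussed. Beyond that, the argument is bookkeeping on top of the two propositions.
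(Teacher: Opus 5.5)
Your proposal is correct and matches the paper's proof: you plug the $k$-independent valuation from \Cref{prop: valuation coefficients formal log} into the limit formula of \Cref{prop: beta algorithm}, then use the identification $H = pd_p$ from \Cref{rmk: relation to Hasse invariant}. Your separate treatment of the degenerate case $A=0$ (resp.\ $B=0$) is a harmless refinement that the paper leaves implicit: there every summand of $d_{p^{2k+1}}$ vanishes, so $\beta=0$ and both sides are $+\infty$.
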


\begin{proof}
    From \Cref{prop: beta algorithm} we have $v(\beta) = \lim_{k \to \infty} \left(v(d_{p^{2k+1}}) + (k+1) - \frac{1}{e}\right)$.
    \Cref{prop: valuation coefficients formal log} shows that $v(d_{p^{2k+1}}) + (k+1) - \frac{1}{e}$ is independent of $k$ and equal to the formula given in the statement. The equality $H=pd_p$ and hence the formula involving $H$ follow from \Cref{rmk: relation to Hasse invariant}.
\end{proof}

We can now prove our formula for $v(\beta)$.

\begin{proof}[Proof of \Cref{prop: valuation beta}]
\Cref{cor: formula for valuation beta} gives $v(\beta)=v(A)-\frac{1}{e}$ (resp.~$v(\beta)=v(B)-\frac{1}{e}$) when $e \in \{3,6\}$ (resp.~$e=4$), so it remains to express the valuation of the coefficients via the $j$-invariant, noting that the discriminant $\Delta$ is a unit by assumption as the model of $E_L$ given by \eqref{eq: integral model EL} has good reduction.
The identities
\[
j = -1728 \cdot \frac{(4A)^3}{\Delta} \qquad \text{and} \qquad j - 1728 = 1728 \cdot \frac{16 \cdot 27B^2}{\Delta}
\]
now immediately give $v(A)=\frac{1}{3}v(j)$ and $v(B)=\frac{1}{2}v(j-1728)$ as claimed.
\end{proof}

\subsection{Volkov's $\alpha$ and the $p$-adic image}

With everything in place, we are finally in a position to completely describe $\alpha$ itself for a potential $e$-lift. A priori, it appears that one should compute $\beta$ in order to determine the sign $\varepsilon$, especially in the case $e=4$ where knowing $v(\beta)$ isn't sufficient. We first show that this can be bypassed via the discriminant and conclude the proof of \Cref{prop: beta parameter}(iii).

\begin{proposition}\label{prop: epsilon via discriminant} 
Let $E/\Q_p$ be a potential $e$-lift with minimal discriminant $\Delta$ and write $E \cong E^\varepsilon_\beta$ as in \Cref{prop: beta parameter}(iii), for some $\beta \in \mathbb{Z}_p \pi_e \cup \mathbb{Z}_p \pi_e^{e-3}$ and some $\varepsilon \in \{ \pm 1 \}$. Then $\varepsilon=1$ if and only if $v(\Delta)<6$. 
In particular, the Volkov parameter is $\alpha = \begin{cases}
\begin{array}{cc}
    -p \cdot (\beta/\pi_e)^{-1} & \text{ if } v(\Delta)<6 \\
    -p \cdot \beta/\pi_e^{e-3} & \text{ if } v(\Delta)>6.
    \end{array}
\end{cases}$
\end{proposition}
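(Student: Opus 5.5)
The strategy is to read off $\varepsilon$ from the shape of the twist relating $E$ to its good-reduction model over $L$. We take $E/\Q_p$ with a minimal model $y^2=x^3+A_0x+B_0$, $A_0,B_0\in\Z_p$. Since $E$ has semistability defect $e$, Lemma~\ref{lemma: quadratic twist acquires good reduction over a small extension} gives $v(\Delta)=v(\Delta_{\min})$ with $e=12/\gcd(12,v(\Delta))$. Hence $v(\Delta)\in\{4,8\}$ for $e=3$, $v(\Delta)\in\{3,9\}$ for $e=4$, and $v(\Delta)\in\{2,10\}$ for $e=6$; in particular $v(\Delta)\neq 6$ always. Over $L=\Q_p(\pi_e)$ we pass to a good reduction model via $(x,y)\mapsto(\pi_e^{2s}x,\pi_e^{3s}y)$ with $s=v(\Delta)e/12\in\{1,\dots,e-1\}$. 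This gives $A=A_0\pi_e^{-4s}$ and $B=B_0\pi_e^{-6s}$, and the model has unit discriminant. The two possible values of $s$ for a given $e$ are $s$ and $e-s$, and they correspond to $v(\Delta)<6$ and $v(\Delta)>6$ respectively. They are exchanged by a ramified quadratic twist, consistent with Proposition~\ref{prop: beta parameter}(iii)(b).

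Next I would determine $\beta$'s coset, $\Z_p\pi_e$ versus $\Z_p\pi_e^{e-3}$, from $s$. By Corollary~\ref{cor: formula for valuation beta}, $v(\beta)=v(A)-\frac1e$ (resp.\ $v(B)-\frac1e$ for $e=4$). We compute $v(A)=v(A_0)-\frac{4s}{e}$, and reducing modulo $\Z$ gives the fractional part of $v(\beta)$. For example, with $e=3$, $s=1$ we get $v(A)\equiv -\frac43\equiv\frac23$, so $v(\beta)\equiv\frac13=v(\pi_e)$ and $\beta\in\Z_p\pi_e$; with $s=2$ we get $v(\beta)\equiv 0=v(\pi_e^{0})$, so $\beta\in\Z_p\pi_e^{e-3}$. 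The case $e=6$ works the same way, separating $\frac16$ from $\frac12$. This settles $e\in\{3,6\}$ whenever $\beta\neq0$: by Proposition~\ref{prop: beta parameter}(iii)(a), $\varepsilon=1$ exactly when $\beta\in\Z_p\pi_e$, i.e.\ exactly when $s$ is the smaller value, i.e.\ exactly when $v(\Delta)<6$.

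For $e=4$, and for $\beta=0$, valuations do not suffice, since $\pi_e$ and $\pi_e^{e-3}$ coincide when $e=4$. Here I would use the full proportionality statement of Corollary~\ref{cor: linear combination formal logarithms}. Volkov's sign $\varepsilon$ records which $\Q_p$-form of $E_\beta$ one has, through the action of $\op{Gal}(L/\Q_p)$-descent (equivalently, of $\tau_e$ on $\mathcal L(\beta)$). I would compare this with the descent datum $\pi_e\mapsto\zeta_e\pi_e$ on our model: the twist by $\pi_e^{s}$ multiplies the invariant differential, and hence the class $[\log_{\hat E}]$, by $\pi_e^{-s}$. Consequently $\tau_e$ acts on the line $\mathcal L(\beta)$ through the character $\zeta_e^{\mp s}$. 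Matching this eigenvalue with the action of $\tau_e$ on $\mathbf e_1,\mathbf e_2$ (via Definition~\ref{def: D alpha}(iii) and Lemma~\ref{lemma: adapted generator}) should identify $\varepsilon=1$ with $s<e/2$. This uniformly covers $\beta=0$ and $e=4$.

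I expect this last step to be the main obstacle: pinning down Volkov's normalisation of $\varepsilon$ as a Galois character and checking signs carefully. Once $\varepsilon$ is determined, the formula for $\alpha$ is immediate from Proposition~\ref{prop: beta parameter}(iv).
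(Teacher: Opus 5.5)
\emph{Verdict: genuine gap.} The proposal proves the cases $e\in\{3,6\}$ with $\beta\neq 0$, but not $e=4$ or $\beta=0$.

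\textbf{The part that works.} For $e\in\{3,6\}$ with $\beta\neq 0$ your argument is sound, and it differs from the paper's. You use \Cref{cor: formula for valuation beta} together with $v(A)=v(A_0)-\frac{4s}{e}$ to determine the coset of $\beta$. Volkov's characterisation in \Cref{prop: beta parameter}(iii)(a) then gives $\varepsilon$. One small slip: to obtain $A=A_0\pi_e^{-4s}$ and $B=B_0\pi_e^{-6s}$, the rescaling must be $(x,y)\mapsto(\pi_e^{-2s}x,\pi_e^{-3s}y)$.

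\textbf{The gap.} The cases $e=4$ (where $\Z_p\pi_e=\Z_p\pi_e^{e-3}$) and $\beta=0$ are not proved. These are exactly the cases where the coset carries no information. Your decisive step is only that the eigenvalue matching ``should identify'' $\varepsilon=1$ with $s<e/2$, and the sign $\mp$ is left open. That sign is precisely what the proposition asserts. Moreover, the matching cannot work as you describe it:
\begin{itemize}
\item By \Cref{def: D alpha}(iii), $\tau_e$ acts on $b_1$ by $\zeta_e$ for every $\alpha$, so $\varepsilon$ is invisible there. It is encoded in how $D_\alpha$ is identified with the Dieudonn\'e module.
\item The element $\tau_e$ does not preserve $L$, so it does not act on the $\mathcal{O}_L$-line $\mathcal{L}(\beta)$. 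Making sense of such an action would require base change to $\mathcal{O}_K$ and control of a semilinear action, which your sketch does not supply.
\end{itemize}

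\textbf{The missing idea.} You need Volkov's actual normalisation of $\varepsilon$. The element $\tau_e\in\Gal(K/\Q_{p^2})$ acts, by functoriality of the N\'eron model $\mathcal{E}$ of $E_K$, on its special fibre $(\tilde{E})_{\F_{p^2}}$, and hence on the Dieudonn\'e module. Volkov defines $\varepsilon$ by $\tau_e(e_1\otimes 1)=\zeta_e^{\varepsilon}(e_1\otimes 1)$, where $e_1$ is the adapted generator of \Cref{lemma: adapted generator}.

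With this definition the computation is short and uniform:
\begin{itemize}
\item Take $\psi:(x,y)\mapsto(u^2x,u^3y)$ with $u=\pi_e^{-s}$. Since $\tau_e(u)=\zeta_e^{-s}u$, the map ${}^{\tau_e}\psi\circ\psi^{-1}$ is $(w,z)\mapsto(\zeta_e^{-2s}w,\zeta_e^{-3s}z)$.
\item Check that $\mathcal{E}$ and ${}^{\tau_e}\mathcal{E}$ have the same special fibre. The reduction of this map then sends the parameter $t=-w/z$ to $\zeta_e^{s}t$.
\item By \Cref{lemma: odd-killed}(iii), $\tau_e$ therefore multiplies $\log_\Gamma(t)$ by $\zeta_e^{s}$. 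Via the isomorphism of \Cref{lemma: adapted generator}(i), it multiplies $e_1\otimes 1$ by $\zeta_e^{s}$.
\end{itemize}
Hence $\varepsilon\equiv s \bmod e$. For $s\in\{1,e-1\}$ this means $\varepsilon=1$ if and only if $s<e/2$, if and only if $v(\Delta)=12s/e<6$. This holds in every case, including $e=4$ and $j\in\{0,1728\}$. It also makes your valuation argument unnecessary, which is why the paper proceeds this way throughout.
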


\begin{proof}
   We begin by recalling the action of $\Gal(K/\Q_{p^2})$ on the special fibre of a good reduction model (equivalently, Néron model) of $E_K$; see also \cite[p.~497]{MR236190}.
    The Galois group $\Gal(K/\Q_{p^2})$ acts on $E_K = E_{\Q_{p^2}} \times_{\Q_{p^2}} K$ via its action on $K$. By functoriality of the Néron model, it also acts on the Néron model $\mathcal{E}$ of $E_K$, and the map $\mathcal{E} \to \operatorname{Spec} \mathcal{O}_K$ is compatible with the Galois action on both schemes. Since $\operatorname{Gal}(K/\mathbb{Q}_{p^2})$ acts trivially on the residue field $\mathcal{O}_K/(\pi_e) \cong \mathbb{F}_{p^2}$, it acts on the special fibre of $\mathcal{E}$, which may be canonically identified with $(\tilde{E})_{\mathbb{F}_{p^2}}$. 

    By following Volkov's argument \cite[p.~95]{Volkov}, one sees that $\varepsilon$ is determined as follows: taking $\tau_e \in \Gal(K/\Q_{p^2})$ (where $\tau_e$ is defined by $\tau_e(\pi_e)=\zeta_e\pi_e$) in the above construction, we obtain an induced automorphism of $(\tilde{E})_{\mathbb{F}_{p^2}}$, which in turn acts on its Dieudonné module. The sign $\varepsilon$ is defined by the equality $\tau_e (e_1 \otimes 1) = \zeta_e^{\varepsilon} (e_1 \otimes 1)$, where $e_1$ is the adapted generator of \Cref{lemma: adapted generator}. Via the isomorphism of \Cref{thm: MH iso}, it thus suffices to understand the action of $\tau_e$ on $\log_\Gamma(t)$. We now do this in terms of a minimal model $y^2 = x^3+Ax+B$ of $E/\Q_p$ with discriminant $\Delta$.
    
    We give details in the case $e=4$ and briefly sketch the differences for the remaining cases $e \in \{3, 6\}$. Note that when $e=4$ the valuation of $\Delta$ can only be $3$ or $9$ (see \cite[Table 15.1]{MR2514094}), and that these two cases correspond to $v(A)=1$ and $v(A)=3$ respectively (with $v(B^2) > v(A^3)$ in both cases).

    Let $u := \pi_e^{-v(A)}$. A good reduction model of $E_K$ is given by $\mathcal{E} : z^2 = w^3 + Au^4 w + Bu^6$, with the isomorphism given by 
    \[
    \begin{array}{cccc}
    \psi :& E_K & \to & \mathcal{E} \\
     & (x,y) & \mapsto & ( u^2x, u^3y).
    \end{array}
    \]
    The action of $\tau_e$ on the special fibre of $\mathcal{E}$ may be computed by reducing modulo $\pi_e$ the composition
\[
\begin{array}{cccc}
{}^{\tau_e} \psi \circ \psi^{-1} : & \mathcal{E}  & \to & {}^{\tau_e} \mathcal{E} \\
& (w, z) & \mapsto & (\zeta_e^{-2v(A)}w, \zeta_e^{-3v(A)}z),
\end{array}
\]
where ${}^{\tau_e} \mathcal{E} : z^2 = w^3 + Au^4 w + B\zeta_e^{-6} u^6$. Note that the special fibres of $\mathcal{E}$ and ${}^{\tau_e}\mathcal{E}$ coincide, because $v(Bu^6)=v(B\zeta_e^{-6} u^6)=v(B) - \frac{6}{4}v(A)>0$. Thus, $\tau_e$ induces $(w,z) \mapsto (\zeta_e^{-2v(A)}w, \zeta_e^{-3v(A)}z)$ on the special fibre $(\tilde{E})_{\mathbb{F}_{p^2}}$, and hence sends the parameter $t = -w/z$ to $-\zeta_e^{-2v(A)}w / \zeta_e^{-3v(A)}z =\zeta_e^{v(A)} t$. 

By \Cref{lemma: odd-killed}(iii), we find that $\tau_e$ acts on $\log_\Gamma(t)$ (hence on $e_1 \otimes 1$) as $\tau_e \log_\Gamma(t) = \zeta_e^{v(A)} \log_\Gamma(t)$, so we obtain $\varepsilon \equiv v(A) \bmod{4}$. This concludes the proof, because we have $v(A)=1$ if and only if $v(\Delta)<6$, and similarly $v(A)=3 \equiv -1 \bmod{4}$ if and only if $v(\Delta)>6$.

Finally, in the case $e=6$ (resp.~$e=3$) one takes $u:=\pi_e^{-v(B)}$ (resp.~$u:=\pi_e^{-v(B)/2}$), and by a similar calculation finds that $\tau_e$ acts on $\log_\Gamma(t)$ as multiplication by $\zeta_e^{v(B)}$ (resp.~$\zeta_e^{-v(B)}$). Since $v(\Delta)=2v(B)$ can only be $2$ or $10$ (resp.~$4$ or $8$), we find in all cases that the eigenvalue of $\tau_e$ acting on $e_1 \otimes 1$ is $\zeta_e$ if $v(\Delta)<6$ and $\zeta_e^{-1}$ if $v(\Delta)>6$. \end{proof}

\begin{remark}
    The result of the previous proposition agrees with \cite[Remarque on p.~133]{Volkov}, which however does not explicitly give the value of $\varepsilon$ in the case $v(\alpha)=1$. 
    As mentioned in \textit{loc.~cit.}, one can deduce the value of $\varepsilon$ (unless $v(\alpha)=1$) by using Kraus' description of the mod $p$ representation via fundamental characters \cite[\S 2.3.2, Proposition 2 and Lemme 2]{KrausPoidsConducteur}.

\end{remark}

We next determine the valuation of $\alpha$ purely in terms of invariants that can be easily computed from a Weierstrass model.

\begin{proposition}\label{proposition: formula valuation alpha}
Let $E/\Qp$ be a potential $e$-lift of $\tilde{E}/\mathbb{F}_p$ with $j(E)=j$, Volkov parameter $\alpha$ and minimal discriminant $\Delta$. Then:
\begin{enumerate}
\item Suppose $j \neq 0,1728$. The valuation of $\alpha$ is given by the following table: 
    
\begin{center}
 \begin{tabular}{c|ccc}
     & $e=3$ & $e=4$ & $e=6$ \\ \hline 
     $v(\Delta) < 6$  & $\frac{1}{3}(5-v(j))$ & $\frac{1}{2}(3-v(j-1728))$ & $\frac{1}{3}(4-v(j))$ \\[0.1in]
     $v(\Delta) > 6$ & $\frac{1}{3}(2+v(j))$ & $\frac{1}{2}(1+v(j-1728))$ & $\frac{1}{3}(1+v(j))$
 \end{tabular}
\end{center}
\item $E$ has a canonical subgroup if and only if $v(\alpha)=1$, if and only if $v(j) \in \{1,2\}$ (resp.~$v(j-1728)=1$) if $e \in \{3,6\}$ (resp.~$e=4$).
\item Up to possibly replacing $E$ by a ramified quadratic twist, we have
\[
v(\alpha^{-1})= \begin{cases}
\frac{1}{3}(v(j)-4) &\text{ if } e \in \{3,6\} \text{ and } v(j) \equiv 1 \bmod{3}; \\
\frac{1}{3}(v(j)-5) &\text{ if } e \in \{3,6\} \text{ and } v(j) \equiv -1 \bmod{3}; \\
\frac{1}{2}(v(j-1728)-3) &\text{ if } e=4.
\end{cases}
\]
Moreover, if $E$ does not have a canonical subgroup, then the formulae above all satisfy $v(\alpha^{-1}) \geq 0$.
\end{enumerate}
\end{proposition}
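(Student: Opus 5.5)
The strategy is to combine \Cref{prop: beta parameter}(iv), \Cref{prop: epsilon via discriminant} and \Cref{prop: valuation beta}, so that everything reduces to bookkeeping with $v(\pi_e)=\frac1e$. By \Cref{prop: epsilon via discriminant}, if $v(\Delta)<6$ then
\[
v(\alpha)=1-(v(\beta)-\tfrac1e)=1+\tfrac2e-w,
\]
where $w=\frac13v(j)$ for $e\in\{3,6\}$ and $w=\frac12v(j-1728)$ for $e=4$; here we use \Cref{prop: valuation beta}, which gives $v(\beta)=w-\frac1e$. If instead $v(\Delta)>6$, then
\[
v(\alpha)=1+v(\beta)-\tfrac{e-3}{e}=w+\tfrac{2}{e}-1.
\]
The hypothesis $j\ne 0,1728$ guarantees $\beta\neq0$ via \Cref{prop: beta parameter}(ii), so $\alpha\notin\{0,\infty\}$ and these valuations make sense. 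Substituting $e=3,4,6$ recovers the table in (i). For example, with $e=3$ and $v(\Delta)<6$ one gets $\frac53-\frac{v(j)}3$, and with $e=6$ and $v(\Delta)>6$ one gets $\frac{v(j)}3-\frac23$. The last entry needs a check against the table's $\frac13(1+v(j))$, so the slip must be located. The source is that $v(\beta)$ in \Cref{prop: valuation beta} is computed on the model \eqref{eq: integral model EL}, while $\beta\in\Z_p\pi_e^{e-3}$ when $\varepsilon=-1$. I would therefore reconcile the cases carefully using the constraint that $\beta\in\Z_p\pi_e$ or $\Z_p\pi_e^{e-3}$ forces $v(\beta)\equiv\frac1e$ or $\frac{e-3}{e}\pmod 1$. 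This is exactly where the dichotomy $v(\Delta)\lessgtr 6$ matches the congruence class of $v(j)$ modulo $3$ (or the parity of $v(j-1728)$). Recomputing each entry this way is the step where I expect errors, and hence the main obstacle: making sure the normalisations of $\beta$, $\pi_e$ and $\varepsilon$ are consistent.

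For (ii), the first equivalence is \Cref{rmk: alpha role}(ii). For the second, I set $v(\alpha)=1$ in each table entry and solve. This gives $v(j)=2$ ($e=3$, $v(\Delta)<6$), $v(j)=1$ ($e=3$, $v(\Delta)>6$), $v(j)=1$ or $2$ for $e=6$, and $v(j-1728)=1$ for $e=4$. I then check which of these are compatible with the discriminant constraints. The case $j\in\{0,1728\}$ is excluded separately, since there $\alpha\in\{0,\infty\}$ by \Cref{prop: beta parameter}(iv).

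For (iii), \Cref{rmk: alpha role}(i) says a ramified quadratic twist replaces $\alpha$ by $\pm p^2\alpha^{-1}$, so it swaps $v(\alpha)\leftrightarrow 2-v(\alpha)$ and changes $e$ between $3$ and $6$ (or fixes $e=4$). The two rows of the table are related by this involution. I would therefore choose the twist realising the row $v(\alpha^{-1})=w-\frac{2}{e}-1+\ldots$, namely the $v(\Delta)<6$ row, and read off $v(\alpha^{-1})$. The residue of $v(j)$ modulo $3$ then determines whether the twisted curve has $e=6$ or $e=3$, which gives the two stated formulas; when $j\in\{0,1728\}$, $v(\alpha^{-1})=\infty$ is harmless. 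Finally, excluding a canonical subgroup means $v(\alpha)\ne1$ by (ii). The displayed quantities are integers (they are valuations of elements of $\Q_p$) and differ from $-1$, and the formulas exclude values $\le -2$ because $v(j)\ge1$ (respectively $v(j-1728)\ge1$) by \Cref{prop: e divides p plus one}. Together these give $v(\alpha^{-1})\ge0$.
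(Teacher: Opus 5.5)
Your route is the paper's: combine \Cref{prop: beta parameter}(iv), \Cref{prop: epsilon via discriminant} and \Cref{prop: valuation beta}, then twist. However, your computation in the case $v(\Delta)>6$ is wrong, and the discrepancy you detect comes from your own arithmetic slip. Since $\frac{e-3}{e}=1-\frac{3}{e}$, one has
\[
v(\alpha)=1+v(\beta)-\tfrac{e-3}{e}=v(\beta)+\tfrac{3}{e}=w+\tfrac{2}{e},
\]
not $w+\frac{2}{e}-1$. This gives $\frac13(2+v(j))$, $\frac12(1+v(j-1728))$ and $\frac13(1+v(j))$ for $e=3,4,6$, which is exactly the second row of the table. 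For $e=3$ you can check it immediately: there $\pi_e^{e-3}=1$, so $v(\alpha)=1+v(\beta)=\frac13(2+v(j))$. Your formula would contradict all three entries of that row, not only the $e=6$ one. So the assertion that substituting $e=3,4,6$ recovers the table is not true of what you wrote.

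The repair you propose would not work either. There is no inconsistency between the model \eqref{eq: integral model EL} and the condition $\beta\in\Z_p\pi_e^{e-3}$. More basically, a congruence for $v(\beta)$ modulo $1$ can never account for a discrepancy by an integer. As written, the second row of (i) is asserted rather than derived, and your argument for (ii) silently uses the table's values instead of your own (e.g.\ $v(j)=1$ for $e=3$, $v(\Delta)>6$, which your formula would not give).

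Once the arithmetic is corrected, the rest goes through as you describe and matches the paper. For (ii), solve $v(\alpha)=1$ entry by entry, using that $v(\alpha)\in\Z$. Integrality is what ties the class of $v(j)$ modulo $3$ (or the parity of $v(j-1728)$) to the sign of $v(\Delta)-6$. For (iii), a ramified twist sends $v(\alpha)\mapsto 2-v(\alpha)$ and exchanges the two rows, swapping $e=3$ and $e=6$; then choose the $v(\Delta)<6$ row.
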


\begin{proof}
Note that $v(\beta)$ is given by \Cref{prop: valuation beta}. A straightforward case analysis with \Cref{prop: beta parameter}(iii)-(iv) then gives $v(\alpha)$ for (i); (ii) similarly follows since having a canonical subgroup is equivalent to $v(\alpha)=1$ (\Cref{rmk: canonical subgp}). 
For (iii), observe that taking a ramified quadratic twist connects the pairs $\{\frac{1}{3}(5-v(j)), \frac{1}{3}(1+v(j))\}$, $\{\frac{1}{3}(2+v(j)),\frac{1}{3}(4-v(j))\}$ and $\{\frac{1}{2}(3-v(j-1728)),\frac{1}{2}(1+v(j-1728))\}$. In each case, the chosen options satisfy $v(\alpha^{-1}) \geq - 1$ since $j \in \Z_p$ and $v(\alpha^{-1})$ is an integer (note this also forces the specified congruence condition on $v(j) \bmod{3}$). If $E$ doesn't have a canonical subgroup, then neither does any quadratic twist, and hence the indicated choice satisfies $v(\alpha^{-1}) \geq 0$. 
\end{proof}

We now finally apply the description of $v(\alpha)$ given in \Cref{proposition: formula valuation alpha} to complete the proof of \Cref{introthm: more precise description padic Galreps}.

\begin{corollary}\label{cor: explicit p-adic image}
    Let $p>7$ be a prime and let $E/\Q_p$ be an elliptic curve. Suppose that $\operatorname{Im}\rho_{E,p} \subseteq C_{ns}^+(p)$ and that $E$ has semistability defect $e \in \{3,4,6\}$. Set $j=j(E)$ and let $n_0$ be the positive integer
    \[
    n_0 := \begin{cases}
    \begin{array}{lll}
    \!\left\lfloor \frac{1}{3}v(j) \right\rfloor & \quad & \text{if $e \in \{3,6\}$} \\
    \!\left\lfloor \frac{1}{2}v(j-1728) \right\rfloor & \quad & \text{if $e = 4$.}
    \end{array}
    \end{cases}
    \]
    Let $\pi_{n_0} : \GL_2(\Z_p) \to \GL_2(\Z/p^{n_0}\Z)$ be the canonical projection. If $p>\sqrt{n_0+1}$, then 
\[
\operatorname{Im}\rho_{E,p^{n_0}} \subseteq C_{ns}^+(p^{n_0}) \text{ with index dividing $3$,} \qquad\qquad \text{and} \,\, \operatorname{Im}\rho_{E,p^\infty} = \pi_{n_0}^{-1}(\operatorname{Im}\rho_{E,p^{n_0}}).
\]
Moreover, if $e=4$ or $p \not\equiv 2,5 \bmod 9$, then $\operatorname{Im}\rho_{E,p^{n_0}} = C_{ns}^+(p^{n_0})$. 
\end{corollary}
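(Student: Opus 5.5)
The plan is to reduce \Cref{cor: explicit p-adic image} to \Cref{thm: more precise description padic Galreps} by converting $v(\alpha^{-1})$ into $n_0$ through \Cref{proposition: formula valuation alpha}(iii).

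First, the preliminary reductions of \S\ref{subsec: preliminary reductions}. Since $p>7$ and $\operatorname{Im}\rho_{E,p}\subseteq C_{ns}^+(p)$, \Cref{cor: supersingular and no canonical subgroup} and \Cref{prop: e divides p plus one} show that $E$ has potentially good supersingular reduction, $e\mid p+1$, and $E$ has no canonical subgroup of order $p$. Because $e\in\{3,4,6\}$ and $p>7$, we also have $e<p-1$, so $E$ is a potential $e$-lift of its reduction $\tilde E$ over $L$. The hypotheses and conclusions are invariant under ramified quadratic twist: the index in \Cref{thm:ellipticcartantower} is odd, and twisting preserves $v(j)$ and $v(j-1728)$, hence $n_0$. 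So we may replace $E$ by the twist selected in \Cref{proposition: formula valuation alpha}(iii). For that twist, $v(\alpha^{-1})\ge 0$ and \Cref{setup} holds.

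Second, compute $v(\alpha^{-1})+1$ in each case and check that it equals $n_0$.
\begin{itemize}
\item If $e\in\{3,6\}$ and $v(j)\equiv1\bmod 3$, then $v(\alpha^{-1})+1=\frac{v(j)-1}{3}=\lfloor v(j)/3\rfloor$.
\item If $v(j)\equiv 2\bmod 3$, then $v(\alpha^{-1})+1=\frac{v(j)-2}{3}=\lfloor v(j)/3\rfloor$.
\item If $e=4$, integrality of $v(\alpha^{-1})$ forces $v(j-1728)$ odd, and $v(\alpha^{-1})+1=\frac{v(j-1728)-1}{2}=\lfloor v(j-1728)/2\rfloor$.
\end{itemize}
The case $v(j)\equiv 0\bmod 3$ cannot occur, since $v(\alpha^{-1})$ would then not be an integer. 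The case $j\in\{0,1728\}$ corresponds to $\alpha\in\{0,\infty\}$ by \Cref{thm: valuation alpha overview}(i). There the twist gives $\alpha=\infty$, so $E$ has CM and \Cref{lemma: description in potential CM case}(ii) gives the result directly, interpreting $n_0=\infty$. In all other cases $n_0=v(\alpha^{-1})+1\ge1$.

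Third, apply \Cref{thm: more precise description padic Galreps}. The hypothesis $p>\sqrt{n_0+1}=\sqrt{v(\alpha^{-1})+2}$, together with $p>7$, is exactly what its final statement requires. That statement says the conclusion of \Cref{thm: no sharp group over Qp}(ii) holds with $n=n_0$. Hence $\operatorname{Im}\rho_{E,p^\infty}=\pi_{n_0}^{-1}(\operatorname{Im}\rho_{E,p^{n_0}})$, and $\operatorname{Im}\rho_{E,p^{n_0}}\subseteq C_{ns}^+(p^{n_0})$ with index $[C_{ns}^+(p):G(p)]\in\{1,3\}$.

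Finally, the equality statement. \Cref{thm: no sharp group over Qp} already gives index $1$ when $e=4$ or $p\equiv1\bmod 3$. It remains to treat $p\equiv 2\bmod 3$ with $e\in\{3,6\}$ and $p\not\equiv 2,5\bmod 9$, i.e. $p\equiv 8\bmod 9$, and show that $G(p)=C_{ns}^+(p)$. By \Cref{lemma: G(p) is not in the Cartan}, inertia acts through characters with exponents $1\pm\frac{p^2-1}{e}$ and $p\mp\frac{p^2-1}{e}$ on the level-$2$ fundamental character.

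This is the step I expect to be the main obstacle. When $9\mid p+1$ we have $9\mid p^2-1$, and one must check that one of the exponents is prime to $3$. For example, $1+\frac{p^2-1}{3}\equiv 1\bmod 3$ when $9\mid p^2-1$; the case $e=6$ is similar. If some exponent is prime to $3$, then $\rho_{E,p}(I_{\Q_p})$ has order $p^2-1$ and fills $C_{ns}(p)$. If this fails, I would instead use the Frobenius element together with the order of $\det$ to exhibit elements of $C_{ns}(p)$ of order $p^2-1$.
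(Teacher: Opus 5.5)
Your proposal is correct and follows the paper's proof. Both twist so that \Cref{proposition: formula valuation alpha}(iii) gives $v(\alpha^{-1})=n_0-1\geq 0$, apply \Cref{thm: more precise description padic Galreps}, and settle the index through the inertia exponents $1\mp\frac{p^2-1}{e}$ from Volkov's description; your check that these are $\equiv 1 \bmod 3$ when $p\equiv 8 \bmod 9$ already closes the only remaining case (the paper does this check uniformly instead of splitting off $e=4$ and $p\equiv 1\bmod 3$), so the Frobenius/determinant fallback is unnecessary.
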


\begin{proof}
Note that $E$ does not have a canonical subgroup by \Cref{cor: supersingular and no canonical subgroup}, hence $n_0>0$ and $v(\alpha) \neq 1$ by \Cref{proposition: formula valuation alpha}(ii). By reduction step (iv) in \Cref{subsec: preliminary reductions}, we are free to replace $E$ by a quadratic twist. After possibly doing so, we may apply \Cref{proposition: formula valuation alpha}(iii) to get an explicit formula for $v(\alpha)$ in terms of $j$. It is immediate to check that this formula may be written as $v(\alpha^{-1})=n_0-1$ with $n_0 \geq 1$ as in the statement, hence by \Cref{thm: more precise description padic Galreps} we have $\operatorname{Im}\rho_{E,p^{n_0}} \subseteq C_{ns}^+(p^{n_0})$ and $\operatorname{Im}\rho_{E,p^\infty} = \pi_{n_0}^{-1}(\operatorname{Im}\rho_{E,p^{n_0}})$.

It remains to verify the claim on the index $[C_{ns}^+(p^{n_0}):\operatorname{Im}\rho_{E,p^{n_0}}]=[C_{ns}^+(p) : \operatorname{Im} \rho_{E, p}]$. To do this, we consider the action of inertia on $E[p] \otimes_{\mathbb{F}_p} \overline{\mathbb{F}}_p$ in terms of the level $2$ fundamental character $\psi_2$. Volkov \cite[p.~129, p.~131]{Volkov} shows the action is via $\{\psi_2^{1-\frac{p^2-1}{e}},\psi_2^{p+\frac{p^2-1}{e}} \}$ if $v(\alpha)\leq 0$ (resp.~$\{\psi_2^{1+\frac{p^2-1}{e}},\psi_2^{p-\frac{p^2-1}{e}} \}$ if $v(\alpha)\geq 2$). Now the index is $3$ if and only if both exponents $1 \mp \frac{p^2-1}{e}, p \pm \frac{p^2-1}{e}$ are divisible by $3$; a quick check shows that if $e=4$ or $p \not \equiv 2,5 \bmod{9}$ then $1 \mp \frac{p^2-1}{e} \not\equiv 0 \bmod 3$ which completes the proof.
\end{proof}

\begin{remark}\label{rmk: index 3}
The proof of the previous corollary also shows how to determine whether the index $[C_{ns}^+(p^{n_0}) : \operatorname{Im}\rho_{E,p^{n_0}}]$ is $1$ or $3$ in the missing cases when $e \in \{3,6\}$ and $p \equiv 2,5 \bmod{9}$ via studying the exponents of the fundamental characters in the representation $E[p] \otimes \overline{\F}_p$.

Let $\Delta$ be a minimal discriminant of $E$. Using that $v(\alpha) \leq 0$ if and only if $v(\Delta)<6$ (\Cref{proposition: formula valuation alpha}(ii)), and recalling that $v(\Delta) \in \{\frac{12}{e},\frac{12(e-1)}{e}\}$ (cf.~\Cref{lemma: quadratic twist acquires good reduction over a small extension}), the index is $3$ precisely in the following cases:
\[
 p \equiv 2 \bmod{9} \,\,\, \text{ and } v(\Delta) \in \{4,10\}; \qquad\qquad \text{or } \,\,\, p \equiv 5 \bmod{9} \,\,\, \text{ and } v(\Delta) \in \{2,8\}.
\]

Moreover, all four of these cases occur. For $p \equiv 2, 5 \bmod{9}$, consider the elliptic curves $E_i/\Q_p : y^2 = x^3 + p^i$ for $i \in \{1, 2, 4, 5\}$. These have potentially good supersingular reduction, minimal discriminant $p^{2i}$, and semistability defect $e = 6/\gcd(2,i)$. Comparing with the conditions described above, we deduce that for every prime $p \equiv 2,5 \bmod{9}$ and each $e \in \{3,6\}$, the case $[C_{ns}^+(p) : \operatorname{Im}\rho_{E,p}] = 3$ does indeed occur.

We also remark that $C_{ns}^+(p)$ has a unique index $3$ subgroup up to conjugacy, so knowledge of the index suffices to describe the Galois image.
\end{remark}

\subsection{Examples}
 
We now present several examples illustrating the computation of $\alpha$ and $\beta$, as well as their application to our $p$-adic image results. Throughout the examples, we write $\alpha_i, \beta_i$ for the $\alpha$- and $\beta$-parameters of the elliptic curve $E_i/\Q_p$.

\begin{example}
Let $p=11$ and let $E_1/\Q_{p}: y^2=x^3+p^3x+p^2$. This has semistability defect $e=3$ with minimal model $E_1/L: y^2=x^3+p\pi_3^2x+1$. One can then compute the following coefficients of $\log_{\hat{E}_1}$:
    \begin{center}
      \begin{tabular}{ccc}
        $ d_{p} = 20\pi_e^2$, & $d_{p^2}= \frac{59003}{p} + O(p^4)$, & $d_{p^3}= \frac{-62940\pi_e^2}{p} + O(p^4)$, \\
         & $d_{p^4}= \frac{370910}{p^2} + O(p^4)$, & $d_{p^5}= \frac{-859443\pi_e^2}{p^2} + O(p^4)$.
    \end{tabular}   
    \end{center}
    Using \Cref{prop: beta algorithm}, we hence get:
    \[
    \beta_1 \equiv -\frac{p}{\pi_e} \frac{d_{p^3}}{d_{p^2}} \equiv \frac{62940p\pi_e}{59003} \equiv 0 \bmod{\pi_e^4} \qquad \text{ and } \quad \beta_1 \equiv -\frac{p}{\pi_e} \frac{d_{p^5}}{d_{p^4}} \equiv 2p\pi_e \bmod{\pi_e^7};
    \]
    in particular $v(\beta_1)=\frac{4}{3}$. By \Cref{prop: beta parameter}(i) we must have $\beta_1 \in \mathbb{Z}_{11} \pi_e$ and hence $\varepsilon_1=+1$ (\Cref{prop: beta parameter}(iii)). Applying \Cref{prop: beta parameter}(iv), we find that $\alpha_1 = 5 + O(p) \in \Z_p$ and in particular $v(\alpha_1^{-1})=0$. 

    \Cref{thm: more precise description padic Galreps} then shows that the conclusion of \Cref{thm: no sharp group over Qp}(ii) holds with $n=1$. More explicitly, the image of the $p$-adic Galois representation of $E$ is the inverse image in $\GL_2(\Z_p)$ of its mod $p$ representation, which is index $3$ subgroup of $G_1$ of $C_{ns}^+(p)$ by \Cref{rmk: index 3}. Therefore $\op{Im} \rho_{E_1,p^{\infty}} = \pi_1^{-1}(G_1)$.
\end{example}

\begin{example}
Let $p>5$ be a prime with $p \equiv 2 \bmod{3}$. Let $E_2/\Qp : y^2 = x^3 + p^4 x + p^2$, which has potentially supersingular reduction with semistability defect $e=3$ and $j$-invariant $1728 \cdot \frac{p^8}{p^8 + 27/4}$. Using \Cref{prop: valuation beta} yields $v(\beta_2)=7/3$. As in the previous example we have $\varepsilon=1$, from which it follows that $v(\alpha_2^{-1})=1$. \Cref{thm: more precise description padic Galreps} hence shows that the mod $p^2$ representation of $E_2$ has image contained in the normaliser of a non-split Cartan, while $\operatorname{Im} \rho_{E_2, p^\infty}$ is the inverse image in $\GL_2(\Z_p)$ of $\operatorname{Im} \rho_{E_2, p^2}$.
\end{example}

\begin{example}\label{ex: ambiguous alpha 2}
Let $p>7$ be a prime with $p \equiv 3 \bmod{4}$ and let $E_3/\Q_p: y^2=x^3+px+p^2$. Then $E_3$ has semistability defect $e=4$ and satisfies $j(E_3)-1728 = -1728 \cdot \frac{p}{p + 4/27}$.
By \Cref{prop: valuation beta}, we have $v(\beta_3)=\frac{1}{4}$ and by \Cref{thm: valuation alpha overview}(ii) we have $v(\alpha_3^{-1})=-1$ (for both twists; the curve $E_3$ has minimal discriminant of valuation $3$ and hence $\varepsilon=+1$ by \Cref{prop: epsilon via discriminant}). This is a case not covered by \Cref{thm: more precise description padic Galreps}: $E_3$ has a canonical subgroup (see \Cref{rmk: alpha role}(ii) or use $v(j-1728)=1$ with \Cref{thm: valuation alpha overview}(iii)) and so the mod $p$ image is not contained in $C_{ns}^+(p)$ since $p>7$ (see \Cref{cor: supersingular and no canonical subgroup}).
\end{example}

\section{Application to elliptic curves over $\Q$}\label{sect: adelic index}

\subsection{Description of the $p$-adic Galois image}\label{subsec: global description p adic image}

As a consequence of \Cref{thm: no sharp group over Qp} and \cite[Theorem 1.6]{furio24} we obtain the following structure theorem for the global image of the $p$-adic representations attached to non-CM elliptic curves over $\Q$. Note that, unlike \Cref{thm: no sharp group over Qp}, this is a statement about representations of the absolute Galois group of $\mathbb{Q}$ rather than of $\Qp$.

\begin{theorem}\label{thm: no sharp groups over Q}
    Let $E/\Q$ be a non-CM elliptic curve and $p > 7$ be a prime. Consider the mod $p$ and $p$-adic Galois representations
\[
\rho_{E, p} : \op{Gal}(\overline{\Q}/\Q) \to \GL_2(\F_p) \quad \text{and} \quad \rho_{E, p^\infty} : \op{Gal}(\overline{\Q}/\Q) \to \GL_2(\Z_p).
\] 
    Suppose that $\op{Im} \rho_{E, p} \subseteq C_{ns}^+(p)$. Then there exists $n \geq 1$ such that 
    \[
    \op{Im} \rho_{E, p^\infty} = \pi_n^{-1} \left(C_{ns}^+(p^n)\right),
    \]
where $\pi_n : \GL_2(\Z_p) \to \GL_2(\Z/p^n\Z)$ is the canonical projection.
\end{theorem}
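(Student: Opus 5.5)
The plan is to combine the global classification of \cite[Theorem 1.6]{furio24} with the local obstruction of \Cref{thm: no sharp group over Qp}. For a non-CM curve $E/\Q$ with $\op{Im}\rho_{E,p} \subseteq C_{ns}^+(p)$ and $p>7$, \cite[Theorem 1.6]{furio24} says that $G:=\op{Im}\rho_{E,p^\infty}$ falls into one of two cases. Either $G=\pi_n^{-1}(C_{ns}^+(p^n))$ for some $n\geq 1$, in which case we are done. Or $G=\pi_2^{-1}(G_{ns}^\#(p^2))$. So the proof reduces to excluding the second case, and I would do this by restricting to a decomposition group at $p$.

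Fix an embedding $\overline{\Q}\hookrightarrow\Qpbar$, so that $\op{Gal}(\Qpbar/\Qp)$ sits inside $\op{Gal}(\overline{\Q}/\Q)$. The local representation $\rho_{E_{\Qp},p^\infty}$ is then the restriction of the global one, and its image is a subgroup of $G$. In particular, $\op{Im}\rho_{E_{\Qp},p}\subseteq C_{ns}^+(p)$, so the curve $E_{\Qp}$ satisfies the hypotheses of \Cref{thm: no sharp group over Qp}. Consequently $G_{\mathrm{loc}}:=\op{Im}\rho_{E_{\Qp},p^\infty}$ is in case (i) or (ii) of that theorem.

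Suppose, for contradiction, that $G(p^2)\subseteq G_{ns}^\#(p^2)$. Then $G_{\mathrm{loc}}(p^2)\subseteq G_{ns}^\#(p^2)$ as well, and I would derive a contradiction in each local case.
\begin{itemize}
\item In case (i), $G_{\mathrm{loc}}\subseteq C_{ns}^+$ with index dividing $3$. Reducing mod $p^2$, the $p$-Sylow of $G_{\mathrm{loc}}(p^2)$ is the kernel of reduction $\op{Id}+pV'$ with $V'=V_1\oplus V_2$, which has order $p^2$. On the other hand, everything inside $G_{ns}^\#(p^2)$ has kernel contained in $\op{Id}+p(V_1\oplus V_3)$. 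Since $V_2\cap(V_1\oplus V_3)=0$ and $V_1\oplus V_2$ has dimension $2$, this is impossible.
\item In case (ii) with $n\geq 2$, $G_{\mathrm{loc}}(p^2)\subseteq C_{ns}^+(p^2)$, and the same kernel comparison gives a contradiction.
\item In case (ii) with $n=1$, $G_{\mathrm{loc}}$ contains $\op{Id}+pM_2(\Z_p)$. Its mod-$p^2$ kernel then has order $p^4$, which exceeds $p^3=|\op{Id}+pV|$.
\end{itemize}

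Conjugation is not an issue in these comparisons, because the subspaces $V_i$ are canonical given a choice of Cartan containing $G(p)$. Moreover, the conjugation fixing $G(p)\subseteq C_{ns}^+(p)$ determines the same Cartan for the local image as for the global one. This is the step needing the most care, but since $G_{\mathrm{loc}}(p)$ contains a non-scalar Cartan element (\Cref{prop: N-Cartan lift}), its centralizer determines the Cartan $C_{ns}(p)$ uniquely, which resolves it.

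Hence the case $G=\pi_2^{-1}(G_{ns}^\#(p^2))$ cannot occur, and $G=\pi_n^{-1}(C_{ns}^+(p^n))$ for some $n$. If instead \cite{furio24} only gives the index-$[C_{ns}^+(p):G(p)]$ version, then the surjectivity of $\det$ onto $\hat{\Z}^\times$ together with non-CM forces $G(p)=C_{ns}^+(p)$ over $\Q$, as in \cite{furio24}. Either way, the main obstacle is purely the local exclusion, which is supplied by \Cref{thm: no sharp group over Qp}.
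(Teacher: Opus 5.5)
Your proposal is correct and is essentially the paper's own argument: invoke \cite[Theorem 1.6]{furio24}, restrict to the decomposition group $\op{Gal}(\Qpbar/\Qp)$, and exclude the $G_{ns}^\#(p^2)$ alternative because it forces the mod-$p^2$ kernel into $V_1\oplus V_3$, whereas \Cref{thm: no sharp group over Qp} forces the local kernel to be either $V_1\oplus V_2$ or all of $M_2(\F_p)$. Your extra check that the local and global conjugations single out the same non-split Cartan makes explicit a point the paper leaves implicit; it works once you take the non-scalar Cartan element to have order greater than $2$ in $\op{PGL}_2(\F_p)$, as in the proof of \Cref{thm:ellipticcartantower}. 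Your split of local case (ii) into $n\geq 2$ versus $n=1$ is also the correct one (the paper's ``$n>2$'' should read $n\geq 2$).
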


\begin{proof}
We first apply \cite[Theorem 1.6]{furio24}. Since $p>7$, either: $\op{Im} \rho_{E,p^{\infty}} = \pi_n^{-1}(C_{ns}^+(p^n))$; or $\op{Im} \rho_{E,p^2} \cong C_{ns}^+(p) \ltimes \{ \op{Id} + p \left(\begin{smallmatrix} a & b\varepsilon \\ -b & c \end{smallmatrix}\right) : a,b,c \in \mathbb{F}_p \}.$ For contradiction, suppose the second option holds. 
We now examine the local image $G:=\rho_{E,p^{\infty}}(\op{Gal}(\Qpbar/\Qp)) \subseteq \op{Im} \rho_{E,p^{\infty}}$, in particular the group $\ker(G(p^2) \rightarrow G(p))$. Recall the definition and notation of the irreducible submodules of $M_{2}(\mathbb{F}_p)$ given in \Cref{lemma: conjugacy action of Cnsp}(iii). From the description above, one sees directly that $\ker(G(p^2) \rightarrow G(p)) \subseteq V_1 \oplus V_3$.
On the other hand, \Cref{thm: no sharp group over Qp} states that either $\ker(G(p^2) \to G(p)) = V_1 \oplus V_2$ (in case (i), and in case (ii) if $n>2$), or $\ker(G(p^2) \to G(p)) = V_1 \oplus V_2 \oplus V_3$ (otherwise). In either case, we have a contradiction that completes the proof.
\end{proof}

\begin{remark}
\Cref{rmk: uniformity} shows that the image of the local Galois group $\op{Gal}(\Qpbar/\Qp)$ via $\rho_{E, p^\infty}$ is not necessarily open in $\GL_2(\Z_p)$. This implies that for elliptic curves over $\Q$, the value of $n$ in \Cref{thm: no sharp groups over Q} cannot be bounded by purely $p$-adic arguments.
\end{remark}

\subsection{Adelic bounds} 
We now give an improved version of \cite[Theorem 1.5]{furio24}. The key of the improvement consists in the fact that, thanks to \Cref{thm: no sharp groups over Q}, we are able to use a sharper bound on the index of the $p$-adic representations: if $n$ is the smallest integer such that $\operatorname{Im}\rho_{E,p^\infty} \supset \op{Id} + p^nM_2(\Z_p)$, then $[\GL_2(\Z_p) : \operatorname{Im}\rho_{E,p^\infty}] \le \frac{1}{2}p^{2n}$, while in \cite{furio24} the bound $[\GL_2(\Z_p) : \operatorname{Im}\rho_{E,p^\infty}] \le \frac{1}{2}p^{3n}$ was used.

\begin{theorem}\label{thm: adelic bound}
    Let $E/\Q$ be a non-CM elliptic curve and let $\operatorname{h}(j(E))$ be the logarithmic Weil height of its $j$-invariant. Let $\rho_E=\prod_p \rho_{E,p^\infty} : \Gal(\overline{\Q}/\Q) \rightarrow \GL_2(\widehat{\Z})$. We have
    \[
    [\GL_2(\widehat{\Z}) : \operatorname{Im} \rho_E] < 3.7 \cdot 10^{19} (\operatorname{h}(j(E)) + 480)^{3.11}.
    \]
    Moreover, 
    \begin{align*}
		[\GL_2(\widehat{\Z}) : \operatorname{Im}\rho_E] < 2.8 \cdot 10^{20} (\operatorname{h}(j(E)) + 270)^{2 + 3.251 \cdot \delta(12 \operatorname{h}(j(E)))}.
    \end{align*}
    where $\delta: (-0.75,\infty) \rightarrow \mathbb{R}$ is defined as 
		$\delta(x) := \frac{1}{\log(\log(x+40) + 7.6) - 0.903}.$

    In particular, we have $[\GL_2(\widehat{\Z}) : \operatorname{Im}\rho_E] < \operatorname{h}(j(E))^{2+O\left(\frac{1}{\log\log\operatorname{h}(j(E))}\right)}$ as $\operatorname{h}(j(E))$ tends to $\infty$.
\end{theorem}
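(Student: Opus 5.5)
The plan is to follow the strategy of \cite[Theorem 1.5]{furio24} essentially verbatim, changing only the $p$-adic index bound at primes with non-split Cartan image. Write $[\GL_2(\widehat{\Z}) : \operatorname{Im}\rho_E]$ as a product of three factors. The first is the contribution of a finite set of small primes (those $\le 37$, or dividing a fixed auxiliary modulus), which is bounded by an absolute constant via the known classification of $2$-, $3$-, \dots, $37$-adic images and the entanglement bounds used in \emph{loc.~cit.} The second is the contribution of the primes $p>37$ for which $\rho_{E,p}$ is not surjective; by the results recalled in the introduction (Bilu--Parent--Rebolledo, Zywina, Le Fourn--Lemos, Furio--Lombardo) these are exactly the primes with $\operatorname{Im}\rho_{E,p}\subseteq C_{ns}^+(p)$. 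The third is a bounded commutator/entanglement correction, i.e. the factor relating $\prod_p[\GL_2(\Z_p):\operatorname{Im}\rho_{E,p^\infty}]$ to the adelic index; this is handled exactly as in \cite{furio24}, since for surjective primes $p>37$ Serre's lemma yields full $p$-adic image and the usual Goursat-type argument applies.

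For each prime $p$ in the second set, \Cref{thm: no sharp groups over Q} gives $\operatorname{Im}\rho_{E,p^\infty}=\pi_n^{-1}(C_{ns}^+(p^n))$ for some $n\ge1$. A direct count, using $\#C_{ns}^+(p^n)=2(p^2-1)p^{2(n-1)}$ and $\#\GL_2(\Z/p^n\Z)=p^{4(n-1)}(p^2-1)(p^2-p)$, then gives
\[
[\GL_2(\Z_p):\operatorname{Im}\rho_{E,p^\infty}]=\tfrac12 p^{2n-1}(p-1)<\tfrac12 p^{2n},
\]
which replaces the weaker bound $\tfrac12 p^{3n}$ used in \cite{furio24}. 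Next, bound $p^n$ in terms of the height. The isogeny-type estimates of \cite{furio24} (from Gaudron--Rémond isogeny theorems applied to the $\pi_n^{-1}(C_{ns}^+(p^n))$ structure, which produces a cyclic-like isogeny of degree comparable to $p^{n}$ or $p^{2n}$ between $E$ and a twist) give an explicit inequality of the form $p^{n}\le c\,(\operatorname{h}(j(E))+c')^{\kappa}$. The product over all such $p$ is then controlled with the bound on the number and size of non-surjective primes, again as in \cite{furio24}. Substituting the exponent $2n$ in place of $3n$ gives the first displayed bound. The $\delta$-refinement comes from the sharper estimate of $\prod_p p$ over the exceptional primes in terms of $\log\operatorname{h}$, as in the second statement of \cite[Theorem 1.5]{furio24}.

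The main obstacle is bookkeeping rather than new ideas. Every numerical constant ($3.7\cdot10^{19}$, $480$, the exponent $3.11$, the constants inside $\delta$) has to be re-derived with the improved exponent: one optimises, as in \cite{furio24}, a parameter splitting the cases of one versus several non-surjective primes. I expect the exponent of the height to drop from the $\approx 3.11\cdot\tfrac32$-type quantity of \cite{furio24} to the stated values, and the final asymptotic $\operatorname{h}^{2+O(1/\log\log \operatorname{h})}$ to follow immediately from the second bound since $\delta(x)=O(1/\log\log x)$.
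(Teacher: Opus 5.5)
Your overall route is the paper's: rerun the proof of \cite[Theorem~7.1]{furio24}, using \Cref{thm: no sharp groups over Q} to replace the bound $\tfrac12 p^{3n_p}$ by $\tfrac{p-1}{2p}p^{2n_p}$. This turns $\Lambda^3$ into $\Lambda^2$, where $\Lambda=\prod_{p>2}p^{n_p}$ and $n_p\ge 0$ is maximal with $\operatorname{Im}\rho_{E,p^{n_p}}\subseteq C_{ns}^+(p^{n_p})$.

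However, your first factor is wrong: the primes $\le 37$ do \emph{not} contribute an explicit absolute constant.
\begin{itemize}
\item A non-CM $E/\Q$ can have $\operatorname{Im}\rho_{E,p}\subseteq C_{ns}^+(p)$ for $p\in\{3,5,7,11\}$, because $X_{ns}^+(p)(\Q)$ is infinite there. In that case the $p$-adic index is of order $p^{2n_p}$.
\item No explicit uniform bound on $n_p$ is available in general. For $p=11$, for instance, it would require effective control of $X_{ns}^+(121)(\Q)$. Faltings' theorem gives only ineffective finiteness, which is useless for explicit constants.
\item The same applies to any $13<p\le 37$ with Cartan image.
\end{itemize}
In the paper all such primes stay inside $\Lambda$ and are paid for by the height. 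The case split is that of \cite[Proposition~7.15]{furio24}: Cartan image at some $p>13$; surjectivity at all $p>13$; or $j(E)$ in a finite list. It is not a cut at $37$.

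Moreover, \Cref{thm: no sharp groups over Q} needs $p>7$, so for $p\in\{3,5,7\}$ you need separate bounds of the form $C_p\,p^{2n_p}$. The paper takes these from \cite[Proposition~6.7]{furio24} for $p=3,7$. For $p=5$ it uses \cite{rszb22} to exclude $G_{ns}^\#(25)$ unless $j(E)=2^4\cdot 3^2\cdot 5^7\cdot 23^3$; that curve is treated separately (adelic index $200$).

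Your height step also needs repair. The isogeny estimate must bound $\Lambda$ as a whole, not each $p^{n_p}$ separately. Multiplying per-prime bounds $p^{n_p}\le c(\operatorname{h}(j(E))+c')^{\kappa}$ gives an exponent that grows with the number of Cartan primes, so no polynomial bound results.

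Likewise, the correction term in \cite[Equation~(7.7)]{furio24} is not absolutely bounded. It contains $3^\beta$, where $\beta$ is the number of Cartan primes of bad reduction, and this factor must be carried into the height estimate.

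With these corrections your plan is exactly the paper's proof:
\begin{itemize}
\item in the Cartan case one gets $[\GL_2(\widehat{\Z}):\operatorname{Im}\rho_E]\le 1.5\cdot 10^7\,\Delta_7\,3^\beta\Lambda^2$, with $\Delta_7\le 8$;
\item one reruns part (A) of the proof of \cite[Theorem~7.1]{furio24} with $\Lambda^2$ in place of $\Lambda^3$, obtaining bounds in terms of $\Fheight(E)$;
\item one takes the maximum with the existing bounds in the other two cases;
\item one concludes with $\Fheight(E)\le\frac{1}{12}\operatorname{h}(j(E))$, which produces the shifts $480=12\cdot 40$ and $270=12\cdot 22.5$.
\end{itemize}
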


Before proving this theorem, we state and prove two lemmas. The first exploits \Cref{thm: no sharp groups over Q} to improve \cite[Corollary 6.8]{furio24}, whereas the second consolidates several of the results of \cite{furio24}, which we will then apply in the proof of \Cref{thm: adelic bound}.

\begin{lemma}\label{lemma: p-adic indices}
    Let $E/\Q$ be a non-CM elliptic curve and let $p$ be an odd prime such that $\operatorname{Im}\rho_{E,p} \subseteq C_{ns}^+(p)$, with equality holding in the case $p=3$. Let $n \ge 1$ be the largest integer for which $\operatorname{Im}\rho_{E,p^n} \subseteq C_{ns}^+(p^n)$. If $j(E) \ne 2^{4} \cdot 3^{2} \cdot 5^{7} \cdot 23^{3}$, we have
	\begin{align*}
        [\GL_2(\Z_3) : \operatorname{Im}\rho_{E,3^\infty}] &\le 3^{2n} &\text{if } p=3, \\
        [\GL_2(\Z_5) : \operatorname{Im}\rho_{E,5^\infty}] &\le \max\{2 \cdot 5^{2n-1}, 30\} \le \frac{6}{5} \cdot 5^{2n} &\text{if } p=5,\\
        [\GL_2(\Z_7) : \operatorname{Im}\rho_{E,7^\infty}] &\le \max\{3 \cdot 7^{2n-1}, 147\} \le 3 \cdot 7^{2n} &\text{if } p=7, \\
        [\GL_2(\Z_p) : \operatorname{Im}\rho_{E,p^\infty}] &= \frac{p-1}{2p} \cdot p^{2n} &\text{if } p > 7.
	\end{align*}
\end{lemma}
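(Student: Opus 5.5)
The plan is to treat $p>7$ and the small primes $p\in\{3,5,7\}$ separately, since only the first case uses the new results of this paper.

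For $p>7$, \Cref{thm: no sharp groups over Q} gives $\operatorname{Im}\rho_{E,p^\infty}=\pi_m^{-1}(C_{ns}^+(p^m))$ for some $m\ge1$. This group contains $\operatorname{Id}+p^mM_2(\Z_p)$ and its reduction mod $p^{m+1}$ is not contained in $C_{ns}^+(p^{m+1})$, because its kernel layer there has order $p^4>p^2$. Hence $m=n$, the largest integer with $\operatorname{Im}\rho_{E,p^n}\subseteq C_{ns}^+(p^n)$. The index is then
\[
[\GL_2(\Z_p):\operatorname{Im}\rho_{E,p^\infty}]=\frac{\#\GL_2(\Z/p^n\Z)}{\#C_{ns}^+(p^n)}
=\frac{p^{4(n-1)}p(p-1)^2(p+1)}{2(p^2-1)p^{2(n-1)}}=\frac{p-1}{2p}\,p^{2n},
\]
using $\#C_{ns}^+(p^n)=2(p^2-1)p^{2(n-1)}$. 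This gives the last line of the lemma with equality.

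For $p\in\{3,5,7\}$ I will not use the local theory. Instead I will rely on the global classification from \cite{furio24}, essentially the input to \cite[Corollary 6.8]{furio24}, together with the known $p$-adic images over $\Q$ for these small primes from \cite{rszb22}. The approach is to list the possible N-Cartan lifts $G$ with $G(p)$ as given and $n$ maximal, and to bound $[\GL_2(\Z_p):G]=[\GL_2(\Z_p):\pi_n^{-1}C_{ns}^+(p^n)]\cdot[\pi_n^{-1}C_{ns}^+(p^n):G]$. The first factor equals $\frac{p-1}{2p}p^{2n}$ as above. The second factor is controlled by \Cref{thm:cartantower}.

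\begin{itemize}
\item In the generic cases (first or second alternative of \Cref{thm:cartantower}) the extra factor is $[C_{ns}^+(p):G(p)]$. This is $1$ for $p=3$, where equality is assumed, and at most $3$ for $p=5,7$. This gives bounds of the shape $2\cdot5^{2n-1}$ and $3\cdot7^{2n-1}$.
\item The exceptional images, namely the sharp group $G_{ns}^\#(p^2)$, non-surjective determinant-compatible subgroups, and the single exceptional $j$-invariant $2^4\cdot3^2\cdot5^7\cdot23^3$ (the known $7$-adic point), have to be checked against the lists in \cite{rszb22}. These contribute the constants $30$ and $147$ in the maxima.
\end{itemize}

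The final simplifications $\max\{2\cdot5^{2n-1},30\}\le\frac65 5^{2n}$ and $\max\{3\cdot7^{2n-1},147\}\le 3\cdot 7^{2n}$ are immediate for $n\ge1$.

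The main obstacle is the small-prime bookkeeping: extracting from \cite{furio24} and \cite{rszb22} exactly which exceptional $3$-, $5$- and $7$-adic images occur, and computing their indices to obtain the constants $30$ and $147$. The $p>7$ case is a direct corollary of \Cref{thm: no sharp groups over Q}.
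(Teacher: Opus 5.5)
Your treatment of $p>7$ is correct and is the paper's argument: the paper reads the index directly off \Cref{thm: no sharp groups over Q}, and your check that the level $m$ there equals the maximal $n$ fills in a detail the paper leaves implicit. The part for $p\in\{3,5,7\}$, however, would not go through as sketched, because the bookkeeping is wrong.

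First, $\frac{p-1}{2p}p^{2n}$ is already $2\cdot 5^{2n-1}$, resp.\ $3\cdot 7^{2n-1}$. So an extra factor $[C_{ns}^+(p):G(p)]=3$ gives $6\cdot 5^{2n-1}$ and $9\cdot 7^{2n-1}$, not the bounds you wrote. In fact any factor $>1$ violates the stated bounds once $n\ge 2$. You therefore need global input that $G(p)=C_{ns}^+(p)$ whenever $n\ge 2$, and nothing in your plan supplies it. For $p=7$, index $3$ is already excluded by the determinant, since $7\equiv 1 \bmod 3$. For $p=5$, the constant $30$ is exactly $[\GL_2(\F_5):G(5)]$ when $G(5)$ has index $3$ in $C_{ns}^+(5)$ and $n=1$.

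Second, you misplace the exceptional $j$-invariant. $2^4\cdot 3^2\cdot 5^7\cdot 23^3$ is not a $7$-adic point; it is the non-CM $j$-invariant for which $\operatorname{Im}\rho_{E,25}$ lies in $G_{ns}^\#(25)$. Since $[\GL_2(\Z_5):\pi_2^{-1}(G_{ns}^\#(25))]=50>30$, the sharp group at $p=5$ cannot be ``absorbed into the constant''. It must be excluded, and that is precisely the role of the hypothesis on $j(E)$. By contrast, $147=[\GL_2(\Z_7):\pi_2^{-1}(G_{ns}^\#(49))]$ comes from the rational point on $X_{ns}^\#(7^2)$, which the statement permits. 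Likewise, the bound $3^{2n}$ for $p=3$, $n=1$ leaves room for $G_{ns}^\#(9)$, which has index $9$.

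You also cannot simply feed $p\le 7$ into \Cref{thm:cartantower}. In this paper its hypotheses (e.g.\ $|G_1/G_2|\ge p^2$) are verified only for $p>7$, using local inputs such as \Cref{lemma: G(p) is not in the Cartan} and \cite{smith23}. The decomposition in \Cref{lemma: conjugacy action of Cnsp} is likewise stated only for $p>7$.

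The paper avoids all of this by citation:
\begin{itemize}
\item for $p\in\{3,7\}$ the bounds are \cite[Proposition 6.7]{furio24};
\item for $p=5$ it uses \cite[Theorem 1.6]{rszb22} to get $\operatorname{Im}\rho_{E,25}\not\subseteq G_{ns}^\#(25)$ whenever $j(E)\neq 2^4\cdot 3^2\cdot 5^7\cdot 23^3$, and then reads the bound off the classification in \cite[Theorem 6.5]{furio24}.
\end{itemize}
That is the route to take for the small primes.
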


\begin{proof}
    For $p \in \{3,7\}$ the conclusion follows immediately from \cite[Proposition 6.7]{furio24}. If $p>7$, then by \Cref{thm: no sharp groups over Q}, $[\GL_2(\Z_p) : \operatorname{Im}\rho_{E,p^\infty}] = [\op{GL}_2(\Z/p^n\Z):C_{ns}^+(p^n)] = \frac{p^{2n}-p^{2n-1}}{2} =\frac{p-1}{2p} \cdot p^{2n}$.

    If $p=5$, then $\operatorname{Im}\rho_{E,p^2} \not\subseteq G_{ns}^\#(p^2)$ by \cite[Theorem 1.6]{rszb22} since $j(E) \ne 2^{4} \cdot 3^{2} \cdot 5^{7} \cdot 23^{3}$; the result now follows from the classification in \cite[Theorem 6.5]{furio24}.
\end{proof}

\begin{remark}
    Note that in \Cref{lemma: p-adic indices} we invoke the result of \cite[Proposition 6.7]{furio24} for $p \in \{3,7\}$. In these cases, however, it is now known that the $p$-adic indices are uniformly bounded (see \cite[Theorem 1.4]{balakrishnan25} and \cite[Corollary 1.5]{furio20257adicgaloisrepresentationselliptic}). We nonetheless retain the above formulation as it is more convenient for our purposes, and the use of the uniform bound would not yield any significant improvements in \Cref{thm: adelic bound}.
\end{remark}

\begin{lemma}\label{lemma: furio known}
Let $E/\Q$ be a non-CM elliptic curve with $j$-invariant $j(E)$ and stable Faltings height $h_{\mathcal{F}}(E)$. Define

\begin{align*}
\mathcal{J} =& \{ -11 \, {\cdot} \, 131^3,  \:\:\, -2^{-1} \, {\cdot} \, 17^2\, {\cdot} \, 101^3, \:\:\, -2^{-17}\, {\cdot} \, 17 \, {\cdot} \, 373^3, \:\:\, -7 \, {\cdot} \, 137^3 \, {\cdot} \, 2083^3, \:\:\, 2^4\, {\cdot} \, 3^{-13} \, {\cdot} \, 5 \, {\cdot} \, 13^4 \, {\cdot} \, 17^3, \:\:\, -7 \, {\cdot} \, 11^3, \:\:\, \\
& \:\:\, {-}11^2, -2^{12} \, {\cdot} \, 3^{-13} \, {\cdot} \, 5^3 \, {\cdot} \, 11 \, {\cdot} \, 13^4, \:\:\, 2^{18} \, {\cdot} \, 3^3 \, {\cdot} \, 5^{-13} \, {\cdot} \, 13^4 \, {\cdot} \, 61^{-13} \, {\cdot} \, 127^3 \, {\cdot} \, 139^3 \, {\cdot} \, 157^3 \, {\cdot} \, 283^3 \, {\cdot} \, 929, \:\:\, 2^4 \, {\cdot} \, 3^2 \, {\cdot} \, 5^7 \, {\cdot} \, 23^3 \}. 
\end{align*}

\noindent One of the following holds:
\begin{itemize}
    \item[(A)] there exists a prime $p>13$ such that $\op{Im} \rho_{E,p}=C_{ns}^+(p)$;
     \item[(B)] $\rho_{E,p}$ is surjective for all primes $p>13$;
    \item[(C)] $j(E) \in \mathcal{J}$.
\end{itemize}

\noindent Moreover,
\begin{itemize}
    \item if case {\normalfont(B)} holds, then 
    \[
    [\op{GL}_2(\widehat{\Z}): \op{Im} \rho_E] < \begin{cases}
    8.4 \cdot 10^{22} \cdot (h_{\mathcal{F}}(E)+40)^{2.616}, \\
    4 \cdot 10^{22} \cdot (h_{\mathcal{F}}(E)+40)^{1.814 \cdot \delta(h_{\mathcal{F}}(E))}(h_{\mathcal{F}}(E)+22.5)^2;
    \end{cases}
    \]
    \item if case {\normalfont(C)} holds, then $[\op{GL}_2(\widehat{\Z}): \op{Im} \rho_E] \leq 2736$.
\end{itemize}
\end{lemma}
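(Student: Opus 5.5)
This lemma is a consolidation of results from \cite{furio24}, so the plan is to assemble them rather than prove anything new. The trichotomy (A)/(B)/(C) comes from the classification of mod $p$ images for non-CM $E/\Q$ and $p>13$ cited in the introduction \cite{serre81, mazur78, biluparent11, biluparent13, zywina15, lefournlemos21, furiolombardo23}. Suppose some prime $p>13$ has $\rho_{E,p}$ non-surjective. Then either the image is contained in $C_{ns}^+(p)$, or it falls in one of the known exceptional cases: Borel images at $p\in\{17,37\}$, or the $X_{ns}^+$/$X_{s}^+$ type exceptional points. Each of the latter is realised only by finitely many explicit $j$-invariants.

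\textbf{Step 1: proper containment.} We must also rule out $\op{Im}\rho_{E,p}$ being a proper subgroup of $C_{ns}^+(p)$ for $p>13$. This follows from Zywina's classification \cite{zywina15}. For such $p$, proper subgroups of $C_{ns}^+(p)$ with surjective determinant would force the image into $C_{ns}(p)$ or into a small exceptional group, and both are excluded for non-CM $E/\Q$. Hence non-surjectivity with image in $C_{ns}^+(p)$ yields case (A).

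\textbf{Step 2: the exceptional list.} We collect the exceptional $j$-invariants into $\mathcal{J}$. The first entries are the $17$- and $37$-isogeny $j$-invariants. The remaining entries come from the relevant small-level exceptional images, which \cite{furio24} includes in order to state the adelic bound uniformly. In particular, $2^4\cdot3^2\cdot5^7\cdot23^3$ is the $5$-adic exceptional $G_{ns}^\#(25)$ point from \cite{rszb22}. This gives (C).

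\textbf{Step 3: the bounds.} For (B), the two displayed bounds in terms of $h_{\mathcal{F}}(E)$ are exactly the surjective-case estimates proved in \cite{furio24}. Those estimates combine the isogeny-theorem bounds, which control the primes $\le 13$ and the ramified primes, with the explicit indices at small primes. We will quote them verbatim, checking only that the constants match. For (C), each $j\in\mathcal{J}$ is a specific non-CM curve over $\Q$, so its adelic index is finite and computable (e.g.\ via Zywina's algorithm or the LMFDB). The maximum over the list is $2736$, as recorded in \cite{furio24}.

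The main obstacle is bookkeeping rather than mathematics: confirming that the set $\mathcal{J}$ is precisely the union of the exceptional lists used in \cite{furio24}, and that the index computations for each member really are bounded by $2736$. I would verify the latter by tabulating known adelic indices for each $j$ (all of these curves appear in the literature with computed images) and taking the maximum.
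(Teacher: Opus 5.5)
Your plan of quoting \cite{furio24} is the same as the paper's, but the two places where you supply the justification yourself do not work as written.

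\textbf{Step 1 is wrong.} Zywina's classification does \emph{not} rule out proper subgroups of $C_{ns}^+(p)$, and the group-theoretic reason you give is false. For $p\equiv 2 \bmod 3$, take the index-$3$ subgroup of $C_{ns}^+(p)$ generated by $C_{ns}(p)^3$ and an element of $C_{ns}^+(p)\setminus C_{ns}(p)$. It has surjective determinant, because the norm $\F_{p^2}^\times\to\F_p^\times$ followed by cubing is still onto when $3\nmid p-1$. It is neither contained in $C_{ns}(p)$ nor exceptional. Arithmetic input, namely the inertia image at $p$ (cf.\ \Cref{lemma: G(p) is not in the Cartan}), only cuts the index down to $1$ or $3$. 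The index-$3$ case is not settled in \cite{zywina15}; excluding it required \cite{lefournlemos21} and \cite{furiolombardo23}. Moreover, \Cref{rmk: index 3} shows that this index really occurs for local images over $\Q_p$, so no local argument can exclude it. The paper reproves none of this: it takes the whole trichotomy from \cite[Proposition 7.15]{furio24}, which packages these results.

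Your description of $\mathcal{J}$ is also off, though harmlessly, since enlarging (C) cannot break the trichotomy. The entries $-11\cdot 131^3$ and $-11^2$ are $11$-isogeny $j$-invariants. The three with denominators $3^{13}$ or $5^{13}\cdot 61^{13}$ are the non-CM points of $X_{S_4}(13)$. For $p>13$ there is no ``finite explicit list'' of $X_{ns}^+$-type points: those are exactly case (A).

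\textbf{The index computation in case (C) has two gaps.} First, the last entry $2^4\cdot 3^2\cdot 5^7\cdot 23^3$ is new to this paper. It was added because \Cref{lemma: p-adic indices} needs $\op{Im}\rho_{E,25}\not\subseteq G_{ns}^\#(25)$. So its index is not ``recorded in \cite{furio24}'': \cite[Proposition 7.12]{furio24} only covers the other nine $j$-invariants, and this one must be computed separately.

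Second, a $j$-invariant does not determine ``a specific curve'' over $\Q$, only a family of quadratic twists. Reading off a single index from the LMFDB therefore needs the fact that the adelic index depends only on $j(E)$. This holds because $\det\rho_E$ is the surjective cyclotomic character, so $[\GL_2(\widehat{\Z}):\op{Im}\rho_E]=[\SL_2(\widehat{\Z}):\rho_E(\Gal(\overline{\Q}/\Q^{\operatorname{ab}}))]$, and a quadratic twist does not change $\rho_E$ restricted to $\Gal(\overline{\Q}/\Q^{\operatorname{ab}})$. This is \cite[Corollary 2.3]{zywina15index}. The paper invokes it before reading the index $200\le 2736$ off the curve \texttt{396900.b1}.

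With these repairs, and with the case (B) bounds quoted from part (B) of the proof of \cite[Theorem 7.1]{furio24} as you propose, your argument becomes the paper's.
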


\begin{proof}
    The classification is \cite[Proposition 7.15]{furio24}. If $j(E) \in \mathcal{J}$ and $j(E) \neq 2^4 \cdot 3^2 \cdot 5^7 \cdot 23^3$, then the index follows directly from \cite[Proposition 7.12]{furio24}. If $j(E)=2^4 \cdot 3^2 \cdot 5^7 \cdot 23^3$, then we note that the adelic index only depends on the $\overline{\Q}$-isomorphism class by \cite[Corollary 2.3]{zywina15index} and use the LMFDB (namely the curve \href{https://beta.lmfdb.org/EllipticCurve/Q/396900/b/1}{396900.b1}) to find that the index is $200 \leq 2736$. In case (B), the bounds are given in part (B) of \cite[Proof of Theorem 7.1]{furio24}.
\end{proof}

\begin{proof}[Proof of \Cref{thm: adelic bound}]
    It suffices to bound the adelic index in case (A) of \Cref{lemma: furio known}. Define $\mathcal{C}_{ns}$ as the set of primes $p>5$ for which $\operatorname{Im}\rho_{E,p} \subseteq C_{ns}^+(p)$ and let $\mathcal{C}=\mathcal{C}_{ns} \cup \{5\}$. Let $\beta$ be the number of primes $p \in \mathcal{C}_{ns}$ for which $E$ has bad reduction at $p$.
    For every $p \geq 3$ let $n_p$ be the maximal integer $n \geq 0$ for which $\operatorname{Im}\rho_{E,p^n} \subseteq C_{ns}^+(p^n)$. Note that $n_p=0$ if $\operatorname{Im} \rho_{E, p}$ is not contained in $C_{ns}^+(p)$.
    Set $\Lambda := \prod_{p>2} p^{n_p}$,
    \[
    \op{Ind}(p):=[\op{GL}_2(\Z_p):\op{Im} \rho_{E,p^{\infty}}] \qquad \text{and} \qquad \operatorname{Ind}_S(p) = [ \SL_2(\Z_p) : \rho_{E, p^\infty}(\Gal(\overline{\Q} / \Q^{\operatorname{ab}})) ].
    \]
    As we assume to be in case (A), the proof of \cite[Lemma 7.18]{furio24} (in particular Equation (7.7)) gives
    \begin{equation*}
        [\GL_2(\widehat{\Z}) : \operatorname{Im}\rho_E] \le 24^2 \Delta_7 \cdot 2^{|\mathcal{C}_{ns}|} \cdot 3^\beta \cdot \operatorname{Ind}_S(2) \cdot \operatorname{Ind}_S(3) \cdot \prod_{p \in \mathcal{C}} \op{Ind}(p),
    \end{equation*}
    where $\Delta_7$ is defined as $1$ if $7 \notin \mathcal{C}$, as $8$ if $7 \in \mathcal{C}$ and $E$ has good reduction at $7$, and as $\frac{8}{3}$ if $7 \in \mathcal{C}$ and $E$ has bad reduction at $7$.

    We can now follow the proof of \cite[Lemma 7.18]{furio24}, in particular the part \textit{Bounding the $p$-adic indices}, and bound $\op{Ind}(p)$ for $p \in \mathcal{C}$ and $\op{Ind}_S(p)$ for $p \in \{2,3\}$. 
    
    If $p=2$, then the arguments in \textit{loc.~cit.}~show we may assume that $\op{Ind}_S(2) \leq 32$. Similarly, if $p=3,5$ and $\op{Im} \rho_{E, p} \not \subseteq C_{ns}^+(p)$, then we find $\op{Ind}_S(3) \leq 27$ and $\op{Ind}(5) \leq 5$; if instead $\op{Im} \rho_{E, p} \subseteq C_{ns}^+(p)$, using \Cref{lemma: p-adic indices} we have $\op{Ind}_S(3) \le 2 \op{Ind}(3) \le 2 \cdot 3^{2n_3}$ and $\op{Ind}(5) \le \frac{6}{5} \cdot 5^{2n_5}$. In all cases, this gives $\op{Ind}_S(3) \leq 27 \cdot 3^{2n_3}$ and $\op{Ind}(5) \leq 5 \cdot 5^{2n_5}$. For the primes in $\mathcal{C}_{ns}$, we can directly apply \Cref{lemma: p-adic indices}, obtaining $\op{Ind}(7) \leq \frac{6}{2} \cdot 7^{2n_7}$ and $\op{Ind}(p) \leq  \frac{1}{2}p^{2n_p}$ for $p > 7$. Replacing all these bounds in the inequality above, we obtain
	\begin{align}\label{eq: boundnonsuq}
		[\GL_2(\widehat{\Z}) : \operatorname{Im}\rho_E] &\leq (2^{12} \cdot 3^6 \cdot 5 ) \cdot 3^\beta \cdot \Delta_7 \cdot \Lambda^2 \leq 1.5 \cdot 10^7 \cdot \Delta_7 \cdot 3^{\beta} \cdot \Lambda^2.
	\end{align}
    
    Observe that this inequality is similar to that of \cite[Lemma 7.18]{furio24}, however the exponent of $\Lambda$ is decreased by $1$ and the constant is multiplied by $6$.  To conclude, it suffices to mimic part (A) of the proof of \cite[Theorem 7.1]{furio24}, doing the calculations with $\Lambda^2$ in place of $\Lambda^3$ with the improved bound in \Cref{eq: boundnonsuq}. 
    We report here the new inequalities obtained. Let $\Fheight(E)$ be the stable Faltings height of $E$. Recall that $\Fheight(E) > -0.75$ (see \cite[Remark 2.4]{furio24}). If $j \notin \Z$ then
    \begin{equation}
    [\GL_2(\widehat{\Z}) : \operatorname{Im}\rho_E] < \begin{cases}
        6.2 \cdot 10^{10} (\Fheight(E)+1.5)^{2.38}, \\
        5 \cdot 10^{13} (\Fheight(E) +40)^{1.437 \cdot \delta(\Fheight(E))} (\Fheight(E) + 1.5)^2;
    \end{cases}
    \end{equation}
    while for $j \in \Z$ we have
    \begin{equation}
        [\GL_2(\widehat{\Z}) : \operatorname{Im}\rho_E] < \begin{cases}
        2.72 \cdot 10^{17} (\Fheight(E) + 40)^{3.11}, \\
        2.35 \cdot 10^{16} (\Fheight(E) +40)^{3.251 \cdot \delta(\Fheight(E))} (\Fheight(E) + 22.5)^2.
    \end{cases}
    \end{equation}
    Combining these bounds with those of \Cref{lemma: furio known}, we obtain in any case that
    \begin{align*}
        [\GL_2(\widehat{\Z}) : \operatorname{Im} \rho_E] &< \begin{cases} 8.4 \cdot 10^{22} (\Fheight(E) + 40)^{3.11} \\
		4 \cdot 10^{22} (\Fheight(E) + 22.5)^{2 + 3.251 \cdot \delta(\Fheight(E))}. \end{cases}
    \end{align*}
    By \cite[Theorem 2.3]{furio24} we have that $\Fheight(E) \le \frac{1}{12}\operatorname{h}(j(E))$, and applying this bound in the inequalities above, we conclude the proof.
\end{proof}

\bibliographystyle{alpha}
\bibliography{biblio}

\end{document}